\newtheorem{thm}{Theorem}[section]
\newtheorem{cor}[thm]{Corollary}
\newtheorem{lem}[thm]{Lemma}
\theoremstyle{definition}
\newtheorem{defn}[thm]{Definition}
\theoremstyle{remark}
\newtheorem{rem}[thm]{Remark}
\newtheorem{ex}[thm]{Example}
\numberwithin{equation}{section}
\newtheoremstyle{dotless}{}{}{}{}{\bfseries}{}{ }{}
\theoremstyle{dotless}
\newtheorem*{repthm}{Theorem}
\newtheorem*{repcor}{Corollary}
\def\N{\mathbb{N}}
\def\R{\mathbb{R}}
\def\Z{\mathbb{Z}}
\def\MCG{{\mathcal MCG}}
\newcommand{\inj}{\rightarrowtail}
\newcommand{\surj}{\twoheadrightarrow}
\newcommand{\Ra}{\Rightarrow}
\newcommand{\La}{\Leftarrow}
\newcommand{\ra}{\rightarrow}
\newcommand{\co}{\colon\thinspace}
\newcommand{\uas}{{$\omega$--a.s.\ }}
\newcommand{\e}[1]{\mathcal{E}(#1)}
\newcommand{\nse}[1]{\mathcal{NE}(#1)}
\newcommand{\se}[1]{\mathcal{SE}(#1)}
\newcommand{\eultra}[1]{\mathcal{E}^{\omega}(#1)}
\newcommand{\nseultra}[1]{\mathcal{NE}^{\omega}(#1)}
\newcommand{\seultra}[1]{\mathcal{SE}^{\omega}(#1)}
\newcommand{\x}[1]{\mathcal{X}(#1)}
\newcommand{\q}[1]{\mathcal{Q}(#1)}
\newcommand{\qbar}[1]{\mathcal{Q}_{\omega}(\overline{#1})}
\newcommand{\xbar}[1]{\mathcal{X}_{\omega}(\overline{#1})}
\newcommand{\norm}[1] {\left\lVert #1 \right\lVert}
 \def\co{\colon\thinspace}
\begin{document}
\title{The Asymptotic Cone of Teichm\"uller Space: Thickness and Divergence}
\author{Harold Sultan}
\subjclass[2010]{20F65,  20F67,  20F69, 30F60}
\keywords{Teichm\"uller Space, Asymptotic Cone, Thickness}
\address{Department  of Mathematics\\Brandeis University\\
Waltham\\MA 02453}
\email{HSultan@brandeis.edu}
\date{\today}
\maketitle

\begin{abstract}
We study the Asymptotic Cone of Teichm\"uller space equipped with the Weil-Petersson metric.  In particular, we provide a characterization of the canonical finest pieces in the tree-graded structure of the asymptotic cone of Teichm\"uller space along the same lines as a similar characterization for right angled Artin groups in \cite{behrstockcharney} and for mapping class groups in \cite{bkmm}.  As a corollary of the characterization, we complete the thickness classification of Teichm\"uller spaces for all surfaces of finite type, thereby answering questions of Behrstock-Dru\c tu \cite{behrstockdrutu}, Behrstock-Dru\c tu-Mosher \cite{bdm}, and Brock-Masur \cite{brockmasur}.  In particular, we prove that Teichm\"uller space of the genus two surface with one boundary component (or puncture) can be uniquely characterized in the following two senses: it is thick of order two, and it has superquadratic yet at most cubic divergence.  In addition, we characterize strongly contracting quasi-geodesics in Teichm\"uller space, generalizing results of Brock-Masur-Minsky \cite{bmm2}.  As a tool, we develop a complex of separating multicurves, which may be of independent interest. 
\end{abstract}
 \tableofcontents
\section{Introduction}
\label{chap:introduction}
\label{sec:overview}

In the setting of spaces of non-positive curvature, Euclidean and hyperbolic space represent the two classically well understood extremes. In this paper we will see that the geometries of Teichm\"uller Spaces for various surfaces provide natural examples of non-positively curves spaces which nontrivially interpolate between these two ends of the spectrum of non-positively curved spaces.  In particular, the geometry of Teichm\"uller spaces includes on the one hand examples of hyperbolic and strongly relatively hyperbolic metric spaces, and on the other hand thick of order one and thick of order two metric spaces.  The example of a Teichm\"uller space which is thick of order two is a novelty of this paper.  In a similar vein, we will see that the divergence function of Teichm\"uller spaces includes examples of spaces with quadratic divergence, superquadratic yet at most cubic divergence, exponential divergence, and infinite divergence.  Again, the example of a Teichm\"uller space which has superquadratic yet at most cubic divergence is a novelty of this paper.
  
The above referenced notion of thickness, developed in \cite{bdm} and further explored in \cite{behrstockdrutu}, is aptly named as it stands in stark contrast to relative hyperbolicity.  In fact, if a space is thick of any finite order than it is not strongly relatively hyperbolic, \cite{bdm}.  Informally the \emph{order of thickness of a space} should thought as a precise means of interpolating between product spaces, which are thick of order zero, and (relatively)-hyperbolic spaces, which are not thick of any finite order, or are thick of order infinity.  More specifically, thickness will be defined inductively in Section \ref{subsec:background}.  


For $S$ a surface of finite type, \emph{Teichm\"uller space}, $\mathcal{T}(S),$ is a classical space which parameterizes isotopy classes of hyperbolic structures on $S.$  In the literature there are various natural metrics with which Teichm\"uller space can be equipped.  Hereinafter, we always consider $\mathcal{T}(S)$ with the Weil-Petersson metric.  The Weil-Petersson metric on $\mathcal{T}(S)$ is a complex analytically defined Riemannian metric of variable non-positive curvature.  While the space is not complete, its completion, $\mathcal{\overline{T}}(S),$ obtained by augmenting Teichm\"uller spaces of lower complexity surfaces corresponding to limit points in the space with pinched curves, is a CAT(0) metric space \cite{wolpertcat,yamada}.  The large scale geometry of Teichm\"uller space has been an object of recent interest, especially within the circles of ideas surrounding Thurston's Ending Lamination Conjecture.  In this context, the pants complex, $\mathcal{P}(S),$ a combinatorial complex associated to a hyperbolic surface $S,$ becomes relevant.  Specifically, by a groundbreaking theorem of Brock \cite{brock}, $\mathcal{P}(S)$ is quasi-isometric to $\mathcal{T}(S).$  Accordingly, in order to study large scale geometric properties of Teichm\"uller space, it suffices to study the pants complex of a surface.  For instance, significant recent results of Behrstock \cite{behrstock}, Behrstock-Minsky \cite{behrstockminsky}, Brock-Farb \cite{brockfarb}, Brock-Masur \cite{brockmasur}, and Brock-Masur-Minsky \cite{bmm1,bmm2}, among others, can be viewed from this perspective.  Similarly, all of the results of this paper regarding the coarse structure of the pants complex should be interpreted as coarse results regarding Teichm\"uller space.  


In recent years, study of asymptotic cones has proven extremely fruitful in considering the coarse geometry of groups and spaces.  See for instance \cite{bds, drutu, drutusapir}.  One aspect in common to the aforementioned studies of asymptotic cones is interest in \emph{cut-points}, namely single points whose removal disconnects the asymptotic cone.  The general theme is that cut-points in asymptotic cones correspond to a weak form of hyperbolicity in the underlying space.  One of the highlights of the paper is a characterization of when two points in the asymptotic cone of Teichm\"uller space are separated by a cut-point, see Theorem \ref{thm:pieces}.    

On the one hand, it is shown in \cite{behrstock} that in the asymptotic cone of Teichm\"uller space, every point is a global cut-point.  On the other hand, for high enough complexity surfaces, Teichm\"uller space has natural quasi-isometrically embedded flats, or \emph{quasi-flats}, \cite{behrstockminsky, brockfarb, mm2}.  In turn, this implies the existence of naturally embedded flats in the asymptotic cone and hence the existence of nontrivial subsets of the asymptotic cone without cut-points.  Putting things together, for high enough complexity surfaces, the asymptotic cone of Teichm\"uller space is a \emph{tree-graded space}.  In such a setting, there are canonically defined \emph{finest pieces} of the tree-graded structure, which are defined to be maximal subsets of the asymptotic cone subject to the condition that no two points in a finest piece can be separated by the removal of a point.  A highlight of this paper is the following theorem that characterizes the finest pieces in tree-graded structure of the asymptotic cone of Teichm\"uller space.

\begin{repthm} $\textbf{\ref{thm:pieces}.}$
\emph{Let $S=S_{g,n},$ and let $\mathcal{P}_{\omega}(S)$ be any asymptotic cone of $\mathcal{P}(S).$  Then $\forall a_{\omega},b_{\omega}  \in\mathcal{P}{\omega}(S),$ the following are equivalent:
\begin{enumerate}
\item No point separates $a_{\omega}$ and $b_{\omega},$ or equivalently $a_{\omega}$ and $b_{\omega}$ are in the same canonical finest piece, and
\item In any neighborhood of $a_{\omega}, b_{\omega},$ respectively, there exists $a'_{\omega},b'_{\omega},$ with representative sequences $(a'_{n})$,$(b'_{n})$, such that $\lim_{\omega}d_{\mathbb{S}(S)}(a'_{n},b'_{n}) < \infty.$  
\end{enumerate}}
\end{repthm}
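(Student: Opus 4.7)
The plan is to follow the pattern established by Behrstock-Kleiner-Minsky-Mosher \cite{bkmm} for the mapping class group and by Behrstock-Charney \cite{behrstockcharney} for right-angled Artin groups, with the separating multicurve complex $\mathbb{S}(S)$ playing the role that the curve complex plays in those settings. The two implications are logically independent and require rather different machinery, so I handle them separately.

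For the direction $(2) \Rightarrow (1)$, I use a product-region argument. Assume $\lim_\omega d_{\mathbb{S}(S)}(a'_n, b'_n) = K < \infty$. Then for $\omega$-a.e.\ $n$ there is a separating multicurve $\sigma_n$ whose $\mathbb{S}(S)$-distance to both $a'_n$ and $b'_n$ is at most $K$. Writing $S \setminus \sigma_n = Y_n \sqcup Z_n$ (enlarging $K$ if necessary so both components have positive complexity), the set $Q(\sigma_n)$ of pants decompositions of $S$ containing $\sigma_n$ is coarsely isometric to the product $\mathcal{P}(Y_n) \times \mathcal{P}(Z_n)$. Its ultralimit in $\mathcal{P}_\omega(S)$ is a product of two unbounded asymptotic cones of lower-complexity pants complexes and therefore contains no cut-points. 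Since $a'_\omega$ and $b'_\omega$ lie within bounded pants-complex distance of this product region, they belong to a common cut-point-free subset; letting $a'_\omega \to a_\omega$ and $b'_\omega \to b_\omega$ places $a_\omega$ and $b_\omega$ in the same finest piece.

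For the direction $(1) \Rightarrow (2)$, I proceed by contrapositive, and this is where the main technical work lies. Assuming neighborhoods $U_a \ni a_\omega$ and $U_b \ni b_\omega$ exist for which $\lim_\omega d_{\mathbb{S}(S)}(a'_n, b'_n) = \infty$ for every choice of representative sequences, I construct a cut-point separating $a_\omega$ from $b_\omega$ as follows. Fix quasi-geodesics $\gamma_n$ from $a_n$ to $b_n$ in $\mathcal{P}(S)$; their images under the coarse Lipschitz projection $\mathcal{P}(S) \to \mathbb{S}(S)$ have length tending to infinity, so one can select intermediate separating multicurves $\sigma_n$ whose distance in $\mathbb{S}(S)$ to both $a_n$ and $b_n$ grows without bound. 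Taking $c_\omega$ to be the ultralimit of midpoint vertices of $\gamma_n$ lying in $Q(\sigma_n)$ yields the candidate cut-point. The hard part will be showing $c_\omega$ is a genuine single cut-point rather than merely a separating subset: one must rule out any detour in $\mathcal{P}_\omega(S)$ from $a_\omega$ to $b_\omega$ avoiding $c_\omega$. I expect this to require a Behrstock-style inequality for subsurface projections to the components of $S \setminus \sigma_n$, which forces the product region $Q(\sigma_n)$ to collapse to a single point in the ultralimit under the hypothesis, together with a Masur-Minsky-style distance estimate for the pants complex forcing every quasi-geodesic from $a_n$ to $b_n$ to pass within bounded distance of $\sigma_n$. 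These two ingredients combine to show that after passing to the ultralimit every path in $\mathcal{P}_\omega(S)$ from $a_\omega$ to $b_\omega$ must traverse $c_\omega$, producing the required cut-point and completing the contrapositive.
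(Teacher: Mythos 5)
Your proposal has the right macro-level shape (forward direction via product regions, backward direction via cut-point construction along quasi-geodesics), but both directions contain genuine gaps, and the backward direction in particular is missing the key new machinery that makes the theorem work.

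For $(2) \Rightarrow (1)$: you move from ``$\lim_\omega d_{\mathbb{S}(S)}(a'_n,b'_n)=K<\infty$'' to ``$a'_\omega$ and $b'_\omega$ lie within bounded \emph{pants-complex} distance of $Q(\sigma_n)$,'' but bounded $\mathbb{S}(S)$-distance does not control pants-complex distance at all (it cannot, because two separating multicurves at $\mathbb{S}(S)$-distance zero, i.e.\ the same multicurve, still index a product region of infinite diameter, and conversely). What you actually get from $d_{\mathbb{S}(S)}(a'_n,b'_n)\leq K$ is a \emph{chain} of at most $K$ natural product regions $\q{\gamma_0},\dots,\q{\gamma_K}$ with $a'_n\in\q{\gamma_0}$, $b'_n\in\q{\gamma_K}$, and each consecutive pair having infinite-diameter coarse intersection; one then needs Theorem~\ref{thm:sepcomplexrelasymptotic} to pass this chain into $\mathcal{P}_\omega(S)$ as a chain of nontrivial convex product regions $\qbar{\gamma_j}$ whose consecutive intersections are still infinite, and finally the observation that a union of cut-point-free sets pairwise glued along sets of more than one point is cut-point free. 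Your single-region argument skips all of this.

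For $(1)\Rightarrow(2)$: the first step already fails, because there is no coarse Lipschitz map $\mathcal{P}(S)\to\mathbb{S}(S)$. For instance $\mathcal{P}(S_{2,1})$ is connected while $\mathbb{S}(S_{2,1})$ has infinitely many connected components (Corollary~\ref{cor:unique}), so no Lipschitz projection can exist; more to the point, this disconnectedness is exactly what makes the theorem nontrivial and is what the rest of the paper exploits. Beyond that, the difficulty you flag (showing the candidate cut-point is a single point rather than a separating subset) actually splits into two structurally different situations, and your sketch collapses them. If along the hierarchy paths there is an ultrapower of \emph{nonseparating} subsurfaces $\overline{Y}\in\nseultra{S}$ (which includes $\overline{Y}=\overline{S}$) with $\lim_\omega d_{\mathcal{C}(Y_i)}(a_i,b_i)\to\infty$, the cut-point is the initial point of a microscopic jet $\iota(J)\times\mathcal{P}_\omega(\overline{Y^c})$, and it is a single point precisely because $Y^c=\emptyset$ for a nonseparating $Y$ — this is the case that parallels \cite{bkmm}. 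Otherwise the main geodesic of the hierarchy is uniformly bounded and the path travels through a finite chain of quasi-convex product regions indexed by separating multicurves $g_{ji}$; the negation of (2) then forces two consecutive $g_{ji},g_{(j+1)i}$ to have $\lim_\omega d_{\mathbb{S}(S)}\to\infty$ even though $\lim_\omega d_{\mathcal{P}(S)}(g_{ji}\lrcorner g_{(j+1)i},\,g_{(j+1)i}\lrcorner g_{ji})$ is bounded, which is exactly the definition of a \emph{structurally integral corner}. The cut-point is that corner — the extension point where the two product regions meet — and showing it separates requires the equivalence-relation analysis of Theorem~\ref{thm:equivrel} and Corollary~\ref{cor:cornersep}, which in turn rests on the Behrstock inequality and bounded-geodesic-image arguments tailored to overlapping separating subsurfaces. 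Your ``Behrstock-style inequality forcing $Q(\sigma_n)$ to collapse to a point'' is not what happens; the product region does not collapse, its corner with the adjacent region does. The structurally integral corner is the new ingredient that distinguishes this proof from \cite{bkmm}, and your proposal does not contain it.
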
 

The characterization of finest pieces in Theorem \ref{thm:pieces} is given in terms of the complex of separating multicurves $\mathbb{S}(S)$ which encodes information about the natural product structures in the pants complex.  The complex of separating multicurves will be defined and explored in Section \ref{chap:separatingcomplex}.  The proof of Theorem \ref{thm:pieces} relies heavily on a notion of \emph{structurally integral corners} to be developed in Section \ref{sec:corners}.  Roughly speaking, a structurally integral corner is a point in the asymptotic cone whose removal disconnects particular natural product regions.  Structurally integral corners only exist for low complexity surfaces.  Theorem \ref{thm:pieces} should be compared with Theorem 4.6 of \cite{behrstockcharney} and Theorem 7.9 of \cite{bkmm} where similar characterizations of the finest pieces are proven for  right angled Artin groups and mapping class groups, respectively.  

The following theorems can be recovered as special cases of Theorem \ref{thm:pieces}.

\begin{repcor}$\textbf{\ref{cor:hyperbolic}.}$ (\cite{behrstock, brockfarb} Theorem 5.1, Theorem 1.1).
\emph{$\mathcal{T}(S_{1,2})$ and $\mathcal{T}(S_{0,5})$ are $\delta$-hyperbolic. }
\end{repcor}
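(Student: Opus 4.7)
The plan is to deduce this corollary from Theorem~\ref{thm:pieces} via the following standard reduction. Since $\mathcal{P}(S)$ is quasi-isometric to $\mathcal{T}(S)$ by Brock's theorem, hyperbolicity of $\mathcal{T}(S)$ reduces to hyperbolicity of $\mathcal{P}(S)$. A classical result of Gromov characterizes hyperbolicity of a geodesic metric space by the condition that every asymptotic cone is an $\mathbb{R}$-tree, and in a tree-graded space this is equivalent to each canonical finest piece being a single point. So the task reduces to showing that for $S\in\{S_{1,2},S_{0,5}\}$, every finest piece of $\mathcal{P}_{\omega}(S)$ is a singleton.

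By Theorem~\ref{thm:pieces}, it is equivalent to verify the contrapositive of condition (2): for distinct $a_{\omega}\neq b_{\omega}\in\mathcal{P}_{\omega}(S)$, no arbitrarily close pair $a'_{\omega},b'_{\omega}$ admits representative sequences with $\lim_{\omega} d_{\mathbb{S}(S)}(a'_n,b'_n)<\infty$. The core geometric input I would supply is a comparison between $d_{\mathbb{S}(S)}$ and $d_{\mathcal{P}(S)}$ that is special to the complexity two setting. Specifically, for each of the two surfaces I would exhibit either (i) that $\mathbb{S}(S)$ is empty on the relevant vertices, making condition (2) vacuously false, or (ii) a coarsely Lipschitz, coarsely surjective projection $\pi\co \mathcal{P}(S)\to\mathbb{S}(S)$ with uniformly bounded fibers, so that $d_{\mathbb{S}(S)}$ and $d_{\mathcal{P}(S)}$ are coarsely equivalent. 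In case (ii), bounded $\mathbb{S}(S)$-distance at the cone scale forces $a'_{\omega}=b'_{\omega}$; letting $a'_{\omega}\to a_{\omega}$ and $b'_{\omega}\to b_{\omega}$ then collapses $a_{\omega}=b_{\omega}$, giving the desired singleton pieces.

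The main obstacle is establishing this coarse comparison for each of the two surfaces. For $S_{0,5}$ every essential simple closed curve is separating, so the separating curve complex coincides with the full curve complex $\mathcal{C}(S_{0,5})$, which is quasi-isometric to the Farey graph and in turn to $\mathcal{P}(S_{0,5})$; this quickly upgrades to the required comparison with $\mathbb{S}(S_{0,5})$. For $S_{1,2}$ the argument is more delicate: one must analyze pants decompositions containing a separating curve versus those consisting of two non-separating curves which jointly separate, and check that each elementary move between pants decompositions changes any associated separating submulticurve by only bounded $\mathbb{S}(S)$-distance. This produces the Lipschitz comparison $\mathcal{P}(S_{1,2})\to\mathbb{S}(S_{1,2})$ needed to run the argument above.

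Once these two coarse comparisons are in hand, the conclusion is immediate from Theorem~\ref{thm:pieces}: for $S\in\{S_{1,2},S_{0,5}\}$ the canonical finest pieces of $\mathcal{P}_{\omega}(S)$ are singletons, so $\mathcal{P}_{\omega}(S)$ is an $\mathbb{R}$-tree, and hence $\mathcal{P}(S)$, and therefore $\mathcal{T}(S)$, is $\delta$-hyperbolic.
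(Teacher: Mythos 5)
Your overall reduction is exactly the paper's: pass to $\mathcal{P}(S)$ via Brock's theorem, then show every asymptotic cone is an $\mathbb{R}$-tree by showing the finest pieces are singletons via Theorem~\ref{thm:pieces}. Where you go wrong is in executing the last step. You flag the possibility, your ``case (i),'' that $\mathbb{S}(S)$ is empty, but then you set it aside and try to build a coarse comparison $\mathcal{P}(S)\to\mathbb{S}(S)$ in ``case (ii).'' In fact case (i) is what holds for both surfaces, and that observation finishes the proof in one line.

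The reason $\mathbb{S}(S_{1,2})=\mathbb{S}(S_{0,5})=\emptyset$ is that both surfaces have complexity $\xi(S)=2$, and by Definition~\ref{defn:sepcomplex} a vertex of $\mathbb{S}(S)$ is a multicurve $C$ such that at least two components of $S\setminus C$ are essential subsurfaces, where (per the standing convention in Section~\ref{subsec:background}) essential subsurfaces are required to have complexity at least one. Cutting a complexity-two surface along any nonempty multicurve leaves complementary essential material of total complexity at most one, so two essential components are impossible. Your case-(ii) argument for $S_{0,5}$ is built on a conflation of ``topologically separating curve'' with the paper's ``separating multicurve'': every curve in $S_{0,5}$ does separate the surface, but it separates it into an $S_{0,3}$ and an $S_{0,4}$, only one of which is essential, so no curve gives a vertex of $\mathbb{S}(S_{0,5})$. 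Thus the claimed identification of $\mathbb{S}(S_{0,5})$ with $\mathcal{C}(S_{0,5})$ (Farey graph) is false, and the proposed Lipschitz comparison for $S_{1,2}$ is likewise unnecessary and cannot be set up as described. With $\mathbb{S}(S)=\emptyset$, condition~(2) of Theorem~\ref{thm:pieces} can never hold, so every pair of distinct points is separated by a cut-point; that is the paper's entire argument, and is what you should have invoked.
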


\begin{repcor} $\textbf{\ref{cor:relhyperbolic}.}$ (\cite{brockmasur} Theorem 1).
\emph{For $\xi(S)=3,$ $\mathcal{T}(S)$ is relatively hyperbolic with respect to natural quasi-convex product regions consisting of all pairs of pants with a fixed separating curve.}
\end{repcor}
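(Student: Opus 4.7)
The plan is to deduce this corollary from Theorem~\ref{thm:pieces} combined with the Dru\c tu--Sapir asymptotic-cone criterion for strong relative hyperbolicity \cite{drutusapir}: a geodesic metric space is relatively hyperbolic with respect to a uniformly quasi-convex collection $\mathcal{A}$ of subsets precisely when every asymptotic cone is tree-graded with pieces the ultralimits of elements of $\mathcal{A}$. Because $\mathcal{T}(S)$ and $\mathcal{P}(S)$ are quasi-isometric by Brock \cite{brock} and relative hyperbolicity is a quasi-isometry invariant, it suffices to work with $\mathcal{P}(S)$.

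For each separating curve $\gamma \subset S$, let $\mathcal{Q}(\gamma) \subset \mathcal{P}(S)$ denote the set of pants decompositions containing $\gamma$. When $\xi(S)=3$, cutting along $\gamma$ yields two subsurfaces $S_1, S_2$ of strictly smaller complexity, and $\mathcal{Q}(\gamma)$ is quasi-isometric to the product $\mathcal{P}(S_1) \times \mathcal{P}(S_2)$, so it contains a genuine quasi-flat. To identify the finest pieces I would apply Theorem~\ref{thm:pieces}: points $a_\omega, b_\omega \in \mathcal{P}_\omega(S)$ lie in a common finest piece iff approximating sequences can be chosen with $\lim_\omega d_{\mathbb{S}(S)}(a'_n,b'_n) < \infty$. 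In the complexity-three setting the separating multicurves in a pants decomposition are essentially determined by the separating curve it contains, so pants decompositions that share a separating curve project to a common vertex of $\mathbb{S}(S)$, while pants decompositions whose separating data diverge in $\mathbb{S}(S)$ cannot be brought to bounded distance. The condition of Theorem~\ref{thm:pieces}(2) therefore reduces to $a_\omega, b_\omega$ lying in a common ultralimit of the form $\lim_\omega \mathcal{Q}(\gamma_n)$, and this identifies the canonical finest pieces with ultralimits of the product regions $\mathcal{Q}(\gamma)$.

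With the pieces identified, Behrstock's global-cut-point theorem \cite{behrstock} for $\mathcal{P}_\omega(S)$ ensures a genuine tree-graded structure with these pieces, and applying Dru\c tu--Sapir yields strong relative hyperbolicity of $\mathcal{P}(S)$, hence of $\mathcal{T}(S)$, with respect to $\{\mathcal{Q}(\gamma)\}$. The main subtleties I expect are verifying the remaining Dru\c tu--Sapir hypotheses: uniform quasi-convexity of each $\mathcal{Q}(\gamma)$, which should follow from its description as a coarse product stratum in the CAT(0) completion $\overline{\mathcal{T}}(S)$, and uniformly bounded coarse intersection of distinct product regions, which reduces to a statement about the geometry of $\mathbb{S}(S)$ developed in Section~\ref{chap:separatingcomplex}. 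Once these technical conditions are in place, the corollary becomes a formal consequence of Theorem~\ref{thm:pieces}.
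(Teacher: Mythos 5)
Your proposal follows essentially the same route as the paper: reduce to $\mathcal{P}(S)$, apply Theorem~\ref{thm:pieces} to identify the finest pieces with the regions $\qbar{\gamma}$, and conclude that $\mathcal{P}(S)$ is asymptotically tree-graded in the sense of Dru\c tu--Sapir. The one point you leave implicit that the paper's proof makes explicit is the key topological fact driving everything: when $\xi(S)=3$, for any two distinct separating curves $\gamma \neq \delta$ the complement $S \setminus (\gamma \cup \delta)$ contains no essential subsurface, so $\mathbb{S}(S)$ has no edges at all and $d_{\mathbb{S}(S)}$ takes only the values $0$ and $\infty$; this is exactly why ``bounded $\mathbb{S}(S)$ distance'' in Theorem~\ref{thm:pieces}(2) collapses to ``same separating curve $\omega$-a.s.'' and hence to membership in a single $\qbar{\gamma}$. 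Your detour through Behrstock's global cut-point theorem is not needed --- Theorem~\ref{thm:pieces} already supplies the cut-points between distinct finest pieces --- but it does no harm, and your checklist of uniform quasi-convexity (which is Theorem~\ref{thm:quasidistance}) and bounded coarse intersection (which is the topological fact above together with Theorem~\ref{thm:convex}) matches what the paper relies on from earlier sections.
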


More generally, in the course of studying non-positively curved metric spaces, such as $\mathcal{T}(S),$ one is frequently interested in families of geodesics which admit \emph{hyperbolic type} properties, or properties exhibited by geodesics in hyperbolic space which are not exhibited by geodesics in Euclidean space.  In the geometric group theory literature there are various well studied examples of such hyperbolic type properties including being Morse, being contracting, and having cut-points in the asymptotic cone.  Such studies have proven fruitful in analyzing right angled Artin groups \cite{behrstockcharney}, Teichm\"uller space \cite{behrstock,brockfarb,brockmasur,bmm2}, the mapping class group \cite{behrstock}, CAT(0) spaces \cite{behrstockdrutu,bestvinafujiwara,charney}, and Out($F_{n}$) \cite{algomkfir} among others (for instance \cite{drutumozessapir,drutusapir,kapovitchleeb,osin,mm1}).

A \emph{morse geodesic} $\gamma$ is defined by the property that all quasi-geodesics $\sigma$ with endpoints on $\gamma$ remain within a bounded distance from $\gamma.$  A  \emph{strongly contracting geodesic} has the property that metric balls disjoint from the geodesic have nearest point projections onto the geodesic with uniformly bounded diameter.  It is an elementary fact that in hyperbolic space all geodesics are Morse and strongly contracting.  On the other end of the spectrum, in product spaces such as Euclidean spaces of dimension two and above, there are no Morse or strongly contracting geodesics.  Relatedly, there are no cut-points in any asymptotic cones of product spaces, whereas all asymptotic cones of $\delta$-hyperbolic spaces are $\R$-trees, and hence any two distinct points are separated by a cut-point.  The following theorem characterizes strongly contracting  (or equivalently Morse) quasi-geodesics in $\mathcal{T}(S).$  This family of strongly contracting quasi-geodesics represents a generalization of quasi-geodesics with \emph{bounded combinatorics} studied in \cite{bmm2} and similarly in \cite{behrstock}.  

\begin{repthm} $\textbf{\ref{thm:contracting}.}$  \emph{Let $\gamma$ be a quasi-geodesic in $\mathcal{\overline{T}}(S),$ and let $\gamma'$ be a corresponding quasi-geodesic in $\mathcal{P}(S).$  Then $\gamma$ is strongly contracting if and only if there exists a constant $C$ such that for all $Y \in \se{S},$ the subsurface projection $\pi_{Y}(\gamma')$ has diameter bounded above by $C.$ }
\end{repthm}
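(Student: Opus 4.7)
My approach is to prove the two implications separately. The forward direction is a direct geometric argument using the product regions associated to separating subsurfaces; the backward direction routes through the Morse property and exploits the tree-graded structure of the asymptotic cone via Theorem \ref{thm:pieces}.

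\textbf{Forward direction ($\Rightarrow$):} Assume $\gamma'$ is $K$-strongly contracting but, for contradiction, $\pi_Y(\gamma')$ has diameter $\geq D$ for some $Y \in \se{S}$, with $D$ arbitrarily large. Since $Y$ is separating, the pants decompositions containing $\partial Y$ form a coarsely quasi-isometrically embedded product region $Q(Y) \cong \mathcal{P}(Y) \times \mathcal{P}(Y^c)$. Pick $P_1, P_2 \in \gamma'$ with $d_Y(P_1,P_2) \asymp D$. Using the product structure, construct a point $x$ by moving off $\gamma'$ in the $\mathcal{P}(Y^c)$-factor near $P_1$; Behrstock's subsurface projection inequality ensures $d_{\mathcal{P}(S)}(x,\gamma') \asymp R$ for the chosen displacement $R = D/4$. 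The ball $B(x,R/2)$ contains points obtained by varying the $\mathcal{P}(Y)$-factor, and by the pants distance formula these have nearest-point projections onto $\gamma'$ that sweep out an interval of $\mathcal{P}(S)$-diameter $\asymp R/2$. For $R$ sufficiently large this violates $K$-strong contraction.

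\textbf{Backward direction ($\Leftarrow$):} Assume $\mathrm{diam}(\pi_Y(\gamma')) \leq C$ for every $Y \in \se{S}$. Using the standard equivalence of Morse and strongly contracting quasi-geodesics in the CAT(0) setting of $\mathcal{\overline{T}}(S)$, it suffices to show $\gamma'$ is Morse. By Dru\c{t}u--Mozes--Sapir this is equivalent to the ultralimit $\gamma_\omega$ being a transversal arc for the tree-graded structure of every asymptotic cone $\mathcal{P}_\omega(S)$, i.e., meeting each finest piece in at most one point. Suppose not; some $\gamma_\omega$ contains distinct $a_\omega \neq b_\omega$ in a common finest piece. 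By Theorem \ref{thm:pieces} there are nearby representative sequences $a'_n, b'_n$ with $\lim_\omega d_{\mathbb{S}(S)}(a'_n,b'_n) < \infty$. Let $P_n,Q_n \in \gamma'$ be nearest points to $a'_n,b'_n$; since $a_\omega \neq b_\omega$ we have $d_{\mathcal{P}(S)}(P_n,Q_n) \asymp n$, yet $d_Y(P_n,Q_n) \leq C$ for every $Y \in \se{S}$ by hypothesis. Combining the coarse-Lipschitz behaviour of subsurface projections (to transfer from $a'_n,b'_n$ to $P_n,Q_n$) with the structural properties of $\mathbb{S}(S)$ developed in Section \ref{chap:separatingcomplex}---in particular a distance-formula-type control of $d_{\mathbb{S}(S)}$ by separating subsurface projections---bounded $d_{\mathbb{S}(S)}(a'_n,b'_n)$ forces bounded $d_{\mathbb{S}(S)}(P_n,Q_n)$, which together with the bounded separating projections of $\gamma'$ and the Brock/BKMM pants distance formula is incompatible with the linear growth $d_{\mathcal{P}(S)}(P_n,Q_n) \asymp n$. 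This contradiction completes the proof.

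\textbf{Main obstacle:} The delicate step is the final inferential chain in the backward direction, where asymptotic-cone data (bounded $d_{\mathbb{S}(S)}$ of nearby representatives) must be translated into a concrete metric contradiction along $\gamma'$. This hinges on having adequate hierarchy-style control over $\mathbb{S}(S)$ from Section \ref{chap:separatingcomplex}, together with the pants distance formula. The forward direction by comparison is a clean geometric construction in $Q(Y)$ guided by Behrstock's inequality, generalising the bounded-combinatorics argument of Brock--Masur--Minsky \cite{bmm2}.
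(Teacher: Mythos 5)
Your forward direction and the paper's use different mechanisms but both are plausible in spirit: the paper argues via the Morse property by noting that unbounded $\pi_{Y}(\gamma')$ for some $Y \in \se{S}$ yields (by Theorem \ref{thm:hierarchy}) a sequence of hierarchy quasi-geodesics with endpoints on $\gamma'$ that travel for arbitrarily long intervals inside the product regions $\q{\partial Y_{r}}$, which are never Morse, whereas you try to directly exhibit a ball violating the contraction property. Your version needs further justification at the step where you claim the ball's nearest-point projections "sweep out an interval of $\mathcal{P}(S)$-diameter $\asymp R/2$": this requires knowing that $\gamma'$ actually tracks the product region $\q{\partial Y}$ near $P_{1},P_{2}$, which is not automatic from $d_{Y}(P_{1},P_{2}) \asymp D$ alone but is exactly what the paper extracts from H2 of Theorem \ref{thm:hierarchy}. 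With that supplement, your forward argument is viable, essentially recasting the hierarchy argument into contraction language.

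The backward direction, however, contains a genuine gap. Your final inferential chain asserts that bounded $d_{\mathbb{S}(S)}(P_{n},Q_{n})$ together with the bounded separating projections of $\gamma'$ is, via the pants distance formula, incompatible with $d_{\mathcal{P}(S)}(P_{n},Q_{n}) \asymp n$. This is false: the distance formula (Theorem \ref{thm:quasidistance}) sums over \emph{all} essential subsurfaces, and nonseparating subsurfaces, including the entire surface $S$, contribute freely. Bounded $\mathbb{S}(S)$ distance and bounded separating projections impose no constraint on those terms, so linear (or even arbitrarily fast) growth of pants distance remains entirely consistent. You also lean on a "distance-formula-type control of $d_{\mathbb{S}(S)}$ by separating subsurface projections" which the paper does not establish and which cannot hold in general (for instance, $\mathbb{S}(S_{2,1})$ has infinitely many infinite-diameter components, so $d_{\mathbb{S}(S)}$ can be infinite even between multicurves with all separating subsurface projections bounded). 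The paper's actual route avoids this: since $a_{\omega} \neq b_{\omega}$, Corollary \ref{cor:sepbounded} supplies some $\overline{W} \in \eultra{S}$ with $\lim_{\omega} d_{\mathcal{C}(W_{i})}(a_{i},b_{i})$ unbounded, and the hypothesis forces $\overline{W} \in \nseultra{S}$. Because $W^{c}$ is trivial, the resulting microscopic jet (Theorems \ref{thm:mjetsexist}, \ref{thm:mjetssep}, as in Case One of Theorem \ref{thm:pieces}) produces a single-point separating set, hence a cut-point between $a_{\omega}$ and $b_{\omega}$, and Theorem \ref{thm:hypquasi}(iv) finishes the job. The key move you are missing is the passage to a \emph{nonseparating} subsurface with unbounded projection and the jet it produces, rather than any contradiction with a distance formula for $\mathbb{S}(S)$.
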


%

Later, we focus in particular on the Teichm\"uller space of the surface $S_{2,1}$ which in the literature has previously proven to be difficult to analyze.  As noted, for ``small'' complexity surfaces which don't admit any nontrivial separating curves, Brock-Farb \cite{brockfarb} prove that $\mathcal{T}(S)$ is hyperbolic.  A new proof was later provided by Behrstock in \cite{behrstock}.  Similarly, for ``medium'' complexity surfaces, which admit nontrivial separating curves, yet have the property that any two separating curves intersect, Brock-Masur prove that $\mathcal{T}(S)$ is relatively hyperbolic, \cite{brockmasur}.  Finally, for all the remaining ``large'' complexity surfaces excluding $S_{2,1},$ whose complexes of separating multicurves only have a single infinite diameter connected component, the combined work of \cite{behrstock, brockmasur}, implies that the Teichm\"uller spaces of these surfaces are not relatively hyperbolic and in fact are thick of order one.  However, unlike all other surfaces of finite type, the surface $S_{2,1}$ has the peculiar property that it is ``large enough'' such that it admits disjoint separating curves, although ``too small'' such that the complex of separating multicurves has infinitely many infinite diameter connected components.  As we will see, this phenomenon makes the study of the Teichm\"uller space of $S_{2,1}$ quite rich.

Using Theorem \ref{thm:pieces} in conjunction with a careful analysis of the Brock-Masur construction for showing that $\mathcal{T}(S_{2,1})$ is thick of order at most two \cite{brockmasur}, we prove the following theorem answering question 12.8 of \cite{bdm}.  

\begin{repthm} $\textbf{\ref{thm:thick2}.}$
 \emph{$\mathcal{T}(S_{2,1})$ is thick of order two.}
\end{repthm}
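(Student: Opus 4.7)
The theorem has two parts: $\mathcal{T}(S_{2,1})$ is thick of order at most two, and $\mathcal{T}(S_{2,1})$ is not thick of order one. I would prove these separately.

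For the upper bound, the plan is to promote the Brock--Masur cover of $\mathcal{P}(S_{2,1})$ to an honest thick-of-order-two network. For each infinite-diameter connected component $\Lambda$ of $\mathbb{S}(S_{2,1})$, let $\mathcal{P}_\Lambda$ be the union of the natural product regions $\mathcal{Q}(C) \subset \mathcal{P}(S_{2,1})$ over separating curves $C \in \Lambda$. Each $\mathcal{Q}(C)$ is quasi-isometric to a product of two hyperbolic factors and hence is thick of order zero; disjoint separating curves in $\Lambda$ yield product regions with infinite coarse intersection (realized by pants decompositions extending the multicurve), so the family $\{\mathcal{Q}(C) : C \in \Lambda\}$ assembles $\mathcal{P}_\Lambda$ into a thick-of-order-one subset. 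To finish I would verify that $\{\mathcal{P}_\Lambda\}$ covers $\mathcal{P}(S_{2,1})$ up to finite Hausdorff distance, and that any two such components can be connected by a thick chain of order two. Coverage follows because every pants decomposition of $S_{2,1}$ either contains a separating curve or is one elementary move away from one containing a separating curve. Infinite coarse intersection between $\mathcal{P}_\Lambda$ and $\mathcal{P}_{\Lambda'}$ is produced by exhibiting bi-infinite families of pants decompositions through \emph{non-separating} curves that bridge the two components of $\mathbb{S}(S_{2,1})$; this step uses the full flexibility of the pants graph rather than the separating-curve structure alone, and is the place where the careful analysis of the Brock--Masur construction enters.

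For the lower bound I would argue by contradiction using Theorem \ref{thm:pieces}. Suppose $\mathcal{P}(S_{2,1})$ were thick of order one with respect to some family $\{L_i\}$ of unconstricted subsets satisfying the thick-chain coverage condition. Since $\mathbb{S}(S_{2,1})$ has infinitely many infinite-diameter components --- the peculiarity of $S_{2,1}$ emphasized in the introduction --- I can choose sequences $(a_n),(b_n) \in \mathcal{P}(S_{2,1})$ supported on separating multicurves whose representatives escape to infinity in distinct components $\Lambda_a \neq \Lambda_b$ of $\mathbb{S}(S_{2,1})$, with $d_{\mathbb{S}(S_{2,1})}(a_n,b_n) \to \infty$ together with every small perturbation of the two sequences. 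Passing to the asymptotic cone gives $a_\omega, b_\omega \in \mathcal{P}_\omega(S_{2,1})$ which, by Theorem \ref{thm:pieces}, lie in distinct canonical finest pieces. Each ultralimit $L_i^\omega$ is unconstricted and hence contains no internal cut point, so it is forced into a single canonical finest piece; consecutive members of any thick chain therefore share an infinite-diameter coarse intersection lying inside one finest piece, and no such chain can cross from the piece containing $a_\omega$ to the piece containing $b_\omega$. This contradicts the coverage hypothesis, establishing that $\mathcal{P}(S_{2,1})$ is not thick of order one and completing the proof.

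The main obstacle is the coverage/connectivity step of the upper bound: one must construct genuine thick chains of pants decompositions linking distinct $\mathcal{P}_\Lambda$'s through non-separating configurations, leveraging the fact that, although $\mathbb{S}(S_{2,1})$ itself is disconnected, the underlying pants graph retains enough coarse connectivity at one higher order to support a thick-of-order-two structure. The lower-bound argument is by comparison cleaner once Theorem \ref{thm:pieces} is available, but it does require care in choosing sequences whose separating multicurves escape the components of $\mathbb{S}(S_{2,1})$ quickly enough, and robustly enough under small perturbations in $\mathcal{P}(S_{2,1})$, to realize distinct finest pieces of $\mathcal{P}_\omega(S_{2,1})$.
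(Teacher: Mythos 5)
Your two-sided strategy (upper bound via Brock--Masur, lower bound via Theorem~\ref{thm:pieces}) matches the paper's overall structure, but each half has a gap relative to the paper's proof.

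\textbf{Upper bound.} You propose promoting the Brock--Masur sets $\mathcal{P}_{\Lambda}$ (which are exactly the sets $\mathcal{X}(\alpha)$ of the paper, indexed by connected components $\Lambda$ of $\mathbb{S}(S_{2,1})$) to a thick-of-order-two network. The piece you miss is that Definition~\ref{defn:thick} (strong thickness, following \cite{behrstockdrutu}) requires the subsets in the network to be \emph{quasi-convex}, and the paper explicitly notes that the sets $\mathcal{X}(\alpha)$ are probably not quasi-convex (citing Example~5.1.3 of \cite{sultanthesis}). This is precisely why the paper replaces them by the thickened point-pushing pseudo-Anosov axes $\mathcal{X}(f,\tilde{\alpha},Q)$ and proves Lemma~\ref{lem:quasiconvex}. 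You identify ``coverage/connectivity'' as the main obstacle, but coverage and thick chaining were already handled by Brock--Masur; quasi-convexity is the issue the paper actually has to solve, and your proposal never addresses it. Without it, you only re-derive the weaker \cite{bdm} version of thickness, not the one used here to deduce the divergence bounds in Section~\ref{sec:divergence}.

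\textbf{Lower bound.} Your route --- each wide subset's ultralimit lands in a single finest piece by tree-graded generalities, chained subsets share a piece, so no chain can connect $a_{\omega}$ and $b_{\omega}$ in distinct pieces --- is a clean alternative to the paper's labelling via $\mathcal{X}_{\omega}(\overline{\alpha})$, and arguably bypasses the pentagon/projection argument in the proof of Lemma~\ref{lem:zeropieces}. But several steps you leave implicit are where the paper spends its effort. First, you write ``unconstricted,'' but the definition in this paper is \emph{wide}: you need $L_{i}^{\omega}$ to be cut-point-free in the \emph{particular} cone you have chosen, which is exactly what wide, not unconstricted, provides. Second, ``consecutive members of any thick chain therefore share an infinite-diameter coarse intersection lying inside one finest piece'' silently converts infinite-diameter coarse intersection in $\mathcal{P}(S_{2,1})$ into $\geq 2$ common points in a fixed asymptotic cone; this is not automatic and is exactly what the constant-base-point and quasi-convexity observations in Corollary~\ref{cor:intpieces} are for. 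Third, the step from ``coverage'' to ``$a_{\omega}$ is in the ultralimit of a single member $L_{\alpha}$'' conceals the issue that the covering index may vary along a representative sequence; the paper sidesteps this by showing directly that the union of any chain-connected family sits in a single $\mathcal{X}(\alpha)$, which is visibly a nontrivially proper subset under the forgetful projection $\mathcal{P}_{\omega}(S_{2,1}) \to \mathcal{P}_{\omega}(S_{2,0})$, rather than by manufacturing a specific pair $a_{\omega}, b_{\omega}$. You flag the difficulty of choosing $(a_{n}),(b_{n})$ robustly, which is fair, but the paper's formulation avoids needing them at all.

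In short: the lower-bound idea is a legitimate variant that could be made to work with the same supporting lemmas the paper proves (Theorem~\ref{thm:zeropieces}, Corollary~\ref{cor:intpieces}), but the upper bound as written is missing the quasi-convexity repair that is the paper's main new contribution to the Brock--Masur construction.
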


Notably, Theorem \ref{thm:thick2} completes the thickness classification of the Teichm\"uller spaces of all surfaces of finite type.  Moreover, among all surfaces of finite type, $S_{2,1}$ is the only surface that is thick of order two.  

The \emph{divergence} of a metric space measures the inefficiency of detour paths.  More formally, divergence along a geodesic is defined as the growth rate of the length of detour paths connecting sequences of pairs of points on a geodesic, where the distance between the pairs of points is growing linearly while the detour path is forced to avoid linearly sized metric balls centered along the geodesic between the pairs of points.  It is an elementary fact of Euclidean geometry that Euclidean space has linear divergence.  On the other end of the spectrum, hyperbolic space has exponential divergence.  

Given this gap between the linear divergence in Euclidean space and the exponential divergence in hyperbolic space, the exploration of spaces with ``intermediate divergence'' provides a means of understanding a rich spectrum of non-positively curved geometries which interpolate between flat and negatively curved geometries.  The history of this exploration goes back to Gromov, who noticed that $\delta$-hyperbolic spaces, like $\mathbb{H}^{n},$ have at least exponential divergence, \cite{gromov}.  Gromov then asked if there were non-positively curved spaces whose divergence functions were superlinear yet subexponential, \cite{gromov2}.  Soon afterward, Gersten answered this question in the affirmative by constructing CAT(0) groups with quadratic divergence, \cite{gersten}.  In short order Gersten proved that in fact the family of fundamental groups of graph manifolds provided natural examples of spaces with quadratic divergence \cite{gersten2}.  Moreover, in recent years it has been shown that various other well studied groups such as mapping class groups, right angled Artin groups, and Teichm\"uller spaces with the Teichm\"uller metric also have quadratic divergence, \cite{behrstock,behrstockcharney,duchinrafi}.  

After identifying spaces with quadratic divergence, Gersten went on to reformulate Gromov's question and asked if there existed CAT(0) spaces with superquadratic yet subexponential divergence.  This latter question of Gersten was recently answered in the affirmative by independent papers of Behrstock-Dru\c tu and Macura who each constructed CAT(0) groups with polynomial of degree $n$ divergence functions for every natural number $n,$ \cite{behrstockdrutu, macura}.  In Section \ref{sec:divergence} we show that a naturally occurring Teichm\"uller space, $\mathcal{T}(S_{2,1}),$ which is CAT(0), also provides an example answering Gersten's question in the affirmative.  In fact, we prove the following theorem answering question 4.19 in \cite{behrstockdrutu}:
\begin{repthm} $\textbf{\ref{thm:atleastsuperquad}.}$
\emph{$\mathcal{T}(S_{2,1})$ has superquadratic yet at most cubic divergence.  Moreover, it is the unique Teichm\"uller space with this property.}
\end{repthm}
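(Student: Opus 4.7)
The plan is to break the proof into three tasks. For the upper bound of cubic divergence, I would invoke Theorem \ref{thm:thick2} in combination with the Behrstock--Dru\c tu theorem that a space which is thick of order $n$ has polynomial divergence of degree at most $n+1$, specialized to $n=2$. For uniqueness, by the thickness classification summarized in the introduction, every other $\mathcal{T}(S)$ is either $\delta$-hyperbolic, strongly relatively hyperbolic, or thick of order one---with divergences that are at least exponential in the first two cases and exactly quadratic in the third (again by Behrstock--Dru\c tu). Hence only $S_{2,1}$ can have divergence in the window strictly between quadratic and cubic.

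The main obstacle is the superquadratic lower bound. My plan is to construct a bi-infinite quasi-geodesic $\gamma$ in $\mathcal{P}(S_{2,1})$ whose subsurface projections to every $Y \in \se{S_{2,1}}$ are uniformly bounded. The construction exploits the defining feature of $S_{2,1}$---that $\mathbb{S}(S_{2,1})$ has multiple infinite-diameter connected components---so that $\gamma$ can progress through the pants complex without ever projecting deeply into any one separating subsurface. By Theorem \ref{thm:contracting}, such $\gamma$ is strongly contracting. The remaining task is to convert strong contraction, in this thick-of-order-two setting, into a strictly superquadratic divergence estimate along $\gamma$.

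The heart of the argument, which I expect to be the principal technical difficulty, is the following. Given a candidate detour path $\sigma$ from $\gamma(-n)$ to $\gamma(n)$ avoiding the ball of radius $\lfloor n/2 \rfloor$ about $\gamma(0)$, I would push $\sigma$ forward through the coarse nearest-point projection $\pi_\gamma$ and use the cut-point characterization of Theorem \ref{thm:pieces}, together with the structurally integral corner analysis of Section \ref{sec:corners}, to show that $\sigma$ cannot be contained in any single thick-of-order-one product region of $\mathcal{P}(S_{2,1})$. Since each such region individually admits only quadratic-length detours, $\sigma$ must concatenate on the order of $n^\epsilon$ such pieces for some fixed $\epsilon > 0$, and the cumulative transition cost between them will force the total length to exceed $n^2$. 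The quantitative delicacy is to balance this so that the argument yields a strictly superquadratic lower bound while remaining consistent with the at-most-cubic upper bound---precisely the window into which only $\mathcal{T}(S_{2,1})$ fits.
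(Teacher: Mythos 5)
Your handling of the upper bound (thick of order two plus Behrstock--Dru\c tu's Theorem \ref{thm:atmost}) and of uniqueness (the thickness classification rules out everything except $S_{2,1}$) matches the paper's argument, and both are fine. The problem is in the superquadratic lower bound, which is where the real content lies.

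The step ``since each such region individually admits only quadratic-length detours, $\sigma$ must concatenate on the order of $n^\epsilon$ such pieces for some fixed $\epsilon > 0$, and the cumulative transition cost between them will force the total length to exceed $n^2$'' does not follow from anything you have established. There is no reason the number of distinct thick-of-order-one regions visited should grow polynomially in $n$, the thick-of-order-one regions can and do thickly intersect (that is the whole point of thick chaining), so the ``transition cost'' between them can be negligible, and in any case a quadratic upper bound on detour length inside each region gives no lower bound at all. This accounting argument, as stated, would not even recover quadratic divergence, let alone superquadratic.

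The paper's actual route is quite different and worth internalizing. First, one does not take an arbitrary quasi-geodesic with bounded separating projections: one takes a \emph{periodic} quasi-geodesic $\gamma$ obtained by lifting a pseudo-Anosov axis from $\mathcal{P}(S_{2,0})$ to $\mathcal{P}(S_{2,1})$ via the Rafi--Schleimer embedding (Lemma \ref{lem:embedding}). The contracting property alone gives the usual linear-many-times-linear quadratic lower bound by discretizing the detour path $\alpha_r$ into segments $\tau^{jc}_r$ projecting to consecutive length-$c$ pieces of $\gamma$ (Lemma \ref{lem:scatleastquad}, Lemma \ref{lem:quasigeo}). To upgrade to superquadratic, the paper shows that $\omega$-a.s.\ the segments $\tau^{jc}_r$ themselves have \emph{superlinear} divergence. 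This is done by contradiction: if infinitely many $\tau^{jc}_r$ had linear divergence, then by Lemma \ref{lem:superlinear} their ultralimits would have no cut-points; by Theorem \ref{thm:zeropieces} each such ultralimit sits inside a single set $\xbar{\alpha}$; Corollary \ref{cor:intpieces} forces consecutive ones into the \emph{same} $\xbar{\alpha}$; and projecting to $\mathcal{P}_\omega(S_{2,0})$ then exhibits two distinct points on the pseudo-Anosov axis $\bar\gamma_\omega$ that can be joined essentially inside a single product region $\qbar{\alpha}$, contradicting that every product region meets $\bar\gamma_\omega$ in at most one cut-point. Finally, the subtle issue raised in Example \ref{ex:degenerate}---that ``linear-many superlinear pieces'' need not sum to something superquadratic---is resolved by the periodicity of $\gamma$ via Lemma \ref{lem:periodic}, which lets one translate a hypothetical bad sequence of segments back to a fixed location and derive a contradiction. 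Your proposal omits both the periodicity mechanism and the asymptotic-cone contradiction, and it is precisely these two ingredients that make the lower bound go through.
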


A common approach to proving that a geodesic has at least quadratic divergence is to show that a geodesic is contracting.  Contraction implies that in order for a connected subsegment of a detour path avoiding a ball of radius $R$ centered on the geodesic to have nearest point projection onto the geodesic of more than a uniformly bounded diameter, the length of the subsegment must be linear in $R.$  In turn, it follows that a detour path must travel at least a linear amount of linear distances, and hence at least a quadratic distance.  See \cite{behrstock} for such an approach in proving that $\mathcal{MCG}$ has quadratic divergence.  In the proof of Theorem \ref{thm:atleastsuperquad} we follow the previously sketched outline, although we pick a careful example of a quasi-geodesic such that the detour path must in fact travel a linear amount of superlinear distances, thereby ensuring superquadratic divergence.  Since cut-points in asymptotic cones correspond to instances of superlinear divergence, Theorem \ref{thm:pieces} has a role in the proof of Theorem \ref{thm:atleastsuperquad}.  Conjecturally $\mathcal{T}(S_{2,1})$ has cubic divergence, see \cite{behrstockdrutu}.  

\subsection*{Acknowledgements}
This paper comprises the main portion of the author's doctoral thesis \cite{sultanthesis}.  My sincerest gratitude goes to my advisors Jason Behrstock and Walter Neumann for all of their invaluable guidance and extremely helpful advice throughout my research and specifically with regard to this paper.  They selflessly encouraged, advised, directed, and challenged me in my research.  I would also like to thank Lee Mosher and Saul Schleimer for useful suggestions and comments.  Finally, I dedicate this paper to my wife Ann and my son Bobby.

\section{Preliminaries}
\label{chap:preliminaries}
\label{subsec:background}

 \begin{defn}[coarse intersection]\label{def:coarse} 
Given a metric space $X,$ and subsets $A,B \subset X,$ the subsets \emph{coarsely intersect}, denoted $A \hat{\cap} B \ne \emptyset ,$ if there exists a positive constant $r$ such that any two elements in the collection of subsets $\{N_{R}(A) \cap N_{R}(B) | R \geq r \}$ have finite Hausdorff distance.  Moreover, if $C \subset X$ has finite Hausdorff distance from any set $N_{R}(A) \cap N_{R}(B),$ then $C$ is the \emph{coarse intersection} of the subsets $A$ and $B.$ If $C$ has bounded diameter, we say the subsets $A$ and $B$ have \emph{bounded coarse intersection}.
\end{defn}   


\begin{defn} [quasi-isometry]  Given metric spaces $(X,d_{X}),(Y,d_{Y}),$ a map $f\co(X,d_{X}) \ra (Y,d_{Y})$ is called a  \emph{$(K,L)$ quasi-isometric embedding of $X$ into $Y$} if there exist constants
$K\geq 1, L \geq 0$ such that for all $x,x'\in X$ the following
inequality holds: $$ K^{-1}d_{X}(x,x')-L \leq  d_{Y}(f(x),f(x')) \leq
Kd_{X}(x,x')+L$$ If in addition,  the map f is  \textit{roughly
onto}, i.e. a fixed neighborhood of the image must be the entire
codomain, f is called a \emph{quasi-isometry}.  Two metric spaces are called \textit{quasi-isometric} if and only if there exists a quasi-isometry between them.  The special case of a quasi-isometric embedding with domain a line (segment, ray, or bi-infinite) is a \emph{quasi-geodesic}. 
\end{defn}

\begin{rem} To simplify notation, we sometimes write:  $$ d_{X}(x,x')\approx_{K,L} d_{Y}(y,y') \mbox{ to imply }K^{-1}d_{X}(x,x')-L \leq  d_{Y}(y,y') \leq
Kd_{X}(x,x')+L$$ for some $K,L.$  Similarly, we write $d_{X}(x,x') \lesssim_{K,L} d_{Y}(y,y')$ to imply $ d_{X}(x,x') \leq Kd_{Y}(y,y') + L .$  When the constants $K,L$ are not important, they will be omitted from the notation.  
\end{rem}

\subsubsection{Curves and Essential Subsurfaces}
  Let  $S=S_{g,n},$ by any surface of finite type.  That is, $S$ is a genus g surface with $n$ boundary components (or punctures).  The \emph{complexity} of $S,$ denoted $\xi(S),$ is defined to be $3g-3+n.$  While in terms of the mapping class group there is a distinction between boundary components of a surface and punctures on a surface, as elements of the mapping class group must fix the former, yet can permute the latter, for our the purposes such a distinction will not be relevant.  Accordingly, throughout while we will always refer to surfaces with boundary components, the same results hold mutatis mutandis for surfaces with punctures.

A simple closed curve $\gamma$ on a surface $S$ is \emph{peripheral} if it bounds a disk, once punctured disk, or annulus; a non-peripheral curve is \emph{essential}.  We only consider essential simple closed curves up to isotopy and by abuse of notation will refer to the isotopy classes simply as curves.  Since we consider curves up to isotopy, we can always assume that their intersections are transverse and cannot be removed.  Equivalently, $S \setminus \left( \gamma_{1} \cup \gamma_{2} \right)$ does not contain any bigons.  We say that two curves are \emph{disjoint}, denoted $\gamma_{1} \cap \gamma_{2}  = \emptyset,$ if they can be drawn disjointly on the surface.  Otherwise, we say that the curves \emph{intersect}, denoted $\gamma_{1} \cap \gamma_{2}  \ne \emptyset.$  A \emph{multicurve} is a set of disjoint non parallel curves.  

An \emph{essential subsurface} $Y$ of a surface $S$ is a subsurface $Y \subseteq S$ such that $Y$ is a union of (not necessarily all) complementary components of a multicurve.  We always consider essential subsurfaces and by abuse of notation will refer to the isotopy classes of essential subsurfaces simply as essential subsurfaces.  Furthermore, we always assume every connected component of every essential subsurface $Y\subset S$ has complexity at least one.  In particular, unless otherwise noted annuli or pairs of pants are not considered essential subsurfaces and do not appear as connected components of essential subsurfaces.  For a fixed surface $S,$ let $\e{S}$ denote the set of all connected essential subsurfaces of $S.$   

Given any essential subsurface $Y$ we define the \emph{essential complement of $Y$}, denoted $Y^{c},$ to be the  maximal (in terms of containment) essential subsurface in the complement $S \setminus Y$ if such an essential subsurface exists, and to be the empty set otherwise.  An essential subsurface $Y$ is called a \emph{separating essential subsurface} if the complement $S \setminus Y$ contains an essential subsurface, or equivalently $Y^{c}$ is nontrivial.  The reason for the name separating essential subsurface is due to that the fact that $Y$ is a separating essential subsurface if and only if the boundary $\partial Y$ is a \emph{separating multicurve}, an object we will consider at length in Section~\ref{chap:separatingcomplex}.  All other essential subsurfaces which are not separating essential subsurfaces, are defined to be \emph{nonseparating essential subsurfaces}.  For example, if $Y$ is an essential subsurface such that the complement $S \setminus Y$ consists of a disjoint union of annuli and pairs of pants, then $Y$ is a nonseparating essential subsurface.  Let the subsets $\se{S},\nse{S}\subset \e{S}$ denote the sets of all connected separating, nonseparating essential subsurfaces of $S,$ respectively. 

An essential subsurface $Y$ is \emph{proper} if it is not all of $S.$  If two essential subsurfaces $W,V$ have representatives which can be drawn disjointly on a surface they are said to be \emph{disjoint}.  On the other hand, we say $W$ is \emph{nested} in $V,$ denoted $W \subset V,$ if $W$ has a representative which can be realized as an essential subsurface inside a representative of the essential subsurface $V.$  If $W$ and $V$ are not disjoint, yet neither essential subsurface is nested in the other, we say that $W$ \emph{overlaps} $V,$ denoted $W \pitchfork V .$  In general, if two essential subsurfaces $W,V$ either are nested or overlap, we say that the surfaces \emph{intersect} each other.  In such a setting we define the \emph{essential intersection}, denoted $W \cap V,$ to be the maximal essential subsurface which is nested in both $W$ and $V,$ if such an essential subsurface exists, and the emptyset otherwise.  Note that $W \cap V$ may be trivial even if the essential subsurfaces $W,V$ are not disjoint, as the intersection $W \cap V$ may be supported in a subsurface which is not essential.  For instance, see Figure \ref{fig:essentialintersections}.  Similarly, the \emph{essential complement of V in W}, denoted $W \setminus V,$ is defined to be the maximal essential subsurface in $(S\cap W) \setminus Y$ if such an essential subsurface exists, and to be the empty set otherwise. 

\begin{figure}[h]
\centering
\includegraphics[height=5 cm]{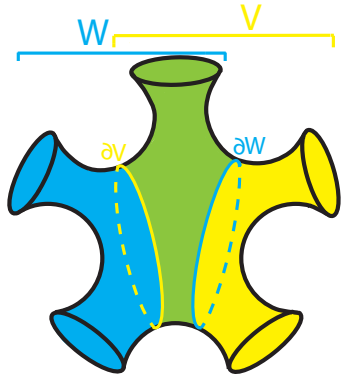}
\caption[Overlapping essential subsurfaces]{$W,V \in \e{S},$ $W \pitchfork V.$}  \label{fig:essentialintersections}
\end{figure}
A multicurve $C$ is \emph{disjoint} from an essential subsurface $Y,$ denoted  $C \cap Y  = \emptyset,$ if the multicurve and essential subsurface have representatives which can be drawn disjointly on the surface.  Otherwise, the multicurve $C$ and the essential subsurface $Y$ are said to \emph{intersect}.  In particular, given a proper essential subsurface $Y \subsetneq S,$ the boundary parallel curve(s) $\partial Y$ are disjoint from $Y.$ 

\subsubsection{Curve and Pants Complex}
 
For any surface $S$ with positive complexity, the \emph{curve complex} of $S,$ denoted $\mathcal{C}(S),$ is the simplicial complex obtained by associating a 0-cell to each curve, and more generally a k-cell to each multicurve with $k+1$ elements.  In the special case of low complexity surfaces which do not admit disjoint curves, we relax the notion of adjacency to allow edges between vertices corresponding to curves which intersect minimally on the surface.  $\mathcal{C}(S)$ is a locally infinite, infinite diameter, $\delta$-hyperbolic metric space, see \cite{mm1}. 

We will be particularly interested in maximal multicurves, or \emph{pants decompositions}.  Equivalently, a pants decomposition is a multicurve $\{ \gamma_{1},...,\gamma_{m} \}$ such that $S - \{\gamma_{1},...,\gamma_{m} \}$ consists of a disjoint union of \emph{pairs of pants}, or $S_{0,3}$'s.  For example, in Figure \ref{fig:essentialintersections} the multicurve $\{\partial W, \partial V\}$ is a pants decomposition of $S_{0,5}.$ 

Related to the curve complex, $\mathcal{C}(S),$ there is another natural complex associated to any surface of finite type with positive complexity: the \emph{pants complex}, $\mathcal{P}(S).$  To be sure, the pants complex is a 2-complex, although for our purposes, since we will only be interested in the quasi-isometry type of the pants complex, it will suffice to consider the 1-skeleton of the pants complex, the \emph{pants graph}.  By abuse of notation, we often refer the pants graph as the pants complex.  The pants graph has vertices corresponding to different pants decompositions of the surface up to isotopy, and edges between two vertices when the two corresponding pants decompositions differ by a so-called \emph{elementary pants move}.  Specifically, two pants decompositions of a surface differ by an elementary pants move if the two decompositions differ in exactly one curve and inside the unique connected complexity one essential subsurface in the complement of all the other agreeing curves of the pants decompositions  (topologically either an $S_{1,1}$ or an $S_{0,4}$) the differing curves intersect minimally (namely, once if the connected complexity one essential subsurface is $S_{1,1}$ and twice if the connected complexity one essential subsurface is $S_{0,4}$).  The pants graph is connected \cite{hatcherthurston}, and we will view it as a metric space by endowing it with the graph metric.

\subsubsection{The Pants complex and Teichm\"uller space}
\begin{defn} [Teichm\"uller space]  For $S$ a surface of finite type with $\chi(S)<0,$ the \emph{Teichm\"uller space of S} is the set of isotopy classes of hyperbolic structures on $S.$  Formally, $\mathcal{T}(S) = \{(f,X) | f\co S \ra X \} / \sim,$ where $S$ is a \emph{model surface} (a topological surface without a metric), $X$ is a surface with a hyperbolic metric, the map $f$ is a homeomorphism called a \emph{marking}, and the equivalence relation is given by $(g,Y) \sim (f,X) \iff gf^{-1} \mbox{ is isotopic to an isometry}.$  Often we omit the marking from the notation.
\end{defn}

It is a standard result that as a topological space, $\mathcal{T}(S)$ homeomorphic to $\mathcal{R}^{6g-6+2b+3p},$ where $g$ is the genus, $b$ is the number of boundary components, and $n$ is the number of punctures; for instance, see \cite{primer} for a proof.  On the other hand, a more interesting and active area of research is to study $\mathcal{T}(S)$ as a metric space.  For purposes of this paper, since we seek to explore the large scale geometric properties of Teichm\"uller space, we will not need to use the actual integral form definition of the WP metric but in its place will use the pants complex as a combinatorial model for studying $\mathcal{T}(S).$  Specifically, as justified by the conjunction of the following two theorems, in order to study quasi-isometry invariant properties of $\mathcal{T}(S),$ such as for instance thickness and divergence, it suffices to study the quasi-isometric model of Teichm\"uller space given by the pants complex.  

\begin{thm}[\cite{bers1, bers2} Bers constant] \label{thm:bers} $\exists$ a \emph{Bers constant} $B(S),$ such that $\forall X \in \mathcal{T}(S),$ there exists a \emph{Bers pants decomposition} $X_{B} \in \mathcal{P}^{0}(S)$ such that $\forall \alpha \in X_{B},$ the length $l_{X}(\alpha) \leq B.$  In other words, every point in Teichm\"uller space has a pants decomposition consisting of all short curves, where short is measured relative to a uniform constant depending only on the topology of the surface.    
\end{thm}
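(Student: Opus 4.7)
The plan is to prove this by induction on the complexity $\xi(S) = 3g-3+n$, with the stronger inductive hypothesis that the statement holds for any finite-area hyperbolic surface possibly with geodesic boundary, and that the constant $B$ depends only on the topological type. The base case $\xi(S) = 1$ (where $S$ is $S_{1,1}$ or $S_{0,4}$) reduces to exhibiting a single short essential simple closed geodesic on $X$, since any such curve is already a pants decomposition.

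The key analytic input, which I would establish first as a standalone lemma, is that every point $x \in X$ lies on an essential simple closed geodesic of length bounded by a constant $L_0 = L_0(\xi(S))$. This is proved by an area argument using Gauss--Bonnet: the total area of $X$ equals $2\pi|\chi(S)|$, while a geodesic ball of radius $r$ in the universal cover $\mathbb{H}^2$ has area $2\pi(\cosh r - 1)$. Once $r$ is chosen so that $2\pi(\cosh r - 1) > 2\pi|\chi(S)|$, the projection $\mathbb{H}^2 \to X$ cannot be injective on $B(\tilde{x}, r)$, which yields two geodesic segments of length at most $r$ between a pair of preimages of some $y \in X$. Concatenating these produces a homotopically nontrivial loop of length $\leq 2r$, whose free homotopy class is represented by a closed geodesic; a standard surgery argument (cutting and rejoining at self-intersections) then extracts an essential simple closed geodesic of length $\leq 2r$.

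For the inductive step, let $\alpha$ be a shortest essential simple closed geodesic on $X$, which by the lemma has length $\ell(\alpha) \leq L_0$. Cut $X$ along $\alpha$ to obtain a (possibly disconnected) hyperbolic surface $X'$ with geodesic boundary, of strictly smaller complexity. Apply the inductive hypothesis to each component of $X'$ to obtain a pants decomposition in which every interior curve has length bounded by $B(\xi(S) - 1)$. Union this with $\alpha$ to obtain a pants decomposition of $S$ in which every curve has length at most $\max(L_0, B(\xi(S)-1))$, which defines $B(S)$.

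The main obstacle is ensuring that the inductive hypothesis applies uniformly, independent of how short $\alpha$ may be: when $\ell(\alpha)$ is very small, the collar lemma guarantees an embedded annular neighborhood of $\alpha$ of width tending to $\infty$, so the thick part of the cut surface $X'$ shrinks relative to the ambient metric. The resolution is to observe that the lemma producing a short essential geodesic is purely topological in nature, depending only on the area (hence $\chi$) of the surface and not on boundary lengths, and that the surgery step eliminates any produced curve that is peripheral or boundary-parallel. Thus the constant $L_0$ can be chosen depending only on $\xi(S')$, and the induction closes. One must also verify that the closed geodesic produced in the universal cover argument can be arranged to be essential and non-peripheral relative to $\partial X'$, which follows by discarding boundary-parallel candidates and reapplying the area estimate on the complementary thick region.
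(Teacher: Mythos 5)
The paper does not prove this statement at all; it is cited directly to Bers \cite{bers1,bers2} as known background. So your proposal is not competing with a proof in the paper, and I'll assess it on its own merits.

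Your overall strategy --- bound the length of some essential non-peripheral simple closed geodesic by an area argument, cut along it, and induct on $\xi(S)$ --- is the standard route (Bers' original argument, cf.\ Buser's book). But there is a genuine gap in the step you flag as the ``main obstacle.'' You claim that the constant $L_0$ bounding the shortest essential non-peripheral simple closed geodesic ``depends only on $\xi(S')$ and not on boundary lengths.'' This is false. Consider the one-holed torus $S_{1,1}$ with geodesic boundary of length $L$. The Fricke trace identity $\mathrm{tr}(A)^2 + \mathrm{tr}(B)^2 + \mathrm{tr}(AB)^2 - \mathrm{tr}(A)\mathrm{tr}(B)\mathrm{tr}(AB) = 2 + \mathrm{tr}([A,B])$ with $|\mathrm{tr}([A,B])| = 2\cosh(L/2)$ shows that for the most symmetric such surface the common trace $t$ of $A$, $B$, $AB$ satisfies $t^3 - 3t^2 \approx 2\cosh(L/2)$, so $|t| \to \infty$ as $L \to \infty$; hence the shortest non-peripheral geodesic has length $2\operatorname{arccosh}(|t|/2) \to \infty$. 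The reason your Gauss--Bonnet argument does not rescue this is exactly the boundary leakage you do not address: the ball $B(\tilde{x},r)$ need not project into $X$ when $x$ is within distance $r$ of $\partial X$, and for $L$ large the entire surface can lie within $r_0$ of the boundary, so the argument produces nothing. Your proposed repair (``discarding boundary-parallel candidates and reapplying the area estimate on the complementary thick region'') is not a real argument as written, since the complementary region is not a hyperbolic surface with geodesic boundary and the area estimate does not reapply directly.

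Two further, more minor issues. First, your ``standalone lemma'' is overstated: a point $x$ deep in the collar of a short geodesic (or in a cusp) does \emph{not} lie on any essential simple closed geodesic of uniformly bounded length. Fortunately your inductive step only uses the existence of \emph{one} short non-peripheral geodesic (a systole bound), not the per-point statement, so this overstatement does not propagate. Second, for the surgery at self-intersections to terminate in a non-peripheral curve, you need the boundary components to be longer than the loop produced; when a boundary is itself short, the surgery can collapse onto a boundary-parallel curve, which is precisely the case requiring the more careful analysis above.

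None of this is fatal to the theorem or even to your outline: the correct version of the lemma bounds the non-peripheral systole in terms of $\chi$ \emph{and} a bound $L$ on the boundary lengths, and since each cut in the induction introduces boundary of length controlled by the constant from the previous step, the constants compound over the $\xi(S)$ steps and remain finite. But as written, the resolution of your ``main obstacle'' relies on a false claim, so the proposal has a real gap that needs to be closed by tracking the boundary-length dependence explicitly through the induction.
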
   


Using the mapping suggested by Theorem \ref{thm:bers}, the following groundbreaking theorem of Brock proves that $\mathcal{T}(S)$ and $\mathcal{P}(S)$ are quasi-isometric.

\begin{thm} [\cite{brock} Theorem 3.2] \label{thm:brock} The mapping $\Psi \co (\mathcal{T}(S),WP) \ra (\mathcal{P}(S),\mbox{graph metric})$ given by $$X \mapsto B_{X} $$ where $B_{X} \in \mathcal{P}(S)$ is a Bers pants decomposition of $X$ as in Theorem \ref{thm:bers}, is coarsely well-defined, and moreover, is a quasi-isometry. 
\end{thm}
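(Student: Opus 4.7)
The plan is to establish the theorem via four verifications in order: coarse well-definedness of $\Psi$, coarse surjectivity, the Lipschitz-type bound $d_{\mathcal{P}}(B_X, B_Y) \lesssim d_{WP}(X, Y)$, and the reverse bound $d_{WP}(X, Y) \lesssim d_{\mathcal{P}}(B_X, B_Y)$. Coarse well-definedness reduces to uniformly bounding $d_{\mathcal{P}}(B, B')$ for any two Bers pants decompositions $B, B'$ of a fixed $X$; by the collar lemma, the number of simple closed geodesics of length at most $B(S)$ on any hyperbolic $X$ is bounded in terms of $S$ alone, so the set of candidate Bers decompositions at $X$ has universally bounded cardinality, and a connectivity argument using Hatcher--Thurston style moves gives the required uniform bound. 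Coarse surjectivity is immediate, since any pants decomposition $P$ is realized by a hyperbolic structure with all cuffs of $P$ set to some fixed small common length.

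For the forward bound, I would take a WP-geodesic $\gamma \co [0, T] \to \mathcal{\overline{T}}(S)$ from $X$ to $Y$ and assign a Bers decomposition $B_t$ to each $\gamma(t)$. The crux is showing that $t \mapsto B_t$ changes by a uniformly bounded amount per unit WP-time. This rests on Wolpert's formula for the WP-gradient of geodesic length, giving Lipschitz-type control on how $\ell_{\alpha}(\gamma(t))$ evolves: at WP speed one, a short curve can cross the Bers threshold $B(S)$ only after a controlled amount of time, at which point a replacement short curve must appear by Theorem \ref{thm:bers}, and the resulting swap can be absorbed into a uniformly bounded number of elementary pants moves.

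For the reverse bound, it suffices to show that whenever $P, P'$ differ by a single elementary pants move exchanging curves $\alpha, \alpha'$, there exist $X, Y \in \mathcal{T}(S)$ with $B_X = P$, $B_Y = P'$, and $d_{WP}(X, Y)$ bounded by a universal constant. The construction is a ``pinch and unpinch'' detour through the WP completion: shrink $\alpha$ to zero length to land on the boundary stratum where $\alpha$ is a node, which by Wolpert's asymptotic $d_{WP} \asymp \sqrt{\ell_{\alpha}}$ takes bounded WP length, then reopen $\alpha'$ on the adjacent stratum to reach $Y$. Applying this bound along an edge-path in $\mathcal{P}(S)$ realizing $d_{\mathcal{P}}(B_X, B_Y)$ and using the triangle inequality completes the estimate.

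The main obstacle I anticipate is this final step: the WP metric degenerates near the completion boundary, so producing a genuinely bounded-length path that actually lands on the correct adjacent stratum and then emerges with the desired Bers decomposition requires quantitative understanding of the WP geometry in pinching (Fenchel--Nielsen) coordinates, including Wolpert's asymptotic expansion for the metric tensor and the CAT(0) structure of the completion. One must also verify that throughout the detour the surviving cuffs $P \cap P'$ keep bounded length, which is not automatic from the formal construction given the non-product nature of the WP metric in these coordinates.
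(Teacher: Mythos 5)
The paper does not prove this theorem; it is quoted directly from Brock's paper. Brock's original argument establishes the quasi-isometry by relating both $d_{WP}(X,Y)$ and $d_{\mathcal{P}}(B_X,B_Y)$ to a third quantity, the volume of the convex core of the quasi-Fuchsian $3$-manifold $Q(X,Y)$, so it is fundamentally three-dimensional. Your proposal is a genuinely different route confined to two-dimensional Teichm\"uller theory. Your ``pinch and unpinch'' upper bound is essentially the argument Brock does give for that inequality; it is cleanest to pinch \emph{all} of $P$ simultaneously (so the surviving cuffs are nodes throughout, sidestepping the obstacle you flag about keeping $P\cap P'$ short), then travel in the boundary stratum $\overline{\mathcal{T}(W)}$ between the two Farey cusps $\hat{\alpha}$ and $\hat{\alpha}'$, then un-pinch; one invokes Masur's identification of boundary strata with lower-dimensional WP Teichm\"uller spaces and mapping-class-group equivariance to get a uniform constant per elementary move.

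The genuine gap is in your lower bound. Wolpert's bound on $\nabla\ell_\alpha^{1/2}$ does make $\sqrt{\ell_\alpha}$ uniformly Lipschitz along WP paths, so Bers-short curves stay controllably short over a unit of WP time, and that much is correct. But the claim that the resulting change of Bers decomposition ``can be absorbed into a uniformly bounded number of elementary pants moves'' is precisely the step that needs a proof, and it is not automatic from the Lipschitz estimate. What you need is the lemma that two pants decompositions all of whose curves have length at most $L$ on a common hyperbolic surface lie within uniformly bounded $\mathcal{P}(S)$-distance of one another; this follows from the collar lemma, which bounds the geometric intersection numbers of the curves involved in terms of $L$, combined with a finiteness argument (there are only finitely many mapping-class-group orbits of pairs of pants decompositions with pairwise intersection bounded by a given constant, and $\mathcal{P}(S)$-distance is equivariant). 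Incidentally, this is also the lemma hidden in your coarse well-definedness step, so it is doing double duty in your outline and deserves to be stated and proved explicitly rather than alluded to as a ``Hatcher--Thurston connectivity argument.''
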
   

\subsubsection{Ultrapowers and Asymptotic Cones}
 

A \emph{non-principal ultrafilter} is a subset $\omega \subset 2^{\N},$ satisfying the following properties:
\begin{enumerate}
\item $\omega$ is non empty; $\omega$ does not contain the empty set (filter),
\item $X,Y \in \omega\;  \implies \; X\cap Y \in \omega$  (filter),
\item $X\subset Y,$ $X \in \omega\;  \implies \; Y \in \omega$ (filter),
\item $X \not \in \omega$ $\implies$ $(\N \setminus X) \in \omega$ (ultrafilter), and
\item $|X| < \infty$ $\implies \; X \not \in \omega$ (non-principal).
\end{enumerate}

Given a sequence of points $(x_{i})$ and an ultrafilter $\omega,$ the \emph{ultralimit of $(x_{i})$}, denoted $\lim_{\omega} x_{i},$ is defined to be $x$ if for any neighborhood $U$ of $x,$ the set $\{i: x_{i} \in U\} \in \omega.$  That is, \emph{$\omega$ almost surely (or \uas)} $x_{i} \in U.$  Ultralimits are unique when they exist.


Given any set $S$ and an ultrafilter $\omega,$ we define the \emph{ultrapower} of $S,$ denoted $S^{\omega},$ as sequences $\overline{s}$ or $(s_{i})$ under the equivalence relation $\overline{s}\sim \overline{s'} \iff$ \uas $s_{i} =s'_{i}.$   Elements of the ultrapower will be denoted $s^{\omega}$ and their representative sequences will be denoted by $\overline{s}$ or $(s_{i}).$  By abuse of notation we  will sometimes denote elements of the ultrapower and similarly elements of the asymptotic cone by their representative sequences.

For a metric space $(X,d),$ we define the \emph{asymptotic cone of X}, relative to a fixed choice of ultrafilter $\omega,$ a sequence of base points in the space $(x_{i}),$ and an unbounded sequence of positive scaling constants $(s_{i}),$ as follows:
$$ Cone_{\omega}(X,(x_{i}) ,(s_{i})) \equiv  \lim_{\omega}(X,x_{i},d_{i}=\frac{d}{s_{i}})  $$
When the choice of scaling constants and base points are not relevant we denote the asymptotic cone of a metric space $X$ by $X_{\omega}.$  Elements of asymptotic cones will be denoted $x_{\omega}$ with representatives denoted by $\overline{x}$ or $(x_{i}).$  For $\mathcal{P}(S)$ we denote $Cone_{\omega}(\mathcal{P}(S),(P_{0}^{i}),(s_{i})) = \mathcal{P}_{\omega}(S).$  In particular, we assume a fixed base point of our asymptotic cone with representative given by $(P^{0}_{i}).$  Furthermore, unless otherwise specified always assume a fixed ultrafilter $\omega.$ 

More generally, given a subset $Y \subset X,$ and a choice of asymptotic cone $X_{\omega},$ throughout we will often consider the \emph{ultralimit} of Y, denoted $Y_{\omega},$ defined as follows: $$Y_{\omega} =: \{y_{\omega} \in X_{\omega} | y_{\omega} \mbox{ has a representative sequence $(y'_{i})$ with } y'_{i} \in Y \; \omega\mbox{-a.s}\}$$  In particular, when dealing with ultralimits we will always be considering the ultralimits as subsets contained inside an understood asymptotic cone.  Furthermore, given a sequence of subspaces $Y_{i} \subset X,$ we can similarly define the ultralimit, $Y_{\omega}.$  Based on the context it will be clear which type of ultralimit is being considered.   


The following elementary theorem organizes  some well known elementary facts about asymptotic cones, see for instance \cite{kapovich}.
\begin{thm}  \label{thm:conebasics} For metric spaces $X,Y$ and any asymptotic cones $X_{\omega},$ $Y_{\omega},$
\begin{enumerate} 
\item $(X \times Y)_{\omega}$=$X_{\omega} \times Y_{\omega}.$ 
\item For $X$ a geodesic metric space,  $X_{\omega}$ is a geodesic metric space and in particular is locally path connected.
\item $X \approx Y$ implies $X_{\omega}$ and $Y_{\omega}$ are bi-Lipschitz equivalent.
\end{enumerate}
\end{thm}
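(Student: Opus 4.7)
The overall plan is to construct, for each statement, the natural map induced by taking ultralimits coordinatewise, and then verify its metric properties using the compatibility of $\lim_{\omega}$ with continuous operations on distances. For (1), fix the product metric $d_{X \times Y}((x,y),(x',y')) = \max(d_X(x,x'), d_Y(y,y'))$ (any bi-Lipschitz equivalent choice works). Define $\Phi \co (X \times Y)_\omega \to X_\omega \times Y_\omega$ by sending the cone point represented by $(x_i, y_i)$ to $\bigl((x_i)_\omega, (y_i)_\omega\bigr)$, using the obvious product base point. Because $\lim_\omega \max = \max \lim_\omega$, $\Phi$ preserves distance on the nose; the inverse is given by reassembling component sequences, so $\Phi$ is an isometry.

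For (2), given two cone points $x_\omega, y_\omega$ with representatives $(x_i), (y_i)$ and rescaled distance $D = \lim_\omega d(x_i, y_i)/s_i$, pick for each $i$ a unit-speed geodesic $\gamma_i \co [0, d(x_i, y_i)] \to X$ from $x_i$ to $y_i$. Define $\gamma \co [0, D] \to X_\omega$ by
\[
\gamma(t) = \bigl(\gamma_i(\min(t s_i,\, d(x_i, y_i)))\bigr)_\omega.
\]
The triangle inequality applied to each $\gamma_i$ shows that $d_{X_\omega}(\gamma(t),\gamma(t')) = |t - t'|$, so $\gamma$ is an isometric embedding, and in particular a geodesic. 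Local path connectivity is then immediate: every cone point is the endpoint of geodesics to all nearby points, so arbitrarily small open balls are path connected.

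For (3), let $f \co X \to Y$ be a $(K,L)$-quasi-isometry. Define $f_\omega \co X_\omega \to Y_\omega$ by $f_\omega\bigl((x_i)_\omega\bigr) = (f(x_i))_\omega$, taking $(f(x_0^i))$ as the base point of $Y_\omega$. The quasi-isometry inequalities, after division by $s_i \to \infty$, wash out the additive constant $L$, yielding
\[
K^{-1} d_{X_\omega}(x_\omega, x'_\omega) \;\leq\; d_{Y_\omega}(f_\omega(x_\omega), f_\omega(x'_\omega)) \;\leq\; K\, d_{X_\omega}(x_\omega, x'_\omega),
\]
so $f_\omega$ is $K$-bi-Lipschitz, hence injective. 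Surjectivity uses that $f$ is roughly onto: for any $(y_i)_\omega \in Y_\omega$, choose $x_i \in X$ with $d_Y(f(x_i), y_i) \leq L'$ for a uniform $L'$; then $(f(x_i))_\omega = (y_i)_\omega$ in $Y_\omega$ since $L'/s_i \to 0$.

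The one subtlety — the part I would expect to be the main obstacle to stating cleanly — is verifying that the preimage sequence $(x_i)$ really defines a cone point, i.e.\ that $d_X(x_i, x_0^i)/s_i$ stays bounded $\omega$-a.s. This is handled by running the quasi-isometry inequality in the other direction: $d_X(x_i, x_0^i) \leq K\, d_Y(f(x_i), f(x_0^i)) + KL$, and $d_Y(f(x_i), f(x_0^i))/s_i$ is bounded because $(y_i)$ has bounded rescaled distance from $(f(x_0^i))$ and $d_Y(f(x_i), y_i) \leq L'$. Once this check is in place, $f_\omega$ is a bi-Lipschitz bijection, completing (3).
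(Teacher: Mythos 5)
The paper does not actually prove this theorem: it labels it as a collection of ``well known elementary facts'' and simply cites Kapovich's text as a reference, so there is no internal argument to compare your proposal against. Your proof is the standard one and is correct. The only thing I would tighten is the bookkeeping in (3): you flag the subtlety that a roughly-chosen preimage sequence $(x_i)$ must still define a cone point, which is the right thing to worry about, but there is a mirror-image check you leave implicit, namely that $f_\omega$ is well-defined at all --- that $f$ sends sequences at bounded rescaled distance from the basepoint to sequences at bounded rescaled distance from $(f(x_0^i))$, and that it respects the equivalence of representatives. Both follow immediately from the same upper quasi-isometry bound $d_Y(f(x_i),f(x'_i)) \leq K\,d_X(x_i,x'_i)+L$ after dividing by $s_i$, so this is not a gap, just a step worth writing down so the reader sees $f_\omega \co X_\omega \to Y_\omega$ is a genuine map before you argue it is bijective.
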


The next elementary lemma which follows from the definition of $\omega$ will be useful on a couple of occasions.  
\begin{lem} \label{lem:finite}
If A is a finite set, then any $\overline{\alpha} \in A^{\omega}$ is \emph{\uas constant}.  That is, $\exists! a_{0}\in A$ such that $\{ i | \alpha_{i} =a_{0} \} \in \omega.$  In particular, $|A^{\omega}|= |A|.$ 
\end{lem}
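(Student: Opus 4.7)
The plan is to partition $\N$ according to the value that $\overline{\alpha}$ takes in $A$ and use the defining properties of the ultrafilter $\omega$ to single out exactly one part of this partition.

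First I would write $A=\{a_{1},\dots,a_{n}\}$ and, for each $j$, set $B_{j}=\{i\in\N \mid \alpha_{i}=a_{j}\}$. These sets are pairwise disjoint and their union is all of $\N$. To show that at least one $B_{j}$ lies in $\omega$, I would argue by contradiction: if every $B_{j}\notin\omega$, then by the ultrafilter axiom (4) every complement $\N\setminus B_{j}$ lies in $\omega$. Applying axiom (2) (closure under finite intersection) $n-1$ times gives $\bigcap_{j=1}^{n}(\N\setminus B_{j})\in\omega$. But this intersection equals $\N\setminus\bigcup_{j}B_{j}=\emptyset$, contradicting axiom (1). So some $B_{j_{0}}\in\omega$, and setting $a_{0}=a_{j_{0}}$ gives an element with the desired property.

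For uniqueness, suppose both $B_{j}$ and $B_{k}$ lie in $\omega$ with $j\neq k$. Then axiom (2) gives $B_{j}\cap B_{k}\in\omega$, but $B_{j}\cap B_{k}=\emptyset$, again contradicting axiom (1). Hence the index $j_{0}$, and therefore $a_{0}$, is unique.

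Finally, for the cardinality statement, I would exhibit a bijection between $A$ and $A^{\omega}$. Define $\iota\co A\to A^{\omega}$ by sending $a$ to the equivalence class of the constant sequence $(a,a,a,\dots)$. This map is injective, since if $a\neq a'$ then $\{i\mid a=a'\}=\emptyset\notin\omega$, so the constant sequences are not $\omega$-equivalent. It is surjective by the first part of the lemma: any $\overline{\alpha}\in A^{\omega}$ agrees $\omega$-almost surely with the constant sequence $(a_{0})$, hence represents the same class as $\iota(a_{0})$. There is no real obstacle here; the entire argument is a direct unfolding of the ultrafilter axioms, and the only subtlety is remembering to invoke axiom (4) to force dichotomy on each $B_{j}$ before applying finite intersection.
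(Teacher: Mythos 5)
Your proof is correct and is the standard argument. The paper states Lemma \ref{lem:finite} without proof, labeling it elementary, so there is no authorial proof to compare against; your partition-of-$\N$ argument, using ultrafilter axiom (4) to force each $B_{j}$ or its complement into $\omega$, axiom (2) to intersect, and axiom (1) to rule out $\emptyset\in\omega$, together with the bijection via constant sequences for the cardinality claim, is precisely the unfolding of the axioms the author has in mind.
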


%

Ultrapowers are more general than asymptotic cones, as the construction of an ultrapower can be applied to arbitrary sets as opposed to the construction of an asymptotic cone which can only be applied to metric spaces.  In fact, we will often be interested in ultrapowers of objects such as $\eultra{S},$ or the ultrapower of connected essential subsurfaces of $S.$  Similarly, we will consider $\seultra{S},\nseultra{S}),$ or the ultrapowers of separating, nonseparating connected essential subsurfaces of $S,$ respectively.  As an application of Lemma \ref{lem:finite}, since any essential subsurface is either separating or nonseparating, for any ultrapower of essential subsurfaces $\overline{Y},$ \uas $Y_{i}$ is either always separating or always nonseparating.  In particular, any $\overline{Y} \in \eultra{S}$ is either in $\seultra{S}$ or $\nseultra{S},$ and the two options are mutually exclusive.   

\subsubsection{(Relative) Hyperbolicity and Thickness}
 
For points $x_{1},x_{2}$ in any geodesic metric space $X,$ we use the notation $[x_{1},x_{2}]$ to denote a geodesic between them. 
\begin{defn}[$\delta$-hyperbolic]  A geodesic metric space $X$ is said to be \emph{$\delta$-hyperbolic} if it satisfies the \emph{$\delta$-thin triangles inequality}.  Specifically, there exists some constant $\delta \geq 0$ such that for any three points in the space $x_{1},x_{2},x_{3}$ and $[x_{i},x_{j}]$ any geodesic connecting $x_{i}$ and $x_{j},$ then $[x_{1},x_{3}] \subset N_{\delta}([x_{1},x_{2}]) \bigcup  N_{\delta}([x_{2},x_{3}]).$  A metric space is called \emph{hyperbolic} if it is $\delta$-hyperbolic for some $\delta.$
\end{defn}


An important generalization of hyperbolicity is the notion of relative hyperbolicity.  Informally, a metric space $X$ is relatively hyperbolic with respect to a collection of subsets $\mathcal{A},$ if when all of the subsets in $\mathcal{A}$ are collapsed to finite diameter sets, the resulting ``electric space,'' $X/\mathcal{A},$ is hyperbolic.  To exclude trivialities we can assume no set $A \in \mathcal{A}$ has finite Hausdorff distance from $X.$  More specifically, spaces satisfying the above are said to be \emph{weakly relatively hyperbolic}.  If, in addition, a weakly relatively hyperbolic space $X$ has the \emph{bounded coset penetration property}, namely quasi-geodesics with the same endpoints travel roughly through the same subsets in $\mathcal{A}$ both entering and exiting the same subsets near each other, then $X$ is said to be \emph{strongly relatively hyperbolic}.  We will use the following equivalent definition of strong relative hyperbolicity of a metric space due to \cite{drutusapir} formulated in terms of asymptotic cones: 

\begin{defn}[Relatively Hyperbolic] A metric space $(X,d)$ is said to be \emph{hyperbolic relative} to a collection of \emph{peripheral subsets} $\mathcal{A}$ if $X$ is \emph{asymptotically tree-graded}, with respect to $\mathcal{A}.$  That is,
\begin{enumerate}
\item Every asymptotic cone $X_{\omega}$ is \emph{tree-graded} with respect to the \emph{pieces} $A_{\omega}$ for $A \in \mathcal{A}.$  More specifically, the intersection of each pair of distinct pieces, $A_{\omega},A'_{\omega},$  has at most one point and every simple geodesic triangle (a simple loop composed of three geodesics) in $X_{\omega}$ lies in one piece $A_{\omega}.$
\item  $X$ is not contained in a finite radius neighborhood of any of the subsets in $\mathcal{A}.$
\end{enumerate}
\end{defn}

In contrast to earlier concepts of hyperbolicity or relatively hyperbolicity, we have the following notion of thickness developed in \cite{bdm} and explored further in \cite{behrstockdrutu}.  
 
\begin{defn}[Thickness]   \label{defn:thick}$\\$\begin{enumerate}
\item A space $X$ is said to be \emph{thick of order zero} if none of its asymptotic cones $X_{\omega}$ have cut-points, or equivalently $X$ is \emph{wide}, and moreover it satisfies the following nontriviality condition: there is a constant $c$ such that every $x\in X$ is distance at most $c$ from a bi-infinite quasi-geodesic in $X.$  
\item A space $X$ is said to be \emph{thick of order at most $n+1$} if there exist subsets  $P_{\alpha} \subset X,$ satisfying the following conditions:
\subitem (i) The subsets $P_{\alpha}$ are \emph{quasi-convex} (namely, there exist constants $(K,L,C)$ such that any two points in $P_{\alpha}$ can be connected by a (K,L)-quasi-geodesic remaining inside $N_{C}(P_{\alpha})$) and are thick of order at most $n$ when endowed with the restriction metric from the space $X,$
\subitem (ii) The subsets are \emph{almost everything}.  Namely, $\exists$ a fixed constant $R_{1}$ such that $ \bigcup_{\alpha} N_{R_{1}}(P_{\alpha}) = X,$
\subitem (iii) The subsets can be \emph{chained together thickly}.  Specifically, for any subsets $P_{\alpha}, P_{\beta},$ there exists a sequence of subsets $P_{\alpha}=P_{\gamma_{1}}, ..., P_{\gamma_{n}} = P_{\beta}$ such that for some fixed constant $R_{2} \geq 0,$ $diam( N_{R_{2}}(P_{\gamma_{i}}) \bigcap N_{R_{2}}(P_{\gamma_{i+1}}) )=\infty.$  In particular, due to the quasi-convexity assumption in (i), it follows that the coarse intersection between consecutive subsets being chained together is coarsely connected.
 \item A space $X$ is \emph{thick of order n} if $n$ is the lowest integer such that $X$ is thick of order at most $n.$
\end{enumerate}
\end{defn}

In Section \ref{chap:thick} we will often be interested in subspaces $Y \subset X$ which are \emph{thick of order zero}.  Namely, we say that a subspace $Y$ is thick of order zero if in every asymptotic cone $X_{\omega}$ the subset corresponding to the ultralimit $Y_{\omega}$ has the property that any two distinct points in $Y_{\omega}$ are not separated by a cut-point.  Additionally, we require that $Y$ satisfies the nontriviality condition of every point being within distance $c$ from a bi-infinite quasi-geodesic in $Y.$  

\begin{rem} \label{rem:wide}
It should be mentioned that Definition \ref{defn:thick} of thickness is what is in fact called \emph{strongly thick} in \cite{behrstockdrutu}, as opposed to the slightly more general version of thickness considered in \cite{bdm}.  As in \cite{behrstockdrutu}, for our purposes the notion of strong thickness is more natural as it proves to be more conducive to proving results regarding divergence, such as we will do in Section \ref{chap:thick}.  There are two differences between the different definitions of thickness.

First, as opposed to requirement in Definition \ref{defn:thick} (or equivalently in the definition of strong thickness in \cite{behrstockdrutu}) that thick of order zero subsets be wide, in \cite{bdm} a thick of order zero subset is only required to be \emph{unconstricted}.  Namely, there exists some ultrafilter $\omega$ and some sequence of scalars $s_{i}$ such that any asymptotic cone $Cone_{\omega}(X, \cdot ,(s_{i}))$ does not have cut-points.  Nonetheless, as noted in \cite{bdm} for the special case of finitely generated groups, the definition of thick of order zero in Definition \ref{defn:thick} (or being wide) is equivalent to the definition considered in \cite{bdm} (or being unconstricted).  Moreover, in \cite{sultanthesis} it is shown that for CAT(0) spaces with extendable geodesics, being wide and unconstricted are similarly equivalent.  Second, the requirement for quasi-convexity in condition (i) of Definition \ref{defn:thick} is omitted in the definition of thickness in \cite{bdm}.  \end{rem}

The following theorem of \cite{bdm}, which in fact inspired the development of the notion of thickness, captures the contrasting relationship between hyperbolicity and thickness:
\begin{thm}[\cite{bdm} Corollary 7.9] \label{thm:bdm}  A metric space $X$ which is thick of any finite order is not strongly relatively hyperbolic with respect to any subsets, i.e. \emph{non relatively hyperbolic (NRH)}.
\end{thm}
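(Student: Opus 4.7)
The plan is to prove Theorem~\ref{thm:bdm} by strong induction on the order of thickness $n$, establishing in fact the stronger statement: if $Y \subseteq X$ is a quasi-convex subspace that is thick of order at most $n$, and if $X$ is asymptotically tree-graded with respect to a collection $\mathcal{A}$, then $Y$ lies in a uniform tubular neighborhood of some single peripheral subset $A \in \mathcal{A}$. Taking $Y = X$ in this statement immediately contradicts condition (2) in the definition of relative hyperbolicity, which forbids $X$ from being contained in a bounded neighborhood of any $A \in \mathcal{A}$, so the theorem follows.

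For the base case $n = 0$, the subspace $Y$ is wide, so no asymptotic cone $Y_{\omega}$ contains a cut-point. Passing to any asymptotic cone $X_{\omega}$ of the ambient space and taking ultralimits, $Y_{\omega}$ sits as a connected, cut-point-free subset of the tree-graded space $X_{\omega}$; the standard tree-graded fact that a connected subset without cut-points is contained in a single piece then forces $Y_{\omega} \subset A_{\omega}$ for some $A \in \mathcal{A}$. The real work of the base case is to upgrade this per-cone containment to a single uniform radius $R$, valid simultaneously across all basepoints and scalings, giving $Y \subset N_{R}(A)$ for some fixed $A \in \mathcal{A}$; this is a Dru\c tu--Sapir style compactness-and-limit argument of the type already developed in \cite{drutusapir, bdm}.

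For the inductive step, let $Y$ be thick of order at most $n+1$ with associated quasi-convex subsets $\{P_{\alpha}\}$ as in Definition~\ref{defn:thick}. By the induction hypothesis each $P_{\alpha}$ is contained in $N_{R}(A_{\alpha})$ for some $A_{\alpha} \in \mathcal{A}$. The crucial input is clause (iii): whenever $P_{\alpha}$ and $P_{\beta}$ are thickly chained together in one step, their coarse intersection has infinite diameter. Passing to an asymptotic cone in which the two subsets diverge, the ultralimits $(A_{\alpha})_{\omega}$ and $(A_{\beta})_{\omega}$ then share at least two distinct points, which by the defining property of pieces in a tree-graded space forces $A_{\alpha} = A_{\beta}$. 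Iterating along any thick chain and invoking the transitivity guaranteed by Definition~\ref{defn:thick}(2)(iii), every $P_{\alpha}$ lies in $N_{R}(A)$ for one common piece $A$. Since the $P_{\alpha}$ cover $Y$ up to a uniform neighborhood by clause (ii), we conclude $Y \subset N_{R + R_{1}}(A)$, completing the induction.

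The main obstacle is the base case: securing the \emph{uniform} radius $R$ independent of the choice of ultrafilter, basepoint, and scaling sequence. A priori the per-cone containment only gives a radius depending on these choices, and without uniformity the inductive machinery collapses, because the chaining argument in the inductive step needs each $A_{\alpha}$ to be defined from a single honest bounded neighborhood in $X$. Handling this requires the full strength of the bounded coset penetration property, repackaged via the asymptotically tree-graded characterization of \cite{drutusapir}, combined with the quasi-convexity of $Y$ to rule out long detour quasi-geodesics in $Y$ wandering through arbitrarily many distinct pieces. Once the base case is secured, the inductive step is essentially formal, using only that two distinct pieces of a tree-graded space intersect in at most one point.
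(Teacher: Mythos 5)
The paper does not actually prove this statement; it quotes it as Corollary~7.9 of \cite{bdm} and uses it as a black box to contrast thickness with relative hyperbolicity. So there is no in-paper proof to compare against. Judged against the argument in \cite{bdm} and the Dru\c tu--Sapir framework it rests on, your sketch is essentially the right proof and follows the same structure: an induction on the order of thickness, with the base case being the Dru\c tu--Sapir fact that a quasi-convex wide subspace of an asymptotically tree-graded space lies in a uniformly bounded neighborhood of a single peripheral subset, and the inductive step using the thick-chaining condition together with the fact that distinct pieces of a tree-graded space meet in at most one point (equivalently, distinct peripheral subsets of an ATG space have bounded coarse intersection), to force all the $P_\alpha$ into a neighborhood of the same $A$, contradicting the non-degeneracy clause of ATG.

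Two small points worth tightening. First, when you invoke the induction hypothesis for all the $P_\alpha$ simultaneously, you need the radius $R$ to be uniform in $\alpha$; this requires that the $P_\alpha$ carry uniform thickness and quasi-convexity constants, which is built into Definition~\ref{defn:thick} here (the $(K,L,C)$ in clause~(i) are fixed once for the whole collection), but it is the reason the base case must be stated and proved with a radius depending only on those constants and the ATG data, not on the particular subspace. Second, the step ``the ultralimits $(A_{\alpha})_\omega$ and $(A_{\beta})_\omega$ share two points, hence $A_\alpha=A_\beta$'' is cleaner phrased directly in $X$: one of the defining properties of an ATG collection $\mathcal{A}$ (property $\alpha_2$ in Dru\c tu--Sapir) is that for distinct $A,A'\in\mathcal{A}$ and any $R$, the set $N_R(A)\cap N_R(A')$ has uniformly bounded diameter, which immediately contradicts $\operatorname{diam}\bigl(N_{R_2}(P_\alpha)\cap N_{R_2}(P_\beta)\bigr)=\infty$ unless $A_\alpha=A_\beta$; this avoids the extra bookkeeping of choosing a cone in which the unbounded intersection becomes visible.
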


Another perspective is to understand thickness as a precise means of interpolating between two ends of the spectrum of non-positively curved spaces: product spaces and hyperbolic spaces.  On the one hand, nontrivial product spaces are thick of order zero (this follows from Theorem \ref{thm:conebasics} statement (2) as nontrivial products do not contain cut-points).  On the other hand, Theorem \ref{thm:bdm} says that strongly relatively hyperbolic and hyperbolic spaces are not thick of any order, or equivalently can be thought of as thick of order infinity.  Then, in this sense the higher the order of thickness of a metric space the closer the space resembles hyperbolic space and shares features of negative curvature.  From this point of view, the close connections between thickness and divergence explored in \cite{behrstockdrutu} as well as in Section \ref{chap:thick} are very natural.  

%


\subsection{Tools from mapping class groups}
\label{subsec:tools}

 In this section we review some tools developed by Behrstock \cite{behrstock}, Behrstock-Kleiner-Minsky-Mosher \cite{bkmm}, Behrstock-Minsky \cite{behrstockminsky}, and Masur-Minsky \cite{mm2} in their geometric analyses of the curve complex, $\mathcal{C}(S),$ and the marking complex, $M(S).$  If fact, in the aforementioned papers, many of these tools developed for the marking complex have simplifications which immediately apply to the pants complex.  


\subsubsection{Subsurface projections}  \label{subsec:subsurfaceprojection}

Given a curve $\alpha \in \mathcal{C}(S)$ and a connected essential subsurface $Y \in \e{S}$ such that $\alpha$ intersects $Y,$  we can define the projection of $\alpha$ to $2^{\mathcal{C}(Y)}$, denoted $\pi_{\mathcal{C}(Y)}(\alpha)$, to be the collection of vertices in $\mathcal{C}(Y)$ obtained in the following surgical manner.  Specifically, the intersection $\alpha \cap Y$ consists of either the curve $\alpha,$ if $\alpha \subset Y,$ or a non-empty disjoint union of arc subsegments of $\alpha$ with the endpoints of the arcs on boundary components of $Y.$  In the former case we define the projection $\pi_{\mathcal{C}(Y)}(\alpha)=\alpha.$  In the latter case, $\pi_{\mathcal{C}(Y)}(\alpha)$ consists of all curves obtained by the following process.  If an arc in $\alpha \cap Y$ has both endpoints on the same boundary component of $\partial Y,$ then $\pi_{\mathcal{C}(Y)}(\alpha)$ includes the curves obtained by taking the union of the arc and the boundary component containing the endpoints of the arc.  Note that this yields at most two curves, at least one of which is essential.  On the other hand, if an arc in $\alpha \cap Y$ has endpoints on different boundary components of $\partial Y,$ then $\pi_{\mathcal{C}(Y)}(\alpha)$ includes the curve on the boundary of a regular neighborhood of the union of the arc and the different boundary components containing the end points of the arc.  See Figure \ref{fig:curveproj} for an example. Note that above we have only defined the projection $\pi_{\mathcal{C}(Y)}$ for curves intersecting $Y,$ for all curves $\gamma$ disjoint from $Y,$ the projection $\pi_{\mathcal{C}(Y)}(\gamma)=\emptyset.$  

In any context concerning the curve complex of an essential subsurface, $\mathcal{C}(Y)$ in order to avoid distractions we alway assume that $Y \in \e{Y},$ i.e. the essential subsurface $Y$ is connected.  If not, then by definition $\mathcal{C}(Y)$ is a nontrivial join and hence has diameter two.  

\begin{figure}[h]
\centering
\includegraphics[height=5 cm]{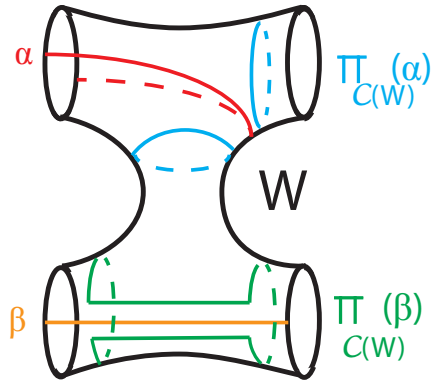}
\caption[Subsurface projection in the curve complex]{Performing s surgery on arcs in the connected proper essential subsurface $W \subsetneq S$ which makes them into curves in $\mathcal{C}(W).$  The arc $\alpha$ has both endpoints on the same boundary component of $W,$ whereas $\beta$ has endpoints on different boundary components of $W.$}\label{fig:curveproj}
\end{figure}


To simplify notation, we write $d_{\mathcal{C}(Y )} (\alpha_{1}, \alpha_{2})$ as shorthand for 
$d_{\mathcal{C}(Y )} (\pi_{\mathcal{C}(Y)}(\alpha_{1}), \pi_{\mathcal{C}(Y)}(\alpha_{2})).$  In particular, this distance is only well-defined if $\alpha_{1},\alpha_{2}$ intersect $Y.$  Similarly, for $A \subset \mathcal{C}(S),$ we write  $diam_{\mathcal{C}(Y)}(A)$ as shorthand for $diam_{\mathcal{C}(Y)}(\pi_{\mathcal{C}(Y)}(A)).$ 
  
The following lemma ensures that the subsurface projection $\pi_{\mathcal{C}(Y)}$ defined above gives a coarsely well-defined projection $\pi_{\mathcal{C}(Y)} \co \mathcal{C}(S) \ra \mathcal{C}(Y) \cup \emptyset.$
\begin{lem}[\cite{mm2}, Lemma 2.2] \label{lem:curveprojub}  
For $\alpha$ any curve and any $Y\in \e{Y}$ the set of curves $\pi_{\mathcal{C}(Y)}(\alpha)$ has diameter bounded above by three.  Hence, we have a coarsely well-defined subsurface projection map which by abuse of notation we refer to as $\pi_{\mathcal{C}(Y)} \co \mathcal{C}(S) \ra \mathcal{C}(Y) \cup \emptyset.$  In particular, if $\sigma$ is any connected path in $\mathcal{C}(S)$ of length $n,$ and $Y$ is any connected subsurface such that every curve in the path $\sigma$ intersects $Y,$ then $diam_{\mathcal{C}(Y)}(\sigma)\leq 3n.$  
\end{lem}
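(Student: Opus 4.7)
The plan is to prove the diameter bound for a single curve by direct analysis of the arcs of $\alpha \cap Y$, and then to promote it to a bound on the projection of a path by exploiting the fact that consecutive vertices of a path in $\mathcal{C}(S)$ together form a disjoint multicurve, to which the single-curve argument generalizes verbatim.

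First I would dispose of the easy case $\alpha \subset Y$, where $\pi_{\mathcal{C}(Y)}(\alpha) = \{\alpha\}$. Otherwise, isotope $\alpha$ so that it meets $\partial Y$ transversely and minimally, and write $\alpha \cap Y$ as a disjoint union of arcs $a_1, \ldots, a_k$ with endpoints on $\partial Y$. By definition, each element of $\pi_{\mathcal{C}(Y)}(\alpha)$ arises as a non-peripheral boundary component of a regular neighborhood $N_i := N(a_i \cup \partial_\ast Y)$ of some single arc $a_i$ together with the adjacent boundary component(s) of $Y$ that its endpoints lie on.

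The key step is to show that any two such surgery curves lie within $\mathcal{C}(Y)$-distance $3$, which I would break into two sub-claims. First, the at most two essential curves produced from a single $N_i$ are distinct non-peripheral boundary components of $N_i$, hence disjoint, and so at distance $\leq 1$ in $\mathcal{C}(Y)$. Second, for $i \neq j$ the arcs $a_i, a_j$ are disjoint in $Y$, so the neighborhoods $N_i, N_j$ can be chosen thin enough that some non-peripheral boundary of $N_i$ is disjoint from some non-peripheral boundary of $N_j$, yielding a pair of curves in $\pi_{\mathcal{C}(Y)}(a_i)$ and $\pi_{\mathcal{C}(Y)}(a_j)$ at distance $\leq 1$. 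Chaining the two sub-claims produces a path of at most $3$ disjoint curves between any element of $\pi_{\mathcal{C}(Y)}(a_i)$ and any element of $\pi_{\mathcal{C}(Y)}(a_j)$, proving the diameter bound.

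For the path statement, the arc analysis above used only that $\alpha$ is a disjoint union of simple closed curves, so the same diameter-$3$ bound applies to the projection of any multicurve $\mu$. Given a path $\sigma = (\alpha_0, \ldots, \alpha_n)$ in $\mathcal{C}(S)$ with each $\alpha_\ell$ intersecting $Y$, consecutive vertices $\alpha_\ell, \alpha_{\ell+1}$ are disjoint, so $\alpha_\ell \cup \alpha_{\ell+1}$ is a multicurve whose projection has diameter $\leq 3$. Iterating via the triangle inequality yields $d_{\mathcal{C}(Y)}(\gamma, \gamma') \leq 3n$ for any $\gamma \in \pi_{\mathcal{C}(Y)}(\alpha_0)$ and $\gamma' \in \pi_{\mathcal{C}(Y)}(\alpha_n)$ and, more generally, $diam_{\mathcal{C}(Y)}(\sigma) \leq 3n$. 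I expect the main technical point to be the second sub-claim: carefully confirming that disjoint non-peripheral essential representatives in $\pi_{\mathcal{C}(Y)}(a_i)$ and $\pi_{\mathcal{C}(Y)}(a_j)$ always exist, particularly in degenerate cases where $a_i$ and $a_j$ share a boundary component of $Y$ or are interleaved along it, or where some $N_i$ is a pair of pants with a single non-peripheral boundary curve. These are low-complexity case checks requiring attention to how regular neighborhoods abut along $\partial Y$, but not genuine obstructions beyond the basic fact that disjoint arcs can be thickened disjointly.
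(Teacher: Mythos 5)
Your reduction from paths to disjoint pairs of curves and your first sub-claim are fine, but the second sub-claim — that disjoint arcs $a_i, a_j$ always yield a disjoint pair of surgery curves — is false, and the interleaved case you flag as a ``low-complexity case check'' is a genuine topological obstruction, not a matter of choosing regular neighborhoods carefully.

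Here is a counterexample. Take $Y = S_{1,2}$ with boundary $\partial_1 \sqcup \partial_2$, and let $a, b$ be disjoint arcs, each with both endpoints on $\partial_1$, with endpoints interleaved along $\partial_1$ in cyclic order $p_1, q_1, p_2, q_2$ (such disjoint interleaved arcs exist because $Y$ has genus, and they arise as $\alpha \cap Y$ for a simple closed curve $\alpha$ in a larger surface whose exterior arcs join the adjacent pairs $p_2, q_1$ and $q_2, p_1$). The surgery curves $c_{a,1}, c_{a,2}$ for $a$ are essential non-peripheral boundary components of the pair of pants $N(a \cup \partial_1)$, lying on the two sides of $a$. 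Because $q_1$ and $q_2$ lie on opposite sides of $a$, the arc $b$ exits $N(a \cup \partial_1)$ through $c_{a,1}$ and re-enters through $c_{a,2}$; hence $b$, and with it each pushoff $c_{b,1}, c_{b,2}$, meets each of $c_{a,1}, c_{a,2}$ in exactly one essential intersection point. Since $\xi(Y)=2$, adjacency in $\mathcal{C}(Y)$ is genuine disjointness, so $d_{\mathcal{C}(Y)}(c_{a,i}, c_{b,j}) \geq 2$ for all four pairs: there is no pair at distance $\leq 1$, and the chain of length $3$ you propose does not exist. (The conclusion of the lemma still holds here — the diameter is exactly $2$, witnessed by the essential curve $\partial N(c_{a,1} \cup c_{b,1})$ — just not by this chaining.)

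The structural problem is that every $N_i = N(a_i \cup \partial_\ast Y)$ contains the shared boundary component, so the $N_i$ overlap no matter how thin they are, and disjointness of arcs does not propagate to disjointness of their surgery curves once arcs are interleaved along $\partial Y$. The argument in \cite{mm2} proceeds differently: it considers the regular neighborhood of the \emph{entire} arc system $\alpha \cap Y$ together with $\partial Y$ at once, so as to produce a single essential curve simultaneously disjoint from every surgery curve, rather than arguing arc-by-arc; the case where that arc system fills $Y$ relative to $\partial Y$, and the Farey-graph convention when $\xi(Y)=1$, then need separate handling. Patching the pairwise scheme would amount to reproving this, so I would consult \cite{mm2} directly for the correct route.
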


The next theorem describes a situation in which subsurface projection maps geodesics in the curve complex to uniformly bounded diameter subsets in the curve complex of a connected essential subsurface.  
\begin{thm}[\cite{mm2}, Theorem 3.1; Bounded Geodesic Image] \label{thm:projdefined}  Let $Y\in \e{S}$ be a connected proper essential subsurface of $S,$ and let $g$ be a geodesic (segment, ray, or bi-infinite) in $\mathcal{C}(S)$ such that every curve corresponding to a vertex of $g$ intersects $Y,$ then $diam_{\mathcal{C}(Y)} (g) $ is uniformly bounded by a constant $K(S)$ depending only on the topological type of $S.$
\end{thm}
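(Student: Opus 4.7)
The plan is to prove the Bounded Geodesic Image Theorem by reducing, via the hyperbolicity of $\mathcal{C}(S)$ and the quasi-isometry between Teichm\"uller space and the pants/marking apparatus, to a statement about short curves along Teichm\"uller geodesics. The contrapositive is the cleanest formulation: if $\operatorname{diam}_{\mathcal{C}(Y)}(g)$ exceeds a constant $K(S)$, then some vertex of $g$ is disjoint from $Y$.

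First I would set up a Teichm\"uller geodesic $\mathcal{G}$ in $\mathcal{T}(S)$ joining points $X_\alpha,X_\beta$ whose Bers pants decompositions contain the endpoints $\alpha,\beta$ of $g$. Invoking the Bers constant (Theorem \ref{thm:bers}) and the fellow-traveling theorem of Masur--Minsky for Teichm\"uller geodesics in $\mathcal{C}(S)$, the sequence of short curves along $\mathcal{G}$ traces out an unparametrized quasi-geodesic that stays within bounded Hausdorff distance of the $\mathcal{C}(S)$-geodesic $g$. Because $\mathcal{C}(S)$ is $\delta$-hyperbolic, the precise choice of geodesic $g$ is irrelevant up to bounded error.

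Next, I would push the projection analysis onto $\mathcal{G}$. Along $\mathcal{G}$ there is a dichotomy at each time $t$: either some Bers-short curve on $X_t$ intersects $Y$ essentially, or every Bers-short curve is disjoint from $Y$, which forces $\partial Y$ itself to become uniformly short on $X_t$ (by a standard extremal-length/collar argument, since a subsurface whose boundary is not short must contain some essential intersecting short curve when the systole is bounded below). In the first regime, subsurface projection changes in a controlled, coarsely Lipschitz way (by Lemma \ref{lem:curveprojub} applied to successive Bers curves, which are disjoint), so projection distance accumulated in such times is at most linear in the number of elementary moves; but by combining Theorem \ref{thm:brock} with the fact that the Bers sequence fellow-travels $g$, this count is in turn bounded in terms of $d_{\mathcal{C}(Y)}(\pi_Y(\alpha),\pi_Y(\beta))$ only up to a multiplicative constant plus an additive error, which does not yet give the theorem.

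The main obstacle, therefore, is to show that large projection to $Y$ forces the second regime to occur, i.e.\ that $\partial Y$ must become short somewhere along $\mathcal{G}$. The idea is that if $\partial Y$ is never short, then the restriction of $\mathcal{G}$ projects to a Teichm\"uller-like geodesic in $\mathcal{T}(Y)$, and the Bers short curves of the ambient surface, projected to $Y$, form a quasi-geodesic in $\mathcal{C}(Y)$ whose length is then forced to be uniformly bounded by the hyperbolicity of $\mathcal{C}(Y)$ together with a compactness argument using the finiteness of topological types of essential subsurfaces (so that the constants depend only on $S$). Once $\partial Y$ is short at some $X_t$, the fellow-traveling statement in Step~1 produces a vertex $v$ of $g$ with $d_{\mathcal{C}(S)}(v,\partial Y)\le C$; enlarging $K(S)$ so that $C<1$ in the relevant sense yields $v\cap Y=\emptyset$, contradicting the hypothesis on $g$. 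Closing the argument by extracting an explicit $K(S)$ from the uniform constants (Bers constant, hyperbolicity constant of $\mathcal{C}(S)$, fellow-traveling constant, finiteness of subsurface types) gives the theorem.
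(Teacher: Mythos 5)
This statement is cited from \cite{mm2} (Theorem 3.1) and is not proved in the paper; there is no in-paper proof to compare against, so I can only assess your proposal on its own terms. Your route — deforming the curve-complex geodesic $g$ to a Teichm\"uller geodesic $\mathcal{G}$, tracking Bers-short curves, and arguing that $\partial Y$ must become short — is genuinely different from the known proofs of this theorem (Masur--Minsky's original nesting argument in $\mathcal{C}(S)$, and Webb's later unicorn-curve argument), both of which are purely combinatorial inside the curve complex and do not pass through Teichm\"uller geometry at all.

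There are two concrete gaps. First, the transfer step does not come for free: the systole sequence along $\mathcal{G}$ is an unparametrized quasi-geodesic that fellow-travels $g$ at some Hausdorff distance $D$ in $\mathcal{C}(S)$, but proximity in $\mathcal{C}(S)$ does \emph{not} control $\pi_{\mathcal{C}(Y)}$ unless the connecting paths also consist of curves intersecting $Y$ (Lemma~\ref{lem:curveprojub} applies only in that case); if those auxiliary paths cross the $1$-neighborhood of $\partial Y$, the projection can jump arbitrarily, so you cannot conclude $\operatorname{diam}_{\mathcal{C}(Y)}(g)\approx\operatorname{diam}_{\mathcal{C}(Y)}(\text{systoles})$ without a separate argument. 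Second, and more decisively, the closing step fails: even granting that $\partial Y$ becomes Bers-short on $\mathcal{G}$, fellow-traveling only places $\partial Y$ within $\mathcal{C}(S)$-distance $D$ of $g$, whereas the contrapositive you want requires a vertex of $g$ at distance $\leq 1$ from $\partial Y$ (i.e.\ disjoint from $Y$). The phrase ``enlarging $K(S)$ so that $C<1$'' is a non sequitur — $D$ is a fixed fellow-traveling constant determined by \cite{mm1}, not something that shrinks as you increase the projection threshold. In between, the claim that hyperbolicity of $\mathcal{C}(Y)$ plus ``finiteness of topological types'' bounds the length of the projected quasi-geodesic is also unsupported; hyperbolicity bounds thinness of triangles, not lengths of quasi-geodesics. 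To make this strategy work you would essentially need Rafi's characterization of short curves along Teichm\"uller geodesics together with a sharper ``active interval'' statement locating $\partial Y$ \emph{on} $g$ rather than merely near it, and at that point the Teichm\"uller detour is both heavier and less self-contained than the direct combinatorial proofs.
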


In addition to projecting curves, we can similarly project multicurves.  In particular, we can project pants decompositions of surfaces to essential subsurfaces.  Specifically, for any essential subsurface $Y$ we have an induced coarsely well-defined projection map: $$\pi_{\mathcal{P}(Y)} \co \mathcal{P}(S) \ra \mathcal{P}(Y)$$  The induced map is defined as follows.  Beginning with any pair of pants $P \in \mathcal{P}(S)$ there is at least one curve $\alpha_{1} \in P$ intersecting $Y.$   We then proceed to construct a pants decomposition of $Y$ inductively.  As our first curve we simply pick any curve $ \beta_{1} \in \pi_{\mathcal{C}(Y)}(\alpha_{1}).$  Then, we consider the surface $Y \setminus \beta_{1}$ and notice that $\xi(Y \setminus \beta_{1}) = \xi(Y) -1.$  Replace $Y$ by $Y \setminus \beta_{1}$ and repeat this process until the complexity is reduced to zero.  At this point, the curves $\{ \beta_{i} \}$ are a pants decomposition of the essential subsurface $Y.$  Due to all the choice, the above process does not produce a unique pants decomposition.  Nonetheless, as in Lemma \ref{lem:curveprojub} the map is coarsely well-defined and in fact is coarsely Lipschitz with uniform constants \cite{mm2, behrstock}.  

The next lemma makes precise a sense in which distances under projections to curve complexes of overlapping surfaces are related to each other.  Intuitively, the point is that the distance in one subsurface projection can be large only at the expense of the distance in all overlapping essential subsurfaces being controlled.  
\begin{lem}[\cite{behrstock, mangahas} Theorem 4.3, Lemma 2.5; Behrstock Inequality]\label{lem:projectionestimates} For $S=S_{g,n},$ let $W,V\in \e{S}$ be such that $W \pitchfork V.$  Then, $\forall P \in \mathcal{P}(S)\co$
$$\min \left( d_{\mathcal{C}(W)}(\mu,\partial V), d_{\mathcal{C}(V)}(P,\partial W) \right) \leq 10$$
\end{lem}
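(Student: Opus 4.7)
The plan is to prove the contrapositive: assume $d_{\mathcal{C}(W)}(P,\partial V) > 10$ and deduce $d_{\mathcal{C}(V)}(P,\partial W) \leq 10$ (or whatever threshold the bookkeeping yields, which we then set to obtain the clean constant 10). The intuition is that since $W \pitchfork V$, the curves $\partial V$ and $\partial W$ each cut essentially into the other subsurface, and a large projection distance in one of the two curve complexes forces any projecting curve to wrap tightly around the other's boundary.

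First I would show that every curve of $P$ meeting $W$ must also meet $V$. Pick $\alpha \in P$ with $\alpha \cap W \neq \emptyset$, which exists because $P$ is a pants decomposition of $S$. If $\alpha$ were disjoint from $V$ then $\alpha$ and $\partial V$ would be disjoint curves on $S$ both intersecting $W$, so by the surgical construction of $\pi_{\mathcal{C}(W)}$ together with Lemma \ref{lem:curveprojub}, their projections would be within uniformly bounded distance in $\mathcal{C}(W)$, contradicting the hypothesis on $P$. Hence this $\alpha$ overlaps both $W$ and $V$.

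Second I would analyze a geodesic $g = v_0 v_1 \cdots v_n$ in $\mathcal{C}(W)$ from $v_0 \in \pi_{\mathcal{C}(W)}(\alpha)$ to $v_n \in \pi_{\mathcal{C}(W)}(\partial V)$; by hypothesis $n$ is large. The dichotomy is whether every $v_i$ meets the essential intersection $W \cap V$ or not. If some $v_j$ misses $V$, then $v_j$ and $\partial V$ are disjoint curves in $W$, so $v_j$ and $v_n$ lie within a bounded distance of each other on $g$; the vertices $v_0,\ldots, v_{n-c}$ (for a uniform $c$) still all meet $V$, and we restrict attention to the corresponding initial segment. Apply Theorem \ref{thm:projdefined} (Bounded Geodesic Image), viewing $g$ as a geodesic in the curve complex $\mathcal{C}(W)$ of the ambient surface $W$ and taking the essential subsurface to be $V \cap W \subset W$: every vertex meets $V \cap W$, so the diameter of $\pi_{\mathcal{C}(V \cap W)}(g)$ is uniformly bounded by some $K$ depending only on the topology.

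Third I would translate this diameter bound back to $\mathcal{C}(V)$. The inclusion $V \cap W \hookrightarrow V$ shows that projections to $\mathcal{C}(V \cap W)$ and $\mathcal{C}(V)$ agree up to bounded error, so $\pi_{\mathcal{C}(V)}(v_0)$ and $\pi_{\mathcal{C}(V)}(v_n)$ are within bounded distance. Finally, the curve $v_n = \pi_{\mathcal{C}(W)}(\partial V)$ was produced by surgering arcs of $\partial V \cap W$ along $\partial W$, so its essential part in $V$ is cobounded with $\partial W \cap V$; hence $d_{\mathcal{C}(V)}(v_n, \partial W)$ is uniformly bounded. Combining via the triangle inequality yields $d_{\mathcal{C}(V)}(\alpha, \partial W) \leq K + O(1)$, which for the constant $10$ in the lemma is a matter of careful accounting. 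The main obstacle is this last translation step: the fact that the projection of a projection ($\pi_{\mathcal{C}(V)} \circ \pi_{\mathcal{C}(W)}$ applied to $\partial V$) lands near $\partial W$ depends on understanding the surgery carefully, and keeping the constants sharp enough to land at $10$ requires examining the cases where $V \cap W$ is empty, an annulus, or a more substantial subsurface separately; in degenerate cases the argument collapses to simple disjointness estimates.
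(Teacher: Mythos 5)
This lemma is cited in the paper (Behrstock's thesis Theorem 4.3, Mangahas Lemma 2.5) rather than proved, so there is no in-paper argument to compare against; I assess your attempt on its own. The high-level intuition is right, but routing the argument through the essential intersection $V\cap W$ creates genuine gaps. First, the essential intersection can be empty even when $W \pitchfork V$: the paper explicitly warns (in the paragraph preceding Figure~\ref{fig:essentialintersections}) that $W\cap V$ ``may be trivial even if the essential subsurfaces $W,V$ are not disjoint,'' because the set-theoretic overlap can consist only of annuli and pairs of pants, which are excluded from being essential. In that case there is no subsurface of $W$ for the Bounded Geodesic Image theorem to act on, and your argument has nothing to run. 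Even when $V\cap W \neq \emptyset$, ``the vertex $v_i$ meets $V$'' does not imply ``$v_i$ meets the essential subsurface $V \cap W$'': a curve in $W$ can cross $\partial V$ and thus meet $V$ while remaining entirely in the non-essential parts of the overlap, so the BGI hypothesis can still fail along your trimmed geodesic.

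Second, and more seriously, the translation in your step three is false. For a proper essential subsurface $U = V\cap W \subsetneq V$, projections to $\mathcal{C}(U)$ and to $\mathcal{C}(V)$ do not agree up to bounded error: a curve can be disjoint from $U$ (so its $\pi_{\mathcal{C}(U)}$ is empty) yet have an uncontrolled position in $\mathcal{C}(V)$, and a bound on $\mbox{diam}_{\mathcal{C}(U)}$ gives no bound on $\mbox{diam}_{\mathcal{C}(V)}$. The Behrstock inequality is not an instance of BGI applied inside $W$; the actual proof (Mangahas, Lemma 2.5) is a direct topological argument showing that when $d_{\mathcal{C}(W)}(\gamma,\partial V)$ is large, the arcs of $\gamma$ in $V$ are forced to run parallel to $\partial W$, yielding $d_{\mathcal{C}(V)}(\gamma,\partial W)\leq 4$ without ever projecting to a subsurface of $W$. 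You need to replace your BGI-based steps with this kind of arc-level argument rather than searching for a subsurface to which BGI applies.
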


Utilizing the projection $\pi_{\mathcal{P}(Y)} \co \mathcal{P}(S) \ra \mathcal{P}(Y),$ for $\overline{Y} \in \eultra{S}$ we can define $\mathcal{P}_{\omega}(\overline{Y})$ to be the ultralimit of $\mathcal{P}(Y_{i}).$  It is clear that $\mathcal{P}_{\omega}(\overline{Y})$ is isomorphic to $\mathcal{P}_{\omega}(Y)$ for $Y$ an essential subsurface \uas isotopic to $Y_{i}.$  Moreover, extending the coarsely well-defined Lipschitz projection $\pi_{\mathcal{P}(Y)} \co \mathcal{P}(S) \ra \mathcal{P}(Y)$ to the asymptotic cone, we have a Lipschitz projection $$\pi_{\mathcal{P}_{\omega}(\overline{Y})}: \mathcal{P}_{\omega}(S) \ra \mathcal{P}_{\omega}(\overline{Y}).$$

\subsubsection{Tight Geodesics and Hierarchies}\label{subsec:tight}
 
A fundamental obstacle in studying geodesics in the curve complex stems from the fact that the 1-skeleton is locally infinite.  In an effort to navigate this problem, in \cite{mm2} Masur-Minsky introduced a notion of \emph{tight multigeodesics}, or simply \emph{tight geodesics}, in $\mathcal{C}(S).$  Specifically, for $S$ a surface of finite type with $\xi(S)\geq 2,$ a tight geodesic in $\mathcal{C}(S)$ is a sequence of simplices $\sigma=(w_{0},...,w_{n})$ such that the selection of any curves $v_{i}\in w_{i}$ yields a geodesic in $\mathcal{C}(S)$ and moreover, for $1 \leq i \leq n-1,$ the simplex $w_{i}$ is the boundary of the essential subsurface filled by the curves $w_{i-1} \cup w_{i+1}.$  In the case of a surface $S$ with $\xi(S)=1$ every geodesic is considered tight.  For $\sigma$ a tight geodesic as above, we use the notation $[w_{i},w_{j}]=(w_{i},...,w_{j})$ to refer to a subsegment of the tight geodesic.  In \cite{mm2} it is shown that any two curves in $\mathcal{C}(S)$ can be joined by a tight geodesic (and in fact there are only finitely many).  

Using tight geodesics, in \cite{mm2} a 2-transitive family of quasi-geodesics, with constants depending on the topological type of $S,$ in $\mathcal{P}(S)$ called \emph{hierarchies}, are developed.  Since we are interested in paths in the pants complex as opposed to the marking complex, unless specified otherwise we use the term ``hierarchies'' to refer to what are in fact called ``resolutions of hierarchies without annuli'' in \cite{mm2}.  The construction of hierarchies which are defined inductively as a union of tight geodesics in the curve complexes of connected essential subsurfaces of $S$ is technical.  For our purposes, it will suffice to record some of their properties in the following theorem.  See \cite{brockmasur} Definition 9 for a similar statement. 

\begin{thm} [\cite{mm2} Section 4; Hierarchies]  \label{thm:hierarchy} For $S$ any surface of finite type, given $P,Q \in \mathcal{P}(S),$ there exists a hierarchy path $\rho=\rho(P,Q)\co [0,n] \ra \mathcal{P}(S)$ with $\rho(0)=P,$ $\rho(n)=Q.$  Moreover, $\rho$ is a quasi-isometric embedding with uniformly bounded constants depending only on the topological type of $S,$ which has the following properties: 
\begin{enumerate}
\item  [H1:] The hierarchy $\rho$ \emph{shadows} a tight $\mathcal{C}(S)$ geodesic $g_{S}$ from a multicurve $p \in P$ to a multicurve $q\in Q,$ called the \emph{main geodesic of the hierarchy}.  That is, there is a monotonic map $\nu\co \rho \ra g_{S}$ such that $\forall i, \; \nu_{i}=\nu(\rho(i))\in g_{S}$ is a curve in the pants decomposition $\rho(i).$ 
\item [H2:] There is a constant $M_{1}$ such that if $Y \in \e{S}$ satisfies $d_{\mathcal{C}(Y)}(P,Q) >M_{1},$ then there is a maximal connected interval $I_{Y}=[t_{1},t_{2}]$ and a tight geodesic $g_{Y}$ in $\mathcal{C}(Y)$  from a multicurve in $\rho(t_{1})$ to a multicurve in $\rho(t_{2})$ such that for all $t_{1}\leq t \leq t_{2},$ $\partial Y$ is a multicurve in $\rho(t),$ and  $\rho |_{I_{Y}}$ shadows the geodesic $g_{Y}.$  Such a connected essential subsurface $Y$ is called an \emph{$M_{1}$-component domain} or simply a \emph{component domain} of $\rho.$  By convention the entire surface $S$ is always considered a component domain.  
\item [H3:] If $Y_{1} \pitchfork Y_{2}$ are two component domains of $\rho,$ then there is a notion of time ordering $<_{t}$ of the domains with the property that $Y_{1} <_{t} Y_{2},$ implies  $d_{Y_{2}}(P,\partial Y_{1})<M_{1}$ and $d_{Y_{1}}(Q,\partial Y_{2})<M_{1}.$  Moreover, the time ordering is independent of the choice of the hierarchy $\rho$ from $P$ to $Q.$    
\item [H4:] For $Y$ a component domain with $I_{Y}=[t_{1},t_{2}],$ let $0 \leq s \leq t_{1},$ $ t_{2} \leq u \leq n.$  Then, $$d_{\mathcal{C}(Y)}(\rho(s),\rho(t_{1})), d_{\mathcal{C}(Y)}(\rho(u),\rho(t_{2}))\leq M_{1} .$$
\end{enumerate}
\end{thm}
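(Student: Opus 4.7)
The plan is to construct the hierarchy path inductively by nesting, starting from a tight geodesic in $\mathcal{C}(S)$ and recursively adding tight geodesics in every subsurface where the projection distance between $P$ and $Q$ is large. In more detail, I would first establish existence of a tight geodesic $g_{S}$ in $\mathcal{C}(S)$ joining some curve in $P$ to some curve in $Q$. Tightness can be produced by picking any curve-complex geodesic $(v_{0},\dots,v_{n})$ and replacing each intermediate $v_{i}$ by the boundary multicurve of the essential subsurface filled by $v_{i-1}\cup v_{i+1}$; a standard filling argument (cf.\ Masur--Minsky II) shows this yields a multigeodesic, and a compactness argument using that only finitely many simplices appear at each position shows there are only finitely many such tight multigeodesics between two curves. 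This will be the main geodesic, and will give property H1.

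Next, to produce H2 I would define a domain $Y\subsetneq S$ to be a \emph{component domain} if $\partial Y$ appears as (part of) a simplex on the main geodesic and $d_{\mathcal{C}(Y)}(P,Q)$ exceeds a threshold $M_{1}$. For each such $Y$ I would recursively construct a tight geodesic $g_{Y}$ in $\mathcal{C}(Y)$ between appropriate projections, decreasing on complexity $\xi(Y)$. A finite \emph{resolution} is then obtained by processing these nested tight geodesics in a consistent order and doing elementary pants moves: at each time $t$ the current pants decomposition $\rho(t)$ contains $\partial Y$ for all the active component domains and contains the currently active vertex of each corresponding tight geodesic, and elementary moves advance along the lowest-complexity active geodesic first. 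This exactly produces the shadow map $\nu$ of H1 and the local shadowing statement $\rho|_{I_{Y}}$ of H2.

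For H3, I would use the Behrstock inequality (Lemma \ref{lem:projectionestimates}) applied to overlapping component domains $Y_{1}\pitchfork Y_{2}$: since both projection distances $d_{Y_{1}}(P,Q)$ and $d_{Y_{2}}(P,Q)$ exceed $M_{1}$, the inequality forces $d_{Y_{1}}(\partial Y_{2},-)$ and $d_{Y_{2}}(\partial Y_{1},-)$ to behave antisymmetrically at $P$ versus $Q$, which is exactly the data of the time ordering $<_{t}$; independence of $\rho$ follows because the ordering is defined purely in terms of $P,Q$ and the projections. For H4 I would invoke the Bounded Geodesic Image theorem (Theorem \ref{thm:projdefined}) on the main geodesic of $Y$: outside $I_{Y}$, the pants decomposition $\rho(s)$ contains $\partial Y$ only trivially, so the restriction of the hierarchy to $[0,t_{1}]$ shadows a tight $\mathcal{C}(S')$-geodesic (in some larger domain $S'$) none of whose vertices meet $Y$ nontrivially, hence projects to a uniformly bounded set in $\mathcal{C}(Y)$. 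Finally, the quasi-isometric embedding property follows from the \emph{distance formula} of Masur--Minsky, which sums the subsurface projection distances along all component domains and compares the result to distance in $\mathcal{P}(S)$.

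The main obstacle is carefully setting up the inductive resolution so that the time ordering, the interval $I_{Y}$, and the shadowing statement are all mutually consistent, and in particular verifying that the threshold $M_{1}$ can be chosen uniformly in the topological type of $S$ so that H3 and H4 hold simultaneously; this requires the delicate interplay between the Behrstock inequality and the Bounded Geodesic Image theorem, and is the technical heart of \cite{mm2}.
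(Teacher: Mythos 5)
This statement is imported verbatim from Masur--Minsky \cite{mm2} (Section~4 and related parts of Sections~5--6); the paper under review states it purely as background and supplies no proof of its own, so there is no internal argument against which your sketch can be compared. Taken on its own merits as a reconstruction of the Masur--Minsky development, your outline is broadly faithful in shape: iterated tightening of a curve-complex geodesic to produce the main geodesic, a complexity-descending recursion that attaches tight geodesics to component domains, a ``resolution'' of the resulting family into a path of elementary pants moves giving the shadow map for H1 and H2, and the distance formula delivering the quasi-isometric embedding.

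Two points, however, need repair before this could be called a proof. First, for H3 you appeal to the Behrstock inequality (Lemma~\ref{lem:projectionestimates}), but in the original logical development of \cite{mm2} the projection estimates constituting H3 (the ``order lemma,'' Lemmas 4.18--4.21 of \cite{mm2}) come \emph{before} Behrstock's inequality, which was subsequently derived from the hierarchy machinery in \cite{behrstock}. Using it here is circular unless you explicitly route through an independent proof of the inequality such as Mangahas's; even then, one must still check that the relation you extract is a bona fide partial order and that it agrees with the order induced by the slice structure of an actual hierarchy, which takes additional work. Second, your argument for H4 does not go through as stated: for $s<t_1$ the pants decomposition $\rho(s)$ generically does \emph{not} contain $\partial Y$, so there is no single ambient tight geodesic disjoint from $Y$ to which Bounded Geodesic Image applies in the way you describe. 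What Masur--Minsky actually prove is a statement about the forward and backward sequences associated to the domain $Y$ inside the hierarchy (\cite{mm2} Lemmas 6.1--6.2), which shows that the entire portion of $\rho$ prior to $I_Y$ projects to a coarse point in $\mathcal{C}(Y)$; BGI is one ingredient in that argument, not the whole of it.
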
  

As a corollary of Theorem \ref{thm:hierarchy}, we have the following quasi-distance formula for computing distances in $\mathcal{P}(S)$ in terms of a sum of subsurface projection distances, where the sum is over all connected essential subsurfaces above a certain threshold. 
\begin{thm}[\cite{mm2} Theorem 6.12; Quasi-Distance Formula]  \label{thm:quasidistance}For $S=S_{g,n}$ there exists a \emph{minimal threshold} $M_{2}$ depending only on the surface $S$ and quasi-isometry constants depending only on the surface $S$ and the threshold $M\geq M_{2}$ such that: 
$$d_{\mathcal{P}(S)}(P,Q) \approx \sum_{Y\in \e{S}} \{ d_{\mathcal{C}(Y)}(P,Q) \}_{M} $$ 
 where the \emph{threshold function} $\{ f(x) \}_{M} :=f(x) \;\; \mbox{ if } f(x) \geq M,$ and $0$ otherwise.
\end{thm}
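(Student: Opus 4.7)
The plan is to prove the two inequalities comprising the quasi-equality separately, using Theorem \ref{thm:hierarchy} on hierarchies for the upper bound on $d_{\mathcal{P}(S)}(P,Q)$ and a Lipschitz/Behrstock-inequality counting argument for the lower bound. The threshold $M_{2}$ will be chosen to dominate both $M_{1}$ from Theorem \ref{thm:hierarchy} and the Behrstock constant (which is $10$) from Lemma \ref{lem:projectionestimates}.

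For the upper bound $d_{\mathcal{P}(S)}(P,Q) \lesssim \sum_{Y\in \e{S}} \{d_{\mathcal{C}(Y)}(P,Q)\}_{M}$, I would produce an explicit short path: the hierarchy path $\rho = \rho(P,Q)\co [0,n] \to \mathcal{P}(S)$ provided by Theorem \ref{thm:hierarchy}. By property H2, the component domains of $\rho$ are precisely the connected essential subsurfaces $Y$ with $d_{\mathcal{C}(Y)}(P,Q) > M_{1}$, and each such $Y$ carries a tight geodesic $g_{Y}$. The structure of a hierarchy is that each elementary pants move along $\rho$ can be charged to advancing one step along some $g_{Y}$, so $|\rho| \lesssim \sum_{Y \text{ a comp. domain}} |g_{Y}|$. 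Property H4 together with H2 gives $|g_{Y}| \approx d_{\mathcal{C}(Y)}(P,Q)$ for each component domain. Choosing $M \ge M_{2} \ge M_{1}$ guarantees that every subsurface contributing to the thresholded sum is a component domain, yielding $d_{\mathcal{P}(S)}(P,Q) \le |\rho| \lesssim \sum_{Y} \{d_{\mathcal{C}(Y)}(P,Q)\}_{M}$.

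For the lower bound $\sum_{Y}\{d_{\mathcal{C}(Y)}(P,Q)\}_{M} \lesssim d_{\mathcal{P}(S)}(P,Q)$, I would take any geodesic $P=P_{0},P_{1},\ldots,P_{n}=Q$ in $\mathcal{P}(S)$, and for each subsurface $Y$ with $d_{\mathcal{C}(Y)}(P,Q) \ge M$ define an \emph{active interval} $J_{Y} \subset \{0,\ldots,n\}$ recording the indices $i$ where $\pi_{\mathcal{C}(Y)}(P_{i})$ is not within bounded distance of either endpoint projection. Since $\pi_{\mathcal{C}(Y)}$ is coarsely Lipschitz (Lemma \ref{lem:curveprojub}), the contribution of $Y$ satisfies $d_{\mathcal{C}(Y)}(P,Q) \lesssim |J_{Y}|$. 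The key is to sum these contributions without overcounting: for overlapping $Y_{1} \pitchfork Y_{2}$, Lemma \ref{lem:projectionestimates} forces that at any index $i$ where $\pi_{\mathcal{C}(Y_{1})}(P_{i})$ is far from $\partial Y_{2}$, the projection $\pi_{\mathcal{C}(Y_{2})}(P_{i})$ is within $10$ of $\partial Y_{1}$; taking $M > 100$ (say) then forces $J_{Y_{1}}$ and $J_{Y_{2}}$ to be essentially disjoint. For disjoint $Y_{1},Y_{2}$, a single elementary pants move can only alter one of the two projections nontrivially. Putting these bounds together, the active intervals partition (up to bounded overlap) a subset of $\{0,\ldots,n\}$, so $\sum_{Y} \{d_{\mathcal{C}(Y)}(P,Q)\}_{M} \lesssim \sum_{Y} |J_{Y}| \lesssim n = d_{\mathcal{P}(S)}(P,Q)$.

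The main obstacle is the lower bound, and specifically the bookkeeping that converts the Behrstock inequality into a genuine disjointness statement on active intervals. The delicate point is that Lemma \ref{lem:projectionestimates} is pointwise, so converting it into an interval-level statement requires carefully defining when a subsurface is "active" at an index and verifying that the transitions between active intervals are controlled; the time-ordering property H3 of hierarchies provides the right combinatorial framework for doing so, and is precisely what makes the choice $M \ge M_{2}$ sufficient. Once this organization is in place, the rest of the argument is a Lipschitz estimate.
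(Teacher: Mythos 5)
This is one of the paper's cited background results: it is recorded as Theorem \ref{thm:quasidistance} with attribution to [mm2] and is not proved in the paper, so there is no paper proof to compare against; the remark that it is ``a corollary of Theorem \ref{thm:hierarchy}'' is just a pointer, not a derivation. Evaluating your sketch on its own terms: the upper bound is correct in outline and in fact matches the standard route (the move-count estimate for the hierarchy resolution together with H2/H4), so nothing to fix there.

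The lower bound is where the real work lies, and your sketch contains a specific false claim. You assert that ``for disjoint $Y_{1},Y_{2}$, a single elementary pants move can only alter one of the two projections nontrivially.'' This is not true: the one curve changed in an elementary move can intersect arbitrarily many pairwise-disjoint essential subsurfaces simultaneously, and then all of those projections move (each by a uniformly bounded amount, but each nontrivially), so the active intervals $J_{Y_1}$ and $J_{Y_2}$ need not be even approximately disjoint. What one actually needs is a bounded-multiplicity statement: at any fixed index $i$ along the path, the number of subsurfaces $Y$ with $i\in J_Y$ is bounded by a constant depending only on $\xi(S)$. For pairs of overlapping domains the Behrstock inequality (and the time-ordering you invoke) does the heavy lifting; for nested or disjoint domains one uses topological bounds on how many can coexist; but assembling these into a uniform multiplicity bound is exactly the combinatorial core of the theorem and is not a matter of bookkeeping that can be waved at. It is also worth noting that [mm2]'s own lower-bound argument does not proceed this way: they first prove that resolutions of hierarchies are quasi-geodesics, and then the lower bound is read off from the move-count estimate for the hierarchy path itself. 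Your active-interval scheme is closer in spirit to later treatments (Behrstock's thesis and the coarse-median/hierarchically-hyperbolic literature) than to the route the citation actually takes.
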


Note that by setting $M'=\max\{10,K,M_{1},M_{2}\}$ we have a single constant $M',$ depending only on the topology of the surface $S,$ which simultaneously satisfies Lemmas \ref{lem:curveprojub} and \ref{lem:projectionestimates}, and Theorems \ref{thm:projdefined}, \ref{thm:hierarchy}, and \ref{thm:quasidistance}.  Throughout we will use this constant $M'.$  

Sequences of hierarchies in the pants complex give rise to ultralimits of hierarchies in the asymptotic cone of the pants complex.  Specifically, given $x_{\omega},y_{\omega}\in \mathcal{P}_{\omega}(S)$ with representatives $(x_{i}),(y_{i}),$ respectively, let $\rho_{\omega}$ be the ultralimit of the sequence of hierarchy paths $\rho_{i}$ from $x_{i}$ to $y_{i}.$  Note that by construction, since $\rho_{i}$ are quasi-geodesics with uniform constants, as in Theorem \ref{thm:conebasics} it follows that $\rho_{\omega}$ is a (K,0)-quasi-geodesic path in the asymptotic cone from $x_{\omega}$ to $y_{\omega}.$

\subsubsection{Convex Regions, Extensions of Multicurves, and Regions of Sublinear Growth}
\label{subsec:sublineargrowth}
 
Given a multicurve $C \subset \mathcal{C}(S),$ by Theorem \ref{thm:quasidistance} we have a \emph{natural quasi-convex region}: 
\begin{equation} \label{eq:qcproduct}
\q{C}\equiv \{P \in \mathcal{P}(S) | C\subset P \}.
\end{equation}
Consider that an element $Q \in \q{C}$ is determined by a choice of a pants decomposition of $S \setminus C.$  Hence, $\q{C}$ can be naturally identified with $\mathcal{P}(S \setminus C),$ which has nontrivial product structure in the event that $S \setminus C$ is a disjoint union of two or more connected essential subsurfaces.  For example, given $W \in \se{S},$ $\q{\partial W}\approx \mathcal{P}(W) \times \mathcal{P}(W^{c}).$

After taking ultralimits, quasi-convex regions give rise to convex regions in the asymptotic cone.  Specifically, given an asymptotic cone $\mathcal{P}_{\omega}(S)$ and element of the ultrapower of multicurves $\overline{C}$ we have an ultralimit $$\qbar{C}=: \{x_{\omega} \in \mathcal{P}_{\omega}(S) | x_{\omega} \mbox{ has a representative $(x'_{i})$ with } x'_{i} \in \q{C_{i}} \; \omega\mbox{-a.s}  \}.$$   Note that unless $\lim_{\omega} \frac{1}{s_{i}} d_{\mathcal{P}(S)}(P^{0}_{i},\q{C_{i}}) < \infty,$ the ultralimit $\qbar{C}$ is trivial.  On the other hand, if $\lim_{\omega} \frac{1}{s_{i}} d_{\mathcal{P}(S)}(P^{0}_{i},\q{C_{i}}) < \infty,$ then $\qbar{C}$ can be naturally identified with $\mathcal{P}_{\omega}(S \setminus \overline{C}),$ which has a nontrivial product structure in the event that the multicurves $C_{i}$ \uas separate the surface $S$ into at least two disjoint connected essential subsurfaces.  Recall that we always assume essential subsurfaces have complexity at least one.  

Given a multicurve $C$ on a surface $S$ and a pants decomposition $X \in \mathcal{P}(S),$ we define the coarsely well-defined \emph{extension of C by X}, denoted $C \lrcorner X,$ by: $$C \lrcorner X  \equiv C \cup \pi_{\mathcal{P}(S \setminus C)}(X).$$  More generally, for $\overline{C}$ an element of the ultrapower of multicurves  satisfying $$\lim_{\omega} \frac{1}{s_{i}} d_{\mathcal{P}(S)}(P^{0}_{i},\q{C_{i}}) < \infty,$$ and $x_{\omega} \in \mathcal{P}_{\omega}(S)$ we can define the \emph{extension of $\overline{C}$ by $x_{\omega}$}, denoted $\overline{C} \lrcorner x_{\omega},$ by: $$\overline{C} \lrcorner x_{\omega}  \equiv \lim_{\omega} (C_{i} \lrcorner X_{i}) \in \mathcal{P}_{\omega}(S),$$ where $(X_{i})$ is any representative of $x_{\omega}.$  

In \cite{bkmm} the set of natural quasi-convex regions  $\q{C}$ and their generalization to the asymptotic cone is studied at length.  In particular, the following theorem is proven:
\begin{thm}[\cite{bkmm} Lemma 3.3, Section 3.4] \label{thm:convex} 
Given two quasi-convex regions  $\q{C},$  $\q{D}$ for $C,D$ isotopy classes of multicurves, the closest point set in $\q{C}$ to $\q{D}$ is coarsely $\q{C \lrcorner D}.$  In particular, $\q{C} \hat{\cap} \q{D}$ can be represented by either $\q{C \lrcorner D}$ or equivalently $\q{D \lrcorner C}.$

For convex regions $\qbar{C},$  $\qbar{D}$ in the asymptotic cone $\mathcal{P}_{\omega}(S),$ the closest point set in $\qbar{C}$ to $\qbar{D}$ is $\qbar{C \lrcorner D}.$  In fact, the intersection $ \qbar{C} \cap \qbar{D}$ is nonempty if and only if $\qbar{C \lrcorner D} = \qbar{D \lrcorner C}.$  Moreover, in this case the intersection is equal to $\qbar{C \lrcorner D}.$
\end{thm}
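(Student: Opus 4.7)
The plan is to prove the coarse statement in $\mathcal{P}(S)$ first, and then derive the asymptotic cone version by taking ultralimits and exploiting convexity. For the coarse part, I would use the quasi-distance formula (Theorem~\ref{thm:quasidistance}) as the main computational tool. Fix $P\in\q{C}$ and $Q\in\q{D}$, and decompose the sum $\sum_{Y}\{d_{\mathcal{C}(Y)}(P,Q)\}_{M'}$ according to how $Y$ interacts with $C$ and $D$. For any $Y$ disjoint from $C$, the projection $\pi_{\mathcal{C}(Y)}$ ignores $C\subset P$, so the $\mathcal{C}(Y)$-contribution to $d(P,Q)$ is forced by $\pi_{\mathcal{C}(Y)}(Q)$ and cannot be made small by varying $P$ within $\q{C}$. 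For $Y$ overlapping $C$, the Behrstock inequality (Lemma~\ref{lem:projectionestimates}) combined with the fact that $\partial Y$ must either cross or be contained in $C$ forces $d_{\mathcal{C}(Y)}(P,Q)$ to be bounded by a uniform constant whenever $d_{\mathcal{C}(Y')}(P,Q)$ is large for some $Y'$ containing $\partial Y$. Thus the ``unavoidable'' contributions come exactly from subsurfaces $Y\subset S\setminus C$, and choosing $P$ to include $\pi_{\mathcal{P}(S\setminus C)}(Q)$, i.e.\ $P\in\q{C\lrcorner D}$, realizes the minimum up to additive error.

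Symmetry gives the same conclusion when we swap the roles of $C$ and $D$, so both $\q{C\lrcorner D}$ and $\q{D\lrcorner C}$ represent the coarse intersection $\q{C}\hat{\cap}\q{D}$; for the Hausdorff-equivalence of the two, I would note that $C\lrcorner D$ and $D\lrcorner C$ are both pants decompositions containing $C\cup D$ extended by compatible projections, and their difference projects trivially onto every essential subsurface either nested in or overlapping $C$ or $D$, so the quasi-distance formula forces them to be uniformly close in $\mathcal{P}(S)$.

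For the passage to the asymptotic cone, I would apply the coarse statement to representative sequences. Given $\qbar{C}$ and $\qbar{D}$, pick representative sequences of multicurves $(C_i),(D_i)$ and consider the sequences $(C_i\lrcorner D_i)$, whose ultralimit is (by definition) $\overline{C\lrcorner D}$. Because the bound ``closest point set in $\q{C_i}$ to $\q{D_i}$ is within uniform Hausdorff distance of $\q{C_i\lrcorner D_i}$'' has constants independent of $i$, after rescaling by $s_i\to\infty$ this Hausdorff error disappears in the ultralimit, giving that $\qbar{C\lrcorner D}$ is exactly the closest-point set in $\qbar{C}$ to $\qbar{D}$. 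Since $\mathcal{P}_\omega(S)$ is a geodesic space (Theorem~\ref{thm:conebasics}) and the convex regions $\qbar{\cdot}$ are closed and convex (coming from ultralimits of quasi-convex regions with uniform constants), minimizers are actually attained on both sides.

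For the intersection criterion: if $\qbar{C}\cap\qbar{D}\neq\emptyset$, any point in the intersection is simultaneously its own closest point in $\qbar{C}$ to $\qbar{D}$ and vice versa, forcing $\qbar{C\lrcorner D}=\qbar{D\lrcorner C}$ and equal to the intersection. Conversely, if the two closest-point sets coincide, then the minimal distance between the two regions is zero, which (using that the sets are closed in a geodesic metric space) gives nonempty intersection equal to the common closest-point set. The main technical obstacle is establishing the first paragraph's case analysis cleanly: one must verify that subsurfaces whose boundary neither equals, is contained in, nor is disjoint from $C\cup D$ cannot contribute large terms to the distance formula, which requires a careful application of the Behrstock inequality together with the Bounded Geodesic Image Theorem~\ref{thm:projdefined} to control projections along the tight geodesics shadowed by hierarchies between $P$ and $Q$.
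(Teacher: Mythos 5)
This theorem is quoted from \cite{bkmm} (Lemma 3.3, Section 3.4) and the paper contains no proof of it, so there is no internal argument to compare against; I can only assess your proposal on its own terms.

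Your overall plan --- prove the coarse statement via the distance formula and then pass to the cone using uniformity of constants --- is the natural one, and the cone part of your argument is sound modulo the coarse part. But the coarse part has the dichotomy exactly backwards. You write that for $Y$ disjoint from $C$ the contribution ``is forced by $\pi_{\mathcal{C}(Y)}(Q)$ and cannot be made small by varying $P$ within $\q{C}$,'' and then conclude that ``the unavoidable contributions come exactly from subsurfaces $Y\subset S\setminus C$.'' This is the reverse of what holds. If $Y\subset S\setminus C$, then the restriction of a pants decomposition $P\in\q{C}$ to $Y$ is completely free, so those are precisely the contributions you \emph{can} kill; that freedom is why the optimal $P$ is $C$ extended by $Q$'s projection. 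The genuinely unavoidable terms are those over subsurfaces $Y$ that intersect $C$: since $C\subset P$ and $C$ intersects $Y$, the projection $\pi_{\mathcal{C}(Y)}(P)$ is pinned near $\pi_{\mathcal{C}(Y)}(C)$ regardless of how $P$ is chosen, and these terms (together with their counterparts for $D$) give the residual distance between the two regions. Your final choice ``$P\in\q{C\lrcorner D}$ realizes the minimum'' is the right answer, but the justification preceding it does not support it. You should also close the gap between $C\lrcorner Q$ (the optimizer in $\q{C}$ for a \emph{fixed} $Q\in\q{D}$) and $C\lrcorner D$: as $Q$ ranges over $\q{D}$, the projection $\pi_{\mathcal{P}(S\setminus C)}(Q)$ still varies on those components of $S\setminus C$ that $D$ does not intersect, and it is exactly this leftover freedom that makes $\q{C\lrcorner D}$ a region (the coarse intersection) rather than a single point. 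Once the dichotomy is corrected, the Behrstock inequality and bounded geodesic image ingredients you invoke do control the cross terms as intended.
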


With the result of Theorem \ref{thm:quasidistance} in mind, \cite{behrstock} and later \cite{behrstockminsky} developed a stratification of $\mathcal{P}_{\omega}(S)$ by considering regions of so-called \emph{sublinear growth}.  Specifically, given $\overline{W} \in \eultra{S}$ and $x_{\omega} \in \mathcal{P}_{\omega}(\overline{W}),$ we define the subset of $\mathcal{P}_{\omega}(\overline{W})$ with \emph{sublinear growth from $x_{\omega}$}, denoted $F_{\overline{W},x_{\omega}},$ as follows: 
$$F_{\overline{W},x_{\omega}}= \{ y_{\omega} \in \mathcal{P}_{\omega}(\overline{W}) \; | \; \forall \overline{U} \subsetneq \overline{W}, \;  d_{\mathcal{P}_{\omega}(\overline{U})}(x_{\omega},y_{\omega}) =0  \}.$$ 


The following theorem organizes some properties of subsets of sublinear growth.
\begin{thm}[\cite{behrstockminsky} Theorem 3.1] \label{thm:sublinear} 
With the same notation as above,
\begin{enumerate}
\item [S1:] $z_{\omega} \ne z'_{\omega} \in  F_{\overline{W},x_{\omega}}$ $\implies$ $\lim_{\omega} d_{\mathcal{C}(W_{i})}(z_{i},z'_{i}) \ra \infty$ for $(z_{i}),(z'_{i})$ any representatives of $z_{\omega},z'_{\omega},$ respectively.  In particular, if $\gamma_{i}$ is a hierarchy between $z_{i}$ and $z'_{i}$ shadowing a tight main geodesic $\beta_{i}$ in $\mathcal{C}(W_{i})$ connecting any curves in the simplices $z_{i}$ and $z'_{i},$ then $\lim_{\omega} |\beta_{i}|$ is unbounded.
\item [S2:] $F_{\overline{W},x_{\omega}} \subset  \mathcal{P}_{\omega}(\overline{W}) $ is a convex $\R$-tree.  
\item [S3:] There is a continuous nearest point projection $$\rho_{\overline{W},x_{\omega}}\co \mathcal{P}_{\omega}(\overline{W}) \ra F_{\overline{W},x_{\omega}}$$ where $\rho_{\overline{W},x_{\omega}}$ is the identity on $F_{\overline{W},x_{\omega}}$ and locally constant on $ \mathcal{P}_{\omega}(\overline{W}) \setminus F_{\overline{W},x_{\omega}}.$
\end{enumerate}
 \end{thm}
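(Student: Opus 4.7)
The plan is to reduce all three claims to the hyperbolic geometry of the main curve complex $\mathcal{C}(W_i)$ via the quasi-distance formula (Theorem \ref{thm:quasidistance}) and hierarchies (Theorem \ref{thm:hierarchy}), and then to exploit the fact that ultralimits of geodesics in $\delta$-hyperbolic spaces form $\mathbb{R}$-trees.

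For statement S1, I would take distinct $z_{\omega}, z'_{\omega} \in F_{\overline{W}, x_{\omega}}$ with representatives $(z_i), (z'_i)$. The quasi-distance formula yields
$$d_{\mathcal{P}(W_i)}(z_i, z'_i) \approx \{ d_{\mathcal{C}(W_i)}(z_i, z'_i) \}_{M'} + \sum_{U \subsetneq W_i} \{ d_{\mathcal{C}(U)}(z_i, z'_i) \}_{M'}.$$
The left-hand side grows linearly in $s_i$ since the rescaled distance is positive. By the triangle inequality applied to $x_i$, the sublinear-growth hypothesis $d_{\mathcal{P}_{\omega}(\overline{U})}(x_{\omega}, z_{\omega}) = d_{\mathcal{P}_{\omega}(\overline{U})}(x_{\omega}, z'_{\omega}) = 0$ for every $\overline{U} \subsetneq \overline{W}$ forces each proper-subsurface contribution to be sublinear. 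Hence the main term $d_{\mathcal{C}(W_i)}(z_i, z'_i)$ must tend to infinity, and shadowing in H1 of Theorem \ref{thm:hierarchy} extends the conclusion to any hierarchy main geodesic $\beta_i$.

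For S2, my plan is to take hierarchy paths $\rho_i$ from $z_i$ to $z'_i$ and consider the ultralimit $\rho_{\omega}$. I would show $\rho_{\omega} \subset F_{\overline{W}, x_{\omega}}$ by verifying that for each intermediate point $w_{\omega}$ and each proper $\overline{U} \subsetneq \overline{W}$, the projection $d_{\mathcal{C}(U_i)}(x_i, w_i)$ is sublinear: the contribution from the portion of the hierarchy before $w_i$ is controlled by bounded geodesic image (Theorem \ref{thm:projdefined}) together with H2 and H4, so it differs from $d_{\mathcal{C}(U_i)}(x_i, z_i)$ by at most a uniformly bounded constant, which is sublinear after rescaling. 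The $\mathbb{R}$-tree structure then follows by using S1 to produce unbounded main geodesics for every pair in $F$, projecting a putative geodesic triangle to the main $\mathcal{C}(W_i)$ geodesics, and applying $\delta$-hyperbolicity to show the triangle collapses to a tripod in the ultralimit.

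For S3, I would define $\rho_{\overline{W}, x_{\omega}}(y_{\omega})$ by choosing a tight $\mathcal{C}(W_i)$-geodesic $g_i$ from a curve in $x_i$ to a curve in $y_i$, picking a coarse nearest-point $p_i$ on $g_i$ to the $\mathcal{C}(W_i)$-projection of $y_i$, and extending $p_i$ to a pants decomposition using nested subsurface projections from $x_i$; the ultralimit lies in $F_{\overline{W}, x_{\omega}}$ by the same argument as in S2. Continuity comes from the coarse Lipschitz property of subsurface projection together with hyperbolicity of $\mathcal{C}(W_i)$; the locally constant property off $F_{\overline{W}, x_{\omega}}$ comes from the Behrstock inequality (Lemma \ref{lem:projectionestimates}), which forces the large-projection subsurfaces witnessing departure from $F$ to be time-ordered, so small perturbations in $\mathcal{P}_{\omega}(\overline{W}) \setminus F_{\overline{W}, x_{\omega}}$ do not alter the shadow onto the main geodesic.

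The principal obstacle I expect is verifying the containment $\rho_{\omega} \subset F_{\overline{W}, x_{\omega}}$ in S2. Although each component domain of a hierarchy contributes only a bounded amount beyond threshold, the number of component domains can grow with $s_i$, and one must show carefully, using H3 (time-ordering) and Lemma \ref{lem:projectionestimates}, that the total contribution to any fixed proper $\overline{U}$ along the hierarchy is sublinear rather than just bounded per domain.
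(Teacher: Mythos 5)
The paper proves only S1 here (S2 and S3 are cited to Behrstock--Minsky), so the relevant comparison is your S1 argument against the paper's, and your S1 argument has a genuine gap. You write that the quasi-distance formula gives
$$d_{\mathcal{P}(W_i)}(z_i,z'_i)\approx \{d_{\mathcal{C}(W_i)}(z_i,z'_i)\}_{M'}+\sum_{U\subsetneq W_i}\{d_{\mathcal{C}(U)}(z_i,z'_i)\}_{M'},$$
that the left side is linear in $s_i$, that the definition of $F_{\overline{W},x_{\omega}}$ forces each proper-subsurface contribution to be sublinear, and ``hence the main term must tend to infinity.'' But the sum on the right ranges over all proper subsurfaces $U\subsetneq W_i$, and the number of terms above threshold is not a priori bounded; knowing that $d_{\mathcal{P}_{\omega}(\overline{U})}(z_{\omega},z'_{\omega})=0$ for every \emph{fixed} $\overline{U}\in\eultra{W}$ does not control the $\omega$-growth of the aggregate sum. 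Your proposed implication does not follow --- this is precisely the ``many component domains'' problem you correctly flag at the end for S2, but it already arises here.

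The paper closes exactly this gap by working with hierarchies rather than the raw quasi-distance formula: it argues the contrapositive. If $d_{\mathcal{C}(W_i)}(z_i,z'_i)$ is bounded by $K$, then \uas there is a tight main geodesic $\beta_i$ of fixed length $k\leq K$, and the hierarchy path $\gamma_i$ from $z_i$ to $z'_i$ traverses only the bounded chain of product regions $\qbar{b_0},\dots,\qbar{b_k}$. The entry and exit points of $\qbar{b_j}$ are $\overline{b_j}\lrcorner z_{\omega}$ and $\overline{b_j}\lrcorner z'_{\omega}$, and these differ only in their projections to $\mathcal{P}_{\omega}(\overline{Y^j})$ with $Y^j_i=W_i\setminus b_{i,j}$ --- a \emph{finite} list of proper subsurfaces of $\overline{W}$. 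Membership of $z_{\omega},z'_{\omega}$ in $F_{\overline{W},x_{\omega}}$ makes these projections agree, so $\gamma_{\omega}$ enters and exits each region at the same point, has length zero, and forces $z_{\omega}=z'_{\omega}$. The essential structural point your argument omits is this reduction from an unbounded collection of proper subsurfaces to the finitely many top-level complements $W_i\setminus b_{i,j}$ coming from the bounded main geodesic; once that is in place, the definition of $F$ can actually be applied pointwise. If you want to salvage the forward direction, you should run the contrapositive as the paper does, or else invoke the hierarchy structure (H1, H2, H4) to show that all proper-subsurface contributions are dominated by the contributions of the boundedly many $W_i\setminus b_{i,j}$.

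Your sketches for S2 and S3 are directionally sensible and roughly parallel Behrstock--Minsky, but the paper does not prove those parts, so I have nothing to compare them against beyond noting that the obstacle you identify (sublinear-per-domain versus sublinear-in-aggregate) is the real technical crux there too.
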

 
We record a proof of property [S1] as ideas therein will be used later in the proof of Theorem \ref{thm:pieces}.  For a proof of the rest of the theorem see \cite{behrstockminsky}.

\begin{proof} 
Proof of [S1]: Assume not.  That is, assume $\exists$ a constant $K\geq 0$ such that \uas $\lim_{\omega}d_{\mathcal{C}(S)}(z_{i},z'_{i}) \leq K.$  Since $\{0,...,K\}$ is a finite set, by Lemma \ref{lem:finite} there is a $k \leq K$ such that \uas $\lim_{\omega}d_{\mathcal{C}(S)}(z_{i},z'_{i})= k.$  In particular, \uas there is a tight geodesic $\beta_{i}$ in $\mathcal{C}(S),$ with simplices $b_{i0},..., b_{ik}$ such that $b_{i0} \subset  z_{i} ,$ $b_{ik} \subset  z'_{i} .$  Thus \uas we can construct a quasi-geodesic hierarchy path $\gamma_{i}$ between $z_{i}$ and $z'_{i}$ with main geodesic $\beta_{i}$ of length $k.$

At the level of the asymptotic cone we have a quasi-geodesic $\gamma_{\omega}$ from $z_{\omega}$ to $z'_{\omega}$ which travels through a finite list of regions $\qbar{b_{j}}$ where $\overline{b_{j}}=(b_{i,j})_{i} \in \mathcal{C}(S)^{\omega}$ for $j \in \{0,...,k\} .$  Moreover, $\gamma_{\omega}$ enters each region $\qbar{b_{j}}$ at the point $\overline{b_{j}} \lrcorner z_{\omega}$ and exits each region at the point  $\overline{b_{j}} \lrcorner z'_{\omega}.$  Since $z_{\omega}, z'_{\omega} \in  F_{\overline{W},x_{\omega}},$ by definition for any $\overline{Y} \subsetneq \overline{W}$ $\pi_{\mathcal{P}_{\omega}(\overline{Y})}(z_{\omega})=\pi_{\mathcal{P}_{\omega}(\overline{Y})}(z'_{\omega}).$  In particular, this holds for $\overline{Y^{j}}$ with $Y^{j}_{i}=W_{i} \setminus b_{i,j}$ for any $j.$  It follows that the ultralimit of the hierarchy paths $\gamma_{\omega}$ enters and exits each region $\qbar{b_{j}}$ at the same point.  Since the regions $\qbar{b_{j}}$ are convex, we can assume the quasi-geodesic $\gamma_{\omega}$ intersects each region in a single point.  This leads to a contradiction since by assumption  $z_{\omega} \ne z'_{\omega},$ yet there is a quasi-geodesic path $\gamma_{\omega}$ of length zero connecting the two points.
\end{proof}

In \cite{behrstockminsky}, regions of sublinear growth are used to stratify product regions in the asymptotic cone.  Specifically,  for $\overline{W} \in \eultra{S}$ such that $\lim_{\omega} \frac{1}{s_{i}} d_{\mathcal{P}(S)}(P^{0},\q{\partial W_{i}}) < \infty,$ and $x_{\omega} \in \mathcal{P}_{\omega}(\overline{W}),$ we define the set $P_{\overline{W},x_{\omega}} \subset \qbar{\partial W}$ as follows:
 $$P_{\overline{W},x_{\omega}} =\{ y_{\omega} \in  \qbar{\partial W} \; | \; \pi_{\mathcal{P}_{\omega}(\overline{W})}(y_{\omega}) \in F_{\overline{W},x_{\omega}} \} \cong  \mathcal{P}_{\omega}(\overline{W^{c}}) \times F_{\overline{W},x_{\omega}}.$$
By precomposition with the projection $\pi_{\mathcal{P}_{\omega}(\overline{W})}\co \mathcal{P}_{\omega}(S) \ra \mathcal{P}_{\omega}(\overline{W}),$ the continuous nearest point projection of property [S3] gives rise to a continuous map: 
\begin{equation} \label{eq:projection} \Phi_{\overline{W},x_{\omega}} = \rho_{\overline{W},x_{\omega}} \circ  \pi_{\mathcal{P}_{\omega}(\overline{W})}\co  \mathcal{P}_{\omega}(S) \ra F_{\overline{W},x_{\omega}}.
\end{equation}   The following theorem regarding the above projection is an extension of Theorem \ref{thm:sublinear}.

\begin{thm}[\cite{behrstockminsky} Theorem 3.5] \label{thm:locconstant} $\Phi_{\overline{W},x_{\omega}}$ restricted to $P_{\overline{W},x_{\omega}}$ is a projection onto the $F_{\overline{W},x_{\omega}} $ factor in its natural product structure, and $\Phi_{\overline{W},x_{\omega}}$ is locally constant on $ \mathcal{P}_{\omega}(S) \setminus P_{\overline{W},x_{\omega}}.$
\end{thm}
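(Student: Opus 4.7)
The first assertion follows by unwinding definitions. Since $P_{\overline{W},x_\omega} \subset \qbar{\partial W}$ inherits the natural product structure $\qbar{\partial W} \cong \mathcal{P}_\omega(\overline{W}) \times \mathcal{P}_\omega(\overline{W^c})$, the projection $\pi_{\mathcal{P}_\omega(\overline{W})}$ restricted to $\qbar{\partial W}$ is simply the first-coordinate projection, which by definition of $P_{\overline{W},x_\omega}$ takes values in $F_{\overline{W},x_\omega}$ on $P_{\overline{W},x_\omega}$. Since $\rho_{\overline{W},x_\omega}$ restricts to the identity on $F_{\overline{W},x_\omega}$ by property [S3] of Theorem \ref{thm:sublinear}, the composition $\Phi_{\overline{W},x_\omega}|_{P_{\overline{W},x_\omega}}$ is precisely the projection onto $F_{\overline{W},x_\omega}$ under $P_{\overline{W},x_\omega} \cong \mathcal{P}_\omega(\overline{W^c}) \times F_{\overline{W},x_\omega}$.

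For the local constancy statement, fix $y_\omega \in \mathcal{P}_\omega(S) \setminus P_{\overline{W},x_\omega}$. The plan is to split into two cases according to where $\pi_{\mathcal{P}_\omega(\overline{W})}(y_\omega)$ lies. If $\pi_{\mathcal{P}_\omega(\overline{W})}(y_\omega) \notin F_{\overline{W},x_\omega}$, then by property [S3] the map $\rho_{\overline{W},x_\omega}$ is constant on some open neighborhood $U \subset \mathcal{P}_\omega(\overline{W})$ of $\pi_{\mathcal{P}_\omega(\overline{W})}(y_\omega)$; by the Lipschitz continuity of $\pi_{\mathcal{P}_\omega(\overline{W})}$, the preimage $\pi_{\mathcal{P}_\omega(\overline{W})}^{-1}(U)$ is an open neighborhood of $y_\omega$ on which $\Phi_{\overline{W},x_\omega}$ is constant.

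The remaining case is $\pi_{\mathcal{P}_\omega(\overline{W})}(y_\omega) \in F_{\overline{W},x_\omega}$ together with $y_\omega \notin \qbar{\partial W}$. Here the aim is to show that $\pi_{\mathcal{P}_\omega(\overline{W})}$ is itself locally constant at $y_\omega$. By Theorem \ref{thm:convex}, $\pi_{\mathcal{P}_\omega(\overline{W})}$ factors through the nearest-point projection to the closed convex region $\qbar{\partial W}$, realized at the finite level by the extensions $\partial W_i \lrcorner y_i$. Since $y_\omega$ sits at positive distance from $\qbar{\partial W}$, one can select a ball $B$ around $y_\omega$ disjoint from $\qbar{\partial W}$, so that for any $y'_\omega \in B$ with representatives $(y_i), (y'_i)$, hierarchy paths between $y_i$ and $y'_i$ furnished by Theorem \ref{thm:hierarchy} keep their main geodesics at linear distance (in $s_i$) from $\partial W_i$. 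Applying the Behrstock inequality (Lemma \ref{lem:projectionestimates}) to each $M'$-component domain overlapping $W_i$ and summing via the quasi-distance formula (Theorem \ref{thm:quasidistance}) should then force the contribution to $d_{\mathcal{C}(W_i)}(y_i,y'_i)$ from subsurfaces overlapping $W_i$ to be sublinear in $s_i$, yielding the desired equality $\pi_{\mathcal{P}_\omega(\overline{W})}(y_\omega) = \pi_{\mathcal{P}_\omega(\overline{W})}(y'_\omega)$ in the asymptotic cone.

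The main obstacle is this second case. The Lipschitz property of $\pi_{\mathcal{P}_\omega(\overline{W})}$ alone yields only closeness rather than equality of projections in the asymptotic cone, so one must extract sharper information from the hierarchy decomposition, crucially leveraging the positive distance from $y_\omega$ to $\qbar{\partial W}$ to ensure that the component domains contributing linearly to $d_{\mathcal{P}(S)}(y_i,y'_i)$ cannot be nested in $W_i$, and handling the overlapping domains via Lemma \ref{lem:projectionestimates} to upgrade Lipschitz continuity to local constancy.
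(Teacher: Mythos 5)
This theorem is quoted from Behrstock--Minsky and the paper contains no proof of it, so there is no in-paper argument to compare against; I assess your attempt on its own terms. Your treatment of the first assertion and of Case~1 (where $\pi_{\mathcal{P}_\omega(\overline{W})}(y_\omega)\notin F_{\overline{W},x_\omega}$) are both correct: $\rho_{\overline{W},x_\omega}$ is locally constant off the closed set $F_{\overline{W},x_\omega}$ by property [S3], and pulling back a small neighborhood through the continuous map $\pi_{\mathcal{P}_\omega(\overline{W})}$ transfers this constancy to $\Phi_{\overline{W},x_\omega}$.

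Case~2, however, is built on a false intermediate claim: for $\xi(W)\geq 2$ the map $\pi_{\mathcal{P}_\omega(\overline{W})}$ is \emph{not} locally constant at a point $y_\omega\notin\qbar{\partial W},$ and the assertion that positive distance from $\qbar{\partial W}$ prevents component domains nested in $W_i$ from contributing linearly to $d_{\mathcal{P}(S)}(y_i,y'_i)$ is also false. Take any proper connected essential $V\subsetneq W_i$ and a partial pseudo-Anosov $f$ supported on $V,$ and set $y'_i=f^{\lfloor s_i/k\rfloor}(y_i).$ Then $d_{\mathcal{P}(S)}(y_i,y'_i)\approx s_i/k,$ so $y'_\omega\to y_\omega$ as $k\to\infty,$ yet for each fixed $k$ the quantity $d_{\mathcal{C}(V)}(y_i,y'_i)$ grows linearly in $s_i,$ so $\pi_{\mathcal{P}_\omega(\overline{W})}(y'_\omega)\neq \pi_{\mathcal{P}_\omega(\overline{W})}(y_\omega),$ while $y'_\omega$ stays far from $\qbar{\partial W}.$ The hierarchy from $y_i$ to $y'_i$ does pass through $\q{\partial V},$ but when $\partial V\not\supset\partial W_i$ this does not force entry into $\q{\partial W_i},$ so the ``far from $\qbar{\partial W}$'' hypothesis, via H2, controls only $d_{\mathcal{C}(W_i)}(y_i,y'_i)$ and says nothing about projections to proper subdomains $V\subsetneq W_i.$ The genuine content here is that $\rho_{\overline{W},x_\omega}$ absorbs exactly these within-$W$ variations: since every point of $F_{\overline{W},x_\omega}$ agrees with $x_\omega$ on all proper subdomains of $\overline{W},$ a perturbation of $\pi_{\mathcal{P}_\omega(\overline{W})}(y_\omega)$ in a proper-subdomain direction moves it transversally off $F_{\overline{W},x_\omega},$ and one must show that its nearest point in the convex $\R$-tree $F_{\overline{W},x_\omega}$ remains $\pi_{\mathcal{P}_\omega(\overline{W})}(y_\omega)$ (using the boundedness of $d_{\mathcal{C}(W_i)}(y_i,y'_i)$ and the structure from [S2]). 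That interplay between $\pi_{\mathcal{P}_\omega(\overline{W})}$ and $\rho_{\overline{W},x_\omega}$ is the missing step; Lemma \ref{lem:projectionestimates} applied to overlapping domains does not supply it.
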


The following lemma shows that the sets $F_{\overline{W},x_{\omega}}$ can be used to study distance in $\mathcal{P}_{\omega}(S).$
\begin{lem}[\cite{behrstockminsky} Theorem 3.6] \label{lem:sepfiber}
$\forall x_{\omega} \ne y_{\omega} \in \mathcal{P}_{\omega}(S), \exists \overline{W} \in \eultra{S}$ such that $$\lim_{\omega} \frac{1}{s_{i}} d_{\mathcal{P}(S)}(P^{0}_{i},\q{\partial W_{i}}) < \infty,$$ with the property that $\pi_{\mathcal{P}_{\omega}(\overline{W})}(x_{\omega}) \ne \pi_{\mathcal{P}_{\omega}(\overline{W})}(y_{\omega}) \in F_{\overline{W},x_{\omega}}.$  
\end{lem}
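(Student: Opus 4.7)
The plan is to induct on the complexity $\xi(S)$, with the inductive statement formulated for every asymptotic cone of the pants complex of every surface of finite type (with any base point sequence and any unbounded scaling). The base case $\xi(S)=1$ is immediate: $\mathcal{P}(S)$ is (quasi-isometric to) the Farey graph, $S$ admits no proper essential subsurfaces under the conventions of Section~\ref{chap:preliminaries}, so $F_{\overline{S},x_\omega}=\mathcal{P}_\omega(\overline{S})$ vacuously, and the constant sequence $\overline{W}=\overline{S}$ works (with condition (a) trivial). For the inductive step I first observe that $x_\omega$ always lies in $F_{\overline{S},x_\omega}$ because subsurface projections compose, so $\pi_{\mathcal{P}_\omega(\overline{U})}(x_\omega)=\pi_{\mathcal{P}_\omega(\overline{U})}(\pi_{\mathcal{P}_\omega(\overline{S})}(x_\omega))$ for every $\overline{U}\subsetneq\overline{S}$. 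If moreover $y_\omega\in F_{\overline{S},x_\omega}$ then $\overline{W}=\overline{S}$ already does the job; otherwise, the definition of $F$ produces some $\overline{U_0}\subsetneq\overline{S}$ with $\pi_{\overline{U_0}}(x_\omega)\ne\pi_{\overline{U_0}}(y_\omega)$, equivalently $d_{\mathcal{P}(U_{0,i})}(\pi x_i,\pi y_i)\gtrsim s_i$.

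The heart of the argument is a descent step: refining $\overline{U_0}$ to a proper sequence $\overline{U'}$ which satisfies condition (a) in $S$ while retaining linearly separated projections in $\mathcal{P}_\omega(\overline{U'})$. I would fix a hierarchy path $\rho_i$ from $x_i$ to $y_i$ in $\mathcal{P}(S)$ (Theorem~\ref{thm:hierarchy}), of total length $\asymp d_{\mathcal{P}(S)}(x_i,y_i)=O(s_i)$. For any component domain $Y_i$ of $\rho_i$, property H2 places $\partial Y_i$ in some pants decomposition along $\rho_i$, giving $d_{\mathcal{P}(S)}(P^0_i,\q{\partial Y_i})=O(s_i)$; thus condition (a) in $S$ is automatic for any component-domain sequence. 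Applying the quasi-distance formula (Theorem~\ref{thm:quasidistance}) inside $\mathcal{P}(U_{0,i})$ together with H2 exhibits a proper component domain $\overline{U'}$ of $\rho_i$ --- chosen with minimal complexity among proper subsurfaces satisfying $d_{\mathcal{P}(U'_i)}(\pi x_i,\pi y_i)\gtrsim s_i$ --- and the homeomorphism type of $U'_i$ may be stabilized by Lemma~\ref{lem:finite}. Now I would apply the inductive hypothesis inside $\mathcal{P}_\omega(\overline{U'})$, an asymptotic cone of $\mathcal{P}(U')$ for a fixed surface of strictly smaller complexity, to the distinct points $\pi_{\overline{U'}}(x_\omega)$ and $\pi_{\overline{U'}}(y_\omega)$, obtaining $\overline{W}\in\eultra{U'}$ satisfying the three conclusions relative to $U'$. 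Viewed as a sequence of essential subsurfaces of $S$, this same $\overline{W}$ works: condition (a) in $S$ follows by chaining the $O(s_i)$ bound on $d_{\mathcal{P}(S)}(P^0_i,\q{\partial U'_i})$ with the $O(s_i)$ bound inside the product region $\q{\partial U'_i}\cong\mathcal{P}((U'_i)^c)\times\mathcal{P}(U'_i)$ coming from the inductive (a) for $\overline{W}$ in $U'$, while distinct projections in $\mathcal{P}_\omega(\overline{W})$ and the $F$-condition in $S$ both follow from the naturality identity $\pi_{\overline{W}}^S=\pi_{\overline{W}}^{U'}\circ\pi_{\overline{U'}}^S$ together with the observation that any $\overline{V}\subsetneq\overline{W}$ essential in $S$ is automatically essential in $U'$.

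The main obstacle is the descent itself: producing a proper subsurface sequence $\overline{U'}$ that simultaneously witnesses the failure of the $F$-condition (a linearly separated projection distance, guaranteed in some form by $y_\omega\notin F_{\overline{S},x_\omega}$) and condition (a) in $S$ (proximity of $\partial U'_i$ to the base point). A naive witness $\overline{U_0}$ need not satisfy (a), so restricting to component domains of the hierarchy $\rho_i$ is essential. Verifying that at least one such component domain carries linear projection distance requires using the quasi-distance formula carefully --- in particular noting that $\pi_{\overline{U_0}}(x_\omega)\ne\pi_{\overline{U_0}}(y_\omega)$ forces the curve-complex contributions in subsurfaces of $U_{0,i}$ to sum to a linear quantity --- combined with a minimality (or sub-induction) argument on complexity to promote this into linearly separated projections for a single component-domain sequence.
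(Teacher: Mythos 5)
You have identified the right skeleton (descend to a proper subsurface sequence witnessing the failure of the $F$--condition, use hierarchy component domains to control the distance to the base point), and you have also correctly identified the load-bearing step you did not complete: showing that the descended $\overline{U'}$ can simultaneously be chosen to be a component domain of a hierarchy from $x_i$ to $y_i$ \emph{and} to have linearly separated projections. As written, that step is a genuine gap. The quasi-distance formula applied inside $U_{0,i}$ only tells you that $\sum_{Y\subseteq U_{0,i}}\{d_{\mathcal{C}(Y)}(x_i,y_i)\}_M \gtrsim s_i$; there could a priori be linearly many component domains each contributing a bounded amount, and nothing you wrote rules this out or forces a single component-domain sequence to carry a linear $\mathcal{P}$-distance. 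Your phrase ``a minimality (or sub-induction) argument'' gestures at the right idea but does not deliver it.

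The missing ingredient is property [S1] of Theorem~\ref{thm:sublinear}, and once it is invoked the whole induction becomes unnecessary. Choose $\overline{W}$ of minimal (\uas constant, by Lemma~\ref{lem:finite}) complexity among elements of $\eultra{S}$ with $d_{\mathcal{P}_\omega(\overline{W})}(x_\omega,y_\omega)>0$; such $\overline{W}$ exists since $\overline{S}$ itself qualifies. Minimality immediately forces $d_{\mathcal{P}_\omega(\overline{U})}(x_\omega,y_\omega)=0$ for every proper $\overline{U}\subsetneq\overline{W}$, so $\pi_{\mathcal{P}_\omega(\overline{W})}(y_\omega)\in F_{\overline{W},x_\omega}$ --- conditions (b) and (c) hold. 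Property [S1] now applies to the distinct points $\pi_{\mathcal{P}_\omega(\overline{W})}(x_\omega)\ne\pi_{\mathcal{P}_\omega(\overline{W})}(y_\omega)\in F_{\overline{W},x_\omega}$ and yields $\lim_\omega d_{\mathcal{C}(W_i)}(x_i,y_i)=\infty$; in particular $d_{\mathcal{C}(W_i)}(x_i,y_i)>M_1$ \uas. If $W_i=S$ \uas then (a) is trivial; otherwise, by H2, $W_i$ is \uas a component domain of a hierarchy $\rho_i$ from $x_i$ to $y_i$, so $\partial W_i$ lies in some $\rho_i(t)$, and $d_{\mathcal{P}(S)}(P^0_i,\q{\partial W_i})\le d_{\mathcal{P}(S)}(P^0_i,\rho_i(t))\lesssim d_{\mathcal{P}(S)}(P^0_i,x_i)+d_{\mathcal{P}(S)}(x_i,y_i)=O(s_i)$, giving (a). This is the argument you were reaching for; without [S1] the passage from ``minimal'' to ``component domain'' is unjustified, and the recursion you set up cannot repair it because the same gap recurs at every level.
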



The following corollary provides a sufficient condition for identifying when two sequences represent the same point in the asymptotic cone.  The proof follows immediately from Lemma \ref{lem:sepfiber} and property [S1] of Theorem \ref{thm:sublinear}.    
\begin{cor} \label{cor:sepbounded}
Let $(x_{i}),(y_{i})$ be sequences representing the points $x_{\omega},y_{\omega} \in \mathcal{P}(S),$ and assume for all $\overline{W} \in \eultra{S}$ that $\lim_{\omega}d_{\mathcal{C}(W_{i})}(x_{i},y_{i})$ is bounded.  Then $x_{\omega}=y_{\omega}.$ 
\end{cor}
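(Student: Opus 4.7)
The proof is essentially a packaging of the two cited results, and I would argue by contrapositive. Suppose $x_{\omega} \neq y_{\omega}$ in $\mathcal{P}_{\omega}(S)$; the goal is to produce some $\overline{W} \in \eultra{S}$ along which the curve complex distances between representatives grow without bound, contradicting the hypothesis.

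The first step is to invoke Lemma \ref{lem:sepfiber} to extract $\overline{W} \in \eultra{S}$ satisfying $\lim_{\omega} \tfrac{1}{s_{i}} d_{\mathcal{P}(S)}(P^{0}_{i}, \q{\partial W_{i}}) < \infty$ and such that
$$\pi_{\mathcal{P}_{\omega}(\overline{W})}(x_{\omega}) \;\neq\; \pi_{\mathcal{P}_{\omega}(\overline{W})}(y_{\omega}) \;\in\; F_{\overline{W}, x_{\omega}}.$$
Since $\pi_{\mathcal{P}_{\omega}(\overline{W})}$ is by definition the ultralimit of the coarsely Lipschitz projections $\pi_{\mathcal{P}(W_{i})}$, the sequences $(\pi_{\mathcal{P}(W_{i})}(x_{i}))$ and $(\pi_{\mathcal{P}(W_{i})}(y_{i}))$ are valid representatives of these two distinct points of $F_{\overline{W}, x_{\omega}}$. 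Applying property [S1] of Theorem \ref{thm:sublinear} to these representatives yields
$$\lim_{\omega} d_{\mathcal{C}(W_{i})}\bigl(\pi_{\mathcal{P}(W_{i})}(x_{i}),\, \pi_{\mathcal{P}(W_{i})}(y_{i})\bigr) \;=\; \infty.$$

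The remaining step is to replace the pants projections by the original sequences inside the curve complex distance. Recall from the construction of $\pi_{\mathcal{P}(W)}$ in Section \ref{subsec:subsurfaceprojection} that the first curve chosen is any element of $\pi_{\mathcal{C}(W)}(P)$, so $\pi_{\mathcal{C}(W)}(\pi_{\mathcal{P}(W)}(P))$ and $\pi_{\mathcal{C}(W)}(P)$ share a vertex; combined with Lemma \ref{lem:curveprojub} this gives a uniform upper bound on $d_{\mathcal{C}(W)}(P,\, \pi_{\mathcal{P}(W)}(P))$ independent of $P$ and $W$. Applying this uniform bound to both $x_{i}$ and $y_{i}$ and using the triangle inequality in $\mathcal{C}(W_{i})$, the divergence of the projected distances forces $\lim_{\omega} d_{\mathcal{C}(W_{i})}(x_{i}, y_{i}) = \infty$, contradicting the hypothesis. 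There is no real obstacle here: the entire content sits in Lemma \ref{lem:sepfiber} and [S1], and the only nontrivial task is bookkeeping between the pants and curve complex projections, which is routine given the construction.
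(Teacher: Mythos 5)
Your proof is correct and follows exactly the route the paper has in mind: the paper explicitly states that Corollary \ref{cor:sepbounded} "follows immediately from Lemma \ref{lem:sepfiber} and property [S1] of Theorem \ref{thm:sublinear}," and your contrapositive argument, extracting $\overline{W}$ from Lemma \ref{lem:sepfiber}, feeding the projected points into [S1], and then transferring the conclusion from the pants projections back to the original representatives via the uniform bound on $d_{\mathcal{C}(W_i)}(x_i,\pi_{\mathcal{P}(W_i)}(x_i))$, is precisely the bookkeeping the paper is leaving implicit.
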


\subsubsection{Jets}
 
In \cite{bkmm}, subsets of $\mathcal{P}_{\omega}(S)$ called jets are developed.  Jets are particular subsets of the asymptotic cone corresponding to sequences of geodesics in  the curve complexes of connected essential subsurfaces which give rise to separation properties in $\mathcal{P}_{\omega}(S).$

Fix $P,Q \in \mathcal{P}(S),$ $Y\in \e{S}$ a connected essential subsurface, and $\sigma$ a tight geodesic in $\mathcal{C}(Y)$ from an element of $\pi_{\mathcal{C}(Y)}(P)$ to an element of $\pi_{\mathcal{C}(Y)}(Q).$  If $g=[\alpha,\beta]$ is a subsegment of $\sigma,$ $(g, P,Q)$ is called a \emph{tight triple} supported in $Y$ with \emph{ambient geodesic} $\sigma.$  For  $(g, P,Q)$ a tight triple as above, we define the \emph{initial pants} of the   
triple, denoted $\iota(g, P,Q) \equiv \alpha \cup \pi_{\mathcal{P}(S \setminus \alpha)}(P).$  Similarly, we define the \emph{terminal pants} of the triple, denoted $\tau(g, P,Q) \equiv \beta \cup \pi_{\mathcal{P}(S \setminus \beta)}(Q).$  Then, we define the \emph{length} of a tight triple supported in $Y$ by $$\norm{g}=\norm{(g, P,Q)}_{Y} \equiv d_{\mathcal{P}(Y)}( \iota(g, P,Q), \tau(g, P,Q)).$$  

For $\overline{P},\overline{Q} \in \mathcal{P}^{\omega}(S)$ which have nontrivial ultralimits in $\mathcal{P}_{\omega}(S),$ a \emph{Jet} J, is a quadruple of ultrapowers $(\overline{g},\overline{Y},\overline{P},\overline{Q}),$ where $(g_{i},P,Q)$ are tight triples supported in $Y_{i}.$  Associated to our jet J with support $\overline{Y}$ we have an \emph{initial point} or \emph{basepoint} of our jet  $\iota(J)=\iota_{\omega}(\overline{g}, \overline{P},\overline{Q}) \in \mathcal{P}_{\omega}(S)$ with a representative ultrapower $ \iota(g_{i}, P,Q).$  Similarly, we a terminal point of our jet $\tau(J)=\tau_{\omega}(\overline{g}, \overline{P},\overline{Q}) \in \mathcal{P}_{\omega}(S)$ with a representative ultrapower $ \tau(g_{i}, P,Q).$  A jet is called \emph{macroscopic} if $\iota(J) \ne \tau(J)$ and \emph{microscopic} otherwise.  To simplify notation, we set $\norm{(g_{i}, P,Q)}_{Y_{i}} =\norm{g_{i}}_{J}.$  We will only consider microscopic jets.   

Let J be a microscopic jet with support $\overline{Y}$ and tight geodesics $g_{i}.$  Then we can consider the ultralimit $\mathcal{Q}_{\omega}(\overline{\iota} \cup \overline{ \partial Y})$ which can be though of as $\iota(J) \times \mathcal{P}_{\omega}(\overline{Y^{c}}) \subset \mathcal{P}_{\omega}(S).$  Then we can define an equivalence relation on $\mathcal{P}_{\omega}(S) \setminus \left(\iota(J) \times \mathcal{P}_{\omega}(\overline{Y^{c}})\right)$ given by: $$x_{\omega} \sim_{J} x'_{\omega} \iff  \lim_{\omega} d_{\mathcal{C}(Y_{i})}( \pi_{g_{i}}(x_{i}), \pi_{g_{i}}(x'_{i})) < \infty.$$  
The following theorems regarding the existence and separation properties of microscopic jets will have application in Section \ref{chap:treegraded}.  

\begin{thm}[\cite{bkmm} Lemma 7.5] \label{thm:mjetsexist}
Let $a_{\omega},b_{\omega} \in \mathcal{P}_{\omega}(S)$ with representatives $(a_{i}),(b_{i})$ respectively.  Assume that $\overline{W} \in \eultra{S}$ is such that $\lim_{\omega}d_{\mathcal{C}(W)}(a_{i}, b_{i} ) \ra \infty.$  Then there exists a microscopic jet $J=(\overline{g},\overline{W},\overline{a},\overline{b})$ such that  $a_{\omega} \not \sim_{J} b_{\omega}.$  Moreover, the subsegments $g_{i}$ can be constructed to be contained in tight $\mathcal{C}(W_{i})$ geodesic of a hierarchy between $a_{i}$ and $b_{i}.$   
\end{thm}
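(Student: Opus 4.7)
The proof proceeds by a pigeonhole argument on the main geodesic of a hierarchy. Fix for each $i$ a hierarchy path $\rho_i$ in $\mathcal{P}(S)$ from $a_i$ to $b_i$, with main tight geodesic $\sigma_i \subset \mathcal{C}(W_i)$ of length $L_i := d_{\mathcal{C}(W_i)}(a_i,b_i)$. By hypothesis $L_i \to \infty$ $\omega$-almost surely, and because $\rho_i$ is a quasi-geodesic with uniform constants while $a_\omega,b_\omega \in \mathcal{P}_\omega(S)$, the $\mathcal{P}(S)$-length of $\rho_i$ is bounded above by $Ds_i$ for a constant $D$ independent of $i$.

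Set $k_i := \lfloor\sqrt{L_i}\rfloor$, so that both $k_i \to \infty$ and $L_i/k_i \to \infty$. Partition $\sigma_i$ into $k_i$ consecutive subsegments $g_i^{(j)} = [v_i^{(j-1)}, v_i^{(j)}]$ each of $\mathcal{C}(W_i)$-length $\approx \sqrt{L_i}$, yielding tight triples $(g_i^{(j)}, a, b)$ with initial and terminal pants $\iota_i^{(j)}, \tau_i^{(j)}$. The main technical step, which I expect to be the hardest part, is the comparison estimate: the initial pants $\iota_i^{(j)}$ lies within uniformly bounded $\mathcal{P}(S)$-distance of $\rho_i(s_j^-)$, where $s_j^-$ is the first time $\nu_i$ equals $v_i^{(j-1)}$, and similarly for $\tau_i^{(j)}$ and the last time $\nu_i = v_i^{(j)}$. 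This should follow from applying the Quasi-Distance Formula (Theorem \ref{thm:quasidistance}) on $\mathcal{P}(S\setminus v_i^{(j-1)})$, combined with property H4 of hierarchies and the Bounded Geodesic Image Theorem (Theorem \ref{thm:projdefined}), to show that every subsurface projection contribution between the two pants decompositions falls below the threshold $M'$.

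Granting this comparison, the monotonicity of $\nu_i$ together with telescoping gives
\[
\sum_{j=1}^{k_i} d_{\mathcal{P}(S)}\bigl(\iota_i^{(j)},\tau_i^{(j)}\bigr) \; \leq \; \mathrm{length}(\rho_i) + O(k_i) \; \leq \; D's_i
\]
for a constant $D'$, since $k_i \leq \sqrt{Ds_i} = o(s_i)$. By pigeonhole, some index $j_i$ satisfies $d_{\mathcal{P}(S)}(\iota_i^{(j_i)},\tau_i^{(j_i)}) \leq D's_i/k_i$. Setting $g_i := g_i^{(j_i)}$, $\alpha_i := v_i^{(j_i-1)}$, $\beta_i := v_i^{(j_i)}$, and $J := (\overline{g},\overline{W},\overline{a},\overline{b})$, we obtain $\tfrac{1}{s_i} d_{\mathcal{P}(S)}(\iota_i,\tau_i) \leq D'/k_i \to 0$, so $\iota(J) = \tau(J)$ in $\mathcal{P}_\omega(S)$ and $J$ is microscopic. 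By construction each $g_i$ is contained in the main $\mathcal{C}(W_i)$-geodesic of the hierarchy $\rho_i$, as required by the ``Moreover'' clause.

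It remains to verify $a_\omega \not\sim_J b_\omega$. Since $g_i \subset \sigma_i$ is a subsegment of a geodesic in $\mathcal{C}(W_i)$ joining $\pi_{\mathcal{C}(W_i)}(a_i)$ and $\pi_{\mathcal{C}(W_i)}(b_i)$, $\delta$-hyperbolicity of the curve complex forces the nearest-point projections $\pi_{g_i}(a_i)$ and $\pi_{g_i}(b_i)$ to lie within a uniformly bounded distance of $\alpha_i$ and $\beta_i$ respectively. Hence
\[
d_{\mathcal{C}(W_i)}\bigl(\pi_{g_i}(a_i),\pi_{g_i}(b_i)\bigr) \; \geq \; d_{\mathcal{C}(W_i)}(\alpha_i,\beta_i) - O(\delta) \; \gtrsim \; \sqrt{L_i} \;\longrightarrow\; \infty,
\]
so $\lim_\omega d_{\mathcal{C}(W_i)}(\pi_{g_i}(a_i),\pi_{g_i}(b_i)) = \infty$ and therefore $a_\omega \not\sim_J b_\omega$. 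The crux of the argument is thus the hierarchy-theoretic comparison between the algebraically-defined tight-triple endpoints and the actual values of the hierarchy path at the corresponding shadowing times; everything else reduces to pigeonhole and standard curve complex hyperbolicity.
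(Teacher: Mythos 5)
The paper does not supply a proof of this theorem; it is cited verbatim from \cite{bkmm}, Lemma~7.5, so there is no in-paper argument to compare against. Taking your proposal on its own terms, the skeleton is in the right spirit: partition the $\mathcal{C}(W_i)$-tight geodesic of a hierarchy into $k_i=\lfloor\sqrt{L_i}\rfloor$ pieces so that both $k_i\to\infty$ and $L_i/k_i\to\infty$, establish a telescoping bound on $\sum_j\lVert g_i^{(j)}\rVert$, pigeonhole, and invoke $\delta$-hyperbolicity of $\mathcal{C}(W_i)$ to get $a_\omega\not\sim_J b_\omega$. The last step and the parameter choice are correct, and the choice of $k_i$ is exactly what the two competing demands (microscopicity and unbounded $g_i$-projection distance) require.

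The gap is the comparison estimate, and it has a concrete bug as written. You claim $\iota_i^{(j)}$ is within uniformly bounded $\mathcal{P}(S)$-distance of $\rho_i(s_j^-)$. When $\overline{W}\subsetneq\overline{S}$ this is false: by definition of the initial pants, $\pi_{\mathcal{P}(W_i^c)}(\iota_i^{(j)})$ is coarsely $\pi_{\mathcal{P}(W_i^c)}(a_i)$, whereas $\rho_i(s_j^-)$ can have $\mathcal{P}(W_i^c)$-projection far from $a_i$'s, since $W_i$ and $W_i^c$ are disjoint and hence not time-ordered, so the chosen resolution may already have done an arbitrary amount of $W_i^c$-work by time $s_j^-$. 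Both the length $\lVert g\rVert_{W_i}=d_{\mathcal{P}(W_i)}(\iota,\tau)$ and the telescoping actually require the bound after projecting to $\mathcal{P}(W_i)$; that version is the substantive content of the lemma (provable via H4 of Theorem~\ref{thm:hierarchy} applied to component domains of $\rho_i$ inside $W_i\setminus v_i^{(j-1)}$, together with the quasi-distance formula), and you have only named the tools rather than carried it out. A smaller imprecision: for $\overline{W}\subsetneq\overline{S}$ the geodesic $\sigma_i$ is not the main geodesic of $\rho_i$ (which lives in $\mathcal{C}(S)$, per H1) but the $\mathcal{C}(W_i)$ component-domain geodesic $g_{W_i}$ furnished by H2, and the times $s_j^{\pm}$ should be defined from the $W_i$-shadow of $\rho_i|_{I_{W_i}}$, not the main shadow $\nu_i$, which need never equal a vertex of $g_{W_i}$.
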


\begin{thm}[\cite{bkmm} Theorem 7.2] \label{thm:mjetssep} For $J$ a microscopic jet, each equivalence class under the relation $\sim_{J}$ is open.  In particular, $x_{\omega}, x'_{\omega} \in \mathcal{P}_{\omega}(S) \setminus \left(\iota(J) \times \mathcal{P}_{\omega}(\overline{Y^{c}})\right),$ $x_{\omega} \not \sim_{J} x'_{\omega}$ $\implies$   $x_{\omega}$ and $x'_{\omega}$ are separated by $\iota(J) \times \mathcal{P}_{\omega}(\overline{Y^{c}}) .$
\end{thm}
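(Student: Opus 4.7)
The plan is to first reduce the separation claim to openness of the equivalence classes by a general topological argument: the complement $\mathcal{P}_{\omega}(S) \setminus (\iota(J) \times \mathcal{P}_{\omega}(\overline{Y^{c}}))$ is partitioned into $\sim_{J}$-equivalence classes, so if each such class is open then any continuous path joining representatives of two distinct classes must exit the complement, necessarily passing through $\iota(J) \times \mathcal{P}_{\omega}(\overline{Y^{c}})$.

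For openness, fix $y_{\omega} \notin \iota(J) \times \mathcal{P}_{\omega}(\overline{Y^{c}}) = \qbar{\alpha \cup \partial Y}$, where $\alpha_{i}$ is the initial vertex of the tight geodesic $g_{i}$, and set $c = d(y_{\omega}, \qbar{\alpha \cup \partial Y}) > 0$. Choose $\epsilon > 0$ small in terms of $c$ and the hierarchy quasi-isometry constant $K$, and consider any $y'_{\omega}$ within $\epsilon$ of $y_{\omega}$. I want to show $y_{\omega} \sim_{J} y'_{\omega}$. Suppose for contradiction that $\lim_{\omega} d_{\mathcal{C}(Y_{i})}(\pi_{g_{i}}(y_{i}), \pi_{g_{i}}(y'_{i})) = \infty$. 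Since nearest point projection onto a geodesic in the $\delta$-hyperbolic space $\mathcal{C}(Y_{i})$ is coarsely $1$-Lipschitz, this forces $d_{\mathcal{C}(Y_{i})}(y_{i}, y'_{i}) \to \infty$ and hence exceeds $M'$ \uas.

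I now apply the hierarchy machinery to a hierarchy $\rho_{i}$ from $y_{i}$ to $y'_{i}$. By property H2 of Theorem \ref{thm:hierarchy}, $Y_{i}$ is a component domain of $\rho_{i}$: there is an interval $I_{Y_{i}}$ on which $\partial Y_{i}$ lies in every pants decomposition along $\rho_{i}$, and along which $\rho_{i}$ shadows a tight $\mathcal{C}(Y_{i})$-geodesic $h_{i}$ connecting $\pi_{\mathcal{C}(Y_{i})}(y_{i})$ and $\pi_{\mathcal{C}(Y_{i})}(y'_{i})$ up to error $M'$ (property H4). By $\delta$-hyperbolicity and the standard fellow-traveling behavior of geodesics relative to their nearest-point projections, $h_{i}$ passes within $O(\delta)$ of every vertex on $g_{i}$ strictly between $\pi_{g_{i}}(y_{i})$ and $\pi_{g_{i}}(y'_{i})$. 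Picking such a vertex $v_{i}^{k_{i}}$ (say near the midpoint) and using tightness and shadowing, I obtain a time $t^{\ast} \in I_{Y_{i}}$ with $\rho_{i}(t^{\ast}) \in \q{\partial Y_{i} \cup v_{i}^{k_{i}}}$.

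It remains to identify the limit of $\rho_{i}(t^{\ast})$ with a point of $\qbar{\alpha \cup \partial Y}$, which is the step where the \emph{microscopic} hypothesis enters. Since $J$ is microscopic, $d_{\mathcal{P}(S)}(\iota_{i}, \tau_{i}) = o(s_{i})$ and therefore $\norm{g_{i}}_{Y_{i}} = o(s_{i})$, so in particular $d_{\mathcal{C}(Y_{i})}(\alpha_{i}, v_{i}^{k_{i}}) = o(s_{i})$. Theorem \ref{thm:convex} together with the distance formula (Theorem \ref{thm:quasidistance}) then yields that the closest point in $\q{\partial Y_{i} \cup \alpha_{i}}$ to $\rho_{i}(t^{\ast})$, obtained by replacing the $Y_{i}$-pants via $\alpha_{i} \lrcorner \rho_{i}(t^{\ast})$ while keeping $\partial Y_{i}$ and the $Y_{i}^{c}$-pants fixed, lies at $\mathcal{P}(S)$-distance of order $d_{\mathcal{C}(Y_{i})}(v_{i}^{k_{i}}, \alpha_{i}) = o(s_{i})$. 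Thus in the asymptotic cone $\rho_{i}(t^{\ast})$ represents a point $z_{\omega} \in \qbar{\alpha \cup \partial Y}$; yet $\rho_{i}(t^{\ast})$ lies on a hierarchy of $\mathcal{P}(S)$-length at most $K\epsilon s_{i} + K$, so $d(y_{\omega}, z_{\omega}) \leq K\epsilon < c$, contradicting the definition of $c$. The hardest part is precisely this transfer, since proximity in $\mathcal{C}(Y_{i})$ does not in general imply proximity in $\mathcal{P}(S)$; it is only the microscopic hypothesis together with the closest-point structure of Theorem \ref{thm:convex} that rescues the argument.
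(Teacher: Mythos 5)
The paper itself does not prove this statement; it imports it verbatim from \cite{bkmm} (their Theorem~7.2), so there is no internal argument to compare yours against. Your reduction of the separation claim to openness is correct, as is the observation that nearest-point projection onto $g_i$ in the hyperbolic space $\mathcal{C}(Y_i)$ is coarsely Lipschitz, and the use of property~H2 to produce a time $t^{\ast}\in I_{Y_i}$ at which $\rho_i$ carries both $\partial Y_i$ and a curve near the middle of $g_i$. The trouble is concentrated in the transfer step, which you yourself flag as the crux but do not actually close, and in fact two things go wrong there.

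First, the identification $\iota(J)\times\mathcal{P}_{\omega}(\overline{Y^c})=\qbar{\alpha\cup\partial Y}$ is false whenever $\xi(Y_i)\geq 2$: the jet set is $\mathcal{Q}_{\omega}(\overline{\iota\cup\partial Y})$, which fixes the \emph{entire} $\mathcal{P}_\omega(\overline{Y})$-coordinate to be $\iota(J)$, whereas $\qbar{\alpha\cup\partial Y}\cong\mathcal{P}_\omega(\overline{Y\setminus\alpha})\times\mathcal{P}_\omega(\overline{Y^c})$ only pins the single curve $\alpha$; the jet set is a strict subset. So even if you succeeded in placing $z_\omega$ in $\qbar{\alpha\cup\partial Y}$, you would not have contradicted $c=d\bigl(y_\omega,\iota(J)\times\mathcal{P}_\omega(\overline{Y^c})\bigr)$, and worse, a point $y_\omega$ lying in $\qbar{\alpha\cup\partial Y}$ but outside the jet set has $c=0$ under your definition, so the whole $\epsilon$-ball argument collapses there. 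Second, the asserted bound $d_{\mathcal{P}(S)}\bigl(\rho_i(t^\ast),\mathcal{Q}(\partial Y_i\cup\alpha_i)\bigr)\approx d_{\mathcal{C}(Y_i)}(v_i^{k_i},\alpha_i)$ does not follow from Theorems~\ref{thm:convex} and~\ref{thm:quasidistance}. The closest point is coarsely $(\partial Y_i\cup\alpha_i)\lrcorner\rho_i(t^\ast)$, and the distance formula gives $\sum_{W\subseteq Y_i,\,W\cap\alpha_i\ne\emptyset}\{d_{\mathcal{C}(W)}(\rho_i(t^\ast),\alpha_i)\}_{M}$; you have only estimated the single summand $W=Y_i$. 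The microscopic hypothesis bounds $d_{\mathcal{P}(Y_i)}(\iota_i,\tau_i)$, not $d_{\mathcal{P}(Y_i)}(\rho_i(t^\ast),\iota_i)$, and $\rho_i(t^\ast)$ sits on a hierarchy from $y_i$ to $y'_i$ whose behaviour inside proper subsurfaces of $Y_i$ is governed by $y_i,y'_i$ and is a priori unrelated to $\iota_i$. To make a proof along your lines work one would need a genuinely different mechanism for forcing paths crossing the ``middle'' of $g_i$ to come sublinearly close to the jet set; the locally-constant projection machinery of Theorems~\ref{thm:sublinear} and~\ref{thm:locconstant} is the natural candidate, but it is not invoked in your argument.
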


\section{Complex of Separating Multicurves}
\label{chap:separatingcomplex}
Along the lines of the curve complex and the pants complex, in this section we introduce and analyze another natural complex associated to a surface, namely the \emph{complex of separating multicurves}, or simply the \emph{separating complex}.  The separating complex, denoted $\mathbb{S}(S),$ can be thought of as a generalizations of the separating curve complex and the Torelli Complex.  Formally, we have the following definition: 

\begin{defn}[Separating complex] \label{defn:sepcomplex} Given a surface $S$ of finite type, define the \emph{separating complex}, denoted $\mathbb{S}(S),$ to have vertices corresponding to isotopy classes of \emph{separating multicurves} $C \subset \mathcal{C}(S),$ that is multicurves $C$ such that at least two connected components of $S\setminus C$ are essential subsurfaces.  More generally, the separating complex has $k$-cells corresponding to a sets of $(k+1)$ isotopy classes of separating multicurves the complement of whose union in the surface $S$ contains an essential subsurface.  As usual, we will be interested in the one skeleton of $\mathbb{S}(S)$ equipped with the graph metric.  

\begin{figure}[h]
\centering
\includegraphics[height=3 cm]{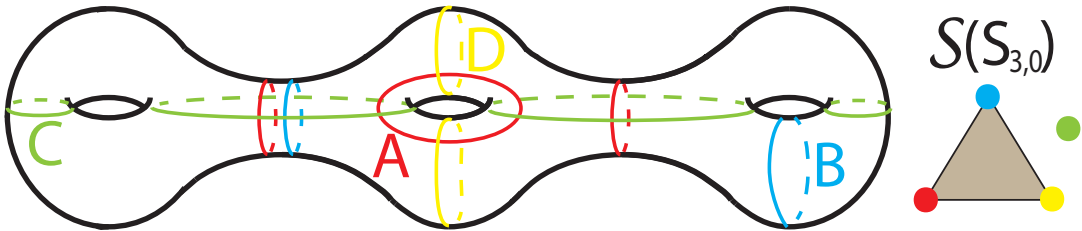}
\caption[Some vertices in $\mathbb{S}(S_{3,0})$]{The separating multicurve $A,B,D$ form a 2-simplex in $\mathbb{S}(S_{3,0}).$  The separating multicurve $C$ is an isolated point in $\mathbb{S}(S_{3,0}).$}\label{fig:sepcomplex}
\end{figure}
\end{defn}

Notice that a vertex in the separating complex representing a separating multicurve $C,$ corresponds to a natural quasi-convex product regions in the pants complex, $\q{C},$ defined in Equation \ref{eq:qcproduct}.  More generally, $k$-cells in the separating complex correspond to a set of $(k+1)$ quasi-convex product regions $\q{C_{0}},$..., $\q{C_{k}}$ such that the coarse intersection between the $k+1$ regions has infinite diameter.  Specifically, consider the multicurve $D=C_{0} \lrcorner C_{1} \lrcorner ... \lrcorner C_{k},$ and note that by Definition \ref{defn:sepcomplex} there is an essential subsurface $Y$ contained in the complement $S \setminus D.$  By Theorem \ref{thm:convex}, the coarse intersection between the product regions $\hat{\bigcap}_{i=0}^{k} \q{C_{i}}= \q{D},$ which in particular has infinite diameter as the complement $S \setminus D$ contains an essential subsurface.  This latter point of view motivates the definition of $\mathbb{S}(S).$   

\begin{rem} \label{rem:disjoint} Note that in Definition \ref{defn:sepcomplex} we did not require disjointness between separating multicurves corresponding to adjacent vertices.  If we let $\mathbb{S}'(S)$ denote a natural relative of our separating complex defined identically to $\mathbb{S}(S)$ in conjunction with an additional assumption of disjointness between representatives of adjacent vertices, then we have the following bi-Lipschitz relation: 
\begin{equation} \label{eq:bilipsep}
\forall C,D \in \mathbb{S}(S), \;\; d_{\mathbb{S}(S)}(C,D) \leq d_{\mathbb{S}'(S)}(C,D) \leq 2d_{\mathbb{S}(S)}(C,D).
\end{equation}
 The point is that while adjacent vertices $C,D \in \mathbb{S}(S)$ need not have disjoint separating multicurve representatives, by definition in the complement $S \setminus \{C,D\}$ there must exist a separating multicurve, $E.$  Then in $\mathbb{S}'(S)$ we have the connected sequence of vertices $C,E,D.$  As we will see, the complex $\mathbb{S}(S)$ is more natural from the point of view of Teichm\"uller space and in particular from the point of view of the asymptotic cones.  Nonetheless, there are situations in this section where for the sake of simplifying the exposition we will prove certain results using $\mathbb{S}'(S),$ and then note that the bi-Lipschitz Equation \ref{eq:bilipsep} implies related results for $\mathbb{S}(S).$ 
\end{rem}




As an immediate consequence of the definition of $\mathbb{S}'(S)$ in conjunction with Equation \ref{eq:bilipsep} we have the following inequality:
\begin{equation} \label{eq:coarsebelow} 
d_{\mathcal{C}(S)}(C,D) \leq d_{\mathbb{S}'(S)}(C,D) \leq 2d_{\mathbb{S}(S)}(C,D). 
\end{equation} 

Recall that in $\mathcal{C}(S),$ two curves are distance three or more if and only if they fill the surface.  Similarly, the following elementary lemma describes the implications of having $\mathbb{S}(S)$ distance at least four.

\begin{lem} \label{lem:smallsepcompdistance}
Let $C,D \in \mathbb{S}(S).$  $d_{\mathbb{S}(S)}(C,D)\geq 4$ implies that any connected essential subsurface of $S \setminus C$ overlaps any connected essential subsurface of $S\setminus D.$ 
\end{lem}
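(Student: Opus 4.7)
I would proceed by contrapositive: suppose there exist connected essential subsurfaces $Y \subseteq S\setminus C$ and $Z \subseteq S\setminus D$ that do not overlap, and produce a path of length at most three in $\mathbb{S}(S)$ from $C$ to $D$, contradicting $d_{\mathbb{S}(S)}(C,D) \geq 4$. The key intermediate vertices will be $\partial Y$ and $\partial Z$, and the first step is to verify that these are themselves vertices of $\mathbb{S}(S)$. Since $C$ is a separating multicurve, $S\setminus C$ contains some connected essential subsurface $Y'$ distinct from $Y$; because $\partial Y \subseteq C$, this $Y'$ also lies in $S\setminus \partial Y$, so $Y^c$ is essential and $\partial Y$ is a separating multicurve. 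The analogous argument handles $\partial Z$.

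Next I would establish the edges. Since $\partial Y \subseteq C$, we have $\partial Y \cup C = C$, whose complement contains the essential subsurface $Y$; so by the definition of adjacency in $\mathbb{S}(S)$ (which, per Remark~\ref{rem:disjoint}, does not demand disjointness of the multicurves themselves), $\partial Y$ and $C$ are equal or adjacent, giving $d_{\mathbb{S}(S)}(C,\partial Y) \leq 1$. Symmetrically $d_{\mathbb{S}(S)}(D,\partial Z) \leq 1$. It remains to bound $d_{\mathbb{S}(S)}(\partial Y,\partial Z)$ (or to shortcut through $D$ directly). Split into two cases. If $Y$ and $Z$ are disjoint, then the complement of $\partial Y \cup \partial Z$ still contains $Y$ as an essential subsurface, so $\partial Y$ and $\partial Z$ are equal or adjacent, producing a path $C,\,\partial Y,\,\partial Z,\,D$ of length at most $3$. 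If instead $Y$ and $Z$ are nested, say $Y \subseteq Z$, then $Y \subseteq Z \subseteq S\setminus D$, so $\partial Y$ may be realized in the closure of $Z$ and shares no transverse intersections with $D$; the complement of $\partial Y \cup D$ still contains $Y$, so $\partial Y$ and $D$ are equal or adjacent, giving the even shorter path $C,\,\partial Y,\,D$ of length at most $2$.

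The main subtle point is checking that $\partial Y$ is a separating multicurve in the sense of Definition~\ref{defn:sepcomplex}, which genuinely uses the hypothesis that $C$ itself was separating (otherwise $\partial Y$ might bound $Y$ together with only pairs of pants and annuli). A secondary delicate point, in the nested case, is that curves of $\partial Y \subseteq C$ may coincide with curves of $D$ rather than being literally disjoint from $D$; this causes no difficulty because adjacency in $\mathbb{S}(S)$ is phrased in terms of the complement of the union of the multicurves containing an essential subsurface, with no disjointness requirement on the multicurves themselves. In either case we obtain $d_{\mathbb{S}(S)}(C,D) \leq 3 < 4$, contradicting the hypothesis and completing the proof.
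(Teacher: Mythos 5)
Your proof is correct and takes essentially the same approach as the paper: argue by contrapositive, splitting into the nested and disjoint cases, and in the disjoint case exhibit the explicit path $C,\,\partial Y,\,\partial Z,\,D$. You supply some extra care the paper elides (verifying that $\partial Y,\partial Z$ are genuine vertices of $\mathbb{S}(S)$, and that adjacency in $\mathbb{S}(S)$ needs no disjointness of multicurves); in the nested case the paper observes directly that $C$ and $D$ are already adjacent (distance $\leq 1$) rather than routing through $\partial Y$, but this is a cosmetic difference.
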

\begin{proof} Assume not, then there are connected essential subsurfaces $Z  \subseteq S \setminus C,$ $Z' \subseteq S \setminus D$ such that $Z$ and $Z'$ are identical, nested, or disjoint.  If $Z\subseteq Z'$ (or equivalently $Z'  \subseteq Z$)  then by definition, $d_{\mathcal{S}(S)}(C,D) \leq 1.$  Finally, if $Z \cap Z'  =\emptyset$ then $d_{\mathcal{S}(S)}(C,D) \leq 3,$ as in $\mathcal{S}(S)$ we have a connected path: $C, \partial Z, \partial Z' , D$ $\Ra\La.$
\end{proof}

In light of our definitions, the following lemma which will have application in Section \ref{chap:thick}.
 \begin{lem} \label{lem:intpoint}
 Let $\overline{W},\overline{V} \in \seultra{S}$ such that \uas $d_{\mathbb{S}(S)}(\partial W_{i},\partial V_{i}) \geq 2.$  Then 
 $$ \Phi_{\overline{W},x_{\omega}} (\qbar{\partial V}) = \{ pt\}, \;\;  \Phi_{\overline{V},y_{\omega}} (\qbar{\partial W} ) = \{ pt \} , $$ where  $\Phi_{\overline{W},x_{\omega}}$ is the projection defined in Equation \ref{eq:projection}.
 \end{lem}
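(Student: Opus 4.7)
The plan is to combine a topological dichotomy on $W_i, V_i$ with the locally constant property of $\Phi_{\overline{W},x_{\omega}}$ from Theorem \ref{thm:locconstant}, exploiting the product structure on $\qbar{\partial V}$.

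First, I observe that the hypothesis $d_{\mathbb{S}(S)}(\partial W_i, \partial V_i) \geq 2$ \uas forces $W_i \pitchfork V_i$ \uas. Indeed, if $W_i$ and $V_i$ were equal, disjoint, or properly nested, then at least one of the essential subsurfaces $V_i, W_i, V_i^c,$ or $W_i^c$ would remain as an essential complementary component of $\partial W_i \cup \partial V_i$, so $\partial W_i$ and $\partial V_i$ would span a $1$-cell of $\mathbb{S}(S)$, contradicting the hypothesis. Moreover, by the interpretation of edges of $\mathbb{S}(S)$ given immediately after Definition \ref{defn:sepcomplex}, the hypothesis $d_{\mathbb{S}(S)}(\partial W_i, \partial V_i) \geq 2$ is equivalent to $\q{\partial W_i} \hat{\cap} \q{\partial V_i}$ having bounded diameter in $\mathcal{P}(S)$, uniformly in $i$ since $S$ is fixed and there are only finitely many mapping class group orbits of such configurations. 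Taking ultralimits, $\qbar{\partial W} \cap \qbar{\partial V}$ therefore consists of at most a single point, which I will call $p$.

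Next, because $P_{\overline{W},x_{\omega}} \subseteq \qbar{\partial W}$ by definition, the set $P_{\overline{W},x_{\omega}} \cap \qbar{\partial V}$ is contained in $\{p\}$. Theorem \ref{thm:locconstant} tells us that $\Phi_{\overline{W},x_{\omega}}$ is locally constant on $\mathcal{P}_{\omega}(S) \setminus P_{\overline{W},x_{\omega}}$, and in particular on the open subset $\qbar{\partial V} \setminus \{p\}$. The product structure $\qbar{\partial V} \cong \mathcal{P}_{\omega}(\overline{V}) \times \mathcal{P}_{\omega}(\overline{V^c})$, in which each factor is a path-connected geodesic asymptotic cone of a pants complex of positive complexity (and so contains more than one point), guarantees that $\qbar{\partial V} \setminus \{p\}$ remains path-connected. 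A locally constant map on a path-connected space is constant, and by the continuity of $\Phi_{\overline{W},x_{\omega}}$ this common value extends to $p$, giving $\Phi_{\overline{W},x_{\omega}}(\qbar{\partial V}) = \{pt\}$. The symmetric statement $\Phi_{\overline{V},y_{\omega}}(\qbar{\partial W}) = \{pt\}$ follows by interchanging the roles of $W$ and $V$ throughout the argument.

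The main obstacle I expect is the combinatorial step identifying edges of $\mathbb{S}(S)$ with infinite-diameter coarse intersections of the natural product regions $\q{\cdot}$, as this is the source of the crucial boundedness of $\q{\partial W_i} \hat{\cap} \q{\partial V_i}$; once this input is in hand, the remainder is a routine application of the locally constant property of $\Phi_{\overline{W},x_{\omega}}$ together with path-connectivity of a nontrivial product after the removal of a single point.
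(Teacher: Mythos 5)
Your proof is correct, but it takes a genuinely different route from the paper's. The paper works with the definition $\Phi_{\overline{W},x_{\omega}} = \rho_{\overline{W},x_{\omega}} \circ \pi_{\mathcal{P}_{\omega}(\overline{W})}$ and directly computes the inner projection: since $S \setminus (\partial W_i \cup \partial V_i)$ has no essential subsurface \uas, for any $a_{\omega} \in \qbar{\partial V}$ with representative $(a_i)$ containing $\partial V_i$, the subsurface projection $\pi_{\mathcal{P}(W_i)}(a_i)$ is coarsely determined by $\pi_{\mathcal{P}(W_i)}(\partial V_i)$ alone, so $\pi_{\mathcal{P}_{\omega}(\overline{W})}(\qbar{\partial V})$ is already a single point and composing with $\rho_{\overline{W},x_{\omega}}$ is a triviality. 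You instead bypass the combinatorics of the subsurface projection entirely, invoking Theorem \ref{thm:convex} to pin $\qbar{\partial W} \cap \qbar{\partial V}$ down to at most a point $p$, then applying the locally-constant property of $\Phi_{\overline{W},x_{\omega}}$ from Theorem \ref{thm:locconstant} on the path-connected punctured product $\qbar{\partial V}\setminus\{p\}$. The paper's route is shorter and gives the stronger conclusion that $\pi_{\mathcal{P}_{\omega}(\overline{W})}$ itself collapses $\qbar{\partial V}$; yours is more topological and reusable for other product regions of the same kind, at the price of importing two external theorems. One small point: your justification that the bound on $\mathrm{diam}\bigl(\q{\partial W_i}\hat\cap\q{\partial V_i}\bigr)$ is uniform via ``finitely many mapping class group orbits'' does not hold (there are infinitely many orbits of overlapping pairs); the correct reason is simply that $\partial W_i \lrcorner \partial V_i$ is \uas a pants decomposition, so $\q{\partial W_i \lrcorner \partial V_i}$ is a single point and Theorem \ref{thm:convex} furnishes the uniform Hausdorff-distance constant, or one can invoke the asymptotic-cone statement of Theorem \ref{thm:convex} directly and skip the uniformity discussion altogether.
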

 %

 \begin{proof}
Recall the definition of $ \Phi_{\overline{W},x_{\omega}} = \rho_{\overline{W},x_{\omega}} \circ \pi_{\mathcal{P}(\overline{W})}.$  By assumption, the complement in the surface $S$ of  $\partial W_{i} \cup \partial V_{i}$ \uas does not contain an essential subsurface.  Hence, it follows that $\pi_{\mathcal{P}(\overline{W})}(\qbar{\partial V}) = \{pt\},$ as for any $a_{\omega} \in \qbar{\partial V}$ we can choose a representative $(a_{i})$ of $a_{\omega}$ which  \uas contains $\partial V_{i}.$  Thus, the projection to $\mathcal{P}(W_{i})$ is coarsely entirely determined by the projection of the curve $\partial V_{i}.$
 \end{proof}

\subsection{Separating complex of $S_{2,1}$}
\label{sec:sepcomplexS21}

\subsection{Connected components of $\mathbb{S}(S_{2,1})$ and Point Pushing}
\label{subsec:ccompsS21}
In this subsection, we consider the connected components of $\mathbb{S}(S_{2,1}),$ which will be of interest in Section \ref{chap:thick}.  By Remark \ref{rem:disjoint} the connected components of $\mathbb{S}'(S)$ and $\mathbb{S}(S)$ are equivalent, and hence for the sake of simplifying the exposition, in this section we will in fact consider the connected components of $\mathbb{S}'(S_{2,1}).$  By topological considerations, $\mathbb{S}'(S_{2,1})$ consists of separating curves or disjoint pairs thereof.  Hence, vertices of $\mathbb{S}'(S_{2,1})$ and simplices of $\mathcal{C}_{sep}(S_{2,1})$ are in correspondence.  Moreover, vertices in $\mathbb{S}'(S_{2,1})$ are adjacent if and only if the corresponding simplices are adjacent in $\mathcal{C}_{sep}(S_{2,1}).$  Thus, the connected components of $\mathbb{S}'(S_{2,1}),$ or equivalently $\mathbb{S}(S_{2,1}),$ are precisely the connected components of $\mathcal{C}_{sep}(S_{2,1}).$  

To study the connected components of $\mathcal{C}_{sep}(S_{2,1}),$ we begin by considering the projection $ \pi_{\mathcal{C}(S_{2,0})}= \pi_{\mathcal{C}(S_{2,0})}\co \mathcal{C}(S_{2,1}) \ra \mathcal{C}(S_{2,0})$ given by forgetting about the boundary component.  Up to homeomorphism there is only one separating curve on the surfaces $S_{2,1}$ and $S_{2,0}.$  In fact under the projection $\pi_{\mathcal{C}(S_{2,0})}$ the image of a separating curve is a separating curve, and similarly the preimage of a separating curve is a union of separating curves. 

\begin{lem} \label{lemsurj} The map $\pi_{\mathcal{C}(S_{2,0})}=\pi_{\mathcal{C}}$ has a natural well-defined surjective restriction 
$$\pi_{\mathcal{C}_{sep}(S_{2,0})}=\pi_{\mathcal{C}_{sep}}\co \mathcal{C}_{sep}(S_{2,1}) \ra \mathcal{C}_{sep}(S_{2,0}).$$
\end{lem}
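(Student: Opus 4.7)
The plan is to unpack what the projection $\pi_{\mathcal{C}}$ does topologically and then appeal to the essentially unique topological type of a separating curve on each of $S_{2,1}$ and $S_{2,0}$. Geometrically, $\pi_{\mathcal{C}}$ is realized by capping the boundary component of $S_{2,1}$ with a disk: after isotoping any curve $\gamma\subset S_{2,1}$ off $\partial S_{2,1}$, the curve $\gamma$ persists in $S_{2,0}$, and the only questions are whether it remains essential and whether it remains separating.

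For well-definedness, I would first classify a separating essential curve $\gamma \subset S_{2,1}$. It divides $S_{2,1}$ into two pieces whose genera sum to $2$, and each must have positive genus: a genus-zero piece would either be a disk (making $\gamma$ null-homotopic) or an annulus cobounding with $\partial S_{2,1}$ (making $\gamma$ peripheral). Hence one side is an $S_{1,1}$ with boundary $\gamma$ and the other is an $S_{1,2}$ with boundary $\gamma \sqcup \partial S_{2,1}$. Capping $\partial S_{2,1}$ affects only the $S_{1,2}$ piece, turning it into $S_{1,1}$, so $\pi_{\mathcal{C}}(\gamma)$ splits $S_{2,0}$ into two copies of $S_{1,1}$. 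In particular it is non-null-homotopic (both sides contain a handle) and it is separating, so $\pi_{\mathcal{C}}(\gamma) \in \mathcal{C}_{sep}(S_{2,0})$, giving the well-defined restriction $\pi_{\mathcal{C}_{sep}}$.

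For surjectivity, let $\gamma' \in \mathcal{C}_{sep}(S_{2,0})$. The same classification shows $\gamma'$ splits $S_{2,0}$ into two copies of $S_{1,1}$. Remove an open disk from the interior of one side to produce a surface homeomorphic to $S_{2,1}$, and let $\gamma$ denote $\gamma'$ viewed in this new surface. Each side of $\gamma$ in $S_{2,1}$ still contains a handle, so $\gamma$ is essential and separating, and by construction $\pi_{\mathcal{C}}(\gamma) = \gamma'$.

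I do not anticipate a serious obstacle: the lemma amounts to observing that the only separating topological type on either $S_{2,1}$ or $S_{2,0}$ is the genus-one/genus-one split, and that this decomposition is compatible with capping or uncapping the boundary disk. The one point of care is ruling out peripherality of $\pi_{\mathcal{C}}(\gamma)$, but since $S_{2,0}$ is closed, essentiality reduces to being non-null-homotopic, which is immediate once both complementary components have positive genus.
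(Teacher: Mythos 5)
Your proof is correct and takes the same approach the paper sketches in the paragraph preceding the lemma (the paper states the lemma without a separate proof, relying on the observation that up to homeomorphism there is a unique separating curve type on each of $S_{2,1}$ and $S_{2,0}$). You have simply filled in the topological bookkeeping — ruling out a disk or annular complementary piece, and capping or uncapping a boundary disk — that the paper leaves implicit.
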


\begin{lem} \label{lem:connected} The fibers of $\pi_{\mathcal{C}_{sep}}$ are connected.
\end{lem}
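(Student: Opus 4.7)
The plan is to recognize $F_\gamma := \pi_{\mathcal{C}_{sep}}^{-1}(\gamma)$ as a single orbit of the Birman subgroup $K := \pi_1(S_{2,0}, p) \leq \mathrm{MCG}(S_{2,1})$ coming from the Birman exact sequence
\[
1 \to \pi_1(S_{2,0}, p) \xrightarrow{\mathrm{Push}} \mathrm{MCG}(S_{2,1}) \to \mathrm{MCG}(S_{2,0}) \to 1,
\]
and then apply a Putman-style connectedness argument. Writing $\gamma$ as separating $S_{2,0}$ into pieces $X, Y \cong S_{1,1}$ with puncture $p \in X$, every lift in $F_\gamma$ is a separating curve of $S_{2,1}$ whose complement consists of an $S_{1,2}$ (containing $p$) and an $S_{1,1}$. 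Transitivity of the $K$-action on $F_\gamma$ follows because any lift arises from an ambient isotopy of $S_{2,0}$ carrying $\gamma$ to its image, which, after correcting the moved image of $p$ back to $p$, realizes the lift as $\mathrm{Push}(\ell)(\gamma)$ for the loop $\ell$ traced by $p$. By $\mathrm{MCG}(S_{2,1})$-equivariance, it suffices to verify for each $s$ in some generating set of $K$ that $\gamma$ and $\mathrm{Push}(s)(\gamma)$ lie in a common component of the subgraph induced on $F_\gamma$ inside $\mathcal{C}_{sep}(S_{2,1})$.

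I would then use the standard generators $\{\alpha_1, \beta_1, \alpha_2, \beta_2\}$ of $\pi_1(S_{2,0}, p)$, with $\alpha_1, \beta_1$ realized as simple loops in $X$ disjoint from $\gamma$ and $\alpha_2, \beta_2$ realized as simple loops of the form $\pi \cdot \lambda \cdot \pi^{-1}$, where $\pi$ is a simple arc from $p$ into $Y$ crossing $\gamma$ transversely in a single point and $\lambda$ is a simple loop in $Y$. For $\alpha_1, \beta_1$ the Dehn-twist decomposition $\mathrm{Push}(\alpha_i) = T_{\alpha_{i,+}}T_{\alpha_{i,-}}^{-1}$ uses curves disjoint from $\gamma$, hence $\mathrm{Push}(\alpha_i)(\gamma) = \gamma$ and there is nothing to check.

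The essential case is $\alpha_2$ (and symmetrically $\beta_2$). Its simple loop representative intersects $X^* := X \setminus \{p\}$ in two parallel pushed-off arcs $\sigma_1, \sigma_3$ from $p$ to nearby points $r_1, r_2$ on $\gamma$, cobounding a thin disk $D$, and intersects $Y$ in a single arc. I construct the intermediate lift $\delta$ as the third boundary of a sufficiently fat regular neighborhood $P \subset X^*$ of $\gamma \cup \partial S_{2,1} \cup \sigma_1 \cup \sigma_3 \cup D$. An Euler characteristic count shows this $2$-complex has $\chi = -1$, so $P$ is a pair of pants with boundaries $\gamma,\ \partial S_{2,1}$, and a new simple closed curve $\delta$ in $\mathring{X^*}$; filling the puncture in $S_{2,0}$ turns $P$ into an annulus cobounding $\gamma$ and $\delta$, so $\delta \in F_\gamma$, and $\delta$ is visibly disjoint from $\gamma$. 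By taking the annular support of $\mathrm{Push}(\alpha_2) = T_{\alpha_{2,+}}T_{\alpha_{2,-}}^{-1}$ thin enough that its restriction to $X^*$ lies in $\mathring{P}$ (its $Y$-part lying outside $X^*$ and therefore outside $\delta$), the curve $\mathrm{Push}(\alpha_2)(\gamma)$ is confined to $P \cup Y$ and never crosses $\delta$. This exhibits a length-two path $\gamma,\ \delta,\ \mathrm{Push}(\alpha_2)(\gamma)$ in the induced subgraph on $F_\gamma$, and the Putman-type principle then concludes the proof.

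The hardest part will be the separation-of-scales verification in the last step: arranging $P$ fat enough to contain the support of $\mathrm{Push}(\alpha_2)$ on $X^*$ strictly in its interior while keeping $\delta$ interior to $X^*$. This is a routine but careful topological construction, using the freedom to take $\mathrm{Push}(\alpha_2)$ supported in an arbitrarily thin neighborhood of $\alpha_2$.
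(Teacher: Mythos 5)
Your route is genuinely different from the paper's. The paper gives a short, self-contained induction on intersection number: for $\alpha,\beta$ in the same fiber, locate an innermost bigon, arrange by isotopy that the boundary component lies inside it, and surger $\alpha$ across that bigon to obtain a curve $\alpha'$ in the fiber, disjoint from $\alpha$, with two fewer intersections with $\beta$. Your proof instead identifies $F_\gamma$ with a single orbit of the point-pushing subgroup $K = \pi_1(S_{2,0},p)$ and runs Putman's connectivity criterion over a generating set. The body of your verification for the ``hard'' generators is correct: the Euler characteristic count makes $P$ a pair of pants, filling the puncture shows $\delta \in F_\gamma$, and once the annular support of $\mathrm{Push}(\alpha_2)$ is squeezed into $\mathring P \cup Y$, it fixes $\delta$, so $\mathrm{Push}(\alpha_2)(\gamma)$ is disjoint from $\delta$. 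I agree the separation-of-scales step you flag is a routine thickening exercise.

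The step that actually carries the weight, however, is the one you dispatch in a sentence: transitivity of $K$ on $F_\gamma$. As sketched, the ``correction'' argument does not close. If $F_t$ is an ambient isotopy with $F_1(\gamma_0)=d$ and $F_1(p)=p'\ne p$, and $G$ is a finger-push carrying $p'$ back to $p$, then $G\circ F_1$ does lie in $K$ but carries the base lift $\gamma_0$ to $G(d)$, and in general $[G(d)]\ne[d]$ in $\mathcal{C}_{sep}(S_{2,1})$: the correcting arc from $p'$ to $p$ may be forced to cross $d$ (this already happens when $d$ is the image of $\gamma_0$ under a slide across an annulus containing $p$, the basic example of a nontrivial element of $F_\gamma$). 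Transitivity is in fact true, but proving it requires real work --- e.g.\ first pushing $p$ off to one side, sliding $\gamma_0$ to $d$ while $p$ is out of the way, and returning $p$ along a path disjoint from $d$; or invoking the boundary map of the evaluation fibration for one-holed-torus subsurfaces, whose surjectivity onto $\pi_0$ of the fiber is exactly the claim. That content is arguably comparable to the paper's entire bigon argument. So either cite transitivity of the $K$-action on fibers as a known fact, in which case your proof is a clean structural alternative, or supply it, in which case the paper's elementary surgery is the more economical route.
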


\begin{proof}
Consider two separating curves $\alpha \ne \beta \in \pi_{\mathcal{C}_{sep}}^{-1}(\gamma).$  If $\alpha$ and $\beta$ are disjoint, we are done.  If not, we will complete the proof by induction on the number of intersections between the curves $\alpha$ and $\beta.$  Look for an innermost bigon $B$ formed by the union of $\alpha$ and $\beta,$ namely a bigon with two vertices given by intersection points of the curves and such that neither of the curves enters the interior of the bigon.  By topological considerations such a bigon must exist.  We can assume that the boundary component of the surface is included in the bigon $B.$  If not, up to a choice of representatives of our curves $\alpha$ and $\beta$ we reduce the intersection number.  

Then we can perform a surgery on $\alpha$ along the bigon $B$ to create the curve $\alpha',$ as in Figure \ref{fig:bigon}.  We can assume that $\alpha'$ is nontrivial, for if not then our original curve $\gamma \in \mathcal{C}_{sep}(S_{2,0})$ would be trivial $\Ra \La.$  Moreover, it is also clear that $\alpha' \in \pi_{\mathcal{C}_{sep}}^{-1}(\gamma).$  Replacing our original curve $\alpha$ with $\alpha'$ reduces the intersection number by two, thereby completing the proof by induction.

\begin{figure}[htpb]
\centering
\includegraphics[height=4 cm]{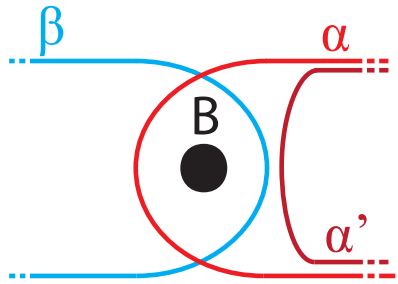}
\caption{Performing surgery to a curve along a bigon to reduce intersection numbers.  }\label{fig:bigon}
\end{figure}

\end{proof}

\begin{lem} \label{lem:coincide}The fibers of $\pi_{\mathcal{C}_{sep}}$ coincide with the connected components of $\mathcal{C}_{sep}(S_{2,1}).$  In particular, since there are infinitely many curves in the range, $\mathcal{C}_{sep}(S_{2,0}),$ it follows that there are infinitely many fibers, and hence infinitely many connected components of $\mathcal{C}_{sep}(S_{2,1}).$
 \end{lem}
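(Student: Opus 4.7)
The plan is to show that every edge of $\mathcal{C}_{sep}(S_{2,1})$ preserves the fiber of $\pi_{\mathcal{C}_{sep}}$; in conjunction with Lemma~\ref{lem:connected}, which gives that each fiber is connected, this immediately yields the equality of fibers and connected components.

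First, I will run a brief topological classification of essential separating curves in $S_{2,1}$. Since $S_{2,1}$ has genus $2$ and a single boundary component, any essential separating curve $\alpha$ cuts $S_{2,1}$ into two pieces whose genera sum to $2$ and which together contain $\partial S_{2,1}$. After ruling out pieces that would be peripheral annuli, disks, or low-complexity subsurfaces that fail to be essential in the sense defined in Section~\ref{chap:preliminaries}, the only possibility is a splitting of the form $S_{2,1}=S_{1,1}\cup_{\alpha}S_{1,2}$, with the $S_{1,2}$ piece containing $\partial S_{2,1}$.

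Next, suppose $\beta$ is a separating curve disjoint from $\alpha$. I will argue that $\beta$ must lie in the $S_{1,2}$ piece, and that it must cobound a pair of pants with $\alpha$ and $\partial S_{2,1}$. If $\beta$ were contained in the $S_{1,1}$ side, then $\beta$ would be non-separating in $S_{1,1}$ (every essential non-peripheral curve in $S_{1,1}$ is), and hence non-separating in $S_{2,1}$, contradicting the choice of $\beta$. So $\beta\subset S_{1,2}$, and a finite case check on how $S_{1,2}\setminus\beta$ can distribute genus and its three external boundary components $\{\alpha,\beta,\partial S_{2,1}\}$ across its two complementary pieces shows that every alternative to the pair-of-pants configuration forces $\beta$ to be either peripheral, trivial, or isotopic to $\alpha$.

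Once this structural step is in place, the conclusion is immediate: filling in $\partial S_{2,1}$ to pass from $S_{2,1}$ to $S_{2,0}$ collapses the pair of pants cobounded by $\alpha,\beta,\partial S_{2,1}$ to an annulus between $\alpha$ and $\beta$, so $\pi_{\mathcal{C}_{sep}}(\alpha)=\pi_{\mathcal{C}_{sep}}(\beta)$. Thus adjacency in $\mathcal{C}_{sep}(S_{2,1})$ preserves fibers, and the fibers coincide with the connected components. The final sentence of the lemma then follows from the surjectivity in Lemma~\ref{lemsurj} together with the standard fact that $\mathcal{C}_{sep}(S_{2,0})$ contains infinitely many isotopy classes of separating curves (for instance, by applying powers of a Dehn twist along a non-separating curve to a fixed separating curve). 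I expect the main obstacle to be the bookkeeping in the second step: one must be careful in enumerating the possible topological types so that no genus/boundary distribution is overlooked.
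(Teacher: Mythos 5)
Your proof is correct, and it follows the same overall strategy as the paper's: by Lemma~\ref{lem:connected} the fibers of $\pi_{\mathcal{C}_{sep}}$ are connected, so it suffices to show that adjacency (disjointness) in $\mathcal{C}_{sep}(S_{2,1})$ preserves fibers, and then conclude via surjectivity from Lemma~\ref{lemsurj}. The small difference is where the topology is done: the paper projects $\alpha$ and $\beta$ to $S_{2,0}$ immediately and then invokes the fact that two disjoint essential separating curves on $S_{2,0}$ must be isotopic, whereas you do the case analysis upstairs in $S_{2,1}$, showing that a separating $\beta$ disjoint from and not isotopic to $\alpha$ must cobound a pair of pants with $\alpha$ and $\partial S_{2,1}$, and then fill in the boundary to collapse that pair of pants to an annulus. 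Both are legitimate and amount to the same underlying classification; your version is a bit longer but makes the pair-of-pants picture explicit, which some readers may find more transparent. One tiny gap worth closing in a write-up: you should explicitly note that $\beta$ must be \emph{separating} inside $S_{1,2}$ (a non-separating curve in $S_{1,2}$ would be non-separating in $S_{2,1}$), before running the genus/boundary distribution check on the two complementary pieces.
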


\begin{proof}
Since Lemma \ref{lem:connected} ensures that any fiber of $\pi_{\mathcal{C}_{sep}}$ is connected, to prove the lemma it suffices to show that any two curves $\alpha, \beta $ which can be connected in $\mathcal{C}_{sep}(S_{2,1})$ must satisfy $\pi_{\mathcal{C}}(\alpha)=\pi_{\mathcal{C}}(\beta).$  Without loss of generality we can assume that $\alpha \cap \beta = \emptyset.$  Ignoring the boundary component, we have disjoint representatives of $\pi_{\mathcal{C}}(\alpha),$ and $\pi_{\mathcal{C}}(\beta).$  However, there are no distinct isotopy classes of separating curves in $S_{2,0}$ $\implies  \pi_{\mathcal{C}}(\alpha)=\pi_{\mathcal{C}}(\beta).$   
\end{proof}

\label{subsec:ppush}
The \emph{point pushing subgroup} is an important subgroup of the mapping class group of a surface with boundary first considered by Birman, \cite{birmanppush}.  Specifically, for $S_{g,n+1}$ with a fixed boundary component labeled $x,$ such that if we fill in the boundary component $x$ we obtain a topological $S_{g,n}$ with a marked base point $x,$ we have the following short exact sequence:
$$ 1\ra \pi_{1}(S_{g,n},x) \inj \MCG(S_{g,n+1}) \surj \MCG(S_{g,n}) \ra 1.$$
The second map is defined by taking a homeomorphism  of $S_{g,n+1}$ and viewing it as a homeomorphism of the surface $S_{g,n}$ obtained by filling in the boundary component $x.$  On the other hand, the first map is give by ``point pushing.''  Specifically, given a loop $\gamma \in  \pi_{1}(S_{g,n}, x),$ the image of the point pushing map of $\gamma,$ denoted $Push_{\gamma},$ is defined to be $T_{\gamma+\epsilon} \circ T_{\gamma - \epsilon}^{-1} \in \MCG(S_{g,n+1})$ where $\gamma + \epsilon$ and $\gamma - \epsilon$ are the two homotopically distinct push-offs of $\gamma$ in $S_{g,n+1}.$  The point pushing subgroup of the mapping class group is defined to be the group generated by point pushing maps for all loops $\gamma \in  \pi_{1}(S_{g,n}, x).$  See Figure \ref{fig:ppush} for examples.

\begin{figure}
  \centering
  \subfloat{\label{fig:ppusha}\includegraphics[width=0.4\textwidth]{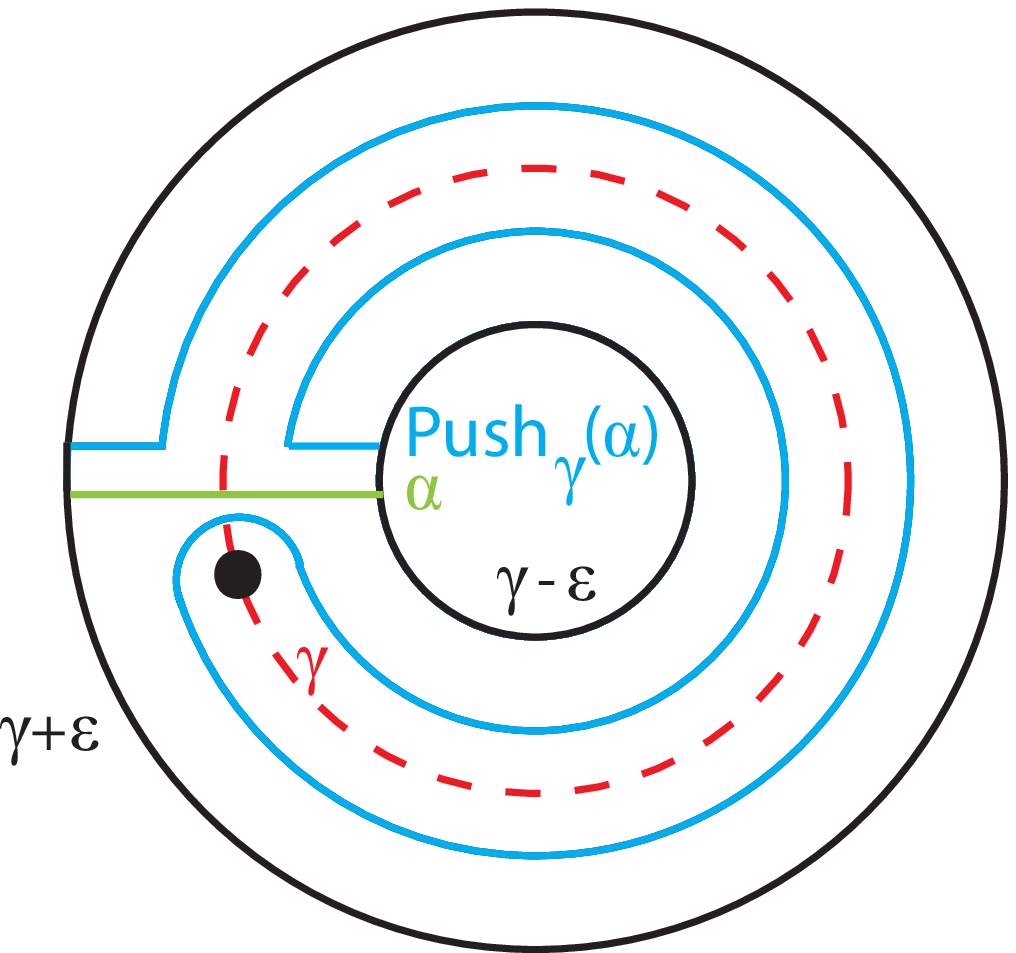}}
  ~ 
  \subfloat{\label{fig:pushdense}\includegraphics[width=0.5\textwidth]{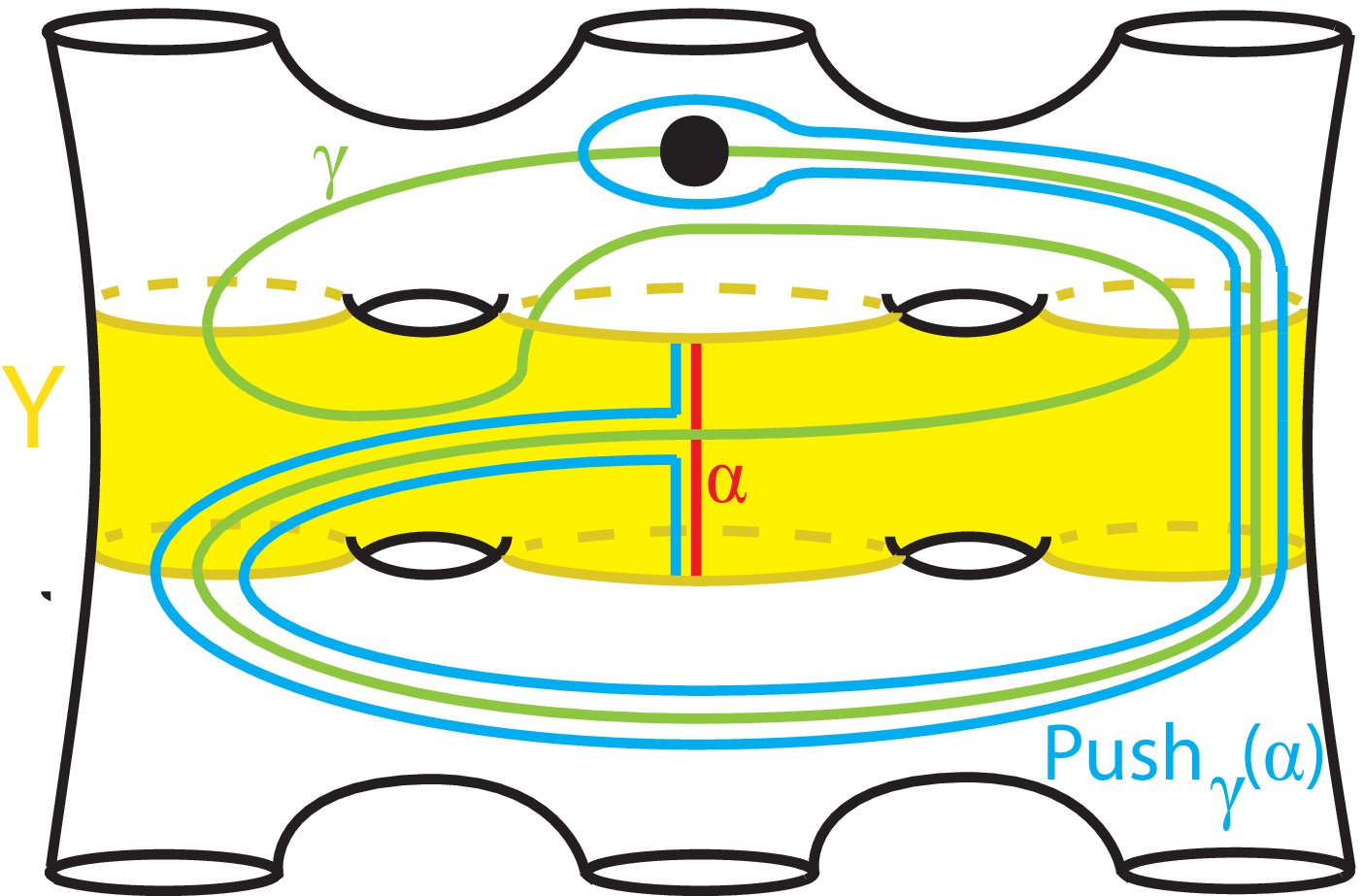}}
  ~ 
  \caption{The point pushing map applied to an arcs $\alpha \subset S.$}
  \label{fig:ppush}
\end{figure}


By construction, the image of this point pushing map is in the kernel of the projection $p:\MCG(S_{g,n+1})  \surj MCG(S_{g,n})$ as the curves $\gamma + \epsilon$ and $\gamma - \epsilon$ viewed in the surface $S_{g,n}$ are the same up to homotopy.  Specifically, since $p$ is a homomorphism we have $p( T_{\gamma+\epsilon} \circ T_{\gamma - \epsilon} ) =p( T_{\gamma+\epsilon}) \circ p( T_{\gamma - \epsilon}^{-1} ) = T_{\gamma}T_{\gamma}^{-1} =Id \in \MCG(S_{g,n}).$  We have just shown the following:

\begin{lem} \label{lem:preserve}
The point pushing subgroup $Push \subset \MCG(S_{2,1})$ preserves the connected components of $\mathcal{C}_{sep}(S_{2,1}).$  Similarly, $Push \subset \MCG(S_{2,1})$ preserves the fibers of the projection $\pi_{\mathcal{P}}\co \mathcal{P}(S_{2,1}) \ra \mathcal{P}(S_{2,0}).$  \end{lem}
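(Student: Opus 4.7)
The plan is to exploit the equivariance of the forgetful projection $\pi_{\mathcal{C}_{sep}}\co \mathcal{C}_{sep}(S_{2,1}) \ra \mathcal{C}_{sep}(S_{2,0})$ with respect to the mapping class group action, together with the fact, just established in the paragraph preceding the lemma, that $Push$ lies in the kernel of the surjection $p\co \MCG(S_{2,1}) \surj \MCG(S_{2,0})$ coming from the Birman exact sequence.

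First I would observe that the projection $\pi_{\mathcal{C}_{sep}}$ is by construction induced by filling in the marked boundary component, so it is equivariant in the sense that for every $\phi \in \MCG(S_{2,1})$ and every separating curve $\alpha \in \mathcal{C}_{sep}(S_{2,1})$ one has $\pi_{\mathcal{C}_{sep}}(\phi \cdot \alpha) = p(\phi) \cdot \pi_{\mathcal{C}_{sep}}(\alpha)$. Applying this with $\phi \in Push$ and using $p(\phi) = \mathrm{Id}$ gives $\pi_{\mathcal{C}_{sep}}(\phi \cdot \alpha) = \pi_{\mathcal{C}_{sep}}(\alpha)$, so $\phi$ sends each fiber of $\pi_{\mathcal{C}_{sep}}$ into itself. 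By Lemma \ref{lem:coincide} these fibers are exactly the connected components of $\mathcal{C}_{sep}(S_{2,1})$, so each point pushing homeomorphism preserves the connected components, and hence so does the subgroup they generate.

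The pants complex statement follows by the same mechanism applied to the forgetful map $\pi_{\mathcal{P}}\co \mathcal{P}(S_{2,1}) \ra \mathcal{P}(S_{2,0})$: this map is again $\MCG(S_{2,1})$-equivariant through $p$, so any $\phi \in \ker(p) \supset Push$ satisfies $\pi_{\mathcal{P}}(\phi \cdot P) = \pi_{\mathcal{P}}(P)$ for all $P \in \mathcal{P}(S_{2,1})$, which is precisely the statement that $\phi$ preserves the fibers of $\pi_{\mathcal{P}}$.

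I do not expect a genuine obstacle here; the content has been done in the preceding paragraphs (verification that $Push$ lies in $\ker p$ via $T_{\gamma+\epsilon}T_{\gamma-\epsilon}^{-1}$, and the identification of fibers with components). The only point worth stating explicitly is equivariance of the two forgetful projections, which is immediate from their definitions as ``ignore the marked boundary component,'' an operation that intertwines the actions via $p$.
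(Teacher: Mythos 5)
Your argument is correct and is essentially the same as the paper's, which simply asserts ``We have just shown the following'' after verifying that $Push \subset \ker p$. You have merely made explicit the two steps the paper leaves implicit: the $\MCG$-equivariance (through $p$) of the forgetful maps, and the invocation of Lemma \ref{lem:coincide} to identify fibers of $\pi_{\mathcal{C}_{sep}}$ with connected components.
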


Since there exist pseudo-Anosov point pushing maps, \cite{kra}, and because pseudo-Anosov axes have infinite diameter in $\mathcal{C}(S)$ \cite{mm1}, which in particular ensures that the axes have infinite diameter in $\mathcal{C}_{sep}(S),$ by Lemma \ref{lem:preserve} it follows that the connected components of $\mathcal{C}_{sep}(S_{2,1})$ have infinite diameter.   Putting together Lemmas \ref{lem:coincide} and \ref{lem:preserve}, we have the following corollary which uniquely characterizes the surface $S_{2,1}$ and which is the underlying reason for the unique phenomenon regarding the thickness and divergence of $\mathcal{T}(S_{2,1})$ studied in Section \ref{chap:thick}.

\begin{cor} \label{cor:unique} $\mathcal{C}_{sep}(S_{2,1}),$ and similarly $\mathbb{S}(S_{2,1}),$ has infinitely many connected connected components, each with infinite diameter. 
\end{cor}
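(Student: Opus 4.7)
The plan is to prove the two parts of the statement separately: first the existence of infinitely many components, and then the infinite diameter of each. The former is essentially already in hand: by Lemma~\ref{lem:coincide}, the connected components of $\mathcal{C}_{sep}(S_{2,1})$ coincide with the fibers of the surjection $\pi_{\mathcal{C}_{sep}}\colon \mathcal{C}_{sep}(S_{2,1}) \to \mathcal{C}_{sep}(S_{2,0})$, and the target has infinitely many vertices (its image contains any isotopy class of separating curve on $S_{2,0}$, of which there are clearly infinitely many, e.g.\ via Dehn twisting), so there are infinitely many fibers.

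To deduce that each component has infinite diameter, the strategy is to exhibit an element of $\MCG(S_{2,1})$ that preserves each component individually and whose orbits are necessarily unbounded. The two main inputs I would use are Kra's theorem \cite{kra}, which supplies a pseudo-Anosov element $\phi$ inside the point-pushing subgroup $Push \subset \MCG(S_{2,1})$, together with Lemma~\ref{lem:preserve}, which guarantees that every element of $Push$ sends each connected component of $\mathcal{C}_{sep}(S_{2,1})$ to itself. Fix any component $\mathcal{K}$ and any separating curve $\alpha \in \mathcal{K}$; then the forward orbit $\{\phi^{n}(\alpha)\}_{n \ge 0}$ lies entirely in $\mathcal{K}$, and the Masur--Minsky result \cite{mm1} that pseudo-Anosov elements act on $\mathcal{C}(S_{2,1})$ with positive translation length forces this orbit to have infinite diameter already in $\mathcal{C}(S_{2,1})$. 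Since disjoint separating curves are adjacent in both $\mathcal{C}(S_{2,1})$ and $\mathcal{C}_{sep}(S_{2,1})$, the natural inclusion $\mathcal{C}_{sep}(S_{2,1}) \hookrightarrow \mathcal{C}(S_{2,1})$ is $1$-Lipschitz, so the orbit --- and hence $\mathcal{K}$ --- has infinite diameter in $\mathcal{C}_{sep}(S_{2,1})$.

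Finally, I would transfer the conclusion to $\mathbb{S}(S_{2,1})$. For the component count, the preliminary discussion of Section~\ref{subsec:ccompsS21} identifies vertices of $\mathbb{S}'(S_{2,1})$ with simplices of $\mathcal{C}_{sep}(S_{2,1})$ with adjacency preserved, so the components of $\mathbb{S}'(S_{2,1})$ and $\mathcal{C}_{sep}(S_{2,1})$ coincide; by Remark~\ref{rem:disjoint} the components of $\mathbb{S}(S_{2,1})$ and $\mathbb{S}'(S_{2,1})$ agree as well. For the diameter, I apply Equation~\ref{eq:coarsebelow} to the $\phi$-orbit of $\alpha$: since $d_{\mathcal{C}(S)}$ is unbounded on this orbit, $d_{\mathbb{S}(S)}$ is unbounded on it too, so each component of $\mathbb{S}(S_{2,1})$ is of infinite diameter.

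The only genuinely nontrivial ingredient in this plan is Kra's production of a pseudo-Anosov point-pushing map; everything else is bookkeeping on top of the lemmas and inequalities already established in this section. The main obstacle I anticipate is simply keeping straight the interplay between the three complexes $\mathcal{C}(S_{2,1})$, $\mathcal{C}_{sep}(S_{2,1})$, and $\mathbb{S}(S_{2,1})$, and verifying that each projection or inclusion used distorts distances only in a direction compatible with the desired unboundedness conclusion.
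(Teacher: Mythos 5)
Your proposal is correct and follows essentially the same route as the paper: Lemma~\ref{lem:coincide} gives infinitely many components, Kra's pseudo-Anosov point-pushing map together with Lemma~\ref{lem:preserve} and the Masur--Minsky unboundedness of pseudo-Anosov orbits in $\mathcal{C}(S)$ gives infinite diameter of each component of $\mathcal{C}_{sep}(S_{2,1})$, and the transfer to $\mathbb{S}(S_{2,1})$ is handled exactly as in the paper via Remark~\ref{rem:disjoint} and Equation~\ref{eq:coarsebelow}. Your explicit observation that the inclusion $\mathcal{C}_{sep}(S_{2,1}) \hookrightarrow \mathcal{C}(S_{2,1})$ is $1$-Lipschitz (so unboundedness propagates in the needed direction) is the same point the paper invokes more tersely.
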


\subsection{$\mathbb{S}_{\omega}(S),$ the ultralimit of $\mathbb{S}(S)$}
\label{sec:sepcomplexcone}

Throughout this section we assume a fixed asymptotic cone $\mathcal{P}_{\omega}(S),$ and consider the ultralimit of $\mathbb{S}(S),$ which we denote $\mathbb{S}_{\omega}(S).$  Formally,  

\begin{defn}[$\mathbb{S}_{\omega}(S)$] Given a surface $S$ of finite type, define $\mathbb{S}_{\omega}(S)$ to have vertices corresponding to $\overline{C} \in \mathbb{S}(S)^{\omega}$ such that $\lim_{\omega} \frac{1}{s_{i}} d_{\mathcal{P}(S)}(P^{0}_{i},\q{C_{i}}) < \infty.$  Equivalently, vertices in $\mathbb{S}_{\omega}(S)$ correspond to natural convex nontrivial product regions $\qbar{C} \subset \mathcal{P}_{\omega}(S).$  By abuse of notation, we will sometimes interchange between these two equivalent descriptions of vertices in $\mathbb{S}_{\omega}(S).$  Furthermore, define $\mathbb{S}_{\omega}(S)$ to have an edge between vertices $\qbar{C}$ and $\qbar{D}$ if in the asymptotic cone $\qbar{C \lrcorner D}= \qbar{D \lrcorner C},$ and moreover \uas the complement $S \setminus \{ C_{i},D_{i}\}$ contains an essential subsurface $Y_{i}.$  By Theorem \ref{thm:convex} this is equivalent to the statement that the intersection between the convex product regions, $\qbar{C} \cap \qbar{D},$ has nontrivial (in fact infinite) diameter in the asymptotic cone.  We can define higher dimensional simplices similarly, although they will not be necessary as we will only be interested in the one skeleton of $\mathbb{S}_{\omega}(S)$ equipped with the graph metric.   
\end{defn}

Given our definition of $\mathbb{S}_{\omega}(S),$ we can define a related $[0,\infty]$--valued pseudometric on the asymptotic cone which gives information about the natural product structures connecting points in the asymptotic cone.  Specifically, define
$$ d_{\mathbb{S}_{\omega}(S)}(a_{\omega},b_{\omega}) \equiv \inf_{\overline{A},\overline{B}}d_{\mathbb{S}_{\omega}(S)}(\overline{A}, \overline{B}) $$ where the infimum is taken over all pairs $\overline{A}, \overline{B}$ in the vertex set of $\mathbb{S}_{\omega}(S)$ having the property that $a_{\omega} \in \qbar{A} $ and $b_{\omega} \in \qbar{B}.$ 

This definition is well-defined, as given any pants decompositions $P\in \mathcal{P}(S)$ there is a bound $D(S)$ depending only on the topological type of the surface $S,$ such that there exists a pants decomposition $P' \in \mathcal{P}(S)$ containing a separating curve and $d_{\mathcal{P}(S)}(P,P')\leq D(S).$  In particular, given any element of the asymptotic cone $a_{\omega}$ with any representative $(A_{i})$ there exists an alternative representative, $(A'_{i}),$ with $A'_{i}$ containing a separating curve, thus making it clear that $a_{\omega}$ lies in some natural convex product region of the asymptotic cone.  The following theorem ensures appropriate compatibility of $\mathbb{S}(S)$ and $\mathbb{S}_{\omega}(S).$

\begin{thm} \label{thm:sepcomplexrelasymptotic}
Let $\overline{C}, \overline{D}$ be vertices in $\mathbb{S}_{\omega}(S).$ Then we have the following inequality:  
$$d_{\mathbb{S}_{\omega}(S)}(\overline{C},\overline{D})   \leq 2\lim_{\omega}d_{\mathbb{S}(S)}(C_{i},D_{i})    \leq 2d_{\mathbb{S}_{\omega}(S)}(\overline{C},\overline{D}). $$ Moreover, when $d_{\mathbb{S}_{\omega}(S)}(\overline{C},\overline{D})$ is finite yet nontrivial, for each of the finite number of natural convex product regions $\qbar{A} \subset \mathcal{P}_{\omega}(S)$ traveled through in the path between $\qbar{C}$ and $\qbar{D},$ the separating curve $A_{i}$ is \uas in the same connected components as the finite $\mathbb{S}(S)$ geodesic from $C_{i}$ to $D_{i}.$  
\end{thm}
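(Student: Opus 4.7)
The statement decomposes into two inequalities plus the connected-components refinement. I will first handle the easier direction $\lim_{\omega} d_{\mathbb{S}(S)}(C_i, D_i) \leq d_{\mathbb{S}_{\omega}(S)}(\overline{C}, \overline{D})$, which also delivers the ``moreover'' claim as a by-product. Suppose $d_{\mathbb{S}_{\omega}(S)}(\overline{C}, \overline{D}) = n < \infty$, realized by a path $\overline{C} = \overline{A}_0, \ldots, \overline{A}_n = \overline{D}$ in $\mathbb{S}_{\omega}(S)$. The edge condition in $\mathbb{S}_{\omega}(S)$ requires, for each $j$, that $\omega$-a.s.\ the complement $S \setminus (A_{j,i} \cup A_{j+1,i})$ contains an essential subsurface, which is precisely the adjacency condition in $\mathbb{S}(S)$. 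Intersecting these $n$ $\omega$-a.s.\ events and applying the triangle inequality yields $d_{\mathbb{S}(S)}(C_i, D_i) \leq n$ $\omega$-a.s. Moreover, every intermediate $A_{j,i}$ sits on an $\mathbb{S}(S)$-path of length $\leq n$ from $C_i$ to $D_i$ and hence lies in the same connected component as the $\mathbb{S}(S)$-geodesic between them, giving the ``moreover'' statement.

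For the harder direction $d_{\mathbb{S}_{\omega}(S)}(\overline{C}, \overline{D}) \leq 2\lim_{\omega} d_{\mathbb{S}(S)}(C_i, D_i)$, I assume $n = \lim_{\omega} d_{\mathbb{S}(S)}(C_i, D_i) < \infty$ (otherwise the claim is vacuous). By the bi-Lipschitz estimate \ref{eq:bilipsep}, $\omega$-a.s.\ I may pick an $\mathbb{S}'(S)$-geodesic $C_i = A^*_{i,0}, A^*_{i,1}, \ldots, A^*_{i,2n} = D_i$ whose consecutive terms are disjoint multicurves with essential complement. Taking ultralimits produces a candidate path $\overline{C} = \overline{A^*_0}, \ldots, \overline{A^*_{2n}} = \overline{D}$ of length $2n$. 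Confirming that this is a bona fide path in $\mathbb{S}_{\omega}(S)$ requires two verifications: (a) each $\overline{A^*_j}$ is a vertex, meaning $\lim_{\omega} s_i^{-1} d_{\mathcal{P}(S)}(P^0_i, \mathcal{Q}(A^*_{i,j})) < \infty$; and (b) consecutive ultralimits are joined by $\mathbb{S}_{\omega}(S)$-edges, which by Theorem \ref{thm:convex} reduces to nonempty intersection of the corresponding convex regions in $\mathcal{P}_{\omega}(S)$.

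Both (a) and (b) are established by producing, uniformly in $j$, a pants decomposition $P_{i,j} \in \mathcal{Q}(A^*_{i,j-1} \cup A^*_{i,j})$ with $s_i^{-1} d_{\mathcal{P}(S)}(P^0_i, P_{i,j})$ uniformly bounded. To construct these, I choose nearest points $P^C_i \in \mathcal{Q}(C_i)$ and $P^D_i \in \mathcal{Q}(D_i)$ to $P^0_i$; both lie at $O(s_i)$-distance from $P^0_i$ by the vertex hypothesis on $\overline{C},\overline{D}$. I then consider the hierarchy $\rho_i$ from $P^C_i$ to $P^D_i$ afforded by Theorem \ref{thm:hierarchy}, which is a quasi-geodesic of length $O(s_i)$ in $\mathcal{P}(S)$ whose main $\mathcal{C}(S)$-geodesic has length at most $d_{\mathcal{C}(S)}(C_i,D_i) \leq 2n$ by Equation \ref{eq:coarsebelow}. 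The $A^*_{i,j}$ and the associated $P_{i,j}$ are extracted from the pants decompositions visited by $\rho_i$, so that all $P_{i,j}$ lie $O(s_i)$-close to $P^0_i$. The distance formula (Theorem \ref{thm:quasidistance}) together with Behrstock's inequality (Lemma \ref{lem:projectionestimates}) and the bounded geodesic image theorem (Theorem \ref{thm:projdefined}) supply the uniform bounds on subsurface projections needed to conclude.

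The hardest step is this hierarchy-based synchronization: the $\mathbb{S}'(S)$-geodesic must be arranged so that its vertices correspond to separating sub-multicurves of pants decompositions along $\rho_i$, while simultaneously keeping the $\mathbb{S}'$-length at $\leq 2n$ and each selected pants decomposition within $O(s_i)$ of $P^0_i$. Matching the combinatorics of the $\mathbb{S}'(S)$-path to the component-domain structure described by property [H2] of Theorem \ref{thm:hierarchy} is the delicate part, since the tight $\mathcal{C}(S)$-geodesic need not transition through separating curves at every stage, forcing one to harvest separating sub-multicurves from neighboring pants decompositions rather than directly from the main geodesic.
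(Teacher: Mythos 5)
The easy direction and the ``moreover'' statement are handled correctly and are essentially the same as the paper's argument. But the hard direction has a genuine gap. You correctly reduce to verifying that each intermediate ultralimit $\overline{A^*_j}$ along the lifted $\mathbb{S}'(S)$-geodesic is a bona fide vertex of $\mathbb{S}_{\omega}(S)$, i.e.\ that $\lim_{\omega} s_i^{-1} d_{\mathcal{P}(S)}(P^0_i, \mathcal{Q}(A^*_{i,j})) < \infty$ --- and you explicitly label the verification ``the hardest step'' and leave it unresolved. Your proposed mechanism (synchronizing the $\mathbb{S}'(S)$-geodesic with a hierarchy $\rho_i$ from $P^C_i$ to $P^D_i$, extracting separating sub-multicurves from pants decompositions visited by $\rho_i$) is not carried out, and as you yourself observe, there is no reason the main tight geodesic of $\rho_i$ passes through the separating multicurves $A^*_{i,j}$ or that the component-domain structure of $\rho_i$ aligns with an arbitrary $\mathbb{S}'(S)$-geodesic while preserving its length $\leq 2n$. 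This is exactly the piece of the proof that does the work, so the proposal is incomplete.

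The paper takes a different and, I think, cleaner route to this step that you should contrast with your own. Rather than trying to realize the $\mathbb{S}'(S)$-geodesic inside a hierarchy, it works directly with the $\mathbb{S}'(S)$-geodesic $C_i = C^0_i, \ldots, C^{n'}_i = D_i$, first replacing the intermediate vertices so that each $C^j_i \subset \mathcal{N}(C^{j-1}_i \cup C^{j+1}_i)$ (a ``tightening'' analogous to tight geodesics). It then proves the pointwise estimate $d_{\mathcal{P}(S)}(P^0_i, \mathcal{Q}(C^j_i)) \lesssim d_{\mathcal{P}(S)}(P^0_i, \mathcal{Q}(C^0_i)) + d_{\mathcal{P}(S)}(P^0_i, \mathcal{Q}(C^{n'}_i))$ by the distance formula, using the combinatorial observation that if $Y$ intersects $C^j_i$ then (by tightness) it intersects $C^{j-1}_i$ or $C^{j+1}_i$, and (since the sequence is an $\mathbb{S}'(S)$-geodesic) $Y$ can be disjoint from at most a bounded window of consecutive terms, so $Y$ intersects every $C^m_i$ on at least one of the two sides of $j$. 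Lemma~\ref{lem:curveprojub} then bounds $d_{\mathcal{C}(Y)}(C^0_i, C^j_i)$ or $d_{\mathcal{C}(Y)}(C^{n'}_i, C^j_i)$ by $O(n')$, which is summable. This sidesteps hierarchy synchronization entirely. If you want to repair your proof without adopting the paper's argument wholesale, you would need to make precise what it means to ``extract'' the $A^*_{i,j}$ from $\rho_i$ and show that the result is still an $\mathbb{S}'(S)$-path of length $\leq 2n$ from $C_i$ to $D_i$, which I do not see how to do; the paper's direct distance-formula estimate is the missing idea.
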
 

\begin{rem}
The multiplicative term of $2$ in the bi-Lipschitz inequality of Theorem \ref{thm:sepcomplexrelasymptotic} is not believed to be necessary, although is used for technical aspects in the proof.
\end{rem}

\begin{proof}[Proof of Theorem \ref{thm:sepcomplexrelasymptotic}]
First we will prove $\lim_{\omega}d_{\mathbb{S}(S)}(C_{i},D_{i})  \leq d_{\mathbb{S}_{\omega}(S)}(\overline{C},\overline{D}).$  It suffices to assume that $d_{\mathbb{S}_{\omega}(S)}(\overline{C},\overline{D})=1$ and show that  $\lim_{\omega}d_{\mathbb{S}(S)}(C_{i},D_{i})    \leq 1.$  Since $$d_{\mathbb{S}_{\omega}(S)}(\overline{C},\overline{D})=1$$ it follows that in the asymptotic cone, the natural convex product regions $\qbar{C},\qbar{D}$ whose intersection is $\qbar{C \lrcorner D}=\qbar{D \lrcorner C} $ is an infinite diameter set.  In particular, $S \setminus (C_{i} \cup D_{i})$ \uas contains an essential subsurface, $Y_{i}.$  Accordingly, in $\mathbb{S}(S)$ \uas we have a connected chain $C_{i},D_{i}$ thus proving $\lim_{\omega}d_{\mathbb{S}(S)}(C_{i},D_{i}) \leq 1$ as desired.  

In order to complete the proof we will show $d_{\mathbb{S}_{\omega}(S)}(\overline{C},\overline{D}) \leq 2 \lim_{\omega}d_{\mathbb{S}(S)}(C_{i},D_{i}).$  Considering the first part of the proof we can assume $\lim_{\omega}d_{\mathbb{S}(S)}(C_{i},D_{i})$ is finite, which by Lemma \ref{lem:finite}  implies that  \uas $d_{\mathbb{S}(S)}(C_{i},D_{i})=n$ for some non-negative constant $n.$  By Remark \ref{rem:disjoint}, it follows that \uas $d_{\mathbb{S}'(S)}(C_{i},D_{i}) = n' \leq 2n.$  Hence, \uas we have a finite $\mathbb{S}'(S)$ geodesic $\overline{C}=\overline{C^{0}}, ..., \overline{C^{n'}}=\overline{D}.$ Since \uas $C^{j}_{i} \cap C^{j+1}_{i}$ are disjoint, it follows that \uas $C^{j}_{i} \lrcorner C^{j+1}_{i}=C^{j+1}_{i} \lrcorner C^{j}_{i}.$   

Putting things together, in order to prove that $d_{\mathbb{S}_{\omega}(S)}(\overline{C},\overline{D}) \leq 2n,$ and hence complete the proof of the lemma, it suffices to show that there are natural convex product regions $\qbar{C^{j}} \subset \mathcal{P}_{\omega}(S)$ in the asymptotic cone for $ j \in \{1,...,n'-1\}$  corresponding to the terms in the sequence of $\mathbb{S}(S)$ geodesics $C^{0}_{i},...,C^{n'}_{i}.$  Equivalently, it suffices to show that $\lim_{\omega} \frac{1}{s_{i}} d_{\mathcal{P}(S)}(P^{0}_{i},\q{C^{j}_{i}}) < \infty$ for all $ j \in \{1,...,n'-1\}$ (by the assumptions of our lemma we already have this for $j=0,n'$).  Once we show this, we will have the following chain of natural convex product regions in the asymptotic cone with each product region intersecting its neighbor in an infinite diameter set:
$$\qbar{C}=\qbar{C^{0}} ,...,\qbar{C^{n'}}=\qbar{D}.$$     

\begin{figure}[htpb]
\centering
\includegraphics[height=3 cm]{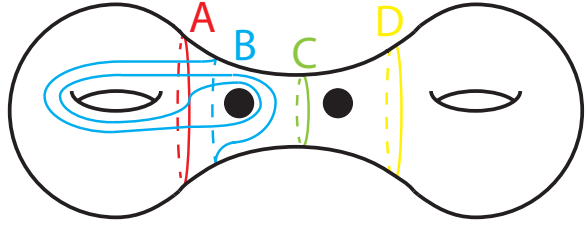}
\caption[Tightening a $\mathbb{S}'(S)$ geodesics.] {$d_{\mathbb{S}'(S_{2,2})}(A,B)=2$ and in fact the sequence $A,D,B$ is a $\mathbb{S}'(S)$ geodesic.  Note that $D \not \subset \mathcal{N}(A \cup B).$  However, replacing $D$ by $C,$ we have a new $\mathbb{S}'(S)$ geodesic $A,C,B$ with $C \subset \mathcal{N}(A \cup B).$  This replacement process is akin to tightening in Subsection \ref{subsec:tight}.}\label{fig:chainsepcomplex}
\end{figure}  

Fix some $\overline{C^{j}},$ for  $ j \in \{1,...,n'-1\}.$  By replacement if necessary, we can assume $C^{j}_{i}$ is contained in a regular neighborhood of  $C^{j-1}_{i}$ and $C^{j+1}_{i}.$  We denote this latter condition by $C^{j}_{i} \subset \mathcal{N}(C^{j-1}_{i} \cup C^{j+1}_{i}).$  See Figure \ref{fig:chainsepcomplex} for an example of such a replacement.  We will show that  $\lim_{\omega} \frac{1}{s_{i}} d_{\mathcal{P}(S)}(P^{0}_{i},\q{C^{j}_{i}}) < \infty.$  Then, iteratively repeating the same argument for each of two the resulting shorter sequences $\overline{C^{0}}, ..., \overline{C^{j}} \mbox{ and } \overline{C^{j}}, ..., \overline{C^{n'}},$ we eventually obtain an entire chain of length $n'$ with the desired property, namely $\lim_{\omega} \frac{1}{s_{i}} d_{\mathcal{P}(S)}(P^{0}_{i},\q{C^{j}_{i}}) < \infty$ for all $ j \in \{1,...,n'-1\}.$  

In order to show that $\lim_{\omega} \frac{1}{s_{i}} d_{\mathcal{P}(S)}(P^{0}_{i},\q{C^{j}_{i}}) < \infty$ we will show:
\begin{eqnarray}
\label{eq:suffice} d_{\mathcal{P}(S)}(P^{0}_{i},\q{C^{j}_{i}}) \lesssim d_{\mathcal{P}(S)}(P^{0}_{i},\q{C^{0}_{i}}) + d_{\mathcal{P}(S)}(P^{0}_{i},\q{C^{n'}_{i}}).
\end{eqnarray}
Then our assumption of $\lim_{\omega} \frac{1}{s_{i}}  \left( d_{\mathcal{P}(S)}(P^{0}_{i},C^{0}_{i}) \\ + d_{\mathcal{P}(S)}(P^{0}_{i},C^{n'}_{i}) \right) < \infty,$ in conjunction with Equation \ref{eq:suffice} completes the proof of the theorem.      

In order to prove equation \ref{eq:suffice}, by Lemma 2.2 of \cite{behrstockminsky} it suffices to show that for any connected essential subsurface $Y\in \e{S}$ such that $Y \cap C^{j}_{i},$  $$d_{\mathcal{C}(Y)}(P^{0}_{i},C^{j}_{i})  \leq d_{\mathcal{C}(Y)}(P^{0}_{i},\{C^{0}_{i},C^{n'}_{i}\}) +n'r'$$ where $r'$ is some constant.  First assume that $Y$ intersects $C^{m}_{i}$ \uas for all $m \in \{0,..,j-1\}.$  In this case we are done as by Lemma \ref{lem:curveprojub} it follows that $d_{\mathcal{C}(Y)}(C^{0}_{i},C^{j}_{i}) \leq  n'r'.$  Similarly, we are done if $Y$ intersects $C^{m}_{i}$ \uas for all $m \in \{j+1,...,n'\}.$  Since $\{C^{j}_{i}\} _{j_{0}}^{n'}$ is a geodesic in $\mathbb{S}'(S),$ it follows that if $Y$ is \uas disjoint from $C^{k}_{i}$ then $Y$ intersects all $C^{l}_{i}$ for all $l$ such that $|l-k|\geq 3.$  Since by assumption $Y \cap C^{j} \ne \emptyset$ and because $C^{j}_{i} \subset \mathcal{N}(C^{j-1}_{i} \cup C^{j+1}_{i}),$ it follows that either $Y \cap C^{j-1} \ne \emptyset$ or $Y \cap C^{j+1} \ne \emptyset.$  In other words, any connected essential subsurface $Y$ which intersects $C^{j}_{i}$ actually intersects two consecutive separating multicurves: either $C^{j-1}_{i},C^{j}_{i}$ or $C^{j}_{i},C^{j+1}_{i}.$  In either case, it follows that $Y$ must \uas  intersect $C^{m}_{i}$ either for all $m \in \{0,...,j-1\}$ or for all $m \in \{j+1,...,n'\},$ thereby completing the proof.
\end{proof}

The bi-Lipschitz relation in Theorem \ref{thm:sepcomplexrelasymptotic} guarantees that one of the terms is infinite if and only if the other term is infinite.  It should be stressed that the term $\lim_{\omega}d_{\mathbb{S}(S)}(C_{i},D_{i})$ can be infinite due to two different reasons.  It is possible that  \uas $C_{i}$ and $D_{i}$ are connected in $\mathbb{S}(S)$ however their distances are unbounded.  On the other hand, for small enough complexity surfaces, it is possible that  \uas $C_{i}$ and $D_{i}$ are in different connected components of $\mathbb{S}(S).$  This distinction will be crucial in Section \ref{chap:treegraded}.  

\section{Asymptotic cone of Teichm\"uller space}
\label{chap:treegraded}
In this section we explore the asymptotic cone of Teichm\"uller space.  The first subsection introduces a notion called structurally integral corners which provide a desired separation property in the asymptotic cone.  The second subsection characterizes when two points in the asymptotic cone of Teichm\"uller space are separated by a cut-point.  Finally, in subsection \ref{sec:contracting} we characterize strongly contracting quasi-geodesics in Teichm\"uller space.

\subsection{Structurally integral corners}
\label{sec:corners}

\subsubsection{Structurally integral corners are well-defined}
Informally, a structurally integral corner entails the joining of two particular natural convex product regions in the asymptotic cone of the pants complex at a ``corner'' such that the removal of the corner joining the regions separates the two product regions from each other.  More formally, fixing some ultrafilter $\omega,$ we have the following definition:

\begin{defn}[structurally integral corner] \label{defn:corner}
Let $\overline{\alpha} \ne \overline{\beta} \in \mathbb{S}^{\omega}$ be such that the following conditions hold:
\begin{enumerate}
\item \uas $\alpha_{i}$ and $\beta_{i}$ are in different connected components of $\mathbb{S}(S).$  In particular, it follows that $\lim_{\omega}d_{\mathbb{S}(S)}(\alpha_{i} , \beta_{i} ) \ra \infty$ and $\alpha_{i} \lrcorner \beta_{i}, \beta_{i} \lrcorner \alpha_{i} \in \mathcal{P}(S).$  And, 
\item $\lim_{\omega}d_{\mathcal{P}(S)}(\alpha_{i} \lrcorner \beta_{i},\beta_{i} \lrcorner \alpha_{i} )$ is bounded.  In particular, for any $\overline{Y} \in \eultra{S},$ the limit $$\lim_{\omega} d_{\mathcal{C}(Y_{i})}(\alpha_{i} \lrcorner \beta_{i}, \beta_{i} \lrcorner \alpha_{i}) \mbox{ is bounded} .$$
\end{enumerate}
In this setting we call the point $(\alpha \lrcorner \beta)^{\omega}$ (or equivalently the point  $(\beta \lrcorner \alpha)^{\omega}$) a \emph{structurally integral corner}, and denote it by $_{\overline{\alpha}}C_{\overline{\beta}}.$  
\end{defn}

\begin{rem} \label{rem:nooccur1}
It should be stressed that due to condition (1) in Definition \ref{defn:corner}, structurally integral corners can only exist for surfaces $S$ with disconnected separating complexes, or equivalently for surfaces with $|\chi(S)| \leq 4,$ see \cite{sultanthesis} Theorem 3.1.1.  
\end{rem}


After descending from elements of ultrapowers to elements of the asymptotic cone, the structurally integral corners $(\alpha \lrcorner \beta)_{\omega}$ and $(\beta \lrcorner \alpha)_{\omega}$ will be identified and moreover, this point will serve as a cut-point between the quasi-convex product regions $\qbar{\alpha}$ and $ \qbar{\beta}.$  We must assume that our cone $\mathcal{P}_{\omega}(S)$ contains the corner $(\alpha \lrcorner \beta)_{\omega},$ or equivalently we must assume $\lim_{\omega} \frac{1}{s_{i}} d_{\mathcal{P}(S)}(P^{0}_{i},\alpha_{i} \lrcorner \beta_{i}) < \infty.$ 


\begin{ex}[A structurally integral corner in $\mathcal{P}_{\omega}(S_{2,1})$] Let $\alpha_{i}, \beta_{i} \in \mathcal{C}_{sep}(S_{2,1})$ be such that $\lim_{\omega} \frac{1}{s_{i}} d_{\mathcal{P}(S)}(P^{0}_{i},\q{\alpha_{i}} < \infty,$ $\lim_{\omega} \frac{1}{s_{i}} d_{\mathcal{P}(S)}(P^{0}_{i},\q{\beta_{i}} < \infty.$  Moreover, assume that 
\uas (i) the intersection number $i(\alpha_{i},\beta_{i})$ is bounded, and (ii) $\alpha_{i},\beta_{i}$ are in different connected components of $\mathcal{C}_{sep}(S).$  In this case $_{\overline{\alpha}}C_{\overline{\beta}}$ is a structurally integral corner in $\mathcal{P}_{\omega}(S_{2,1}).$  The only nontrivial point to note is that the bound on the intersection number between $\alpha_{i}$ and $\beta_{i}$ guarantees condition (2) of Definition \ref{defn:corner}.  
\end{ex}
  
Given the notion of a structurally integral corner, we will now introduce a relation $\sim_{{\overline{\alpha}},{\overline{\beta}}}$ on $\mathcal{P}^{\omega}(S)$ which descends to an equivalence relation on $\mathcal{P}{\omega}(S) \setminus _{\overline{\alpha}}C_{\overline{\beta}}.$  Moreover, each equivalence class is open.  In particular, it will follow that in the asymptotic cone, $\mathcal{P}_{\omega}(S),$ the corner $_{\overline{\alpha}}C_{\overline{\beta}}$ is a cut-point between points of $\mathcal{P}_{\omega}(S) \setminus _{\overline{\alpha}}C_{\overline{\beta}}$ which are in different equivalence classes under the relation $\sim_{{\overline{\alpha}},{\overline{\beta}}}.$  We begin with the following definition of a relation $\sim_{{\overline{\alpha}},{\overline{\beta}}}$ on $\mathcal{P}^{\omega}(S).$
\begin{defn} \label{defn:equivrel} 
Let $_{\overline{\alpha}}C_{\overline{\beta}}$ be a structurally integral corner.  Then we have relation $\sim_{{\overline{\alpha}},{\overline{\beta}}}$ on $\mathcal{P}^{\omega}(S)$ given by saying $\overline{P} \sim_{{\overline{\alpha}},{\overline{\beta}}} \overline{Q}$ if and only if $\overline{P}$ and $\overline{Q}$ fall into the same case under the following trichotomy.  Namely, given $\overline{P},$  
\begin{enumerate}
\item $\overline{P}$ is in case one if $\exists \overline{W_{\alpha}} \in \seultra{S}$ such that the following two conditions hold:
\subitem (i) $\lim_{\omega} d_{\mathbb{S}(S)}(\alpha_{i},\partial W_{\alpha, i})$ is bounded, and
\subitem (ii) $\lim_{\omega} d_{\mathcal{C}(W_{\alpha, i})}(P_{i},\beta_{i}) \ra \infty.$ 
\item $\overline{P}$ is in case two if $\exists \overline{W_{\beta}}  \in \seultra{S}$ such that the following two conditions hold:
\subitem (i) $\lim_{\omega} d_{\mathbb{S}(S)}(\beta_{i},\partial W_{\beta, i})$ is bounded, and
\subitem (ii) $\lim_{\omega} d_{\mathcal{C}(W_{\beta, i})}(P_{i},\alpha_{i}) \ra \infty.$ 
\item $\overline{P}$ is in case three if neither the conditions of case one nor case two apply to $\overline{P}$
\end{enumerate}
\end{defn}

\begin{figure}[h]
\centering
\includegraphics[height=7 cm]{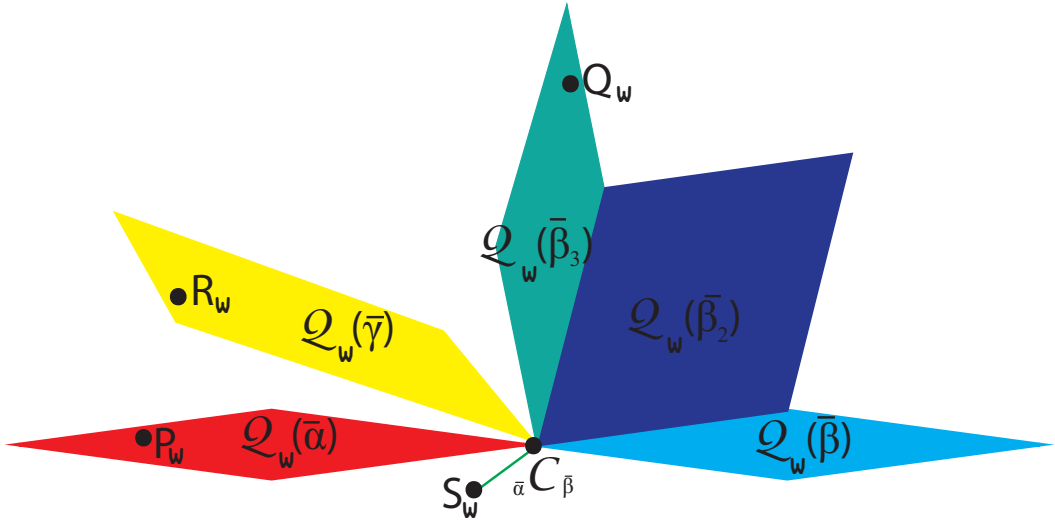}
\caption[A structurally integral corner]{A structurally integral corner $_{\overline{\alpha}}C_{\overline{\beta}} \in \mathcal{P}_{\omega}(S).$  $P_{\omega}$ is in case one of the equivalence relation $\sim_{{\overline{\alpha}},{\overline{\beta}}},$ $Q_{\omega}$ is in case two, and the points $R_{\omega},S_{\omega}$ are in case three.  In the picture we are assuming $d_{\mathbb{S}_{\omega}(S)}(\overline{\alpha},\{\overline{\beta}, \overline{\beta_{2}}, \overline{\beta_{3}} \} )=\infty,$ $d_{\mathbb{S}_{\omega}(S)}(\{\overline{\alpha},\overline{\beta}, \overline{\beta_{1}}, \overline{\beta_{2}} \} , \overline{\gamma})=\infty.$}\label{fig:corner}
\end{figure}

As a first order of business, the following lemma guarantees the mutual exclusivity of the three cases in the definition of  $\sim_{{\overline{\alpha}},{\overline{\beta}}},$ thus ensuring that the equivalence relation of Definition \ref{defn:equivrel} is well-defined.

\begin{lem} \label{lem:welldefined}
Let $\overline{P} \in \mathcal{P}^{\omega}(S).$  Then $\overline{P}$ falls into one and only one of the three cases in the trichotomy of Definition \ref{defn:equivrel}. 
\end{lem}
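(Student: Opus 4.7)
The proof reduces to showing that cases 1 and 2 are mutually exclusive, since cases 1 and 3 (and 2 and 3) are disjoint by the very definition of case 3, and any $\overline{P}$ that is in neither case 1 nor case 2 lies in case 3 tautologically. So suppose for contradiction that $\overline{P}$ falls into both cases 1 and 2, witnessed respectively by $\overline{W_{\alpha}}, \overline{W_{\beta}} \in \seultra{S}$.

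The first task is to produce two overlapping essential subsurfaces to which the Behrstock inequality can be applied. Since $\partial W_{\alpha,i}$ is a bounded $\mathbb{S}(S)$-distance from $\alpha_{i}$, it lies \uas in the same component of $\mathbb{S}(S)$ as $\alpha_{i}$; symmetrically for $\beta$. By condition (1) of the definition of a structurally integral corner, $\alpha_{i}$ and $\beta_{i}$ lie \uas in distinct components of $\mathbb{S}(S)$, so $d_{\mathbb{S}(S)}(\partial W_{\alpha,i}, \partial W_{\beta,i}) \geq 4$ \uas. Lemma \ref{lem:smallsepcompdistance} then forces $W_{\alpha,i}$ to overlap $W_{\beta,i}$, and the Behrstock inequality (Lemma \ref{lem:projectionestimates}) yields, after possibly swapping the roles of $\alpha$ and $\beta$, that $d_{\mathcal{C}(W_{\alpha,i})}(P_{i}, \partial W_{\beta,i}) \leq 10$ \uas.

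The main step is to bound $d_{\mathcal{C}(W_{\alpha,i})}(\beta_{i}, \partial W_{\beta,i})$ independently of $i$; combined with the Behrstock bound and the triangle inequality, this contradicts the case 1 hypothesis that $d_{\mathcal{C}(W_{\alpha,i})}(P_{i}, \beta_{i}) \to \infty$. Using Remark \ref{rem:disjoint} fix, \uas, a chain $\beta_{i} = C_{0}, C_{1}, \ldots, C_{k} = \partial W_{\beta,i}$ in $\mathbb{S}'(S)$ of bounded length $k$, with consecutive multicurves disjoint. The key claim is that each $C_{j}$ must \uas intersect $W_{\alpha,i}$: if some $C_{j}$ were disjoint from $W_{\alpha,i}$, then $C_{j}$ and $\partial W_{\alpha,i}$ would be disjoint separating multicurves with the essential subsurface $W_{\alpha,i}$ in the complement of their union, so $d_{\mathbb{S}'(S)}(C_{j}, \partial W_{\alpha,i}) \leq 1$; chaining this with the bounded distances $d_{\mathbb{S}'(S)}(\alpha_{i}, \partial W_{\alpha,i})$ and $d_{\mathbb{S}'(S)}(\beta_{i}, C_{j}) \leq j$, the triangle inequality together with the bi-Lipschitz inequality of Remark \ref{rem:disjoint} would force $d_{\mathbb{S}(S)}(\alpha_{i}, \beta_{i})$ bounded, contradicting their lying in different components.

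With every $C_{j}$ intersecting $W_{\alpha,i}$, choose for each $j$ a curve $\gamma_{j} \in C_{j}$ meeting $W_{\alpha,i}$; consecutive $\gamma_{j}, \gamma_{j+1}$ are disjoint since $C_{j}, C_{j+1}$ are, so these curves form a path of length at most $k$ in $\mathcal{C}(S)$ every vertex of which intersects $W_{\alpha,i}$. Lemma \ref{lem:curveprojub} then bounds $d_{\mathcal{C}(W_{\alpha,i})}(\beta_{i}, \partial W_{\beta,i})$ by $3k$ up to bounded slack coming from the diameter of $\pi_{\mathcal{C}(W_{\alpha,i})}$ of a multicurve, which completes the contradiction. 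The main obstacle in the proof is the chain-intersection claim of the previous paragraph, where the specific disconnectedness of $\mathbb{S}(S)$ built into the definition of a structurally integral corner is essential; everything else is a bookkeeping application of standard subsurface projection tools.
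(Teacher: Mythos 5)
Your proof is correct and uses the same key ingredients as the paper's: Lemma \ref{lem:smallsepcompdistance} to force $\overline{W_{\alpha}} \pitchfork \overline{W_{\beta}}$, the bounded $\mathbb{S}'(S)$ chain from $\beta_{i}$ to $\partial W_{\beta,i}$ together with Lemma \ref{lem:curveprojub} to bound the $\mathcal{C}(W_{\alpha,i})$ projection, and the Behrstock inequality to close the contradiction. The only cosmetic difference is ordering: you invoke Behrstock first and use the ultrafilter to fix which term of the min is small, then contradict the case 1 hypothesis directly, whereas the paper bounds both $d_{\mathcal{C}(W_{\alpha,i})}(\beta_{i},\partial W_{\beta,i})$ and $d_{\mathcal{C}(W_{\beta,i})}(\alpha_{i},\partial W_{\alpha,i})$ symmetrically and shows both terms of the Behrstock min would have to diverge.
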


\begin{proof} It suffices to show that $\overline{P}$ cannot simultaneously be in cases one and two.  Assume not, that is, assume $\exists$ elements $\overline{W_{\alpha}},\overline{W_{\beta}} \in \seultra{S}$ such that $$\lim_{\omega} d_{\mathbb{S}(S)}(\alpha_{i},\partial W_{\alpha, i}) \mbox{ and } \lim_{\omega} d_{\mathbb{S}(S)}(\beta_{i},\partial W_{\beta, i})$$ are bounded (and by Remark \ref{rem:disjoint} similarly for $\mathbb{S}'(S)),$ while  $$\lim_{\omega} d_{\mathcal{C}(W_{\alpha, i})}(P_{i},\beta_{i}) \mbox{ and } \lim_{\omega} d_{\mathcal{C}(W_{\beta, i})}(P_{i},\alpha_{i})$$ are unbounded.

Since $_{\overline{\alpha}}C_{\overline{\beta}}$ is a structurally integral corner, in particular, we have that $\lim_{\omega} d_{\mathbb{S}(S)}(\alpha_{i}, \beta_{i})$ is unbounded, and consequently by our assumptions,  $\lim_{\omega} d_{\mathbb{S}(S)}(\partial W_{\alpha, i}, \partial W_{\beta, i})$ is unbounded as well.  Lemma \ref{lem:smallsepcompdistance} then guarantees  that $\overline{W_{\alpha}} \pitchfork \overline{W_{\beta}}.$  

By Lemma \ref{lem:curveprojub} if $Y_{i} \in \e{S}$ \uas intersects every separating multicurve in the bounded path of disjoint separating multicurves in $\mathbb{S}'(S)$ connecting $\beta_{i}$ and $\partial W_{\beta, i},$ then $$\lim_{\omega} d_{\mathcal{C}(Y_{i})}(\beta_{i},\partial W_{\beta, i})$$ is bounded as well.  In particular, since the distance in $\mathbb{S}'(S)$ between $\partial W_{\alpha, i}$ and the bounded path connecting $\beta_{i}$ and $\partial W_{\beta, i},$ is unbounded, Lemma \ref{lem:smallsepcompdistance} implies that \uas $\partial W_{\alpha, i}$ intersects every separating multicurve in the bounded path of separating multicurves in $\mathbb{S}'(S)$ connecting $\beta_{i}$ and $\partial W_{\beta, i}.$  Hence, $\lim_{\omega} d_{\mathcal{C}(W_{\alpha, i})}(\beta_{i},\partial W_{\beta, i})$ is bounded.  Similarly, $\lim_{\omega} d_{\mathcal{C}(W_{\beta, i})}(\alpha_{i},\partial W_{\alpha, i})$ is bounded.  In conjunction with our assumptions, it follows that $\lim_{\omega} d_{\mathcal{C}(W_{\alpha, i})}(P_{i},\partial W_{\beta, i})$ and $\lim_{\omega} d_{\mathcal{C}(W_{\beta, i})}(P_{i},\partial W_{\alpha, i})$ are unbounded.  Since $\overline{W_{\alpha}} \pitchfork \overline{W_{\beta}},$ this contradicts Lemma \ref{lem:projectionestimates}. 
\end{proof}

\subsubsection{Equivalence relation induced by structurally integral corners}

Having proven that the relation $\sim_{{\overline{\alpha}},{\overline{\beta}}}$ is well-defined, in this subsection we will prove that the relation in fact descends to an equivalence relation on $\mathcal{P}_{\omega}(S) \setminus _{\overline{\alpha}}C_{\overline{\beta}}.$

\begin{thm} \label{thm:equivrel} The relation $\sim_{{\overline{\alpha}},{\overline{\beta}}}$ descends to an equivalence relation on $\mathcal{P}_{\omega}(S) \setminus \; _{\overline{\alpha}}C_{\overline{\beta}}.$  Moreover, each equivalence class is open.
\end{thm}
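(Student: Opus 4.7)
Reflexivity, symmetry, and transitivity are built into the definition of $\sim_{\overline{\alpha},\overline{\beta}}$: since Lemma~\ref{lem:welldefined} ensures the three cases are mutually exclusive, ``being in the same case'' is automatically an equivalence relation on the ultrapower $\mathcal{P}^{\omega}(S)$. The substance of the theorem is therefore (A) showing that the case a representative falls into depends only on the underlying cone point, provided the cone point is not the corner, so the trichotomy descends to a partition of $\mathcal{P}_{\omega}(S) \setminus {}_{\overline{\alpha}}C_{\overline{\beta}}$ into subsets $C_1, C_2, C_3$; and (B) showing each of these subsets is open.

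My plan is to use the microscopic jet machinery (Theorems~\ref{thm:mjetsexist} and~\ref{thm:mjetssep}) as a uniform tool for cases $1$ and $2$. Given $P_{\omega}$ represented by $\overline{P}$ in case $1$ with witness $\overline{W_{\alpha}} \in \seultra{S}$, the hypothesis $\lim_{\omega} d_{\mathcal{C}(W_{\alpha,i})}(P_i, \beta_i) \to \infty$ lets me invoke Theorem~\ref{thm:mjetsexist} to produce a microscopic jet $J = (\overline{g}, \overline{W_{\alpha}}, \overline{P}, \overline{\beta \lrcorner \alpha})$ whose geodesics $g_i$ sit on the tight $\mathcal{C}(W_{\alpha,i})$ main geodesic of a hierarchy from $P_i$ toward $(\beta \lrcorner \alpha)_i$. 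Theorem~\ref{thm:mjetssep} then says that the $\sim_J$-class of $P_{\omega}$ is an open neighborhood. Any cone point $Q_{\omega}$ in this class satisfies $\lim_{\omega} d_{\mathcal{C}(W_{\alpha,i})}(\pi_{g_i}(Q_i), \pi_{g_i}(P_i)) < \infty$, which together with the coarse Lipschitz relationship between $\pi_{g_i}$ and the subsurface projection $\pi_{\mathcal{C}(W_{\alpha,i})}$ and the triangle inequality yields $\lim_{\omega} d_{\mathcal{C}(W_{\alpha,i})}(Q_i, \beta_i) \to \infty$. Hence $Q_{\omega}$ is also in case $1$ with the same witness $\overline{W_{\alpha}}$. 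This argument simultaneously proves (A) for case $1$ --- applied with $Q_{\omega} = P_{\omega}$ but a different representative $\overline{P}'$, the same $\overline{W_{\alpha}}$ works --- and (B) for the subset $C_1$. Case $2$ is symmetric.

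The remaining task, and what I expect to be the main obstacle, is openness of $C_3 \setminus {}_{\overline{\alpha}}C_{\overline{\beta}}$. Case $3$ is characterized by the \emph{absence} of witnesses from both sides, so there is no obvious local data to perturb; one needs a contrapositive argument. Assume, for contradiction, that some open neighborhood of a non-corner $P_{\omega} \in C_3$ meets $C_1 \cup C_2$; after passing to a subsequence one gets $Q^n_{\omega} \to P_{\omega}$ with each $Q^n_{\omega}$ in case $1$, say, witnessed by some $\overline{W_{\alpha}^n}$. My plan is to diagonalize over $n$ using the non-principal ultrafilter $\omega$ together with Lemma~\ref{lem:finite} (to enforce $\omega$-almost sure constancy of bounded discrete data such as $d_{\mathbb{S}(S)}(\alpha_i, \partial W_{\alpha,i}^n)$) in order to produce a single $\overline{W_{\alpha}} \in \seultra{S}$ with $\lim_{\omega} d_{\mathbb{S}(S)}(\alpha_i, \partial W_{\alpha,i})$ still bounded while $\lim_{\omega} d_{\mathcal{C}(W_{\alpha,i})}(P_i, \beta_i) \to \infty$, placing $P_{\omega}$ into $C_1$ and contradicting $P_{\omega} \in C_3$.

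The delicate technical points are twofold. First, one must control how the witnesses $\overline{W_{\alpha}^n}$ depend on $n$ so the diagonal $\overline{W_{\alpha}}$ still satisfies condition (i) of Definition~\ref{defn:equivrel}; I plan to handle this by selecting each $\overline{W_{\alpha}^n}$ to lie in a bounded $\mathbb{S}(S)$-neighborhood of $\alpha$ of radius independent of $n$ (possible by condition (i) for each $Q^n_{\omega}$) and then invoking Lemma~\ref{lem:finite} on the bounded $\mathbb{S}(S)$-distance. Second, one must ensure the unboundedness of $d_{\mathcal{C}(W_{\alpha,i}^n)}(Q^n_i, \beta_i)$ transfers to unboundedness of $d_{\mathcal{C}(W_{\alpha,i})}(P_i, \beta_i)$ despite $Q^n_{\omega}$ and $P_{\omega}$ being merely cone-close; the Behrstock inequality (Lemma~\ref{lem:projectionestimates}) applied to pairs of overlapping witnesses, combined with the bounded geodesic image theorem (Theorem~\ref{thm:projdefined}) to rule out degenerate projections, should force a compatible witness for $P_{\omega}$ itself. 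Once this contrapositive is complete, openness of $C_3 \setminus {}_{\overline{\alpha}}C_{\overline{\beta}}$ follows, finishing the proof.
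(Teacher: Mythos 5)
Your decomposition is coherent, and the jet argument for openness of $C_1,C_2$ is a genuinely different route from the paper's; assuming a few technical points are handled (such as that $P_\omega$ lies off the jet wall $\iota(J)\times\mathcal{P}_\omega(\overline{W_\alpha^{c}})$ in the cone, and that $|g_i|\to\infty$), it plausibly works. But the proof of the theorem really lives in the case you defer to the end, openness of $C_3$, and there your sketch has a genuine gap. The phrase ``diagonalize over $n$ using the non-principal ultrafilter $\omega$'' conflates two unrelated indices: $\omega$ governs the index $i$ of representative sequences such as $(P_i)$ and $(Q^n_i)$, while $n$ is an ordinary sequence index for the cone points $Q^n_\omega$. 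You cannot apply $\omega$ in the $n$-variable, and producing a diagonal representative of $P_\omega$ out of the double array $(Q^n_i)_{n,i}$ is a separate, delicate step you have not addressed. More fundamentally, even granting a single candidate witness $\overline{W_\alpha}$, the conclusion $\lim_\omega d_{\mathcal{C}(W_{\alpha,i})}(P_i,\beta_i)\to\infty$ does not follow from $Q^n_\omega\to P_\omega$ together with $\lim_\omega d_{\mathcal{C}(W_{\alpha,i})}(Q^n_i,\beta_i)\to\infty$: cone-closeness only controls $d_{\mathcal{P}(S)}(P_i,Q^n_i)$ up to a small multiple of $s_i$, which gives no absolute bound on $d_{\mathcal{C}(W_{\alpha,i})}(P_i,Q^n_i)$. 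There is no triangle-inequality shortcut here, and the appeal to the Behrstock inequality and bounded geodesic image ``forcing a compatible witness'' is precisely where the missing work lies.

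The paper avoids the case split entirely by proving the quantitative Lemma~\ref{lem:equivrel}: if $\overline{P}\not\sim_{\overline{\alpha},\overline{\beta}}\overline{Q}$ then $d_{\mathcal{P}_\omega(S)}(P_\omega,Q_\omega)\geq C\, d_{\mathcal{P}_\omega(S)}(P_\omega,{}_{\overline{\alpha}}C_{\overline{\beta}})$. Its proof shows, for every $Y\in\e{S}$, that the $\mathcal{C}(Y)$-geodesic from $P_i$ to $Q_i$ passes coarsely through $\{\alpha_i,\beta_i\}$, and the Behrstock inequality and bounded geodesic image theorem are deployed there in a case analysis on how $\overline{Y}$ relates to the witness $\overline{W_\alpha}$. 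Because the resulting estimate is symmetric in $P$ and $Q$ and is proved uniformly over all three cases of the trichotomy, it gives the descent to the cone and the openness of all three classes --- including $C_3$ --- in one stroke. If you want to salvage the jet-based approach, something equivalent to that distance lower bound is the ingredient you must still supply for case 3.
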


The proof of Theorem \ref{thm:equivrel} will follow from the following technical lemma.
\begin{lem} \label{lem:equivrel} There exists a constant $C \geq 0$ such that for $_{\overline{\alpha}}C_{\overline{\beta}}$ a structurally integral corner if $\overline{P}, \overline{Q}$ are sequences representing points $P_{\omega},Q_{\omega} \in \mathcal{P}_{\omega}(S),$ and if $\overline{P} \not \sim_{{\overline{\alpha}},{\overline{\beta}}} \overline{Q}.$  Then, 
$$d_{\mathcal{P}_{\omega}(S)}(P_{\omega},Q_{\omega}) \geq Cd_{\mathcal{P}_{\omega}(S)}(P_{\omega}, _{\overline{\alpha}}C_{\overline{\beta}}).$$
\end{lem}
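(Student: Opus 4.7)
The plan is to prove the inequality via a term-by-term comparison in the Masur--Minsky quasi-distance formula (Theorem \ref{thm:quasidistance}) applied to both $d_{\mathcal{P}(S)}(P_i, \alpha_i \lrcorner \beta_i)$ and $d_{\mathcal{P}(S)}(P_i, Q_i)$, exploiting the structural properties forced by the corner definition together with the case-membership assumptions. By the symmetry between cases one and two in Definition \ref{defn:equivrel}, I may assume $\overline{P}$ is in case one with witness $\overline{W_\alpha}$, so that $\lim_\omega d_{\mathcal{C}(W_{\alpha,i})}(P_i,\beta_i) = \infty$ while $\lim_\omega d_{\mathbb{S}(S)}(\alpha_i,\partial W_{\alpha,i})$ is bounded; by Lemma \ref{lem:welldefined}, $\overline{Q}$ is then not in case one.

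First I would analyze the projection of $\alpha_i \lrcorner \beta_i$ to each connected essential subsurface $Y$. Since $\pi_{\mathcal{C}(Y)}(\alpha_i \lrcorner \beta_i)$ has uniformly bounded diameter in $\mathcal{C}(Y)$ by Lemma \ref{lem:curveprojub} and contains both $\pi_{\mathcal{C}(Y)}(\alpha_i)$ and $\pi_{\mathcal{C}(Y)}(\beta_i)$ whenever $Y$ meets $\alpha_i$ and $\beta_i$ respectively, condition (2) of Definition \ref{defn:corner} forces $d_{\mathcal{C}(Y)}(\alpha_i,\beta_i)$ to be uniformly bounded for every $Y$ that meets both. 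Condition (1) in conjunction with Lemma \ref{lem:smallsepcompdistance} rules out $Y$ whose boundary is disjoint from both $\alpha_i$ and $\beta_i$, for otherwise $\partial Y_i$ would yield a length-$2$ path in $\mathbb{S}(S)$ between $\alpha_i$ and $\beta_i$. The classification of contributing subsurfaces then reduces to those $Y$ with $d_{\mathbb{S}(S)}(\partial Y_i,\alpha_i) \leq 1$ (``Type B'', with projection contribution $\approx d_{\mathcal{C}(Y_i)}(P_i,\beta_i)$), those with $d_{\mathbb{S}(S)}(\partial Y_i,\beta_i) \leq 1$ (``Type A'', contribution $\approx d_{\mathcal{C}(Y_i)}(P_i,\alpha_i)$), and a residual class of $Y$ intersecting both with $\partial Y_i$ far from both in $\mathbb{S}(S)$, which I would handle via the Behrstock inequality (Lemma \ref{lem:projectionestimates}) applied to the pair $(Y,W_\alpha)$.

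Next I would extract uniform upper bounds by an ultrafilter diagonalization. Because $\overline{Q}$ is not in case one, no ultrapower $\overline{Y}$ of Type B subsurfaces can satisfy $\lim_\omega d_{\mathcal{C}(Y_i)}(Q_i,\beta_i) = \infty$; were no uniform bound $K_B$ available, choosing $Y_i$ to realize the maximum of $d_{\mathcal{C}(\cdot)}(Q_i,\beta_i)$ over Type B subsurfaces at each level would produce such a forbidden witness and contradict Lemma \ref{lem:welldefined}. Symmetrically, $\overline{P}$ failing to be in case two yields a uniform bound $K_A$ on Type A projection distances $d_{\mathcal{C}(Y_i)}(P_i,\alpha_i)$. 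Consequently, for each Type B subsurface the triangle inequality gives $d_{\mathcal{C}(Y_i)}(P_i,Q_i) \geq d_{\mathcal{C}(Y_i)}(P_i,\beta_i) - K_B$, matching up to a fixed constant its contribution to $d_{\mathcal{P}(S)}(P_i, \alpha_i \lrcorner \beta_i)$, while the Type A contributions are uniformly controlled and, after rescaling by $s_i$ in the asymptotic cone, are absorbed into the multiplicative constant. Summing via the quasi-distance formula delivers the desired inequality $d_{\mathcal{P}_\omega(S)}(P_\omega,Q_\omega) \geq C\,d_{\mathcal{P}_\omega(S)}(P_\omega,{}_{\overline{\alpha}}C_{\overline{\beta}})$. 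The chief technical obstacle is this diagonalization, which requires careful pigeonholing across ultrapower sequences to upgrade the per-sequence consequences of the case-exclusion into uniform bounds $K_A,K_B$ independent of the choice of witness, together with the Behrstock reduction of the residual ``intersects both'' case to the Type A/B analysis.
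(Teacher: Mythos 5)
Your overall strategy---reducing the claim to a subsurface-by-subsurface comparison via the Masur--Minsky distance formula, then classifying the contributing $Y$'s and pushing through the triangle inequality---is in the same spirit as the paper's proof, and the diagonalization idea to upgrade per-sequence boundedness to uniform bounds $K_A,K_B$ is a real technical point that the paper itself glosses over. However, the residual class is a genuine gap. You propose to dispatch it ``via the Behrstock inequality applied to $(Y,W_\alpha)$,'' but Behrstock's inequality (Lemma~\ref{lem:projectionestimates}) requires $Y\pitchfork W_\alpha$, and in the residual class you must also contend with the nested configuration $W_\alpha\subset Y$, where you need the Bounded Geodesic Image theorem (Theorem~\ref{thm:projdefined}) rather than Behrstock. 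Moreover, even in the overlapping configuration, Behrstock only gives you $d_{\mathcal{C}(Y_i)}(\partial W_{\alpha,i},\{P_i,Q_i\})$ bounded (using that $d_{\mathcal{C}(W_{\alpha,i})}(P_i,Q_i)\to\infty$ from the case-membership assumptions); to convert that to control over $d_{\mathcal{C}(Y_i)}(\{\alpha_i,\beta_i\},\{P_i,Q_i\})$ you need to \emph{also} know that $d_{\mathcal{C}(Y_i)}(\partial W_{\alpha,i},\{\alpha_i,\beta_i\})$ is bounded. That boundedness is not automatic for your residual class as defined; establishing it is precisely the content of the paper's ``Case~4,'' where a connectivity argument in $\mathbb{S}'(S)$ using Lemmas~\ref{lem:curveprojub} and~\ref{lem:smallsepcompdistance} shows that if this projection distance were unbounded, $\partial Y_i$ would in fact be $\mathbb{S}(S)$-close to $\alpha_i$ and hence not residual. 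Without that step your residual analysis does not close.

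A secondary point worth flagging: the paper organizes the computation differently, by introducing the nearest-point projection $\pi_{\sigma^Y_i}(\alpha_i\lrcorner\beta_i)$ of the corner onto a $\mathcal{C}(Y)$-geodesic $\sigma^Y_i$ from $P_i$ to $Q_i$, and showing $\alpha_i\lrcorner\beta_i$ stays a uniformly bounded distance from $\sigma^Y_i$. This automatically produces the dichotomy ``$\alpha\lrcorner\beta$ close to $P_i$ or close to $Q_i$ (or to an interior point of $\sigma^Y_i$) in $\mathcal{C}(Y)$,'' which is exactly the form the Behrstock/BGI outputs come in, and it bypasses having to decide in advance which of $P_i$ or $Q_i$ absorbs the bound. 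Your Type~A/Type~B framing implicitly assumes you can always bound $d_{\mathcal{C}(Y_i)}(Q_i,\beta_i)$ (for Type~B) or $d_{\mathcal{C}(Y_i)}(P_i,\alpha_i)$ (for Type~A), but for the residual class this dichotomy is the best you can get, and you need a bookkeeping device (like $\sigma^Y_i$) to track which branch occurs. Finally, note that your trichotomy is phrased in terms of $d_{\mathbb{S}(S)}(\partial Y_i,\cdot)$, which tacitly assumes $\partial Y_i$ is a separating multicurve; subsurfaces $Y\in\nse{S}$ have boundary not lying in $\mathbb{S}(S)$ at all and must be folded into the residual analysis with care, whereas the paper's classification by the topological relationship between $\overline{Y}$ and $\overline{W_\alpha}$ is exhaustive without this caveat.
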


\begin{proof}[Proof of Theorem \ref{thm:equivrel}]
Assume that $\overline{P}$ and $\overline{Q}$ are representatives of the same point of the asymptotic cone.  Then by Lemma \ref{lem:equivrel} either $\overline{P} \sim_{{\overline{\alpha}},{\overline{\beta}}} \overline{Q}$ or in the asymptotic cone, $P_{\omega}= \; _{\overline{\alpha}}C_{\overline{\beta}}.$  Hence, the relation $\sim_{{\overline{\alpha}},{\overline{\beta}}}$ descends to a relation on $\mathcal{P}_{\omega}(S) \setminus _{\overline{\alpha}}C_{\overline{\beta}}$ which is reflexive.  Furthermore, since by definition it is immediate that $\sim_{{\overline{\alpha}},{\overline{\beta}}}$ is symmetric and transitive, it follows that $\sim_{{\overline{\alpha}},{\overline{\beta}}}$ descends to an equivalence relation on $\mathcal{P}_{\omega}(S) \setminus _{\overline{\alpha}}C_{\overline{\beta}}.$  Lemma \ref{lem:equivrel} implies that any point $P_{\omega} \in \mathcal{P}_{\omega}(S) \setminus _{\overline{\alpha}}C_{\overline{\beta}}$ has an open neighborhood consisting entirely of points which are in the same equivalence class.  Hence, the equivalence classes are open. 
\end{proof}

\begin{proof} [Proof of Lemma \ref{lem:equivrel}]  
$P_{i},Q_{i},\alpha_{i} \lrcorner \beta_{i}$ are pants decompositions of a surface and hence have nontrivial subsurface projection to any essential subsurface.  For any $Y \in \e{S},$ let $\sigma^{Y}_{i}$ be a $\mathcal{C}(Y)$ geodesic from $P_{i}$ to $Q_{i}.$  Moreover, let $\pi_{\sigma^{Y}_{i}}(\alpha_{i} \lrcorner \beta_{i})$ be the nearest point projection of $\pi_{\mathcal{C}(Y)}(\alpha_{i} \lrcorner \beta_{i})$ onto the geodesic $\sigma^{Y}_{i}.$  By definition, $\forall Y \in \e{S}$ we have 
\begin{equation} \label{eq:lemequivrel}
d_{\mathcal{C}(Y)}(P_{i},Q_{i}) \geq d_{\mathcal{C}(Y)}(P_{i},\pi_{\sigma^{Y}_{i}}(\alpha_{i} \lrcorner \beta_{i})).
\end{equation}

In order to complete the proof we will show that there is a uniform constant $k$ such that $\forall Y \in \e{S},$\begin{equation} \label{eq:lemequivrel2}
d_{\mathcal{C}(Y)}(\alpha_{i} \lrcorner \beta_{i},\pi_{\sigma^{Y}_{i}}(\alpha_{i} \lrcorner \beta_{i})) < k.
\end{equation}

Combining Equations \ref{eq:lemequivrel} and \ref{eq:lemequivrel2}, for all $Y \in \e{S},$ we have:
\begin{eqnarray} \label{eq:sufficestep2}
d_{\mathcal{C}(Y)}(P_{i},Q_{i}) \geq d_{\mathcal{C}(Y)}(P_{i},\alpha_{i} \lrcorner \beta_{i}) -k.
\end{eqnarray}
In particular, by Theorem \ref{thm:quasidistance}, in the asymptotic cone we have the following inequality thus completing the proof:
\begin{eqnarray} \label{eq:step2}
d_{\mathcal{P}_{\omega}(S)}(P_{\omega},Q_{\omega}) \geq Cd_{\mathcal{P}_{\omega}(S)}(P_{\omega},_{\omega}).   
\end{eqnarray}

By condition (2) in the definition of a structurally integral corner  $_{\overline{\alpha}}C_{\overline{\beta}}$ it follows that $\lim_{\omega}\mbox{diam}_{\mathcal{C}(Y)}(\{ \alpha_{i},\beta_{i}, \alpha_{i} \lrcorner \beta_{i}, \beta_{i} \lrcorner \alpha_{i} \} )$ is bounded, and hence, in place of Equation \ref{eq:lemequivrel2} it suffices to show that $\lim_{\omega}d_{\mathcal{C}(Y_{i})}(\sigma^{Y_{i}}_{i}, \{ \alpha_{i}, \beta_{i} \})$ is bounded.

By assumption $\overline{P}$ and $\overline{Q}$ are in different equivalence classes, and hence by definition $\overline{P}$ and $\overline{Q}$ fall into different cases in Definition \ref{defn:equivrel}.  By symmetry of the cases, without loss of generality we can assume that $\overline{P}$ is in case one of Definition \ref{defn:equivrel}, while $\overline{Q}$ is not.  Namely, $\exists \overline{W_{\alpha}} \in \seultra{S}$ such that $\lim_{\omega} d_{\mathbb{S}(S)}(\alpha_{i},\partial W_{\alpha, i})$ is bounded, while $\lim_{\omega} d_{\mathcal{C}(W_{\alpha, i})}(P_{i},\beta_{i}) \ra \infty.$  Furthermore, for any element $\overline{U} \in \seultra{S}$ such that $\lim_{\omega} d_{\mathbb{S}(S)}(\alpha_{i},\partial U_{i})$ is bounded, perforce $\lim_{\omega} d_{\mathcal{C}(U_{i})}(Q_{i},\beta_{i})$ is also bounded.  By Remark \ref{rem:disjoint} the same statements hold for $\mathbb{S}'(S).$

We proceed by considering cases for the relationship between $\overline{Y}$ and $\overline{W_{\alpha}}$ where $\overline{Y}$ is an arbitrary element of the ultrapower of connected essential subsurfaces.  By Lemma \ref{lem:finite} since there are only a finite number of possibilities for the relationship between two essential subsurfaces - identical, nested, overlapping, and disjoint -  the same finitely many possibilities for the relationship between $\overline{Y}$ and $\overline{W_{\alpha}}.$  In each case we will show $\lim_{\omega}d_{\mathcal{C}(Y_{i})}(\sigma^{Y_{i}}_{i}, \{ \alpha_{i}, \beta_{i} \})$ is bounded, thus completing the proof of the lemma.

\textbf{Case 1: Either $\overline{Y} \subset \overline{W_{\alpha}}$ or $\overline{Y} \cap \overline{W_{\alpha}} = \emptyset.$}  In either case, \uas $d_{\mathbb{S}(S)}(\partial W_{\alpha, i}, \partial Y_{i})\leq 1$ and hence by our assumptions $\lim_{\omega} d_{\mathbb{S}(S)}(\alpha_{i},\partial Y_{i})$ is bounded.  Since $\overline{Q}$ is not in case one of the equivalence relation $\sim_{{\overline{\alpha}},{\overline{\beta}}},$ it follows that $\lim_{\omega} d_{\mathcal{C}(Y_{i})}(Q_{i},\beta_{i})$ is bounded.  In particular, this implies that $\lim_{\omega}d_{\mathcal{C}(Y_{i})}(\sigma^{Y_{i}}_{i}, \{ \alpha_{i}, \beta_{i} \})$ is bounded, completing this case.   

\textbf{Case 2: $\overline{W_{\alpha}} \subset \overline{Y}$ and $\lim_{\omega} d_{\mathcal{C}(Y_{i})}(\partial W_{\alpha, i}, \{\alpha_{i},\beta_{i}\})$ is bounded.}  By our assumptions, $$\lim_{\omega} d_{\mathcal{C}(W_{\alpha, i})}(P_{i},\beta_{i}) \ra \infty,$$ while  $\lim_{\omega} d_{\mathcal{C}(W_{\alpha, i})}(Q_{i},\beta_{i})$ is bounded.  In particular, $\lim_{\omega} d_{\mathcal{C}(W_{\alpha, i})}(P_{i},Q_{i}) \ra \infty.$  Then \uas $d_{\mathcal{C}(Y_{i})}(\partial W_{\alpha, i},\sigma^{Y_{i}}_{i}) \leq 1.$  If not, then Theorem \ref{thm:projdefined} would imply that \uas $d_{\mathcal{C}(W_{\alpha, i})}(P_{i},Q_{i})$ is uniformly bounded which is a contradiction.  However, the assumption of the case that $\lim_{\omega} d_{\mathcal{C}(Y_{i})}(\partial W_{\alpha, i}, \{\alpha_{i},\beta_{i}\})$ is bounded then implies that  $\lim_{\omega} d_{\mathcal{C}(Y_{i})}(\{\alpha_{i},\beta_{i}\},\sigma^{Y_{i}}_{i})$ is bounded, thus completing this case.      
    
   
\textbf{Case 3: $\overline{Y} \pitchfork \overline{W_{\alpha}}$ and $\lim_{\omega} d_{\mathcal{C}(Y_{i})}(\partial W_{\alpha, i}, \{\alpha_{i},\beta_{i}\})$ is bounded.}  As in Case 2, by our assumptions $\lim_{\omega} d_{\mathcal{C}(W_{\alpha, i})}(P_{i},\beta_{i}) \ra \infty,$ while  $\lim_{\omega} d_{\mathcal{C}(W_{\alpha, i})}(Q_{i},\beta_{i})$ is bounded.  In particular, $$\lim_{\omega} d_{\mathcal{C}(W_{\alpha, i})}(P_{i},Q_{i}) \ra \infty.$$  Since \uas $W_{\alpha, i} \pitchfork Y_{i},$ it follows that $\lim_{\omega} d_{\mathcal{C}(Y_{i})}(\partial W_{\alpha, i},\{P_{i},Q_{i}\})$ is uniformly bounded.  If not,  then Lemma \ref{lem:projectionestimates} implies that $d_{\mathcal{C}(W_{\alpha, i})}(P_{i},Q_{i})$ is uniformly bounded which is a contradiction.  However, the assumption of the case that $$\lim_{\omega} d_{\mathcal{C}(Y_{i})}(\partial W_{\alpha, i}, \{\alpha_{i},\beta_{i}\})$$ is bounded then implies that  $\lim_{\omega} d_{\mathcal{C}(Y_{i})}(\{\alpha_{i},\beta_{i}\},\{P_{i},Q_{i}\})$ is bounded.  Since $\sigma^{Y_{i}}_{i}$ is $\mathcal{C}(Y_{i})$ geodesic between $P_{i}$ and $Q_{i},$ it follows that $\lim_{\omega} d_{\mathcal{C}(Y_{i})}(\{\alpha_{i},\beta_{i}\},\sigma^{Y_{i}}_{i})$ is bounded, thus completing this case. 

\textbf{Case 4: Either $\overline{W_{\alpha}} \subset \overline{Y} $ or $\overline{Y} \pitchfork \overline{W_{\alpha}},$ and in both cases, $\lim_{\omega} d_{\mathcal{C}(Y_{i})}(\partial W_{\alpha, i}, \{\alpha_{i},\beta_{i}\})$ is unbounded.}  Since $\lim_{\omega} d_{\mathbb{S}'(S)}(\alpha_{i},\partial W_{\alpha, i})$ is bounded, it follows that there is a bounded path of connected multicurves in the curve complex $\mathcal{C}(S)$ from $\alpha_{i}$ to $\partial W_{\alpha, i}$ such that each multicurve is a separating multicurve.  Call this path $\rho_{i}.$  On the other hand, the assumption of the case is that $\lim_{\omega} d_{\mathcal{C}(Y_{i})}(\partial W_{\alpha, i}, \{\alpha_{i},\beta_{i}\}) \ra \infty.$  Putting things together, by Lemma \ref{lem:curveprojub} it follows \uas $Y_{i}$ is disjoint from some vertex in $\rho_{i}.$  By construction, it follows that $\partial Y_{i} \in \mathbb{S}(S),$ and in fact $\lim_{\omega} d_{\mathbb{S}(S)}(\alpha_{i},\partial Y_{i})$ is bounded.  Since $\overline{Q}$ is not in case one of the equivalence relation $\sim_{{\overline{\alpha}},{\overline{\beta}}},$ it follows that $\lim_{\omega} d_{\mathcal{C}(Y_{i})}(Q_{i},\beta_{i})$ is bounded.  It follows that $\lim_{\omega} d_{\mathcal{C}(Y_{i})}(\{\alpha_{i},\beta_{i}\},\sigma^{Y_{i}}_{i})$ is bounded.  This completes the proof of the final case thereby completing the proof of the lemma.
\end{proof}

\subsubsection{Separation property of structurally integral corners}

As an immediate corollary of Theorem \ref{thm:equivrel} we have the following useful separation property of structurally integral corners in the asymptotic cone.  This separation property should be compared with the separation property of microscopic jets recorded in Theorem \ref{thm:mjetssep}.

\begin{cor} \label{cor:cornersep}
Let $_{\overline{\alpha}}C_{\overline{\beta}}$ be a structurally integral corner, and let $x_{\omega}, x'_{\omega} \in \mathcal{P}_{\omega}(S) \setminus _{\overline{\alpha}}C_{\overline{\beta}}$ be points in the asymptotic cone such that $x_{\omega} \not \sim_{{\overline{\alpha}},{\overline{\beta}}}x'_{\omega}.$  Then $x_{\omega}$ and $x'_{\omega}$ are separated by the corner $ _{\overline{\alpha}}C_{\overline{\beta}}.$  
\end{cor}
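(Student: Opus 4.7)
The plan is to derive the corollary essentially as a topological consequence of Theorem \ref{thm:equivrel}, which already does the heavy lifting. The point is that separation by a single point in a path-connected space is equivalent to disconnection of the complement, and the equivalence relation $\sim_{\overline{\alpha},\overline{\beta}}$ provides exactly the needed open partition.

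First, invoke Theorem \ref{thm:equivrel} to conclude that $\sim_{\overline{\alpha},\overline{\beta}}$ descends to an equivalence relation on $Y := \mathcal{P}_\omega(S) \setminus \; _{\overline{\alpha}}C_{\overline{\beta}}$ whose equivalence classes are open in $Y$. Let $U$ denote the equivalence class of $x_\omega$ and let $V$ denote the union of all other equivalence classes. Because the equivalence classes are open and partition $Y$, the complement of any single class is open as well; hence both $U$ and $V$ are open subsets of $Y$, they are disjoint, they cover $Y$, and by hypothesis $x_\omega \in U$ and $x'_\omega \in V$ are both nonempty.

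Now suppose, for contradiction, that the corner $_{\overline{\alpha}}C_{\overline{\beta}}$ does not separate $x_\omega$ from $x'_\omega$ in $\mathcal{P}_\omega(S)$. Since $\mathcal{P}_\omega(S)$ is a geodesic metric space by Theorem \ref{thm:conebasics}(2), hence locally path-connected, non-separation of two points by a single point is equivalent to the existence of a continuous path $\gamma \colon [0,1]\to \mathcal{P}_\omega(S)$ with $\gamma(0)=x_\omega$, $\gamma(1)=x'_\omega$, and $\gamma([0,1]) \subset Y$. The image $\gamma([0,1])$ is connected in $Y$, so it cannot be written as a disjoint union of two nonempty relatively open subsets; but $\gamma([0,1]) \cap U$ and $\gamma([0,1]) \cap V$ are relatively open in $\gamma([0,1])$, nonempty (they contain $x_\omega$ and $x'_\omega$ respectively), and partition $\gamma([0,1])$, a contradiction. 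Therefore the corner $_{\overline{\alpha}}C_{\overline{\beta}}$ separates $x_\omega$ from $x'_\omega$.

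The only substantive step is the appeal to Theorem \ref{thm:equivrel}; the main potential subtlety is the implicit use of the fact that asymptotic cones of geodesic metric spaces are themselves geodesic (and so path-connected), which is precisely Theorem \ref{thm:conebasics}(2). I do not expect any genuine obstacle here, since the lemma doing the real work, Lemma \ref{lem:equivrel}, has already been established in the bounds-on-subsurface-projections argument preceding the corollary.
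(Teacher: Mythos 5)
Your proof is correct and takes essentially the same route as the paper, which simply declares the corollary an immediate consequence of Theorem~\ref{thm:equivrel}: the open equivalence classes give a disjoint open partition of $\mathcal{P}_\omega(S) \setminus {}_{\overline{\alpha}}C_{\overline{\beta}}$ and separation follows by connectedness. You have just spelled out the topological details (local path-connectedness of the asymptotic cone, hence of its open subsets) that the paper leaves implicit.
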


%

\subsection{Finest pieces}
\label{sec:finestpieces}
 Behrstock showed that every point in the asymptotic cone of both the mapping class group and Teichm\"uller space is a global cut-point,  \cite{behrstock}.  On the other hand, it is well established that for surfaces $S$ with $\xi(S) \geq 2,$ the mapping class group admits quasi-isometric embeddings of $\Z'$ flats, while for surfaces with $\xi(S) \geq 3$ Teichm\"uller space admits quasi-isometric embeddings of $\Z'$ flats, \cite{behrstockminsky, brockfarb, mm2}.  Hence, for high enough complexity surfaces the mapping class group and Teichm\"uller space are not $\delta$-hyperbolic and in particular, their asymptotic cones are not $\R$-trees.  Putting things together, for high enough complexity surfaces, the asymptotic cones of the mapping class group and Teichm\"uller space are nontrivial \emph{tree-graded} spaces with the property that every point is a cut-point globally, but not locally for some nontrivial local regions.  In such settings, we have canonically defined \emph{finest pieces of the tree-graded structure} which are maximal subsets of the asymptotic cone subject to the condition that no two points in a finest piece can be separated by the removal of a point.  In this subsection, we will characterize of the canonically defined finest pieces in the tree-graded structure of  $\mathcal{T}_{\omega}(S).$  Our theorem is motivated by and should be compared with the following theorem of \cite{bkmm}:
\begin{thm}[\cite{bkmm} Theorem 7.9]\label{thm:mcgpieces} 
Let $S=S_{g,n}$ and let $\MCG_{\omega}(S)$ be any asymptotic cone of $\mathcal{MCG}(S).$  Then for all $a_{\omega},b_{\omega}  \in \mathcal{MCG}_{\omega}(S),$ the following are equivalent:
\begin{enumerate}
\item No point separates $a_{\omega}$ and $b_{\omega},$ and 
\item In any neighborhood of $a_{\omega}, b_{\omega}$ there exists $a'_{\omega},b'_{\omega},$ with representatives $(a'_{i}),(b'_{i})$ respectively, such that: $$\lim_{\omega}d_{\mathcal{C}(S)}(a'_{i},b'_{i}) < \infty.$$  
\end{enumerate}
\end{thm}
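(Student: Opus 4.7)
The plan is to prove the two implications separately. The forward direction uses Theorem \ref{thm:sepcomplexrelasymptotic} to turn the hypothesized short chain in $\mathbb{S}(S)$ into a chain of nontrivial convex product regions in $\mathcal{P}_\omega(S)$, thereby avoiding any prescribed cut-point. The reverse direction produces a separating cut-point using either a structurally integral corner (Corollary \ref{cor:cornersep}) or a microscopic jet (Theorems \ref{thm:mjetsexist}, \ref{thm:mjetssep}), depending on whether $a_n$ and $b_n$ lie in different or the same components of $\mathbb{S}(S)$.

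For $(2) \Rightarrow (1)$ I argue contrapositively. Suppose $p_\omega$ separates $a_\omega$ from $b_\omega$. Using local path-connectedness (Theorem \ref{thm:conebasics}), pick small path-connected neighborhoods $V_a \ni a_\omega$, $V_b \ni b_\omega$ lying in the two components of $\mathcal{P}_\omega(S) \setminus \{p_\omega\}$. By hypothesis (2) I find $a'_\omega \in V_a$, $b'_\omega \in V_b$ with representatives $(a'_n), (b'_n)$ satisfying $\lim_\omega d_{\mathbb{S}(S)}(a'_n, b'_n) < \infty$; Theorem \ref{thm:sepcomplexrelasymptotic} then yields a finite chain of vertices $\overline{C^0}, \dots, \overline{C^k}$ in $\mathbb{S}_\omega(S)$ with $a'_\omega \in \mathcal{Q}_\omega(\overline{C^0})$, $b'_\omega \in \mathcal{Q}_\omega(\overline{C^k})$, and consecutive product regions meeting in $\mathcal{Q}_\omega(\overline{C^j \lrcorner C^{j+1}})$, itself a nontrivial product. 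Since nontrivial products of asymptotic cones contain no cut-points, $a'_\omega$ can be path-connected to $b'_\omega$ through the chain while avoiding $p_\omega$; concatenating with paths inside $V_a$ and $V_b$ gives a path from $a_\omega$ to $b_\omega$ avoiding $p_\omega$, a contradiction.

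For $(1) \Rightarrow (2)$, contrapositively, suppose there are open $U_a \ni a_\omega$, $U_b \ni b_\omega$ in which every pair of points has all representatives satisfying $\lim_\omega d_{\mathbb{S}(S)}(a'_n, b'_n) = \infty$. Fix representatives $(a_n), (b_n)$; as discussed at the end of Section \ref{chap:separatingcomplex}, two $\omega$-a.s.\ mutually exclusive sub-cases hold. In sub-case (i), $a_n$ and $b_n$ lie in different components of $\mathbb{S}(S)$: after bounded pants-complex replacement I choose separating multicurves $\alpha_n \subset a_n$, $\beta_n \subset b_n$, still in different components, so condition (1) of Definition \ref{defn:corner} holds; condition (2) follows via Theorem \ref{thm:quasidistance} by bounding each subsurface projection $\lim_\omega d_{\mathcal{C}(Y_i)}(\alpha_i \lrcorner \beta_i, \beta_i \lrcorner \alpha_i)$, using the combinatorial consequence of infinite $\mathbb{S}$-distance (every essential subsurface of $S \setminus \alpha_i$ overlaps every such subsurface of $S \setminus \beta_i$) together with the Behrstock inequality (Lemma \ref{lem:projectionestimates}). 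After checking that $a_\omega$ and $b_\omega$ fall into distinct equivalence classes of $\sim_{\overline{\alpha},\overline{\beta}}$ (via $\overline{W_\alpha}$ on the side of $\alpha_n$ containing the rest of $a_n$), Corollary \ref{cor:cornersep} produces a cut-point. In sub-case (ii), same $\mathbb{S}$-component with diverging distance, Lemma \ref{lem:sepfiber} supplies $\overline{W} \in \mathcal{E}^\omega(S)$ with $\pi_{\mathcal{P}_\omega(\overline{W})}(a_\omega) \ne \pi_{\mathcal{P}_\omega(\overline{W})}(b_\omega) \in F_{\overline{W}, a_\omega}$; property [S1] of Theorem \ref{thm:sublinear} then gives $\lim_\omega d_{\mathcal{C}(W_i)}(a_i, b_i) = \infty$, so Theorems \ref{thm:mjetsexist} and \ref{thm:mjetssep} produce a microscopic jet whose separating set is $\iota(J) \times \mathcal{P}_\omega(\overline{W^c})$. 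I then refine $\overline{W}$ to be non-separating $\omega$-a.s., making $\overline{W^c}$ trivial and the separating set a single cut-point; this refinement uses that any separating witness $\overline{W}$ would, via $\partial W_i$, produce nearby representatives with bounded $\mathbb{S}(S)$-distance, contradicting $\neg(2)$.

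The main obstacle is the refinement step in sub-case (ii): upgrading the $\overline{W}$ from Lemma \ref{lem:sepfiber} to a non-separating subsurface, which is precisely what makes the microscopic jet cut at a single point. This is where the full hypothesis $\neg(2)$ — over all nearby pairs and all representatives — is used rather than just $\lim_\omega d_{\mathbb{S}(S)}(a_n, b_n) = \infty$ for one pair. A secondary difficulty is the combinatorial verification of condition (2) of the structurally integral corner in sub-case (i), requiring uniform bounds on all subsurface projections of $\alpha_i \lrcorner \beta_i$ and $\beta_i \lrcorner \alpha_i$.
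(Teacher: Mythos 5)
The statement you were asked to prove is Theorem \ref{thm:mcgpieces}, which is a \emph{cited} result from \cite{bkmm} (their Theorem 7.9) about $\MCG_{\omega}(S)$, with condition (2) expressed in terms of the full curve complex distance $d_{\mathcal{C}(S)}$. The paper does not give a proof of this theorem; it only quotes it as background and motivation. Your proof sketch is instead an attempt to prove the paper's own Theorem \ref{thm:pieces}, which is the pants-complex analogue: it lives in $\mathcal{P}_{\omega}(S)$ and its condition (2) uses the strictly stronger separating-complex distance $d_{\mathbb{S}(S)}$. These are genuinely different statements, not restatements of one another. The paper makes this distinction explicitly: Remark \ref{rem:alternativecondition2} observes that $d_{\mathcal{C}(S)}$-finiteness is only \emph{implied by}, and not equivalent to, $d_{\mathbb{S}(S)}$-finiteness, and the ``partial pseudo-Anosov axis'' example immediately after Theorem \ref{thm:mcgpieces} shows that condition (2) with $d_{\mathcal{C}(S)}$ can hold in $\mathcal{P}_{\omega}(S)$ while condition (1) fails, so the MCG theorem does \emph{not} transfer verbatim to the pants complex.

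Concretely, each ingredient in your reverse-direction argument belongs to the $\mathcal{P}(S)$ version. You invoke the separating complex $\mathbb{S}(S)$, its asymptotic counterpart $\mathbb{S}_\omega(S)$, Theorem \ref{thm:sepcomplexrelasymptotic}, and structurally integral corners (Definition \ref{defn:corner}, Corollary \ref{cor:cornersep}); none of these appear in, or are needed for, the $\MCG$ statement. Indeed, the paper notes in Remarks \ref{rem:nooccur1} and \ref{rem:nooccur2} that the structurally-integral-corner machinery is a low-complexity phenomenon introduced specifically to handle the extra degeneracies of the pants complex, and that the case it addresses (``Case Two'') does not arise in the original \cite{bkmm} argument. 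If you actually wanted to reproduce \cite{bkmm} Theorem 7.9 for the marking complex, the relevant product regions are $\mathcal{Q}(\gamma)$ for \emph{arbitrary} (not necessarily separating) multicurves $\gamma$, condition (2) is phrased in $\mathcal{C}(S)$, and only the microscopic-jet half of your dichotomy is needed; there is no role for $\mathbb{S}(S)$ or for corners. So the proposal is not wrong as a sketch of Theorem \ref{thm:pieces}, but it does not prove the theorem you were assigned, and comparing it to ``the paper's proof'' is moot since the paper offers no proof of Theorem \ref{thm:mcgpieces} at all.
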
    

\begin{ex}[$\MCG(S) \mbox { vs } \mathcal{P}(S)$; a partial pseudo-Anosov axis] The following example demonstrates that Theorem \ref{thm:mcgpieces} cannot be applied without modification to $\mathcal{P}(S).$  Consider a representative $(P^{0}_{i})$ of the basepoint of our asymptotic cone $\mathcal{P}_{\omega}(S),$ and let $\gamma_{i} \in  P^{0}_{i} $ be a non-separating curve.  Let $g_{i} \in \MCG(S\setminus \gamma_{i})$ be a pseudo-Anosov map.  Then consider the following two points in the asymptotic cone:
$$a_{\omega}= (P^{0}_{i}), \;\;\;  b_{\omega}=(g_{i}^{s_{i}}P^{0}_{i}).$$  
By construction, $a_{\omega} \ne b_{\omega}$ lie on a partial psuedo-Anosov axis in the asymptotic cone.  Furthermore, by construction, using notation from Subsection \ref{subsec:sublineargrowth} we have: $$a_{\omega}, b_{\omega} \in P_{\overline{S \setminus \gamma},a_{\omega}}= F_{\overline{S \setminus \gamma},a_{\omega}} \times \{ \overline{\gamma} \} = \R\mbox{-tree} \times \{pt\}  \subset \mathcal{P}_{\omega}(S).$$  
Hence, $a_{\omega},$ and $b_{\omega}$  can be separated by a cut-point.  Nonetheless, $a_{\omega}$ and $b_{\omega}$ have representatives $(P^{0}_{i}),$ $(g_{i}^{s_{i}}P^{0}_{i}),$ respectively, each containing $\gamma_{i}.$  In particular, $\forall i\in \N,$ $d_{\mathcal{C}(S)}(P^{0}_{i}, g_{i}^{s_{i}}P^{0}_{i}) = 0.$  Hence in $\mathcal{P}(S),$ statement (1) of  Theorem \ref{thm:mcgpieces} can fail even though statement (2) holds.   
\end{ex}

Despite the fact that Theorem \ref{thm:mcgpieces} does not apply verbatim to $\mathcal{P}(S),$ the following slightly modified theorem with condition (2) strengthened does apply to $\mathcal{P}(S).$ 
\begin{thm}\label{thm:pieces} Let $S=S_{g,n}$ and let $\mathcal{P}_{\omega}(S)$ be any asymptotic cone of $\mathcal{P}(S).$  Then for all $a_{\omega},b_{\omega}  \in \mathcal{P}_{\omega}(S),$ the following are equivalent:
\begin{enumerate}
\item No point separates $a_{\omega}$ and $b_{\omega},$ or equivalently $a_{\omega}$ and $b_{\omega}$ are in the same canonical finest piece, and
\item In any neighborhood of $a_{\omega}, b_{\omega},$ respectively, there exists $a'_{\omega},b'_{\omega},$ with representative sequences $(a'_{i})$,$(b'_{i})$, such that $\lim_{\omega}d_{\mathbb{S}(S)}(a'_{i},b'_{i}) < \infty.$  
\end{enumerate}
\end{thm}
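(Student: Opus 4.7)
The plan is to prove the two implications separately. The easy direction (2) $\Rightarrow$ (1) will use the chain-of-product-regions structure from Theorem \ref{thm:sepcomplexrelasymptotic}, and the harder direction (1) $\Rightarrow$ (2) will be established by contrapositive, using the structurally integral corners of Section \ref{sec:corners} and the microscopic jets of Subsection \ref{subsec:tight}.

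For (2) $\Rightarrow$ (1), argue by contradiction. Suppose $c_\omega \ne a_\omega, b_\omega$ separates $a_\omega$ from $b_\omega$, and choose open path-connected neighborhoods $U_a, U_b$ of $a_\omega, b_\omega$ disjoint from $c_\omega$ (small geodesic balls, by Theorem \ref{thm:conebasics}(2)). By (2) there exist $a'_\omega \in U_a$, $b'_\omega \in U_b$ and representatives $(a'_i), (b'_i)$ with $\lim_\omega d_{\mathbb{S}(S)}(a'_i, b'_i) < \infty$; by Lemma \ref{lem:finite} this value is $\omega$-a.s.\ some fixed integer $n$. Theorem \ref{thm:sepcomplexrelasymptotic} then yields a chain of natural convex product regions $\qbar{C^0}, \ldots, \qbar{C^{n'}}$ (with $n' \le 2n$) such that $a'_\omega \in \qbar{C^0}$, $b'_\omega \in \qbar{C^{n'}}$, and each consecutive intersection $\qbar{C^j} \cap \qbar{C^{j+1}}$ has infinite diameter. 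Each $\qbar{C^j}$ is a nontrivial metric product of two asymptotic cones (since $C^j$ is a separating multicurve), hence $\qbar{C^j} \setminus \{c_\omega\}$ remains path-connected. Concatenating geodesics through intersection points distinct from $c_\omega$, then closing inside $U_a$ and $U_b$, produces a path from $a_\omega$ to $b_\omega$ in $\mathcal{P}_\omega(S) \setminus \{c_\omega\}$, contradicting the separation.

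For (1) $\Rightarrow$ (2) I prove the contrapositive. Suppose (2) fails, with witnessing neighborhoods $U_a, U_b$; specializing $a' = a_\omega, b' = b_\omega$ forces $\lim_\omega d_{\mathbb{S}(S)}(a_i, b_i) = \infty$ for every representatives. By Lemma \ref{lem:finite}, separate into two cases according to the behavior of the selected separating sub-multicurves. In \emph{Case A} ($\omega$-a.s.\ in different components of $\mathbb{S}(S)$), which is the low-complexity regime of Remark \ref{rem:nooccur1}, use the failure of (2) to pin down sequences $\overline{\alpha}, \overline{\beta}$ satisfying both conditions of Definition \ref{defn:corner}, namely the different-components condition (directly from the case hypothesis) together with $\lim_\omega d_{\mathcal{P}(S)}(\alpha_i \lrcorner \beta_i, \beta_i \lrcorner \alpha_i) < \infty$ (the bounded-intersection scenario of the example after Definition \ref{defn:corner}). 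The resulting structurally integral corner ${}_{\overline{\alpha}} C_{\overline{\beta}}$ places $a_\omega$ and $b_\omega$ in cases~1 and~2 of Definition \ref{defn:equivrel} (the bounded $\mathbb{S}(S)$-distances $\alpha$-to-$a$ and $\beta$-to-$b$ combined with the diverging cross-projections inside each component), and Corollary \ref{cor:cornersep} furnishes the cut-point. In \emph{Case B} (same component, diverging distance), Corollary \ref{cor:sepbounded} (applied to $a_\omega \ne b_\omega$) supplies $\overline{W} \in \eultra{S}$ with $\lim_\omega d_{\mathcal{C}(W_i)}(a_i, b_i) = \infty$, and Theorem \ref{thm:mjetsexist} builds a microscopic jet $J = (\overline{g}, \overline{W}, \overline{a}, \overline{b})$ with $a_\omega \not\sim_J b_\omega$. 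The central claim is that $\overline{W}$ can be chosen in $\nseultra{S}$, since then $\overline{W^c}$ is empty and the separator $\iota(J) \times \mathcal{P}_\omega(\overline{W^c})$ of Theorem \ref{thm:mjetssep} collapses to the single cut-point $\iota(J)$. If every such $\overline{W}$ were in $\seultra{S}$, one uses the bi-Lipschitz comparison of Theorem \ref{thm:sepcomplexrelasymptotic} together with the hierarchy structure of Theorem \ref{thm:hierarchy} to locate modified representatives of nearby points in $U_a, U_b$ sharing a separating multicurve, producing an $\mathbb{S}(S)$-bounded witness and contradicting the failure of (2).

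The principal obstacle is the nonseparating-support reduction in Case~B. This step is where the use of the finer complex $\mathbb{S}(S)$ (as opposed to the coarser $\mathcal{C}(S)$ appearing in the BKMM mapping class group analogue, Theorem \ref{thm:mcgpieces}) is essential, and it requires combining the $\mathbb{S}$-versus-$\mathbb{S}_\omega$ bi-Lipschitz comparison, the Behrstock inequality (Lemma \ref{lem:projectionestimates}), and a careful hierarchy analysis to convert a diverging $\mathcal{C}(W)$-projection with $W$ separating into an $\mathbb{S}(S)$-bounded pair near $a_\omega, b_\omega$. A secondary challenge is ensuring in Case~A that the candidate $\overline{\alpha}, \overline{\beta}$ simultaneously satisfy both conditions of Definition \ref{defn:corner}; this I expect to manage via the point-push flexibility of Lemma \ref{lem:preserve} to replace candidates by disjoint ones of bounded intersection with the sequences representing $a_\omega, b_\omega$.
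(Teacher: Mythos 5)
Your (2) $\Rightarrow$ (1) direction is essentially the paper's argument and is fine. The (1) $\Rightarrow$ (2) direction, however, uses a case split that does not correctly separate the failure of (2), and as a result your Case~B has a genuine gap.

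The paper's dichotomy in the contrapositive direction is not ``different components'' versus ``same component with diverging distance.'' It is: \emph{(Case One)} there is a radius $r'$ such that for \emph{every} pair of points in $3r'$-neighborhoods of $a_\omega, b_\omega$ and \emph{every} choice of representatives, some $\overline Y\in\nseultra{S}$ has $\lim_\omega d_{\mathcal{C}(Y_i)}\to\infty$; \emph{(Case Two)} the negation, namely in every neighborhood one can find points whose representatives have \emph{all} nonseparating subsurface projections bounded. In Case One a microscopic jet supported in the nonseparating $\overline Y$ produces the cut-point; in Case Two one runs a hierarchy between the nearby points, observes that all its component domains above threshold are separating, extracts the (finite, by Lemma~\ref{lem:finite}) chain of disjoint separating multicurves through which the hierarchy travels, and finds two consecutive ones with diverging $\mathbb{S}(S)$-distance. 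Those two consecutive multicurves are what satisfy both conditions of Definition~\ref{defn:corner} (disjointness on the tight geodesic yields bounded extension distances; the negation of (2) yields the component split), and the resulting corner is the cut-point.

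Your split is instead on whether the separating sub-multicurves of $a_i, b_i$ are $\omega$-a.s.\ in the same or in different components of $\mathbb{S}(S)$. The trouble is that these two regimes do not align with the jets-vs.-corners dichotomy. In your Case~B (same component, diverging distance), it is entirely possible that \emph{all} diverging subsurface projections between $a_i$ and $b_i$ are supported on separating subsurfaces — for instance when the hierarchy starts and ends in the same component of $\mathbb{S}(S)$ but passes through an intermediate separating multicurve in a different component. In that situation no nonseparating $\overline W$ with diverging $\mathcal C(W)$-projection exists, the contradiction you hope to derive to force $\overline W\in\nseultra{S}$ does not materialize (the hierarchy analysis yields a chain of separating regions, not an $\mathbb{S}(S)$-bounded witness), and in fact a structurally integral corner — not a jet — is what separates $a_\omega$ from $b_\omega$. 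So your Case~B is handling a strictly smaller situation than it claims, and your proposed escape route is precisely the scenario it fails to cover.

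There is also a problem in your Case~A: you attempt to build the corner from $\overline\alpha\subset(a_i)$ and $\overline\beta\subset(b_i)$ directly, but these need not satisfy condition~(2) of Definition~\ref{defn:corner} (bounded $\mathcal P(S)$-distance between the extensions $\alpha_i\lrcorner\beta_i$ and $\beta_i\lrcorner\alpha_i$) when $a_\omega$ and $b_\omega$ are far apart; Lemma~\ref{lem:preserve} (point pushing) does not help you secure that condition. The paper avoids this entirely by taking $\overline\alpha,\overline\beta$ to be \emph{consecutive multicurves on a hierarchy path}, where the bounded-extension condition is automatic from tightness and from the hierarchy structure (Theorem~\ref{thm:hierarchy}). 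I would recommend adopting the paper's case split and building the corner from the hierarchy, not from the endpoints.
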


\begin{rem} \label{rem:alternativecondition2}
Note that condition (2) of Theorem \ref{thm:pieces} implies condition (2) of Theorem \ref{thm:mcgpieces} as distance in $\mathbb{C}(S)$ is coarsely bounded above by distance in $\mathbb{S}(S),$ see Equation \ref{eq:coarsebelow}.  Moreover, note that by Theorem \ref{thm:sepcomplexrelasymptotic}, condition (2) of Theorem \ref{thm:pieces} can be replaced by the following statement:  In any neighborhood of $a_{\omega}, b_{\omega},$ respectively, there exist points $a'_{\omega},b'_{\omega},$ such that $d_{\mathbb{S}_{\omega}(S)}(a'_{\omega},b'_{\omega}) < \infty.$
\end{rem}
\begin{proof} [Proof of Theorem \ref{thm:pieces}]

\textbf{$(2) \implies (1)$}:  As noted in Remark \ref{rem:alternativecondition2}, Property (2) implies that $a_{\omega}, b_{\omega}$ are limit points of sequences in the asymptotic cone which have finite $\mathbb{S}_{\omega}(S)$ distance.  Since the canonically defined finest pieces are closed sets \cite{drutusapir}, it suffices to show that points in the asymptotic cone with finite $\mathbb{S}_{\omega}(S)$ distance cannot be separated by a point.  Specifically, assume we have a chain of natural convex nontrivial product regions $\qbar{\gamma_{0}}, ...,\qbar{\gamma_{K}}$ in the asymptotic cone $\mathcal{P}_{\omega}(S)$ such that $a'_{\omega} \in  \qbar{\gamma_{0}},$ $b'_{\omega} \in \qbar{\gamma_{K}},$ and for all $j\in \{0,...,K-1 \}$ $\qbar{\gamma_{j}} \bigcap \qbar{\gamma_{j+1}}$ has infinite diameter intersection.  Clearly, each product region cannot be separated by a point.  Furthermore, by assumption each product region cannot be separated from its neighbor by a point.  It follows that $a'_{\omega}$ and $b'_{\omega}$ cannot be separated by a point, thus completing the proof of $(2) \implies (1).$

\textbf{$(1) \implies (2)$}:  We will prove the contrapositive, namely $\sim (2) \implies \sim (1).$  The negation of property (2) implies that there exists an $r_{1}>0$ such that all points in $r_{1}$ open neighborhoods of $a_{\omega}$ and $b_{\omega}$ respectively have infinite or undefined $\mathbb{S}_{\omega}(S)$ distance.  By Theorem \ref{thm:conebasics}, $\mathcal{P}_{\omega}(S)$ is locally path connected.  Let $r_{2}>0$ be a constant such that the $r_{2}$ open neighborhoods of $a_{\omega}$ and $b_{\omega}$ are path connected.  Set $3r=\min(r_{1},r_{2}).$  By choosing $r_{1}$ to be sufficiently small, we can assume that $d_{\mathcal{P}_{\omega}(S)}(a_{\omega},b_{\omega})>6r.$  

Let the sequences $(a'_{i}),(b'_{i})$ represent any points $a'_{\omega},b'_{\omega}$ in $r$ neighborhoods of $a_{\omega},b_{\omega}$ respectively, let $\gamma_{i}$ be a hierarchy path between $a'_{i}$ and $b'_{i},$ and let $\gamma_{\omega}$ represent its ultralimit.  By construction $\gamma_{\omega}$ is a $(K,0)$-quasi-geodesic.  Let $a''_{\omega}$ denote a point on $\gamma_{\omega}$ of distance $r$ from $a'_{\omega},$ and let $a'''_{\omega}$ denote a point on $\gamma_{\omega}$ of distance $2r$ from $a'_{\omega}.$  Similarly, let $b''_{\omega}$ denote a point on $\gamma_{\omega}$ of distance $r$ from $b'_{\omega},$ and let $b'''_{\omega}$ denote a point on $\gamma_{\omega}$ of distance $2r$ from $b'_{\omega}.$  See Figure \ref{fig:neighborhoods}.  We will show that the quasi-geodesic $\gamma_{\omega}$ contains a cut-point between the points $a''_{\omega}$ and  $b''_{\omega}.$  Then, local path connectedness implies that the cut-point also separates $a_{\omega}$ and $b_{\omega},$ thus completing the proof of the negation of (1) and hence the proof of the Theorem.  Specifically, since by assumption $a_{\omega}$ and $a'_{\omega}$ (and similarly $b_{\omega}$ and $b'_{\omega}$) are within distance $r$ of each other, and because the cut-point between $a''_{\omega}$ and $b''_{\omega}$ is at least distance $r$ from $a'_{\omega},$  (and similarly from $b'_{\omega}$) it follows that a geodesic path between $a_{\omega}$ and $a'_{\omega}$ (and similarly between $b_{\omega}$ and $b'_{\omega}$) does not contain the cut-point.   

We will proceed by considering two cases.  In the first case we will obtain a cut-point using the machinery of microscopic jets and in the second case we will obtain a cut-point using the machinery of structurally integral corners.

\begin{figure}[htpb]
\centering
\includegraphics[height=5 cm]{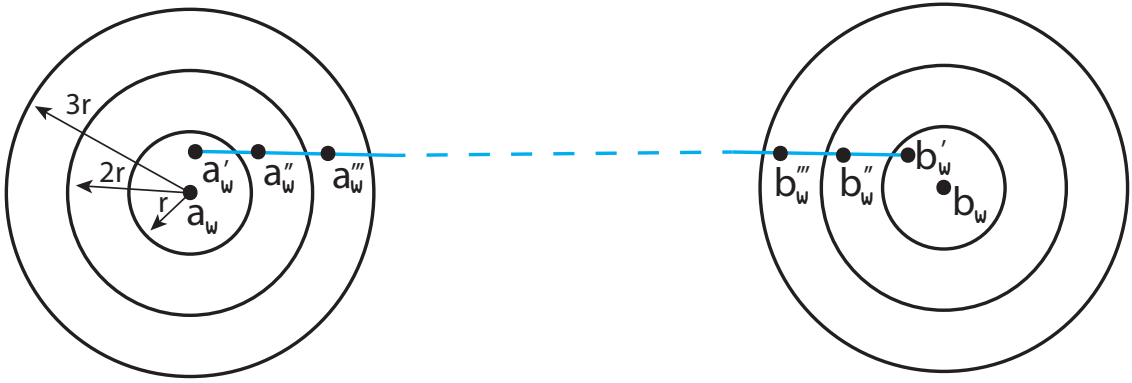}
\caption[Local neighborhoods of points in $\mathcal{P}_{\omega}(S)$]{The dotted line is a quasi-geodesic $\gamma_{\omega}$ from $a'_{\omega}$ to $b'_{\omega}.$}\label{fig:neighborhoods}
\end{figure}

\noindent \textbf{Case One: $\exists r'$ such that for all $a^{0}_{\omega},$ $b^{0}_{\omega}$ in $3r'$ neighborhoods of $a_{\omega},b_{\omega},$ with $(a^{0}_{i}),$$(b^{0}_{i})$ any representatives thereof, respectively, $\exists \overline{Y} \in \nseultra{S}$ with $\lim_{\omega}d_{\mathcal{C}(Y_{i})}(a^{0}_{i},b^{0}_{i}) \ra \infty.$}  

By abuse of notation assume that we have replaced $r$ described above by $r=\min\{r,r'\}.$  In particular, since $a'''_{\omega}, b'''_{\omega}$ are contained in $3r'$ neighborhoods of $a_{\omega},b_{\omega},$ respectively, the assumption of the case ensures that for some $\overline{Y} \in \nseultra{S},$ we have $\lim_{\omega}d_{\mathcal{C}(Y_{i})}(a'''_{i},b'''_{i}) \ra \infty.$  Then, by Theorem \ref{thm:mjetsexist} there exists a microscopic jet $J=(\overline{g},\overline{Y},\overline{a'''},\overline{b'''})$ with $\overline{g} \subset \gamma_{\omega}|_{[a'''_{\omega},b'''_{\omega}]}$ and such that  $a'''_{\omega} \not \sim_{J} b'''_{\omega}.$  By definition, $\lim_{\omega}d_{\mathcal{C}(Y_{i})}(\pi_{g_{i}}(a'''_{i}),\pi_{g_{i}}(b'''_{i})) \ra \infty.$  By the properties of hierarchies in Theorem \ref{thm:hierarchy} it follows that $\lim_{\omega}d_{\mathcal{C}(Y_{i})}(\pi_{g_{i}}(a''_{i}),\pi_{g_{i}}(b''_{i})) \ra \infty,$ and hence $a''_{\omega} \not \sim_{J} b''_{\omega}.$ 

Since the complement $\overline{Y^{c}}$ is the emptyset,  $\iota(J) \times \mathcal{P}_{\omega}(\overline{Y^{c}})$ is a single point in the asymptotic cone.  Moreover, by construction it is not equal to either $a''_{\omega}$ or $b''_{\omega}.$  Theorem \ref{thm:mjetssep} implies that the initial point of the jet is a cut-point between $a''_{\omega}$ and  $b''_{\omega}.$  This completes the proof of case one.  It should be noted that the proof of case one follows closely the proof of Theorem \ref{thm:mcgpieces} in \cite{bkmm}.  In fact, for the special case of $\overline{Y}=S$ the proofs are identical.

\noindent \textbf{Case Two: The negation of case one.  Namely, in any neighborhoods of $a_{\omega}, b_{\omega}$ there exists $a^{0}_{\omega},b^{0}_{\omega}$ with representatives $(a^{0}_{i}),$$(b^{0}_{i}),$ such that $\forall \overline{Y} \in \nseultra{S}, \lim_{\omega}d_{\mathcal{C}(Y_{i})}(a^{0}_{i},b^{0}_{i}) < \infty.$}

For $r$ neighborhoods of $a_{\omega}, b_{\omega}$ set the points $a^{0}_{\omega},b^{0}_{\omega}$ with representatives $(a^{0}_{i}),$$(b^{0}_{i}),$ guaranteed to exist by the hypothesis of the case to be equal to $a'_{\omega},b'_{\omega},$ with representatives $(a'_{i}),$$(b'_{i}),$ respectively.  Then as above, let $\gamma_{i}$ be a hierarchy path between $a'_{i}$ and $b'_{i},$ and similarly define the points $a''_{i},a'''_{i},b''_{i},b'''_{i}.$  By the assumptions of the case the hierarchies $\gamma_{i}$ have the property that for all $Y \in \nse{S},$ the projection of $\gamma_{i}$ to $\mathcal{C}(Y)$ is uniformly bounded.  In particular, the hierarchies $\gamma_{i}$ have uniformly bounded main geodesic length and travels for uniformly bounded distances in all connected nonseparating essential subsurfaces $Y.$  By Lemma \ref{lem:finite} there is a $k$ such that \uas the main geodesic in $\gamma_{i}$ has length exactly $k.$  Specifically, \uas there is a tight main geodesic in $\mathcal{C}(S),$ with simplices $g_{0i},..., g_{ki}$ such that $g_{0i} \subset  a'_{i} ,$ $g_{ki} \subset  b'_{i} .$  By construction, the hierarchy $\gamma_{i}$ travels through the finite set of quasi-convex regions, $\q{g_{0i}},$ ..., $\q{g_{ki}}.$  See Figure \ref{fig:path}.

\begin{figure}[htpb]
\centering
\includegraphics[height=4 cm]{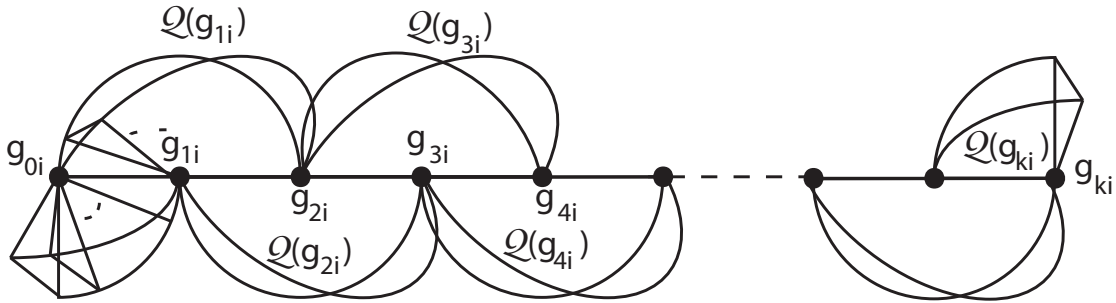}
\caption[The ultralimit of hierarchy paths with uniformly bounded main geodesics.]{The ultralimit of hierarchy paths with a uniformly bounded main geodesics.  Notice that each of the vertices along the finite length main geodesic are separating multicurves.}\label{fig:path}
\end{figure}

Without loss of generality we can assume that for all $j,$ either $\gamma_{ji} \in \mathcal{P}(S),$ i.e $\gamma_{ji}$ is an entire pants decomposition of a surface, or for any $(W_{i})$ a sequence of connected essential subsurfaces in the complement $S \setminus g_{ji},$ we have $\lim_{\omega}d_{\mathcal{C}(W_{i})}(a'_{i}, b'_{i} ) \ra \infty.$  If not, by iterating the argument we used above for a finite length $\mathcal{C}(S)$ main geodesic we can \uas replace the multicurve $g_{ji}$ by a finite list of connected simplices in $\mathcal{C}(S)$ each containing $g_{ji}$ as a proper multicurve.  This iteration process of replacing a multicurve $g_{ji}$ from our our finite list $\{g_{0i},...,g_{ki}\}$ with finite sequences of multicurves each containing the original multicurve as a proper multicurve must terminate due to the finite complexity of the surface $S.$  Accordingly, we have a finite list of nontrivial quasi-convex regions and singletons through which our hierarchy path $\gamma_{i}$ from $a'_{i}$ to $b'_{i}$ \uas travels.  Since the list of nontrivial quasi-convex regions and singletons is bounded \uas, coarsely we can ignore the singletons.  That is, coarsely our hierarchy path $\gamma_{i}$ from $a'_{i}$ to $b'_{i}$ \uas travels through only a finite list of nontrivial quasi-convex regions, $\q{g_{0i}},$ ..., $\q{g_{k'i}}$ such that for any $(W_{i})$ a sequence of connected component of $S \setminus g_{ji},$ we have $\lim_{\omega}d_{\mathcal{C}(W_{i})}(a'_{i}, b'_{i} ) \ra \infty.$  By the assumptions of our case, for each $j,$ $\omega$-a.s $g_{i,j}$ is a separating multicurve, or equivalently for each $j$ the region $\q{g_{ji}}$ is a nontrivial quasi-convex product region.  Moreover, by construction for all $j,$ $\lim_{\omega}d_{\mathcal{P}(S)}(g_{ij} \lrcorner g_{(i+1)j}, g_{(+1)j} \lrcorner g_{ij}, )$ is bounded.  Notice that all of the above analysis holds after restricting to the subquasi-geodesic $\gamma_{i}|_{a'''_{i},b'''_{i}}.$  Assume we have done so.   

However, by the negation of condition (2) of the theorem, it follows that there exist consecutive separating multicurves, $g_{ji}, g_{(j+1)i}$ in our list such that: $$\lim_{\omega}d_{\mathbb{S}(S)}(g_{ji},g_{(j+1),i}) \ra \infty.$$  In particular, in conjunction with the analysis of the previous paragraph, we have a structurally integral corner $_{\overline{g_{j}}} C_{\overline{g_{j+1}}}.$  Moreover, by construction $a''_{\omega},b''_{\omega} \ne  \; _{\overline{g'_{j}}}C_{\overline{g'_{j+1}}}$ as the corner is on the quasi-geodesic $\gamma_{\omega}|_{[a'''_{\omega},b'''_{\omega}]}.$  Furthermore, $a''_{\omega} \not \sim_{\overline{g'_{j}},\overline{g'_{j+1}}} b''_{\omega},$ as by our assumptions $a''_{\omega}$ is in case one of the equivalence relation $\sim_{\overline{g'_{j}},\overline{g'_{j+1}}}$ while $b''_{\omega}$ is in case two of the equivalence relation $\sim_{\overline{g'_{j}},\overline{g'_{j+1}}}.$  Corollary \ref{cor:cornersep} implies that the structurally integral corner $_{\overline{g'_{j}}}C_{\overline{g'_{j+1}}}$ is a cut-point between the points $a''_{\omega},b''_{\omega}.$  This completes the proof of the theorem.  
\end{proof}

\begin{rem} \label{rem:nooccur2}
As in Remark \ref{rem:nooccur1}, Case Two in the proof of Theorem \ref{thm:pieces} occur only for surfaces with $|\chi(S)| \leq 4.$
\end{rem}

\subsubsection{Applications of the classification of finest pieces}

Special cases of Theorem \ref{thm:pieces} include the following celebrated theorems of others.

\begin{cor}[\cite{behrstock,brockfarb} Theorem 5.1, Theorem 1.1] \label{cor:hyperbolic}
Let $S=S_{1,2}$ or $S_{0,5}.$ Then $\mathcal{P}(S)$ is $\delta$-hyperbolic. 
\end{cor}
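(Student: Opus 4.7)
The plan is to deduce the corollary from Theorem \ref{thm:pieces} by showing that for both surfaces the separating complex $\mathbb{S}(S)$ is empty, which forces every asymptotic cone of $\mathcal{P}(S)$ to be an $\mathbb{R}$-tree; hyperbolicity then follows from Gromov's standard characterization of hyperbolic spaces in terms of their asymptotic cones.

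First I would verify $\mathbb{S}(S_{1,2}) = \mathbb{S}(S_{0,5}) = \emptyset$ by a short topological enumeration. Since $\xi(S) = 2$ in each case, any multicurve on $S$ consists of at most two disjoint non-isotopic simple closed curves. A two-curve multicurve is a pants decomposition, whose complementary components are all pairs of pants and hence not essential. A single non-separating curve (possible only on $S_{1,2}$) has connected complement $S_{0,4}$, so only one complementary component exists. Finally, a single separating curve on $S_{1,2}$ cuts it as $S_{1,1} \sqcup S_{0,3}$, while any single curve on $S_{0,5}$ cuts it as $S_{0,3} \sqcup S_{0,4}$; in both cases the $S_{0,3}$ component is a pair of pants and so is not essential, leaving at most one essential complementary component. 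Thus no multicurve on either surface satisfies Definition \ref{defn:sepcomplex}'s requirement that at least two complementary components be essential subsurfaces.

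With $\mathbb{S}(S) = \emptyset$, no pair of sequences $(a'_n), (b'_n)$ can realize a finite $\lim_{\omega} d_{\mathbb{S}(S)}(a'_n, b'_n)$, so condition (2) of Theorem \ref{thm:pieces} fails vacuously for every pair of points $a_\omega, b_\omega \in \mathcal{P}_\omega(S)$. By the equivalence in that theorem, condition (1) also fails for every pair of distinct points: any two distinct points in $\mathcal{P}_\omega(S)$ are separated by a cut-point. Since $\mathcal{P}_\omega(S)$ is a geodesic metric space by Theorem \ref{thm:conebasics}, the universal cut-point property characterizes it as an $\mathbb{R}$-tree, and this holds for every choice of ultrafilter, basepoint, and scaling sequence.

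To conclude, I would invoke Gromov's criterion that a geodesic metric space is $\delta$-hyperbolic if and only if each of its asymptotic cones is an $\mathbb{R}$-tree; this yields $\delta$-hyperbolicity of $\mathcal{P}(S)$, and the Brock quasi-isometry $\mathcal{T}(S) \approx \mathcal{P}(S)$ of Theorem \ref{thm:brock}, together with quasi-isometry invariance of hyperbolicity, transfers the conclusion to $\mathcal{T}(S)$. The only delicate point is the topological enumeration showing $\mathbb{S}(S) = \emptyset$, which depends essentially on the convention that essential subsurfaces have complexity at least one so that pairs of pants are excluded; the remainder of the argument is a direct application of Theorem \ref{thm:pieces} together with standard facts.
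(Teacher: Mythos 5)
Your proof is correct and follows essentially the same route as the paper: both reduce to the observation that $\mathbb{S}(S)=\emptyset$ for $S_{1,2}$ and $S_{0,5}$, then apply Theorem~\ref{thm:pieces} together with the Gromov--Dru\c tu characterization of hyperbolicity via $\R$-tree asymptotic cones. Your explicit topological enumeration verifying $\mathbb{S}(S)=\emptyset$ is a useful elaboration of a fact the paper's proof asserts without argument.
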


\begin{proof}
It suffices to show that for all choices of asymptotic cones, $\mathcal{P}_{\omega}(S)$ is an $\R$-tree, see \cite{drutu, gromov}.  Equivalently, it suffices to show that the finest pieces in any asymptotic cone are trivial, or equivalently, any two points $a_{\omega} \ne b_{\omega} \in \mathcal{P}_{\omega}(S)$ can be separated by a point.  However, by Theorem \ref{thm:pieces}  this is immediate as $\mathbb{S}(S)=\emptyset.$ 
\end{proof}

\begin{cor}[\cite{brockmasur} Theorem 1]\label{cor:relhyperbolic} Let $\xi(S)=3,$ then $\mathcal{P}(S)$ is relatively hyperbolic with respect to natural quasi-convex product regions consisting of all pairs of pants with a fixed separating curve.
\end{cor}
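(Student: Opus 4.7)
My plan is to derive the corollary from Theorem~\ref{thm:pieces} together with Behrstock's theorem that every point of $\mathcal{P}_{\omega}(S)$ is a global cut-point, and the Dru\c tu--Sapir characterization of strong relative hyperbolicity: $\mathcal{P}(S)$ is strongly relatively hyperbolic with respect to $\mathcal{A} = \{\q{C} : C \text{ a separating curve}\}$ if and only if every asymptotic cone $\mathcal{P}_{\omega}(S)$ is tree-graded with pieces exactly the ultralimits $\qbar{C}$, plus the nondegeneracy that $\mathcal{P}(S)$ is not contained in a bounded neighborhood of any single $\q{C}$. Nondegeneracy is immediate from Theorem~\ref{thm:quasidistance}: moving far inside the curve complex of any essential subsurface overlapping $C$ leaves every bounded neighborhood of $\q{C}$.

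First I would analyze $\mathbb{S}(S)$ for the three surfaces with $\xi(S)=3$, namely $S_{0,6}$, $S_{1,3}$, and $S_{2,0}$. Any vertex of $\mathbb{S}(S)$ is a separating multicurve whose complement has at least two essential components; since a pants decomposition has exactly three curves, such a multicurve must in fact be a single separating curve $C$ whose two sides are each complexity-one subsurfaces ($S_{0,4}$ or $S_{1,1}$). Moreover $\mathbb{S}(S)$ contains no edges: an edge would require two disjoint separating curves with an essential subsurface in their joint complement, but a second separating curve disjoint from $C$ would have to live inside one of the two complexity-one sides, and neither $S_{0,4}$ nor $S_{1,1}$ admits any essential separating curve. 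Hence $\mathbb{S}(S)$ is a discrete set and $d_{\mathbb{S}(S)}(C_1,C_2) \in \{0,\infty\}$.

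Next I would apply Theorem~\ref{thm:pieces} to identify the canonical finest pieces. Each $\qbar{C}$ is a nontrivial product $\mathcal{P}_{\omega}(\overline{W}) \times \mathcal{P}_{\omega}(\overline{W^c})$, so contains no cut-point separating any two of its own points; thus every $\qbar{C}$ is contained in some finest piece. Conversely, if $a_{\omega},b_{\omega}$ lie in a common finest piece, Theorem~\ref{thm:pieces}(2) combined with discreteness of $\mathbb{S}(S)$ provides $\epsilon$-approximations $a'_{\omega},b'_{\omega}$ whose representatives share a common separating curve $\omega$-a.s.; Lemma~\ref{lem:finite} applied to topological types of separating curves near the basepoint then produces a single ultracurve $\overline{C}$ with $a'_{\omega},b'_{\omega}\in \qbar{C}$. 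Closedness of $\qbar{C}$ in $\mathcal{P}_{\omega}(S)$ and of finest pieces in tree-graded spaces yields $a_{\omega},b_{\omega} \in \qbar{C}$, so the finest pieces are precisely the regions $\qbar{C}$.

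To close the loop with Dru\c tu--Sapir, I would confirm that distinct pieces $\qbar{C} \ne \qbar{D}$ intersect in at most one point: by Theorem~\ref{thm:convex} the intersection equals $\qbar{C \lrcorner D}$ when nonempty, and for $\xi(S)=3$ the extension $C \lrcorner D$ is already a full pants decomposition, making the intersection a singleton. Combined with Behrstock's cut-point result, this fully pins down $\mathcal{P}_{\omega}(S)$ as tree-graded with pieces exactly $\{\qbar{C}\}$, so Dru\c tu--Sapir gives strong relative hyperbolicity of $\mathcal{P}(S)$ with respect to $\{\q{C}\}$, and Brock's quasi-isometry (Theorem~\ref{thm:brock}) transfers the conclusion to $\mathcal{T}(S)$. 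The hard part will be the closedness/constancy step in the third paragraph: one must upgrade $\epsilon$-approximations that a priori share only some $\epsilon$-dependent separating curve $\overline{C_\epsilon}$ to approximations all lying in one fixed $\qbar{C}$---a step that leans on the singleton-intersection property to preclude nontrivially overlapping chains of pieces and on Theorem~\ref{thm:sepcomplexrelasymptotic} to pass between $\mathbb{S}(S)$-distances and intersection patterns of convex regions in the cone.
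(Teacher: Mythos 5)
Your argument is essentially the same as the paper's: both hinge on the observation that for $\xi(S)=3$ the complement of any two distinct separating curves contains no essential subsurface, so $d_{\mathbb{S}(S)}\in\{0,\infty\}$, and then combine Theorem~\ref{thm:pieces} with the Dru\c tu--Sapir asymptotic characterization of relative hyperbolicity. Your extra verifications (singleton intersection of distinct $\qbar{C}$'s via Theorem~\ref{thm:convex}, and the nondegeneracy condition) are details the paper's terse proof leaves implicit, and your flagged concern about upgrading $\epsilon$-dependent regions $\qbar{C_\epsilon}$ to a single $\qbar{C}$ is precisely what the paper's parenthetical ``such regions are closed'' is meant to address.
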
     
\begin{proof}
It suffices to show that $\mathcal{P}(S)$ is asymptotically tree-graded with respect to peripheral subsets consisting of all natural quasi-convex product regions $\q{\gamma}$ for any $\gamma \in \mathcal{C}_{sep}(S).$    

By topological considerations any two separating curves $\gamma \ne \delta \in \mathcal{C}_{sep}(S),$ $S \setminus (\gamma \cup \delta)$ does not contain an essential subsurface.  Consequently, $d_{\mathbb{S}(S)} \in \{0,\infty\},$ and similarly for all $\overline{C},\overline{D} \in \mathbb{S}^{\omega}(S),$ the expression $\lim_{\omega}d_{\mathbb{S}(S)}(C_{i},D_{i})$ takes values in $\{0,\infty\}.$ Accordingly, Theorem \ref{thm:pieces} implies any two points $a_{\omega}, b_{\omega}$ are either in a common natural convex product region (such regions are closed) or are separated by a cut-point.  In particular, any simple nontrivial geodesic triangle in $\mathcal{P}_{\omega}(S)$ must be contained entirely inside a single piece $\qbar{\gamma}.$ 
\end{proof}

While stated for $\mathcal{P}(S),$ Corollaries \ref{cor:hyperbolic} and \ref{cor:relhyperbolic} immediately apply to $\mathcal{T}(S)$ as hyperbolicity and strong relative hyperbolicity are quasi-isometry invariant properties.

\subsection{Hyperbolic type quasi-geodesics}
\label{sec:contracting}
%
In this section, after some definitions of the various types of hyperbolic type geodesics, we will characterize hyperbolic type quasi-geodesics in Teichm\"uller space.  See \cite{lenzhenrafitao} for a similar analysis of strongly contracting quasi-geodesics in Teichm\"uller space equipped with the Lipschitz metric.  
 
 \begin{defn}[Morse] A (quasi-)geodesic $\gamma$ is called a \emph{Morse (quasi-)geodesic} if every $(K,L)$-quasi-geodesic with endpoints on $\gamma$ is within a bounded distance from $\gamma,$ with the bound depending only on the constants $K,L.$  Similarly, the definition of Morse can be associated to a sequence of (quasi-)geodesic segments with uniform quasi-isometry constants.
\end{defn} 

\begin{defn}[contracting quasi-geodesic]  \label{def:bccont}A quasi-geodesic $\gamma$ is said to be \emph{(b,c)--contracting} if $\exists$ constants $0<b\leq 1$ and $0<c$ such that $\forall x,y \in X:$ $$d_{X}(x,y)<bd_{X}(x,\pi_{\gamma}(x))  \implies d_{X}(\pi_{\gamma}(x),\pi_{\gamma}(y))<c.$$
For the special case of a (b,c)--contracting quasi-geodesic where $b$ can be chosen to be $1,$ the quasi-geodesic $\gamma$ is called \emph{strongly contracting.}
\end{defn}    

In \cite{sultanmorse}, hyperbolic type quasi-geodesics in CAT(0) spaces are analyzed.  In particular, the following result is proven:
\begin{thm}[\cite{sultanmorse} Theorem 3.4] \label{thm:hypquasi} Let $X$ be a CAT(0) space and $\gamma \subset X$ a quasi-geodesic.  Then, the following are equivalent: (1) $\gamma$ is (b,c)--contracting, (2) $\gamma$ is strongly contracting, (iii) $\gamma$ is Morse, and (iv) In every asymptotic cone $X_{\omega},$ any two distinct points in the ultralimit $\gamma_{\omega}$ are separated by a cut-point.
\end{thm}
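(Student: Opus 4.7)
The plan is to establish the four-way equivalence via the cycle $(2)\Rightarrow(1)\Rightarrow(3)\Rightarrow(4)\Rightarrow(2)$, exploiting CAT(0) convexity at the two genuinely nontrivial steps.

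The implication $(2)\Rightarrow(1)$ is immediate since strongly contracting is the special case $b=1$ of $(b,c)$--contracting. For $(1)\Rightarrow(2)$ in a CAT(0) space, I would use convexity of the metric to chain together the $(b,c)$--contracting property. Given $x,y$ with $d(x,y)<d(x,\pi_\gamma(x))$, parametrize the CAT(0) geodesic $[x,y]$ and cover it by a bounded number of subsegments, each of length at most $\tfrac{b}{2}d(z,\pi_\gamma(z))$ for $z$ an endpoint of the subsegment; convexity of the distance-to-$\gamma$ function along geodesics guarantees that the denominator does not collapse, so we can apply the $(b,c)$--contracting hypothesis to each subsegment and sum. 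The resulting diameter bound on $\pi_\gamma([x,y])$ depends only on $b$ and $c$, giving strong contraction.

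For $(2)\Rightarrow(3)$, I would run the standard argument: a $(K,L)$--quasi-geodesic $\sigma$ with endpoints on $\gamma$ that strays a distance $R$ from $\gamma$ must contain consecutive points whose $\gamma$--projections are forced, by strong contraction applied to balls disjoint from $\gamma$, to be close, while the quasi-isometry constants force the total projected length to be $\approx L_X(\sigma)$; comparing bounds yields $R\leq R_0(K,L,c)$. For $(3)\Rightarrow(4)$, suppose $x_\omega\neq y_\omega$ lie on $\gamma_\omega$, with representatives $x_i,y_i$ on $\gamma$, and suppose some path $p_\omega$ from $x_\omega$ to $y_\omega$ in $X_\omega$ avoids a point $z_\omega$ strictly between them on $\gamma_\omega$. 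Discretize $p_\omega$ and lift to a sequence of $(K,0)$--quasi-geodesic paths $p_i$ in $X$ from $x_i$ to $y_i$; the Morse property forces $p_i$ to lie in a uniform neighborhood of $\gamma$, so the ultralimit $p_\omega$ must meet every point of $\gamma_\omega$ between $x_\omega$ and $y_\omega$, contradicting the hypothesis that $z_\omega$ is missed. Hence every such pair is separated by a cut-point, in fact by every intervening point on $\gamma_\omega$.

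The main obstacle is the closing implication $(4)\Rightarrow(2)$. I would argue by contradiction: if $\gamma$ fails to be strongly contracting then, for every $n$, there exist $x_n,y_n\in X$ with $d(x_n,y_n)\leq d(x_n,\pi_\gamma(x_n))$ and $D_n:=\mathrm{diam}\,\pi_\gamma([x_n,y_n])\to\infty$. Choose the scaling sequence $s_n:=D_n$, basepoint along $\pi_\gamma(x_n)$, and pass to the asymptotic cone $X_\omega$. The CAT(0) geodesics $[x_n,y_n]$ yield a bi-Lipschitz segment $\ell_\omega$ in $X_\omega$ that lies at uniformly bounded rescaled distance from $\gamma_\omega$ (because $d(x_n,y_n)/D_n$ is bounded using $d(x,y)\leq d(x,\pi_\gamma(x))$ plus the trivial projection bound). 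Using CAT(0) convexity of nearest-point projection onto $\gamma$ (which passes to convex, $1$--Lipschitz projection onto $\gamma_\omega$ in the cone), the projection $\pi_{\gamma_\omega}(\ell_\omega)$ is a nondegenerate subsegment of $\gamma_\omega$ of length $1$ by our normalization. Concatenating $\ell_\omega$ with the two projection geodesics connecting its endpoints to $\gamma_\omega$ produces a topological circle in $X_\omega$ meeting $\gamma_\omega$ in a nontrivial arc; every interior point of that arc then has a path around it via $\ell_\omega$, contradicting the assumption that such points are cut-points between the endpoints of the arc on $\gamma_\omega$. The technical delicacy is ensuring the loop genuinely surrounds the interior points of the arc rather than degenerating; CAT(0) convexity of $d(\cdot,\gamma)$ and the fact that $\ell_\omega$ has positive rescaled distance from $\gamma_\omega$ (forced by the $D_n$--normalization applied to a point maximizing projection spread) is exactly what prevents degeneration.
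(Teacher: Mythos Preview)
The paper does not contain a proof of this statement; it is quoted as Theorem~3.4 from \cite{sultanmorse} and used as a black box in the proof of Theorem~\ref{thm:contracting}. There is therefore no proof in this paper to compare your proposal against.

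That said, a brief comment on your sketch of $(4)\Rightarrow(2)$: you assert both that $\ell_\omega$ ``lies at uniformly bounded rescaled distance from $\gamma_\omega$'' and that it ``has positive rescaled distance from $\gamma_\omega$.'' The first claim is not obviously available. Since nearest-point projection to a \emph{quasi}-geodesic need not be $1$--Lipschitz even in CAT(0), the hypothesis $d(x_n,y_n)\leq d(x_n,\pi_\gamma(x_n))$ together with $D_n=\mathrm{diam}\,\pi_\gamma([x_n,y_n])$ gives no a priori upper bound on $d(x_n,\pi_\gamma(x_n))/D_n$; the points $x_n$ could fly off to distance $\gg D_n$ from $\gamma$, in which case $\ell_\omega$ is at infinite distance from $\gamma_\omega$ in the $D_n$--rescaled cone and the concatenation with projection geodesics does not produce a loop in $X_\omega$. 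The usual fix is to replace $x_n,y_n$ by points on $[x_n,y_n]$ at controlled distance (comparable to $D_n$) from $\gamma$ while preserving a definite spread in the projection, which requires an additional CAT(0) convexity argument you have not supplied. Your outline for the other implications is along standard lines.
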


Recall that $\overline{\mathcal{T}}(S)$ is CAT(0).  Combining Theorems \ref{thm:pieces} and \ref{thm:hypquasi}, the following corollary characterizes all strongly contracting quasi-geodesics in $\mathcal{\overline{T}}(S).$  Equivalently, in light of Theorem \ref{thm:hypquasi} the theorem also characterizes Morse quasi-geodesics in $\mathcal{\overline{T}}(S).$  The characterization represents a generalization of quasi-geodesics with \emph{bounded combinatorics} studied in \cite{behrstock, bmm2}.  Specifically, in  \cite{behrstock, bmm2} it is shown that quasi-geodesics in $\mathcal{P}(S)$ which have uniformly bounded subsurface projections to all connected proper essential subsurfaces.  More generally, we show:


\begin{thm} \label{thm:contracting} Let $\gamma$ be a quasi-geodesic in $\mathcal{\overline{T}}(S),$ and using Theorem \ref{thm:brock} let $\gamma'$ be a corresponding quasi-geodesic in $\mathcal{P}(S).$  Then $\gamma$ is strongly contracting if and only if there exists a constant $C$ such that for all $Y \in \se{S}$ the subsurface projection $\pi_{Y}(\gamma')$ has diameter bounded above by $C.$ 
\end{thm}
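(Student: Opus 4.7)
The plan is to combine the CAT(0) characterization of Morse/strongly contracting quasi-geodesics (Theorem~\ref{thm:hypquasi}) with the cut-point characterization of finest pieces (Theorem~\ref{thm:pieces}). Since $\overline{\mathcal{T}}(S)$ is CAT(0), Theorem~\ref{thm:hypquasi} reduces strong contraction of $\gamma$ to the statement that in every asymptotic cone $\mathcal{P}_\omega(S)$, any two distinct points of the ultralimit $\gamma_\omega$ are separated by a cut-point. It then suffices to show that this ultralimit cut-point condition is equivalent to uniform boundedness of subsurface projections of $\gamma'$ to every $Y\in\se{S}$.

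\emph{Necessity.} Arguing by contrapositive, suppose there exist $Y_i\in\se{S}$ and $x_i,y_i\in\gamma'$ with $d_{\mathcal{C}(Y_i)}(x_i,y_i)\to\infty$. I would pick a hierarchy path $\sigma_i$ from $x_i$ to $y_i$, use property [H2] of Theorem~\ref{thm:hierarchy} to extract the component-domain interval $I_{Y_i}=[t_1^i,t_2^i]$ along which $\partial Y_i$ appears in every pants decomposition, and set $x'_i=\sigma_i(t_1^i),\,y'_i=\sigma_i(t_2^i)\in\q{\partial Y_i}$; since $\sigma_i$ shadows a $\mathcal{C}(Y_i)$-geodesic on this interval, $d_{\mathcal{P}(S)}(x'_i,y'_i)$ grows at least linearly in $d_{\mathcal{C}(Y_i)}(x_i,y_i)$. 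Because $\gamma$ is strongly contracting, hence Morse, the quasi-geodesic $\sigma_i$ lies in a bounded neighborhood of $\gamma'$, so I can pick $\tilde{x}_i,\tilde{y}_i\in\gamma'$ at uniformly bounded distance from $x'_i,y'_i$. Rescaling by $s_i=d_{\mathcal{P}(S)}(\tilde{x}_i,\tilde{y}_i)\to\infty$ and basing at $\tilde{x}_i$, the limits $\tilde{x}_\omega=x'_\omega$ and $\tilde{y}_\omega=y'_\omega$ are distinct points of $\gamma_\omega$ that both lie in the nontrivial convex product region $\qbar{\partial Y}$. In particular $d_{\mathbb{S}_\omega(S)}(\tilde{x}_\omega,\tilde{y}_\omega)=0$, so by Theorem~\ref{thm:pieces} they are not separated by any cut-point, contradicting Theorem~\ref{thm:hypquasi}.

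\emph{Sufficiency.} Suppose the diameter of $\pi_Y(\gamma')$ is at most $C$ for every $Y\in\se{S}$. Fix an arbitrary asymptotic cone $\mathcal{P}_\omega(S)$ and distinct points $x_\omega\neq y_\omega\in\gamma_\omega$ with representatives $x_i,y_i\in\gamma'$. Lemma~\ref{lem:sepfiber} together with property [S1] of Theorem~\ref{thm:sublinear} supplies some $\overline{W}\in\eultra{S}$ with $\lim_\omega d_{\mathcal{C}(W_i)}(x_i,y_i)=\infty$. The hypothesis rules out $\overline{W}\in\seultra{S}$, so $\overline{W}\in\nseultra{S}$; by the definition of nonseparating, $\overline{W^c}=\emptyset$, and $\mathcal{P}_\omega(\overline{W^c})$ is a single point. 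Theorem~\ref{thm:mjetsexist} then yields a microscopic jet $J$ supported on $\overline{W}$ with $x_\omega\not\sim_J y_\omega$, and Theorem~\ref{thm:mjetssep} forces $x_\omega$ and $y_\omega$ to be separated by the singleton $\iota(J)\times\mathcal{P}_\omega(\overline{W^c})=\{\iota(J)\}$, a genuine cut-point. Applying Theorem~\ref{thm:hypquasi} in the reverse direction delivers strong contraction of $\gamma$.

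\emph{Main obstacle.} The delicate point is the rescaling in the necessity direction: one must simultaneously arrange that $d_{\mathcal{P}(S)}(\tilde{x}_i,x'_i)/s_i\to 0$ (so the limits coincide and land in $\qbar{\partial Y}$) while $s_i$ still diverges (so $\tilde{x}_\omega\ne\tilde{y}_\omega$). Choosing $s_i=d_{\mathcal{P}(S)}(\tilde{x}_i,\tilde{y}_i)$ is what makes both requirements compatible, because the component-domain structure of the hierarchy forces $s_i\gtrsim d_{\mathcal{C}(Y_i)}(x_i,y_i)\to\infty$, whereas the uniform Morse constant controls the absolute approximation error.
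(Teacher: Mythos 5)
Your proposal is correct and takes essentially the same route as the paper: both directions are reduced, via Theorem~\ref{thm:hypquasi}, to the asymptotic-cone cut-point condition, and the sufficiency direction applies the microscopic-jet machinery (Theorems~\ref{thm:mjetsexist} and \ref{thm:mjetssep}) in exactly the paper's way, with the crucial observation that the witnessing $\overline{W}$ must be nonseparating so that $\iota(J)\times\mathcal{P}_\omega(\overline{W^c})$ is a singleton.

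One small point the paper attends to that your write-up elides: to invoke Theorem~\ref{thm:mjetssep} one needs to know that the jet basepoint $\iota(J)$ is distinct from both $x_\omega$ and $y_\omega$, since the relation $\sim_J$ is only defined on $\mathcal{P}_\omega(S)\setminus\bigl(\iota(J)\times\mathcal{P}_\omega(\overline{W^c})\bigr)$. The paper guarantees this by proceeding ``as in Case One of the proof of Theorem~\ref{thm:pieces},'' choosing auxiliary points $a''_\omega,a'''_\omega,b''_\omega,b'''_\omega$ strictly interior along the ultralimit of hierarchy paths and building the jet between the innermost pair, so the resulting cut-point sits strictly between the outer pair; you should either do the same or explicitly note that the subsegments $g_i$ can be taken from the interior of the tight $\mathcal{C}(W_i)$ geodesics so that $\iota(J)$ lies strictly between $x_\omega$ and $y_\omega$ on $\gamma_\omega$. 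For necessity the paper argues directly (without passing to a cone) that a sequence of hierarchy segments with endpoints on $\gamma'$ that spend unboundedly long stretches inside quasi-flats $\q{\partial Y_i}$ cannot be Morse, hence $\gamma'$ is not Morse; your contradiction argument via the asymptotic cone, with the careful choice $s_i=d_{\mathcal{P}(S)}(\tilde x_i,\tilde y_i)$ to make the rescaling deliver two distinct cone points of $\gamma_\omega$ inside one convex product piece $\qbar{\partial Y}$, is a valid and slightly more explicit reformulation of the same idea.
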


\begin{proof}
Assume there is no uniform bound $C$ on the subsurface projection $\pi_{Y}(\gamma'),$ where $Y$ ranges over $\se{S}.$  Then we can construct $\overline{Y} \in \seultra{S}$ such that $\lim_{i} diam(\pi_{Y_{i}}(\gamma')) \ra \infty.$  By the properties of hierarchies in Theorem \ref{thm:hierarchy}, it follows that there is a sequence of hierarchy quasi-geodesic segments $\{\gamma'_{r}\}_{r}$ with endpoints on $\gamma'$  traveling through product regions $\q{\partial Y_{r}}$ for unbounded connected subsegments.  In particular, the sequence of quasi-geodesics $\{\gamma'_{r}\}_{r}$ are not Morse, and furthermore since the hierarchy segments $\gamma'_{r}$ are all quasi-geodesics with uniform constants which have endpoints on $\gamma'$, the quasi-geodesic $\gamma'$ is also not Morse.  Moreover, considering the quasi-isometry taking $\gamma'$ to $\gamma,$ it similarly follows that $\gamma$ is not Morse.  By Theorem \ref{thm:hypquasi}, $\gamma$ is not strongly contracting.

On the other hand, assume $\forall Y \in \se{S}$ that the subsurface projection $\pi_{Y}(\gamma')$ is uniformly bounded.  Let $\mathcal{P}_{\omega}(S)$ be any asymptotic cone with $a_{\omega},b_{\omega}$ any two distinct points on $\gamma'_{\omega}$ with representatives sequences $(a_{i}),(b_{i}) \in \gamma',$ respectively.  Proceeding as in Case One of the proof of Theorem \ref{thm:pieces}, consider a sequence of hierarchy quasi-geodesic segments $\rho(a_{i},b_{i}),$ between the points $a_{i}$ and $b_{i}$ on $\gamma',$ and define distinct points $a''_{\omega},a'''_{\omega},b''_{\omega},b'''_{\omega}$  with representatives $(a''_{i}),(a'''_{i}),(b''_{i}),(b'''_{i})$ along the sequence of hierarchy quasi-geodesic segments $\rho(a_{i},b_{i}).$  By assumption, $\forall \overline{Y} \in \seultra{S}, \lim_{\omega}d_{\mathcal{C}(Y_{i})}(a'''_{i},b'''_{i})$ is bounded.  On the other hand, since $a'''_{\omega} \ne b'''_{\omega}$ by Corollary \ref{cor:sepbounded} there is some $\overline{W} \in \eultra{S}$ such that  $\lim_{\omega}d_{\mathcal{C}(W_{i})}(a_{i},b_{i})$ is unbounded.  Perforce, $\overline{W} \in \nseultra{S}.$  Then, as in Case One of the proof of Theorem \ref{thm:pieces}, there exists a microscopic jet which gives rise to a cut-point between $a_{\omega}$ and $b_{\omega}.$  Since $a_{\omega}$ and $b_{\omega}$ are arbitrary and because cut-points in asymptotic cones are preserved by quasi-isometries, by Theorem \ref{thm:hypquasi} $\gamma$ is strongly contracting.
\end{proof}

\section{Thickness and Divergence of Teichm\"uller Spaces}
\label{chap:thick}
In this section we focus our analysis on the surface $S_{2,1}$ which has previously proven to be difficult to understand, as is apparent from the surrounding literature.  In particular, we complete the thickness classification of Teichm\"uller space of all surfaces of finite type presented in Table \ref{table:thick}.  Specifically, we prove that the Teichm\"uller space of the surface $S_{2,1}$ is thick of order two and has superquadratic divergence, thereby answering questions of \cite{behrstockdrutu, bdm,brockmasur}.  The proof in this section is broken up into three subsections.  In Subsection \ref{sec:atmosttwo} we recall the construction in \cite{brockmasur} where it is shown that $\mathcal{T}(S_{2,1})$ is thick of order at least one and at most two.  Then, in Subsection \ref{sec:thickorder2} we prove that $\mathcal{T}(S_{2,1})$ cannot be thick of order one.  In Subsection \ref{sec:divergence} using our understanding from the previous sections we prove that $\mathcal{T}(S_{2,1})$ can be uniquely characterized among all Teichm\"uller spaces as it has a divergence function which is superquadratic yet subexponential.  Throughout this section we will use the pants complex as a quasi-isometric model for Teichm\"uller space, often making statements and theorems about Teichm\"uller space with proofs obtained from considering the pants complex.

\subsection{$\mathcal{T}(S_{2,1})$ is thick of order one or two}
\label{sec:atmosttwo} In this section we recall results of Behrstock in \cite{behrstock} and Brock-Masur in \cite{brockmasur}.  Specifically, we first recall a result of Behrstock that shows that for all surfaces $\mathcal{T}(S)$ is never wide.  By definition, it follows that $\mathcal{T}(S)$ is never thick of order zero.  Then, we record a slightly adapted version of a proof in \cite{brockmasur} that $\mathcal{T}(S_{2,1})$ is thick of order at most two.  Putting things together, this section implies that $\mathcal{T}(S_{2,1})$ is thick of order one or two.  The reason for the necessary slight adaptation in this section of the proof in \cite{brockmasur} is due to the various versions of thickness in the literature.  See Remark \ref{rem:wide}.

We begin by recalling the following theorem of Behrstock:
\begin{thm}[\cite{behrstock} Theorem 7.1] \label{thm:notwide} Let $\gamma$ be any pseudo-Anosov axis in $\mathcal{P}(S),$ and let $\gamma_{\omega}$ be its ultralimit in any asymptotic cone $\mathcal{P}_{\omega}(S).$  Then any distinct points on $\gamma_{\omega}$ are separated by a cut-point.  
\end{thm}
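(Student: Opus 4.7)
The plan is to leverage the microscopic jets machinery of Theorems \ref{thm:mjetsexist} and \ref{thm:mjetssep} applied with the whole surface $S$ as support. The key input is that if $\phi \in \MCG(S)$ is pseudo-Anosov, then $\phi$ acts loxodromically on the curve complex $\mathcal{C}(S)$ (Masur--Minsky), so the projection of a pseudo-Anosov quasi-axis $\gamma \subset \mathcal{P}(S)$ onto $\mathcal{C}(S)$ via any hierarchy is a quasi-geodesic with linear progress. Consequently, for any two distinct points $a_{\omega} \ne b_{\omega}$ on $\gamma_{\omega}$ with representatives $(a_i),(b_i)$ chosen along $\gamma$, one has $\lim_{\omega} d_{\mathcal{C}(S)}(a_i,b_i) = \infty$.

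With this in hand, I would first pass to slightly interior points $a'_{\omega}, b'_{\omega}$ on the subarc of $\gamma_{\omega}$ between $a_{\omega}$ and $b_{\omega}$, still satisfying $\lim_{\omega} d_{\mathcal{C}(S)}(a'_i,b'_i) = \infty$. Invoking Theorem \ref{thm:mjetsexist} with $\overline{W}$ the constant sequence $S$ produces a microscopic jet $J = (\overline{g}, \overline{S}, \overline{a'}, \overline{b'})$ supported on the whole surface, with the subsegments $g_i$ contained in tight main $\mathcal{C}(S)$-geodesics of hierarchies from $a'_i$ to $b'_i$, and satisfying $a'_{\omega} \not\sim_J b'_{\omega}$.

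Because the essential complement of $S$ in itself is empty, the set $\iota(J) \times \mathcal{P}_{\omega}(\overline{S^c})$ collapses to the single point $\{\iota(J)\}$. By construction, $\iota(J)$ lies on the ultralimit of hierarchy paths strictly between $a'_{\omega}$ and $b'_{\omega}$, so after a further small adjustment we can ensure $\iota(J) \notin \{a'_{\omega}, b'_{\omega}\}$. Theorem \ref{thm:mjetssep} then immediately yields that $\iota(J)$ is a cut-point separating $a'_{\omega}$ from $b'_{\omega}$.

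Finally, to upgrade from a separation of $a'_{\omega}, b'_{\omega}$ to a separation of $a_{\omega}, b_{\omega}$, I would use local path-connectedness of $\mathcal{P}_{\omega}(S)$ (Theorem \ref{thm:conebasics}), arranging that the interior points $a'_{\omega}, b'_{\omega}$ sit in small enough neighborhoods of $a_{\omega}, b_{\omega}$ on $\gamma_{\omega}$ that short local paths from $a_{\omega}$ to $a'_{\omega}$ and from $b_{\omega}$ to $b'_{\omega}$ avoid $\iota(J)$; this is precisely the neighborhood bookkeeping used in the proof of Theorem \ref{thm:pieces}. The only substantive ingredient beyond a direct application of the jets machinery is the linear growth of curve-complex distance along a pseudo-Anosov orbit, which is what guarantees the hypothesis of Theorem \ref{thm:mjetsexist} is available for \emph{every} pair of distinct points on $\gamma_{\omega}$ simultaneously; the remaining steps are formal consequences of separation properties already established.
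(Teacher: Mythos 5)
Your argument is correct, but note that the paper does not actually prove Theorem \ref{thm:notwide}; it imports it by citation from Behrstock, whose original argument uses projection estimates and contraction directly and predates the jets formalism of \cite{bkmm}. Your re-derivation via Theorems \ref{thm:mjetsexist} and \ref{thm:mjetssep} with support $\overline{W}=\overline{S}$ is essentially the $\overline{Y}=S$ specialization of Case One in the paper's own proof of Theorem \ref{thm:pieces}, which the author there explicitly flags as coinciding with the \cite{bkmm} argument in that case --- so you have re-derived a cited ingredient using the generalizing machinery the paper develops, a clean modernization of Behrstock's original. The one substantive input particular to the pseudo-Anosov setting is exactly what you isolate: the linear $\mathcal{C}(S)$-progress of the axis, which follows because a pseudo-Anosov axis has uniformly bounded projections to all proper subsurfaces, so the distance formula gives $d_{\mathcal{C}(S)} \approx d_{\mathcal{P}(S)}$ along $\gamma$; hence distinct $a_\omega \ne b_\omega$ on $\gamma_\omega$ force $\lim_\omega d_{\mathcal{C}(S)}(a_i,b_i)=\infty$, which is precisely the hypothesis Theorem \ref{thm:mjetsexist} needs with $\overline{W}=\overline{S}$. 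A minor simplification at the end: since $\iota(J)$ lies strictly between $a'_\omega$ and $b'_\omega$ on the injectively embedded quasi-geodesic $\gamma_\omega$, the subarcs of $\gamma_\omega$ from $a_\omega$ to $a'_\omega$ and from $b'_\omega$ to $b_\omega$ already avoid $\iota(J)$, so the separation of $a_\omega$ from $b_\omega$ is immediate from that of $a'_\omega$ from $b'_\omega$ without a separate appeal to local path-connectedness.
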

Since all mapping class groups of surfaces with positive complexity contain pseudo-Anosov elements, and given any pseudo-Anosov axis, one can choose an asymptotic cone in which its ultralimit is nontrivial, by Theorem \ref{thm:notwide} it follows that $\mathcal{T}(S)$ is never wide, and hence never thick of order zero.

Next, we consider the proof in \cite{brockmasur} proving that $\mathcal{T}(S_{2,1})$ is thick of order at most two.  Given $\alpha \in \mathcal{C}_{sep}(S_{2,0}),$ let $\tilde{\alpha} \in \mathcal{C}_{sep}(S_{2,1})$ denote any lift of $\alpha$ with respect to the projection $\pi=\pi_{\mathcal{C}(S_{2,0})}\co \mathcal{C}_{sep}(S_{2,1}) \ra \mathcal{C}_{sep}(S_{2,0})$ which forgets about the boundary component.  By topological considerations $S\setminus \tilde{\alpha} = Y_{1} \sqcup Y_{2} = S_{1,1} \sqcup S_{1,2}.$  Since $diam(\mathcal{P}(Y_{i})) = \infty,$ we can choose bi-infinite geodesics $\rho_{i} \in \mathcal{P}(Y_{i}),$ and in fact, by Theorem \ref{thm:quasidistance}, the span of any two such bi-infinite geodesics in the different connected components $Y_{1},Y_{2}$ comprise a quasi-flat.  In particular, it follows that the sets $\q{\tilde{\alpha}}$ are nontrivial product regions, and in particular are wide.  Again, using Theorem \ref{thm:quasidistance}, it is also immediate that subsets  $\q{\tilde{\alpha}}$ are quasi-convex.  Moreover, using the property of hierarchies in Theorem \ref{thm:hierarchy}, it follows that these subsets $\q{\tilde{\alpha}}$ satisfy the non triviality property of every point having a bi-infinite quasi-geodesic through it.  Hence, the subsets $\q{\tilde{\alpha}}$ are thick of order zero.

%
%
%

With the notation as above, set 
\begin{equation} \label{eq:subpieces} \x{\alpha} =\{Q \in \mathcal{P}(S_{2,1}) \; | \; \alpha \in \pi(Q) \} = \bigcup_{\tilde{\alpha}}\q{\tilde{\alpha}} 
\end{equation}

Presently we will prove the following theorem:
\begin{thm} [\cite{brockmasur} Theorem 18] \label{thm:brockmasur} $\mathcal{T}(S_{2,1})$ is thick of order at most two.
\end{thm}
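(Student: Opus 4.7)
The plan is to exhibit the subsets $\mathcal{X}(\alpha) \subset \mathcal{P}(S_{2,1})$ indexed by $\alpha \in \mathcal{C}_{sep}(S_{2,0})$ as the order-one pieces, each built from the order-zero product pieces $\mathcal{Q}(\tilde{\alpha})$ for $\tilde{\alpha}$ in the fiber $\pi^{-1}(\alpha)$, and to verify Definition \ref{defn:thick} at both levels. Since thickness is a quasi-isometry invariant and $\mathcal{T}(S_{2,1}) \approx \mathcal{P}(S_{2,1})$ by Theorem \ref{thm:brock}, this will suffice.

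Step one is to show that $\mathcal{X}(\alpha) = \bigcup_{\tilde{\alpha} \in \pi^{-1}(\alpha)} \mathcal{Q}(\tilde{\alpha})$ is thick of order at most one via the $\mathcal{Q}(\tilde{\alpha})$. The preceding discussion established that each $\mathcal{Q}(\tilde{\alpha}) \approx \mathcal{P}(S_{1,1}) \times \mathcal{P}(S_{1,2})$ is quasi-convex, wide, and satisfies the required nontriviality condition for bi-infinite quasi-geodesics, hence is thick of order zero. Coverage of $\mathcal{X}(\alpha)$ is immediate from equation \ref{eq:subpieces}. For thick chaining, Lemma \ref{lem:coincide} identifies the fiber $\pi^{-1}(\alpha)$ with a connected component of $\mathcal{C}_{sep}(S_{2,1})$, so any two lifts $\tilde{\alpha}, \tilde{\alpha}'$ can be joined by a sequence $\tilde{\alpha} = \tilde{\alpha}_0, \ldots, \tilde{\alpha}_n = \tilde{\alpha}'$ of pairwise disjoint lifts in $\pi^{-1}(\alpha)$. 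For consecutive disjoint lifts $\tilde{\alpha}_j, \tilde{\alpha}_{j+1}$, the complement $S_{2,1} \setminus (\tilde{\alpha}_j \cup \tilde{\alpha}_{j+1})$ contains an essential $S_{1,1}$ component, so Theorem \ref{thm:convex} gives $\mathcal{Q}(\tilde{\alpha}_j) \hat{\cap} \mathcal{Q}(\tilde{\alpha}_{j+1}) \approx \mathcal{Q}(\tilde{\alpha}_j \cup \tilde{\alpha}_{j+1})$ of infinite diameter, yielding the required chaining.

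Step two is to verify $\mathcal{P}(S_{2,1})$ is thick of order at most two via the $\mathcal{X}(\alpha)$. For coverage, every pants decomposition of $S_{2,1}$ lies within uniformly bounded distance from one containing a separating curve, placing it in a uniform neighborhood of the appropriate $\mathcal{X}(\alpha)$. Quasi-convexity of $\mathcal{X}(\alpha)$ is then established by combining step one with Theorem \ref{thm:hierarchy}: given $P,Q \in \mathcal{X}(\alpha)$ containing lifts $\tilde{\alpha}_P, \tilde{\alpha}_Q$, concatenate quasi-geodesics inside successive $\mathcal{Q}(\tilde{\alpha}_j)$ moving through their infinite-diameter coarse intersections, producing a uniform-constant quasi-geodesic from $P$ to $Q$ staying in $\mathcal{X}(\alpha)$. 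For thick chaining of $\mathcal{X}(\alpha)$ and $\mathcal{X}(\beta)$, connectivity of $\mathcal{C}_{sep}(S_{2,0})$ supplies a sequence $\alpha = \alpha_0, \ldots, \alpha_m = \beta$ of separating curves in $S_{2,0}$ with consecutive pairs disjoint; for each pair, one chooses disjoint lifts $\tilde{\alpha}_j, \tilde{\alpha}_{j+1}$ in $S_{2,1}$, and since $S_{2,1} \setminus (\tilde{\alpha}_j \cup \tilde{\alpha}_{j+1})$ contains an essential subsurface of positive complexity, $\mathcal{Q}(\tilde{\alpha}_j \cup \tilde{\alpha}_{j+1}) \subset \mathcal{X}(\alpha_j) \cap \mathcal{X}(\alpha_{j+1})$ has infinite diameter.

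The principal obstacle is verifying quasi-convexity of $\mathcal{X}(\alpha)$, since it is an infinite union of quasi-convex pieces and one must demonstrate that the successive coarse intersections genuinely line up to produce a single uniform-constant quasi-geodesic remaining in a neighborhood of $\mathcal{X}(\alpha)$; this is handled by applying Theorem \ref{thm:convex} repeatedly along the chain of lifts together with the uniform hierarchy bounds of Theorem \ref{thm:hierarchy}. The coverage step, asserting bounded distance to a pants decomposition containing a separating curve, is a standard consequence of the quasi-distance formula of Theorem \ref{thm:quasidistance} and is by now routine.
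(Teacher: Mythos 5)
Your Step 2 contains a genuine gap at precisely the point the paper flags as the delicate one. You assert quasi-convexity of $\mathcal{X}(\alpha)$ by ``concatenating quasi-geodesics inside successive $\mathcal{Q}(\tilde{\alpha}_j)$,'' but this is not an argument for quasi-convexity. First, a concatenation of $n$ quasi-geodesic segments, each with uniform constants, passing through $n$ coarse intersection regions, is in general only a quasi-geodesic with constants degenerating in $n$ --- and two points $P, Q \in \mathcal{X}(\alpha)$ may require arbitrarily long chains of lifts $\tilde{\alpha}_j$ to connect their defining separating curves inside the fiber $\pi^{-1}(\alpha)$. Second, and more fundamentally, quasi-convexity (in the sense of Definition \ref{defn:thick}(i)) requires that \emph{some} $(K,L)$-quasi-geodesic from $P$ to $Q$ with \emph{uniform} constants remains in a bounded neighborhood of $\mathcal{X}(\alpha)$; producing some non-geodesic chain through $\mathcal{X}(\alpha)$ does not establish this, and the natural candidate --- the hierarchy path from $P$ to $Q$ --- has no reason to track $\mathcal{X}(\alpha)$, since its component domains can be nonseparating or can involve separating curves outside $\pi^{-1}(\alpha)$.

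The paper explicitly states that quasi-convexity ``appears unlikely to hold for subsets $\mathcal{X}(\alpha)$,'' citing a counterexample in \cite{sultanthesis}. This is exactly why the paper does \emph{not} use the sets $\mathcal{X}(\alpha)$ as the order-one pieces for Definition \ref{defn:thick}. Instead it replaces them with the sets $\mathcal{X}(f,\tilde{\alpha},Q) = \rho \cup \bigcup_n \mathcal{Q}(f^n(\tilde{\alpha}))$, built from a single point-pushing pseudo-Anosov axis $\rho$ together with the product regions it crosses, and proves quasi-convexity for \emph{these} in Lemma \ref{lem:quasiconvex}. The key mechanism there is that pseudo-Anosov axes have uniformly bounded subsurface projections, which forces the hierarchy between any two points of $\mathcal{X}(f,\tilde{\alpha},Q)$ to have only a small, controlled set of component domains, all of which lie near the set. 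Your Step 1 (thick chaining of the $\mathcal{Q}(\tilde{\alpha})$ via Lemma \ref{lem:connected}) and the coverage and chaining parts of Step 2 align with the paper, but you need to swap in the quasi-convex subsets $\mathcal{X}(f,\tilde{\alpha},Q)$, together with the bare product regions $\mathcal{Q}(\gamma)$, to close the order-two argument as the paper does.
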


To prove Theorem \ref{thm:brockmasur}, Brock-Masur show that the subsets $\x{\alpha}$ are thick of order at most one, any two subsets $\x{\alpha}, \x{\alpha '}$ can be thickly chained together, and the union of uniform neighborhoods of all subsets $\x{\alpha}$ is all of $\mathcal{P}(S_{2,1}).$  Each of these steps will be worked out.

\textbf{1. $\x{\alpha}$ is thick of order one:}  For a given separating curve $\alpha \in \mathcal{C}_{sep}(S_{2,0}),$ consider the set of all thick of order zero subsets $\q{\tilde{\alpha}},$ with $\pi(\tilde{\alpha})=\alpha.$  By definition, the union of all the thick of order zero subsets $\q{\tilde{\alpha}}$ is precisely all of $\x{\alpha}.$  Furthermore, since by Lemma \ref{lem:connected} the fiber of $\alpha$ under the projection map $\pi$ is connected in $\mathcal{C}_{sep}(S_{2,1}),$ in order to prove thick connectivity of elements in the set of all thick of order zero subsets $\q{\tilde{\alpha}},$ it suffices to notice that for $\tilde{\alpha}$ and $\tilde{\alpha'}$ disjoint separating curves, the quasi-convex product regions $\q{\tilde{\alpha}}$ and $\q{\tilde{\alpha'}}$ thickly intersect.  However, this is immediate as $\q{\tilde{\alpha}} \cap \q{\tilde{\alpha'}} = \q{\tilde{\alpha} \cup \tilde{\alpha'}}$ is itself a natural quasi-convex nontrivial product regions and in particular has infinite diameter.

\textbf{2.  Subsets $\x{\alpha}$ and $\x{\alpha'}$ can be thickly chained together:}  Given any separating curves $\alpha,\alpha' \in \mathcal{C}_{sep}(S_{2,0})$ there is a sequence of separating curves between them such that each separating curve intersects its neighboring curves in the sequence minimally.  Specifically, there is a sequence of separating curves $$\alpha = a_{0},a_{1},...,a_{n}=\alpha'$$ with $|a_{i}\cap a_{i+1}| =4,$ see for instance \cite{schleimer}.  Hence, we can assume that $\alpha, \alpha'$ intersect four times.  Up to homeomorphism there are only a finite number of such similar situations, one of which is presented in Figure \ref{fig:pants5}. 

\begin{figure}[htpb]
\centering
\includegraphics[height=3.3 cm]{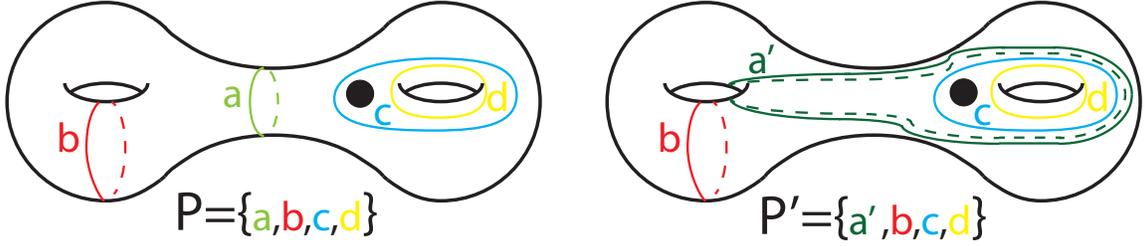}
\caption[Pants decompositions with minimally intersecting separating curves.]{Pants decompositions with minimally intersecting separating curves that are distance two in $\mathcal{P}(S_{2,0}).$}\label{fig:pants5}
\end{figure}
As in Figure \ref{fig:pants5}, we then have pants decompositions, $P_{1} \in \x{\alpha}, P'_{1} \in \x{\alpha'}$ such that  $d_{\mathcal{P}(S_{2,1})}(P_{1},P'_{1})=D,$ for some uniform constant $D.$  Then for any (partial) pseudo-Anosov element $ g \in Push \subset \MCG(S_{2,1}),$ set $P_{n}= g^{n}P_{1}, \;\; P'_{n}=g^{n}P'_{1}.$  By Lemma \ref{lem:preserve}, $\forall \; n \in \Z$ $P_{n} \in \x{\alpha},   P'_{n} \in \x{\alpha'},$ and moreover, $$d_{\mathcal{P}(S_{2,1})}(P_{n},P'_{n})=d_{\mathcal{P}(S_{2,1})}(g^{n}P_{1}, g^{n}P'_{1}) = d_{\mathcal{P}(S_{2,1})}(P_{1},P'_{1})=D.$$  It follows that $diam(N_{D}(\x{\alpha}) \cap N_{D}(\x{\alpha'})) = \infty$ as it contains the axes of (partial) pseudo-Anosov elements. 

\textbf{3.  $N_{1}(\bigcup_{\alpha} \x{\alpha})= \mathcal{P}(S_{2,1}):$}  This follows immediately from the observation that any pair of pants in  $\mathcal{P}(S_{2,1})$ is distance at most one from a pair of pants decomposition containing a separating curve.  

Unfortunately, the above argument for proving that $\mathcal{P}(S_{2,1})$ is thick of order at most two is using a version of thickness which is weaker than the version of thickness in Definition \ref{defn:thick}, and hence we must adapt their proof slightly.  Specifically, recall that in our definition of thickness to show that a space is thick of order at most two it is required that the space have a collection of subsets that are quasi-convex, thick of order one, coarsely make up the entire space, and thickly intersect.  In the argument above from \cite{brockmasur}  we satisfied all the requirements with the exception of quasi-convexity which appears unlikely for to hold for subsets $\x{\alpha},$ see \cite{sultanthesis} Example 5.1.3.  Nonetheless, we will see that we can modify the above argument such that the conclusion that $\mathcal{P}(S_{2,1})$ is thick of order at most two remains true even with the stronger definition of thickness as in Definition \ref{defn:thick}. The idea will be to consider particular quasi-convex subsets of the sets $\x{\alpha}.$

Let $\tilde{\alpha} \in \mathcal{C}_{sep}(S_{2,1})$ with $\pi(\tilde{\alpha})=\alpha \in \mathcal{C}_{sep}(S_{2,0}),$ and let $f$ be any point pushing pseudo-Anosov mapping class of $S_{2,1},$ such that $d_{\mathcal{C}_{sep}(S_{2,1})}(\tilde{\alpha}, f(\tilde{\alpha}))$ is less than some uniform bound.  Let $\rho = \rho(f,\tilde{\alpha},Q)$ be any quasi-geodesic axis of $f$ in the pants complex which goes through some point $Q$ in $\q{\tilde{\alpha}}.$  Then consider the set $$\mathcal{X}(f,\tilde{\alpha},Q )=: \rho \bigcup_{n} \q{f^{n}(\tilde{\alpha}}).$$  Intuitively, this set $\mathcal{X}(f,\tilde{\alpha},Q )$ should be thought of as a point pushing pseudo-Anosov axis thickened up by product regions which it crosses through.  Note that by construction the sets $\mathcal{X}(f,\tilde{\alpha},Q )$ are coarsely contained in $\x{\alpha}$ and moreover, the earlier proof from \cite{brockmasur} that $\x{\alpha}$ is thick of order one, carries through to show that the subsets $\mathcal{X}(f,\tilde{\alpha},Q )$ are similarly thick of order one in the induced metric from the pants complex.   Moreover, the following lemma shows that the subsets $\mathcal{X}(f,\tilde{\alpha},Q )$ are quasi-convex.  


 \begin{lem} 
\label{lem:quasiconvex}
The sets $\mathcal{X}(f,\tilde{\alpha},Q )$ are quasi-convex.
\end{lem}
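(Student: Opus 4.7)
The plan is to exhibit, for any two points $P,P' \in \mathcal{X}(f,\tilde{\alpha},Q),$ an explicit uniform quasi-geodesic from $P$ to $P'$ which lies inside a bounded neighborhood of $\mathcal{X}(f,\tilde{\alpha},Q).$ Since $f$ acts by isometries on $\mathcal{P}(S_{2,1})$ and preserves $\mathcal{X}(f,\tilde{\alpha},Q)$ setwise, applying a suitable power of $f$ reduces to the case where $P \in \q{\tilde{\alpha}}$ and $P' \in \q{f^{N}(\tilde{\alpha})}$ for some integer $N \geq 0.$ If $N=0$ the points lie in a common quasi-convex product region and there is nothing to prove, so assume $N>0.$

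My candidate path $\sigma$ from $P$ to $P'$ has three legs: (a) a geodesic inside $\q{\tilde{\alpha}}$ from $P$ to a nearest point $R$ of $\rho \cap \q{\tilde{\alpha}};$ (b) the subsegment of $\rho$ from $R$ to $f^{N}(R);$ (c) a geodesic inside $\q{f^{N}(\tilde{\alpha})}$ from $f^{N}(R)$ to $P'.$ By construction each leg lies in $\mathcal{X}(f,\tilde{\alpha},Q),$ so the task reduces to showing that $\sigma$ is a uniform quasi-geodesic. The upper bound $d_{\mathcal{P}(S)}(P,P') \leq \mathrm{length}(\sigma)$ is the triangle inequality, so the real content is the reverse estimate $\mathrm{length}(\sigma) \lesssim d_{\mathcal{P}(S)}(P,P').$

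To establish this I will apply the quasi-distance formula (Theorem \ref{thm:quasidistance}) with a threshold $M$ chosen at least as large as the constant $M'$ governing Lemmas \ref{lem:curveprojub}, \ref{lem:projectionestimates} and Theorem \ref{thm:projdefined}. The threshold terms that contribute to the length of $\sigma$ will be matched against terms contributing to $d_{\mathcal{P}(S)}(P,P')$ by classifying each connected essential subsurface $Y \in \e{S_{2,1}}$ according to its position relative to $\tilde{\alpha}$ and $f^{N}(\tilde{\alpha}).$ If $Y$ is disjoint from $\tilde{\alpha}$ the contribution is realized by leg (a); if $Y$ is disjoint from $f^{N}(\tilde{\alpha})$ it is realized by leg (c); the main-surface contribution and contributions from subsurfaces with $\partial Y$ near the main $\mathcal{C}(S_{2,1})$ geodesic from $P$ to $P'$ are realized along leg (b), because the hierarchy from $P$ to $P'$ shadows a tight main geodesic whose component domains coincide, up to uniform constants, with those of a hierarchy from $R$ to $f^{N}(R)$ supported on $\rho.$ The remaining case is a subsurface $Y$ which overlaps both $\tilde{\alpha}$ and $f^{N}(\tilde{\alpha}):$ here the Behrstock inequality (Lemma \ref{lem:projectionestimates}) together with the bounded geodesic image theorem (Theorem \ref{thm:projdefined}) force $d_{\mathcal{C}(Y)}(P,P')$ to be controlled by $d_{\mathcal{C}(Y)}(R, f^{N}(R)),$ which in turn is captured along leg (b).

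The main obstacle is precisely the last family of subsurfaces: those overlapping both boundary curves simultaneously. The argument rests on exploiting that $\rho$ is a quasi-geodesic axis for the point-pushing pseudo-Anosov $f,$ so any component domain $Y$ appearing in a hierarchy between $P$ and $P'$ with $d_{\mathcal{C}(Y)}(P,P') \geq M$ must also be a component domain (with comparable projection distance) for a hierarchy between $R$ and $f^{N}(R),$ since $R$ and $f^{N}(R)$ differ from $P$ and $P'$ only by pants moves supported in subsurfaces disjoint from $\tilde{\alpha}$ and $f^{N}(\tilde{\alpha})$ respectively. Combining these contributions by summation over $Y$ gives $\mathrm{length}(\sigma) \lesssim d_{\mathcal{P}(S)}(P,P')$ with uniform constants depending only on $f,$ $\tilde{\alpha},$ and the quasi-isometry constants of $\rho,$ completing the proof.
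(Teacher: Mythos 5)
Your strategy is genuinely different from the paper's, and the difference matters. The paper picks any $A,B\in\mathcal{X}(f,\tilde\alpha,Q)$, takes a Masur--Minsky hierarchy quasi-geodesic $\sigma(A,B)$ (which is automatically a uniform quasi-geodesic), and then shows its component domains are either the whole surface $S$ or subsurfaces disjoint from $f^{j}(\tilde\alpha)$ or $f^{k}(\tilde\alpha)$; since pseudo-Anosov axes have uniformly bounded proper subsurface projections, the middle portion between $f^{j}(Q)$ and $f^{k}(Q)$ has no large component domains, and the remaining ones are swallowed by the product regions. Quasi-geodesicity is never something they have to check. You instead build an explicit three-leg path inside $\mathcal{X}$ and try to verify quasi-geodesicity by hand via the distance formula, which shifts the burden onto you.

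That burden is where the gap lies. Showing $\mathrm{length}(\sigma)\lesssim d_{\mathcal{P}(S)}(P,P')$ establishes that $\sigma$ is coarsely efficient between its endpoints, but that is strictly weaker than the quasi-geodesic inequality, which must hold for \emph{every} pair of parameters along $\sigma$, not just the endpoints. A piecewise geodesic can have total length comparable to endpoint distance while badly backtracking over an intermediate subsegment. To close the gap you would either need to run your distance-formula estimate uniformly over all subsegments (which requires knowing that intermediate points of leg~(a) still have $R$ as their coarse nearest-point projection onto $\rho\cap\q{\tilde\alpha}$), or argue directly from the contraction of $\rho$ that the concatenation at $R$ and at $f^{N}(R)$ incurs no backtracking --- i.e.\ the reverse triangle inequality $d(P,f^{N}(R))\gtrsim d(P,R)+d(R,f^{N}(R))$ that holds for nearest-point projections onto contracting quasi-geodesics. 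You never invoke either.

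Two smaller points. First, your classification of subsurfaces $Y$ silently assumes every $Y\in\e{S_{2,1}}$ meets at least one of $\tilde\alpha$, $f^{N}(\tilde\alpha)$; if $Y$ is disjoint from both, legs (a) and (c) can each contribute a large $\mathcal{C}(Y)$-term that is not obviously matched by $d_{\mathcal{C}(Y)}(P,P')$ (these two contributions may cancel), and you would need $R$ to be a genuine nearest-point projection in the $\mathcal{C}(Y)$-coordinate to rule this out. Second, the Behrstock inequality and Bounded Geodesic Image are overkill for the case $Y\pitchfork\tilde\alpha$ and $Y\pitchfork f^{N}(\tilde\alpha)$: it is enough to note that a pants decomposition containing $\tilde\alpha$ projects within uniformly bounded diameter of $\pi_{\mathcal{C}(Y)}(\tilde\alpha)$ whenever $Y$ overlaps $\tilde\alpha$, and that the pseudo-Anosov axis $\rho$ has bounded projection to any proper $Y$; together these make $d_{\mathcal{C}(Y)}(P,P')$ itself bounded, so such $Y$ fall below the threshold entirely.
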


\begin{proof}
Pick any elements $A,B \in \mathcal{X}(f,\tilde{\alpha},Q ).$  We will see that they can be connected by a hierarchy quasi-geodesic $\sigma(A,B)$ that remains in a uniform neighborhood of $\mathcal{X}(f,\tilde{\alpha},Q ),$ thus completing the proof.  Without loss of generality we can assume that $A$ and $B$ are contained in natural product regions $\q{f^{j}(\tilde{\alpha})}, \q{f^{k}(\tilde{\alpha})},$ respectively.  Note that remaining in the natural product regions $\q{f^{j}(\tilde{\alpha})}, \q{f^{k}(\tilde{\alpha})},$ the points $A,B$ can be connected to points $f^{j}(Q),f^{k}(Q),$ respectively, both of which lie on the pseudo-Anosov axis $\rho.$  

Since pseudo-Anosov axes have uniformly bounded subsurface projections to all connected proper essential subsurfaces \cite{behrstock, mm2}, it follows that there is a hierarchy quasi-geodesic path connecting $f^{j}(Q)$ and $f^{k}(Q)$ in which the only component domain, for some sufficiently large threshold, is the entire surface $S.$  Accordingly, in the hierarchy quasi-geodesic $\sigma(A,B)$ the only component domains, for some sufficiently large threshold, are the entire surface $S$ and possibly connected essential subsurfaces $Y$ with $Y \subset S \setminus f^{j}(\tilde{\alpha})$ or with $Y \subset S \setminus f^{k}(\tilde{\alpha}).$  By definition, the portion of the $\sigma$ traveling through the component domains of connected essential subsurfaces $Y$ with $Y \subset S \setminus f^{j}(\tilde{\alpha})$ or with $Y \subset S \setminus f^{k}(\tilde{\alpha})$ is coarsely contained in the set $\mathcal{X}(f,\tilde{\alpha},Q).$  

\end{proof}

\begin{proof}[Proof of Theorem \ref{thm:brockmasur}]
Let $\{P\}_{\Gamma}$ be the set consisting of all thick of order zero subsets $\q{\gamma}$ for $\gamma$ any separating curve in $\mathcal{C}_{sep}(S_{2,1})$ as well as all quasi-convex thick of order one subsets of the form $\mathcal{X}(f,\tilde{\gamma},Q ).$  It is immediate that the union of the sets is coarsely the entire space.  In fact, this is true for just the union of the thick of order zero subsets in $\{P\}_{\Gamma}.$  Finally, to complete our argument we will show that any two subsets $P_{a},P_{b} \in \{P\}_{\Gamma}$ can be thickly chained together.  Without loss of generality we can assume that $P_{a}$ and $P_{b}$ are thick of order zero subsets $\q{\alpha},\q{\beta}$ for $\alpha,\beta$ in different connected components of $\mathcal{C}_{sep}(S_{2,1}).$  But then we can construct a sequence of separating curves $\alpha = \gamma_{1},...,\gamma_{n}=\beta$ such that each of the consecutive curves are either disjoint or intersect minimally (four times), \cite{schleimer}.  Hence, we can reduce the situation to showing that we can thickly connect $\q{\alpha}$ and $\q{\beta}$ where $\alpha,\beta$ are separating curves in different connected components of $\mathcal{C}_{sep}(S_{2,1})$ which intersect four times.  Fix any thick of order one sets $P_{c}= \mathcal{X}(f,\alpha,\alpha \lrcorner \beta), P_{d}= \mathcal{X}(f,\beta,\beta \lrcorner \alpha).$  By construction, we have the following chain of thickly intersecting subsets: $P_{a},P_{c},P_{d},P_{b}.$  Note that the fact that $P_{c}$ and $P_{d}$ have infinite diameter coarse intersection was precisely what was in fact shown in part (2) of the Brock-Masur proof recorded earlier in this section.
\end{proof}

\subsection{$\mathcal{T}(S_{2,1})$ is thick of order two}
\label{sec:thickorder2}
Recall the definition of the sets $\x{\alpha}$ in Equation \ref{eq:subpieces}.  Generalizing to the asymptotic cone, we define the following ultralimits:
\begin{equation} 
\label{eq:xbar}
\xbar{\alpha}=:\{ x_{\omega} \in \mathcal{P}_{\omega}(S) | x_{\omega} \mbox{has a representative $(x'_{i})$ with } x'_{i} \in \x{\alpha_{i}} \; \omega \mbox{-a.s}\}.
\end{equation}


\begin{lem} \label{lem:closed}
For $\overline{\alpha} \in \mathcal{C}^{\omega}_{sep}(S_{2,0}),$  $\xbar{\alpha} \subset \mathcal{P}_{\omega}(S_{2,1})$ is a closed set.
\end{lem}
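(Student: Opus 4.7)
The plan is to prove closedness by a standard diagonal argument; no special feature of $S_{2,1}$ or of $\x{\alpha}$ is needed, only the definition of $\xbar{\alpha}$ via $\omega$-a.s. membership. Indeed, essentially the same argument shows that any subset of an asymptotic cone defined as ``points admitting a representative sequence lying $\omega$-a.s. in a prescribed sequence of subsets of the base space'' is automatically closed.

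Concretely, suppose $y_{\omega} \in \mathcal{P}_{\omega}(S_{2,1})$ is a limit point of $\xbar{\alpha}$, and fix any representative sequence $(y_i)$ of $y_{\omega}$. The goal is to exhibit a (possibly different) representative $(y'_i)$ of $y_{\omega}$ with $y'_i \in \x{\alpha_i}$ $\omega$-a.s. First, for each integer $n \geq 1$, choose $y^n_{\omega} \in \xbar{\alpha}$ with $d_{\mathcal{P}_{\omega}(S_{2,1})}(y^n_{\omega}, y_{\omega}) < 1/n$, together with a representative $(y^n_i)$ satisfying $y^n_i \in \x{\alpha_i}$ $\omega$-a.s. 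Setting
\[
A_n = \Bigl\{ i \in \N \,:\, y^n_i \in \x{\alpha_i} \text{ and } \tfrac{1}{s_i} d_{\mathcal{P}(S_{2,1})}(y^n_i, y_i) < 1/n \Bigr\},
\]
we have $A_n \in \omega$ for every $n$.

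Next I would use the standard diagonal trick. Define $m \co \N \to \N$ by $m(i) = \max\{ n \leq i : i \in A_1 \cap \cdots \cap A_n \}$, with $m(i) = 0$ if no such $n$ exists. For any fixed $N$, the set $A_1 \cap \cdots \cap A_N$ lies in $\omega$ and the cofinite set $\{i \geq N\}$ lies in $\omega$, so $\{i : m(i) \geq N\} \in \omega$; hence $\lim_{\omega} m(i) = \infty$. Define $y'_i = y^{m(i)}_i$ whenever $m(i) \geq 1$ and set $y'_i = y_i$ otherwise. Then $\omega$-a.s. one has $y'_i \in \x{\alpha_i}$, and
\[
\tfrac{1}{s_i} d_{\mathcal{P}(S_{2,1})}(y'_i, y_i) < 1/m(i) \longrightarrow 0 \text{ along } \omega,
\]
so $(y'_i)$ still represents $y_{\omega}$. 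This places $y_{\omega}$ in $\xbar{\alpha}$, completing the proof.

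There is really no main obstacle here beyond correctly setting up the diagonal; the only delicate point is ensuring $\lim_{\omega} m(i) = \infty$, which follows from the fact that a non-principal ultrafilter contains every cofinite set together with every $A_n$. Everything else is just unwinding definitions, and the argument makes no use of the specific structure of $\x{\alpha_i}$ beyond its role as an abstract subset of $\mathcal{P}(S_{2,1})$.
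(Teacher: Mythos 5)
Your proof is correct, but it takes a genuinely different route from the paper's. The paper proves the lemma in two lines: it observes that the natural boundary-forgetting map $\pi_{\mathcal{P}(S_{2,0})}\co \mathcal{P}(S_{2,1}) \ra \mathcal{P}(S_{2,0})$ is $1$-Lipschitz, hence induces a continuous map $\pi_{\mathcal{P}_{\omega}(S_{2,0})}\co \mathcal{P}_{\omega}(S_{2,1}) \ra \mathcal{P}_{\omega}(S_{2,0})$ on asymptotic cones, and then identifies $\xbar{\alpha}$ as the preimage $\left(\pi_{\mathcal{P}_{\omega}(S_{2,0})}\right)^{-1}(\qbar{\alpha})$ of the closed set $\qbar{\alpha}$. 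You instead prove the more general statement that the ultralimit of any sequence of subsets is closed in the asymptotic cone, via the standard diagonal argument, and your bookkeeping (the sets $A_n$, the function $m(i)$, and the verification that $\lim_{\omega} m(i) = \infty$ from non-principality) is all sound. What each approach buys: yours is entirely self-contained and makes no use of the structure of $\x{\alpha}$, so it proves the general ultralimit fact as a byproduct; the paper's is shorter and delegates the real content to the already-established continuity of the projection and the closedness of $\qbar{\alpha}$ --- which is itself an instance of the general fact you prove --- at the cost of needing to check carefully that $\xbar{\alpha}$ really is the full preimage of $\qbar{\alpha}$ at the level of cones. Your route is arguably the cleaner one to present from scratch.
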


\begin{proof}
Consider the 1-Lipschitz (hence continuous) projection $\pi_{\mathcal{P}_{\omega}(S_{2,0}) }\co \mathcal{P}_{\omega}(S_{2,1}) \ra \mathcal{P}_{\omega}(S_{2,0})$ which takes a representative sequence $(a_{i})$ for $a_{\omega}$ and maps it to a representative sequence of $(\pi_{\mathcal{P}(S_{2,0})}(a_{i}))$  where the map $\pi_{\mathcal{P}(S_{2,0}) }\co \mathcal{P}(S_{2,1}) \ra \mathcal{P}(S_{2,0})$ is the natural projection which forgets about the boundary component.  By definition $\left(\pi_{\mathcal{P}_{\omega}(S_{2,0})}\right)^{-1}(\qbar{\alpha} )=\xbar{\alpha}.$  By continuity, the result of the lemma follows from the fact that $\qbar{\alpha} \subset \mathcal{P}_{\omega}(S_{2,0})$ is closed.
\end{proof}

Recall Lemma \ref{lem:intpoint}.  In light of the notation developed in this section, as a special case we have the following corollary:
\begin{cor}\label{cor:intpoint2}
Assume $\overline{\alpha} \ne \overline{\beta} \in \mathcal{C}^{\omega}_{sep}(S_{2,1}),$ and let $\xbar{\alpha} =  \bigcup \qbar{\tilde{\alpha}}$ and $\xbar{\beta} = \cup  \qbar{\tilde{\beta}} .$  Then $ | \qbar{\tilde{\alpha}} \cap \qbar{\tilde{\beta}}  | \leq 1 $ and moreover, for $\overline{W},\overline{V} \in \eultra{S}$ with $\overline{\partial W} = \overline{\tilde{\alpha}},$ $\overline{\partial V} = \overline{\tilde{\beta}}$ we have: $$ \Phi_{\overline{W},x_{\omega}} (\qbar{\tilde{\beta}}) = \{ pt\}, \;\;  \Phi_{\overline{V},y_{\omega}} (\qbar{\tilde{\alpha}}) = \{ pt \},$$ where  $\Phi_{\overline{W},x_{\omega}}$ is the projection defined in Equation \ref{eq:projection}.
\end{cor}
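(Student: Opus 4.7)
The plan is to deduce this directly from Lemma \ref{lem:intpoint}, with the cardinality bound following from Theorem \ref{thm:convex} together with an elementary topological observation.

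First I would verify the distance hypothesis $d_{\mathbb{S}(S_{2,1})}(\partial W_i,\partial V_i) = d_{\mathbb{S}(S_{2,1})}(\tilde{\alpha}_i,\tilde{\beta}_i) \geq 2$ needed to invoke Lemma \ref{lem:intpoint}. Since $\overline{\alpha} \ne \overline{\beta}$, the representative sequences satisfy $\alpha_i \ne \beta_i$ for $\omega$-a.s. $i$, so the lifts $\tilde{\alpha}_i$ and $\tilde{\beta}_i$ lie in distinct fibers of $\pi_{\mathcal{C}_{sep}}\co \mathcal{C}_{sep}(S_{2,1}) \surj \mathcal{C}_{sep}(S_{2,0})$. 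By Lemma \ref{lem:coincide} these fibers coincide with the connected components of $\mathcal{C}_{sep}(S_{2,1})$, and by the identification between connected components of $\mathbb{S}(S_{2,1})$ and of $\mathcal{C}_{sep}(S_{2,1})$ recorded at the start of Subsection \ref{subsec:ccompsS21}, the lifts $\tilde{\alpha}_i$ and $\tilde{\beta}_i$ lie in different connected components of $\mathbb{S}(S_{2,1})$. In particular $d_{\mathbb{S}(S_{2,1})}(\tilde{\alpha}_i,\tilde{\beta}_i) = \infty$ $\omega$-a.s., so Lemma \ref{lem:intpoint} applies verbatim and yields both projection statements.

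For the cardinality bound $|\qbar{\tilde{\alpha}} \cap \qbar{\tilde{\beta}}| \leq 1$, I would apply Theorem \ref{thm:convex}: if the intersection is nonempty, it equals $\qbar{\tilde{\alpha} \lrcorner \tilde{\beta}}$. A short topological observation shows that any two distinct separating essential curves on $S_{2,0}$ must intersect; otherwise one would sit inside one of the two $S_{1,1}$ components obtained by cutting along the other, which is impossible since $S_{1,1}$ admits no essential separating curves, while the boundary-parallel alternative would force the two separating curves to be isotopic. Hence $\tilde{\alpha}_i \cap \tilde{\beta}_i \ne \emptyset$ $\omega$-a.s., and a complexity count using $\xi(S_{1,1}) + \xi(S_{1,2}) = 1 + 2 = 3 = \xi(S_{2,1}) - 1$ shows that $\tilde{\alpha}_i \lrcorner \tilde{\beta}_i$ is a complete pants decomposition of $S_{2,1}$. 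Thus $\q{\tilde{\alpha}_i \lrcorner \tilde{\beta}_i}$ is a single point in $\mathcal{P}(S_{2,1})$, and its ultralimit $\qbar{\tilde{\alpha} \lrcorner \tilde{\beta}}$ contains at most one element.

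The argument is essentially organizational. The substantive technical work is already packaged in Lemma \ref{lem:intpoint}, Lemma \ref{lem:coincide}, and Theorem \ref{thm:convex}; the only fresh input is the elementary observation about distinct separating curves on $S_{2,0}$. I do not anticipate any serious obstacle.
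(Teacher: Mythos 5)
Your verification of the hypothesis of Lemma \ref{lem:intpoint} is clean and correct: the chain $\alpha_i \ne \beta_i$ $\Rightarrow$ distinct fibers of $\pi_{\mathcal{C}_{sep}}$ $\Rightarrow$ distinct connected components (Lemma \ref{lem:coincide}) $\Rightarrow$ $d_{\mathbb{S}(S_{2,1})}(\tilde{\alpha}_i,\tilde{\beta}_i)=\infty$ is exactly what makes the corollary a ``special case'' of the lemma, and the two projection statements follow. So that part is fine.

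The gap is in the cardinality argument, at the sentence ``Hence $\tilde{\alpha}_i \cap \tilde{\beta}_i \ne \emptyset$ $\omega$-a.s., and a complexity count \ldots shows that $\tilde{\alpha}_i \lrcorner \tilde{\beta}_i$ is a complete pants decomposition.'' The complexity count tells you that \emph{if} $\pi_{\mathcal{P}(S\setminus\tilde{\alpha}_i)}(\tilde{\beta}_i)$ produces a full pants decomposition of both components $S_{1,1}\sqcup S_{1,2}$, then adding $\tilde{\alpha}_i$ yields $4=\xi(S_{2,1})$ curves. But the mere fact that $\tilde{\beta}_i$ intersects $\tilde{\alpha}_i$ does not guarantee that the inductive projection reaches the bottom: in the $S_{1,2}$ component ($\xi=2$), after choosing a first curve $\beta'$, the curve $\tilde{\beta}_i$ might fail to intersect $S_{1,2}\setminus\beta'$, leaving a complexity-one essential subsurface untouched. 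Indeed, two intersecting separating curves on $S_{2,1}$ \emph{can} leave an essential subsurface in the complement — precisely when they lie in the same $\mathbb{S}(S_{2,1})$-component (that is exactly how connectivity of the fibers of $\pi_{\mathcal{C}_{sep}}$, Lemma \ref{lem:connected}, manifests). So ``intersection $+$ complexity count'' is not enough; you need to rule out an essential subsurface disjoint from both curves. The cleanest fix is to cite what you already established in the first half: different $\mathbb{S}(S_{2,1})$-components gives $d_{\mathbb{S}(S_{2,1})}(\tilde{\alpha}_i,\tilde{\beta}_i)\geq 2$, which by definition of $\mathbb{S}(S)$-adjacency means $S_{2,1}\setminus(\tilde{\alpha}_i\cup\tilde{\beta}_i)$ contains no essential subsurface; then the projection completes and $\tilde{\alpha}_i\lrcorner\tilde{\beta}_i$ is a genuine pants decomposition, so $\q{\tilde{\alpha}_i\lrcorner\tilde{\beta}_i}$ is a point. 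Alternatively, and closer to the spirit of ``special case of Lemma \ref{lem:intpoint},'' you can drop Theorem \ref{thm:convex} entirely: for $a_\omega,b_\omega\in\qbar{\tilde{\alpha}}\cap\qbar{\tilde{\beta}}$, the proof of Lemma \ref{lem:intpoint} shows $\pi_{\mathcal{P}_\omega(\overline{W})}(\qbar{\tilde{\beta}})$ is a single point for each of the two components $W$ of $S\setminus\tilde{\alpha}$, so $a_\omega$ and $b_\omega$ agree in both factors of $\qbar{\tilde{\alpha}}\cong\mathcal{P}_\omega(\overline{W})\times\mathcal{P}_\omega(\overline{W^c})$, hence $a_\omega=b_\omega$. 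Either route closes the gap; the rest of your write-up is correct.
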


The next theorem will be used to prove that the ultralimit of any thick of order zero subset $Z$ in $\mathcal{P}(S_{2,1})$ must be contained entirely inside a particular single closed set of the form $\xbar{\alpha}.$  Recall that by definition, a quasi-convex subspace $Z$ is thick of order zero if (i) it is wide, namely in every asymptotic cone $P_{\omega}(S_{2,1}),$ the subset corresponding to the ultralimit $$Z_{\omega} =: \{x_{\omega} \in \mathcal{P}_{\omega}(S_{2,1}) | x_{\omega} \mbox{ has a representative sequence $(x'_{i})$ with }x'_{i}\in  Z\; \omega\mbox{-a.s} \}$$ has the property that any two distinct points in $Z_{\omega}$ are not separated by a cut-point, and moreover (ii) $Z$ satisfies the nontriviality condition of every point being distance at most $c$ from a bi-infinite quasi-geodesic in $Z.$  

\begin{thm} \label{thm:zeropieces}
Let $(Z_{i}) \subset \mathcal{P}(S_{2,1})$ be any sequence of subsets, and let $\mathcal{P}_{\omega}(S_{2,1})$ be any asymptotic cone such that the ultralimit $Z_{\omega}$ does not have cut-points.  Then $Z_{\omega} \subset \xbar{\alpha},$ for some $\overline{\alpha} \in \mathcal{C}^{\omega}_{sep}(S_{2,0}).$  Moreover, if in any asymptotic cone $\mathcal{P}_{\omega}(S_{2,1}),$ the ultralimit $Z_{\omega}$ contains at least two points, then there exists a unique such $\overline{\alpha}$ satisfying the following condition: in any neighborhoods of $a_{\omega} \ne b_{\omega} \in Z_{\omega}$ there are points $a'_{\omega},b'_{\omega}$ with $d_{\mathbb{S}_{\omega}(S_{2,1})}(a'_{\omega},b'_{\omega})$ bounded, and such that each of the natural quasi-convex product regions $\qbar{C} \in \mathcal{P}_{\omega}(S)$ in a finite $\mathbb{S}_{\omega}(S_{2,1})$ chain from $a'_{\omega}$ to $b'_{\omega}$ are entirely contained in $\xbar{\alpha}.$
\end{thm}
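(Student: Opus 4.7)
The plan is to reduce everything to the characterization of finest pieces in Theorem \ref{thm:pieces} and to exploit the phenomenon peculiar to $S_{2,1}$ established by Lemma \ref{lem:coincide}: the connected components of $\mathbb{S}(S_{2,1})$ are exactly the fibers of the forgetful projection $\pi_{\mathcal{C}_{sep}}\co\mathcal{C}_{sep}(S_{2,1}) \to \mathcal{C}_{sep}(S_{2,0})$. This will force every finite $\mathbb{S}_\omega$-chain of convex product regions in $\mathcal{P}_\omega(S_{2,1})$ to be pinned, via a single $\overline{\alpha} \in \mathcal{C}_{sep}^\omega(S_{2,0})$, inside the set $\xbar{\alpha}$.

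To produce $\overline{\alpha}$ for a pair of points: the case $|Z_\omega| \leq 1$ is immediate, so pick distinct $a_\omega, b_\omega \in Z_\omega$. Since no cut-point of $\mathcal{P}_\omega(S_{2,1})$ separates them, Theorem \ref{thm:pieces} together with Remark \ref{rem:alternativecondition2} provides approximants $a'_\omega, b'_\omega$ in arbitrarily small neighborhoods of $a_\omega, b_\omega$ with $d_{\mathbb{S}_\omega(S_{2,1})}(a'_\omega, b'_\omega) < \infty$. Theorem \ref{thm:sepcomplexrelasymptotic} then yields a finite chain of nontrivial convex product regions $\qbar{C^0}, \ldots, \qbar{C^k}$ from $a'_\omega$ to $b'_\omega$ with $\omega$-a.s.\ all $C^j_i$ lying in a single connected component of $\mathbb{S}(S_{2,1})$; Lemma \ref{lem:coincide} then forces $\pi_{\mathcal{C}_{sep}}(C^j_i) = \alpha_i$ for a single $\overline{\alpha}$, so each $\qbar{C^j} \subset \xbar{\alpha}$ and, in particular, $a'_\omega, b'_\omega \in \xbar{\alpha}$.

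To upgrade this to the single-$\overline{\alpha}$ conclusion $Z_\omega \subset \xbar{\alpha}$, fix $a_\omega \in Z_\omega$ and for each $b_\omega \in Z_\omega \setminus \{a_\omega\}$ associate the $\overline{\alpha_b}$ just produced; since $\xbar{\alpha_b}$ is closed by Lemma \ref{lem:closed} and contains arbitrarily close approximants of both endpoints, one concludes $a_\omega, b_\omega \in \xbar{\alpha_b}$. The remaining task is to show all the $\overline{\alpha_b}$ coincide. If $\overline{\alpha_{b_1}} \neq \overline{\alpha_{b_2}}$, then $a_\omega$ lies in $\xbar{\alpha_{b_1}} \cap \xbar{\alpha_{b_2}}$, which by Corollary \ref{cor:intpoint2} is a union of single-point intersections of convex product regions. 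This places $a_\omega$ in the role of a corner analogous to the structurally integral corners of Section \ref{sec:corners}, and an argument along the lines of Corollary \ref{cor:cornersep}, combined with the tree-graded structure of $\mathcal{P}_\omega(S_{2,0})$ supplied by Corollary \ref{cor:relhyperbolic} (pieces $\qbar{\alpha}$ meeting pairwise in at most one point), realizes $a_\omega$ as a cut-point separating $b_1$ from $b_2$ in $\mathcal{P}_\omega(S_{2,1})$, contradicting the hypothesis on $Z_\omega$. For the moreover clause, the constructed $\overline{\alpha}$ automatically satisfies the chain condition, and an identical corner-separation argument rules out any competitor $\overline{\alpha^*} \neq \overline{\alpha}$. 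The main obstacle is precisely this corner-separation step: since $\xbar{\alpha_{b_1}} \cap \xbar{\alpha_{b_2}}$ is in general a countable union of singletons rather than a single point, the cut-point claim must be produced by selecting the correct pair of convex product regions realizing $a_\omega$ as a corner and invoking the separation machinery of Section \ref{sec:corners} against that choice.
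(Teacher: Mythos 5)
Your opening move matches the paper: invoke Theorem~\ref{thm:pieces} plus Remark~\ref{rem:alternativecondition2} to get approximants with bounded $\mathbb{S}_\omega$-distance, then Theorem~\ref{thm:sepcomplexrelasymptotic} and Lemma~\ref{lem:coincide} to pin the resulting chain of product regions inside a single $\xbar{\alpha}$. But from that point on your argument has two gaps, and the second you flag yourself.

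First gap: you write that since $\xbar{\alpha_b}$ is closed and ``contains arbitrarily close approximants of both endpoints, one concludes $a_\omega, b_\omega \in \xbar{\alpha_b}$.'' That inference needs the approximant pairs in \emph{all} sufficiently small neighborhoods to land in a \emph{single} $\xbar{\alpha_b}$. A priori, shrinking the neighborhoods could produce a different $\overline{\alpha}$ each time, and closedness buys you nothing until you rule this out. This is exactly the content of Lemma~\ref{lem:zeropieces} in the paper, which the paper proves with a pentagon argument: take paths $\rho^1_\omega \subset \xbar{\alpha}$, $\rho^2_\omega \subset \xbar{\beta}$ connecting nearby approximant pairs, close them up with short local paths near $a_\omega$ and $b_\omega$, extract a separating essential subsurface $\overline{W}$ with $\overline{\partial W} = \overline{\tilde\alpha_1}$ so that $\pi_{\mathcal{P}_\omega(\overline{W})}$ already distinguishes two of the vertices, and then apply the continuous locally constant projection $\Phi_{\overline{W},x_\omega}$ of Theorem~\ref{thm:locconstant}, together with Corollary~\ref{cor:intpoint2} to collapse the $\xbar{\beta}$ side to a point, to force all five vertices to the same image --- a contradiction. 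Your proposal skips this stabilization step entirely.

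Second gap: to show all the $\overline{\alpha_b}$ coincide, you propose to view $a_\omega$ as ``a corner analogous to the structurally integral corners of Section~\ref{sec:corners}'' and invoke Corollary~\ref{cor:cornersep}. You concede this is the main obstacle, and it genuinely is: a structurally integral corner requires the specific hypotheses of Definition~\ref{defn:corner} --- that the representing separating multicurves lie $\omega$-a.s.\ in different $\mathbb{S}(S)$-components \emph{and} that $\lim_\omega d_{\mathcal{P}(S)}(\alpha_i\lrcorner\beta_i,\ \beta_i\lrcorner\alpha_i)$ is bounded --- and nothing in your setup establishes these for $a_\omega$ sitting in $\xbar{\alpha_{b_1}}\cap\xbar{\alpha_{b_2}}$. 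Nor is it clear that even if $a_\omega$ were such a corner, the equivalence classes of $\sim_{\overline{\alpha},\overline{\beta}}$ would actually separate $b_1$ from $b_2$. The paper avoids this machinery altogether: once Lemma~\ref{lem:zeropieces} is available, the theorem follows by running the same $\Phi$-projection contradiction on a \emph{triangle} with vertices near $a_\omega,b_\omega,c_\omega$, not by manufacturing cut-points. So your route is not just a different packaging --- it substitutes a tool (corners and Corollary~\ref{cor:cornersep}) whose applicability you have not verified for the tool the paper actually uses (the locally constant projection of Theorem~\ref{thm:locconstant} and Corollary~\ref{cor:intpoint2}), and leaves the crux unproved.
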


Before proving Theorem \ref{thm:zeropieces} we first prove the following lemma.  
\begin{lem} \label{lem:zeropieces}
Let $(Z_{i}) \subset \mathcal{P}(S_{2,1})$ be any sequence of subsets, and let $\mathcal{P}_{\omega}(S_{2,1})$ be any asymptotic cone such that the ultralimit $Z_{\omega}$ is nontrivial and does not have cut-points.  Then $\forall a_{\omega} \ne b_{\omega} \in Z_{\omega},$ it follows that $a_{\omega},b_{\omega} \subset \xbar{\alpha},$ for some $\overline{\alpha} \in \mathcal{C}^{\omega}_{sep}(S_{2,0}).$ In fact, $\overline{\alpha}$ can be uniquely identified by the following condition: in any neighborhoods of $a_{\omega} \ne b_{\omega} \in Z_{\omega}$ there are points $a'_{\omega},b'_{\omega}$ with $d_{\mathbb{S}_{\omega}(S_{2,1})}(a'_{\omega},b'_{\omega})$ bounded, and such that each of the natural quasi-convex product regions $\qbar{C} \in \mathcal{P}_{\omega}(S)$ in a finite $\mathbb{S}_{\omega}(S_{2,1})$ chain from $a'_{\omega}$ to $b'_{\omega}$ are entirely contained in $\xbar{\alpha}.$
\end{lem}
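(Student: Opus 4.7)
The plan is to invoke Theorem~\ref{thm:pieces} to produce a finite $\mathbb{S}_\omega$--chain between approximants of $a_\omega, b_\omega$, then extract $\overline{\alpha}$ from the separating multicurves along the chain via the forgetful projection $\pi_{\mathcal{C}_{sep}}\co \mathcal{C}_{sep}(S_{2,1}) \to \mathcal{C}_{sep}(S_{2,0})$, and finally pass to the limit using closedness of $\xbar{\alpha}$.

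Since $Z_\omega \ni a_\omega \ne b_\omega$ has no cut-points, the two points sit in a common canonical finest piece of $\mathcal{P}_\omega(S_{2,1})$. The implication $(1)\Rightarrow(2)$ of Theorem~\ref{thm:pieces} then provides, for every pair of neighborhoods $U_a \ni a_\omega$ and $U_b \ni b_\omega$, approximants $a'_\omega \in U_a, b'_\omega \in U_b$ with representatives satisfying $\lim_\omega d_{\mathbb{S}(S_{2,1})}(a'_i,b'_i) < \infty$. Theorem~\ref{thm:sepcomplexrelasymptotic} upgrades this to a finite chain of natural convex product regions $\qbar{C^0},\ldots,\qbar{C^n}$ joining $a'_\omega$ and $b'_\omega$, whose representing separating multicurves $C^j_i$ lie $\omega$--a.s.\ in a single connected component of $\mathbb{S}(S_{2,1})$. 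Each $C^j_i$ contains at least one separating curve $\tilde\alpha^j_i \in \mathcal{C}_{sep}(S_{2,1})$, and by the correspondence in Subsection~\ref{subsec:ccompsS21} together with Lemmas~\ref{lem:connected} and~\ref{lem:coincide}, these curves lie $\omega$--a.s.\ in a single fiber of $\pi_{\mathcal{C}_{sep}}$. Hence they project to a common $\alpha_i \in \mathcal{C}_{sep}(S_{2,0})$, and setting $\overline{\alpha} := (\alpha_i)$ we obtain $\mathcal{Q}(C^j_i) \subset \mathcal{X}(\alpha_i)$ at the base level, whence $\qbar{C^j} \subset \xbar{\alpha}$. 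The whole chain, and in particular $a'_\omega$ and $b'_\omega$, lies in $\xbar{\alpha}$. Since $\xbar{\alpha}$ is closed by Lemma~\ref{lem:closed}, letting $U_a, U_b$ shrink to $\{a_\omega\}, \{b_\omega\}$ gives $a_\omega, b_\omega \in \xbar{\alpha}$.

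The main obstacle is the uniqueness of $\overline{\alpha}$. Suppose another $\overline{\beta} \ne \overline{\alpha}$ also satisfies the stated chain condition; then $\omega$--a.s.\ $\alpha_i \ne \beta_i$ are distinct separating curves on $S_{2,0}$ and hence must intersect, so at the base level $\mathcal{X}(\alpha_i) \cap \mathcal{X}(\beta_i) = \emptyset$ (no pants decomposition of $S_{2,1}$ can forget to a pants decomposition of $S_{2,0}$ containing the intersecting pair $\alpha_i, \beta_i$, since pairs of disjoint curves in $S_{2,1}$ remain disjoint after forgetting the boundary). My plan is to convert this base-level disjointness into a contradiction: applying the Lipschitz projection $\pi_{\mathcal{P}_\omega(S_{2,0})}$ to the two chains sends them into $\qbar{\alpha}$ and $\qbar{\beta}$ respectively, and Theorem~\ref{thm:convex} forces $\qbar{\alpha} \cap \qbar{\beta}$ to contain at most one point; by closedness of these convex regions and shrinking the neighborhoods, $\pi_\omega(a_\omega)$ and $\pi_\omega(b_\omega)$ both land in this singleton, so $\pi_\omega(a_\omega) = \pi_\omega(b_\omega)$. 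This places $a_\omega \ne b_\omega$ in a common fiber of $\pi_\omega$; one then invokes the structure of such fibers (in the spirit of the partial pseudo-Anosov axis example of Section~\ref{sec:finestpieces}, where distinct points on such axes are separated by cut-points via the machinery of Subsection~\ref{subsec:sublineargrowth}) to produce a cut-point separating $a_\omega$ and $b_\omega$, contradicting the hypothesis that $Z_\omega$ has no cut-points and yielding $\overline{\alpha} = \overline{\beta}$.
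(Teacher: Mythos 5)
The existence half of your argument---invoking Theorem~\ref{thm:pieces} to obtain $\mathbb{S}$-close approximants, upgrading to a chain of convex product regions via Theorem~\ref{thm:sepcomplexrelasymptotic}, using Lemmas~\ref{lem:connected} and \ref{lem:coincide} to identify a single label $\overline{\alpha} \in \mathcal{C}^\omega_{sep}(S_{2,0})$, and passing to the limit via closedness (Lemma~\ref{lem:closed})---matches the paper's proof and is correct.

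The uniqueness half takes a genuinely different route, and the final step has a real gap. Your deduction that $\pi_\omega(a_\omega) = \pi_\omega(b_\omega)$ is a nice observation and is sound: the 1-Lipschitz projection carries the $\alpha$-chain into $\qbar{\alpha}$ and the $\beta$-chain into $\qbar{\beta}$, and since $\alpha_i \ne \beta_i$ $\omega$-a.s.\ on $S_{2,0}$ the multicurve $\alpha_i \lrcorner \beta_i$ is a pants decomposition, so Theorem~\ref{thm:convex} forces $\qbar{\alpha} \cap \qbar{\beta}$ to be (at most) a singleton; shrinking neighborhoods then pins $\pi_\omega(a_\omega) = \pi_\omega(b_\omega)$. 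But the claimed next step---that two distinct points of $\mathcal{P}_\omega(S_{2,1})$ in a common fiber of $\pi_\omega$ must be separated by a cut-point---is not established, and the implication is \emph{false} in general. Take $\tilde\gamma \in \mathcal{C}_{sep}(S_{2,1})$ forgetting to $\gamma \in \mathcal{C}_{sep}(S_{2,0})$, so $\qbar{\tilde\gamma} \cong \mathcal{P}_\omega(S_{1,1}) \times \mathcal{P}_\omega(S_{1,2})$, and let $a_\omega = (x_\omega, y_\omega)$, $b_\omega = (x_\omega, z_\omega)$ where $y_\omega \ne z_\omega$ lie in a common fiber of the forgetful map $\mathcal{P}_\omega(S_{1,2}) \to \mathcal{P}_\omega(S_{1,1})$ (such pairs exist because the point-pushing orbit is unbounded). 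Then $\pi_\omega(a_\omega) = \pi_\omega(b_\omega)$, yet $a_\omega, b_\omega$ lie in a single nontrivial convex product region and so admit no cut-point between them. (This does not violate the lemma, since here the only valid label is $\overline{\alpha} = \overline{\gamma}$ and no second label $\overline{\beta}$ arises, but it does show your cited implication cannot be used as a black box.) The appeal to ``the spirit of the partial pseudo-Anosov axis example'' concerns one very particular configuration and does not supply what you need. Closing the gap genuinely requires using the extra structure furnished by the \emph{two} chains, which is exactly the content of the paper's argument: it isolates a separating factor $\overline{W}$ of $\overline{\tilde\alpha_1}$ distinguishing $a^1_\omega$ from the exit point $e_\omega$ of the $\alpha$-chain's hierarchy path (via Lemma~\ref{lem:sepfiber}), then builds a pentagon with vertices $a^1_\omega, e_\omega, b^1_\omega, b^2_\omega, a^2_\omega$ and applies the continuous, locally constant projection $\Phi_{\overline{W}, x_\omega}$ of Theorem~\ref{thm:locconstant}, using Corollary~\ref{cor:intpoint2} to collapse the $\beta$-side to a single point, obtaining $\Phi_{\overline{W}, x_\omega}(a^1_\omega) = \Phi_{\overline{W}, x_\omega}(e_\omega)$---a contradiction. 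That pentagon-and-projection step is the essential idea missing from your proposal.
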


\begin{rem}
Alternatively, as in the proof of Theorem \ref{thm:sepcomplexrelasymptotic} the unique characterization of the element $\overline{\alpha} \in \mathcal{C}^{\omega}_{sep}(S_{2,0})$ in Theorem \ref{thm:zeropieces} and Lemma \ref{lem:zeropieces} can be described as follows: in any neighborhoods of $a_{\omega} \ne b_{\omega} \in Z_{\omega}$ there are points $a'_{\omega},b'_{\omega}$ with representatives $(a'_{i}),(b'_{i})$ with $\lim_{\omega} d_{\mathcal{C}_{sep}(S_{2,1})}(a'_{i},b'_{i})$ bounded, and such that \uas a finite $\mathcal{C}_{sep}(S_{2,1})$ geodesic between $(a'_{i})$ and $(b'_{i})$ is contained in the connected components of $\mathcal{C}_{sep}(S_{2,1})$ corresponding to $\overline{\alpha}.$

\end{rem}
\begin{proof}
Since $Z_{\omega}$ does not have any cut points, by Theorem \ref{thm:pieces} and Remark \ref{rem:alternativecondition2}, in any neighborhoods of $a_{\omega}, b_{\omega}$ there exist points $a'_{\omega},b'_{\omega}$ with $d_{\mathbb{S}_{\omega}(S_{2,1})}(a'_{\omega},b'_{\omega})$ bounded.  That is, there is a finite chain of convex nontrivial product regions $\qbar{\tilde{\alpha}_{1}},$...,$\qbar{\tilde{\alpha}_{K}}$ such that $a'_{\omega} \in  \qbar{\tilde{\alpha}_{1}},$ $b'_{\omega} \in \qbar{\tilde{\alpha}_{K}},$ and $|\qbar{\tilde{\alpha}_{j}}\cap \qbar{\tilde{\alpha}_{j+1}}| \geq 2.$  As suggested by the notation, for all $j \in \{1,...,K\},$ $\pi_{\mathcal{C}^{\omega}(S_{2,0})} (\overline{\tilde{\alpha}}_{j}) = \overline{\alpha}$ for some fixed $\overline{\alpha} \in \mathcal{C}^{\omega}(S_{2,0})$ where the projection $$\pi_{\mathcal{C}^{\omega}(S_{2,0})} \co\mathcal{C}^{\omega}(S_{2,1}) \ra \mathcal{C}^{\omega}(S_{2,0})$$ is the extension to the ultrapower of the natural projection map which forgets about the boundary component.  In particular, all the natural convex product regions $\qbar{\tilde{\alpha}_{j}}$ in the chain connecting $a'_{\omega}, b'_{\omega} $ are contained in the set $\xbar{\alpha}.$

Since by Lemma \ref{lem:closed} the sets $\xbar{\alpha}$ are closed, in order to complete the proof of the lemma it suffices to show that for all $a'_{\omega},$ $b'_{\omega}$ in small enough neighborhoods of $a_{\omega}, \; b_{\omega},$ respectively, such that $d_{\mathbb{S}_{\omega}(S_{2,1})}(a'_{\omega},b'_{\omega})$ is bounded, we have that $a'_{\omega}$ and $b'_{\omega}$ are all always contained in the same set $\xbar{\alpha}$ as above.  Assume not, that is, assume that in any neighborhoods of $a_{\omega},b_{\omega}$ there exist points $a^{1}_{\omega},b^{1}_{\omega}$ and $a^{2}_{\omega},b^{2}_{\omega}$ such that $d_{\mathbb{S}_{\omega}(S_{2,1})}(a^{1}_{\omega},b^{1}_{\omega}) < \infty$ and $d_{\mathbb{S}_{\omega}(S_{2,1})}(a^{2}_{\omega},b^{2}_{\omega}) < \infty ,$ yet $a^{1}_{\omega},b^{1}_{\omega} \in \xbar{\alpha}$ while $a^{2}_{\omega},b^{2}_{\omega} \in \xbar{\beta} $ where $\overline{\alpha}\ne \overline{\beta}.$  In particular, we can assume that 
 $a^{1}_{\omega},b^{1}_{\omega}$ lie in an $r$-neighborhood of $a_{\omega}$ and $a^{2}_{\omega},b^{2}_{\omega}$ lie in an $r$-neighborhood of $b_{\omega}$ where $r\geq 0$ is a constant such that open $r$-neighborhoods of $a_{\omega},b_{\omega}$ are path connected.  In addition, we can assume that $2r<d_{\mathcal{P}_{\omega}(S)}(a_{\omega},b_{\omega}).$  See Figure \ref{fig:chains} for an illustration of this.

\begin{figure}[htpb]
\centering
\includegraphics[height=7 cm]{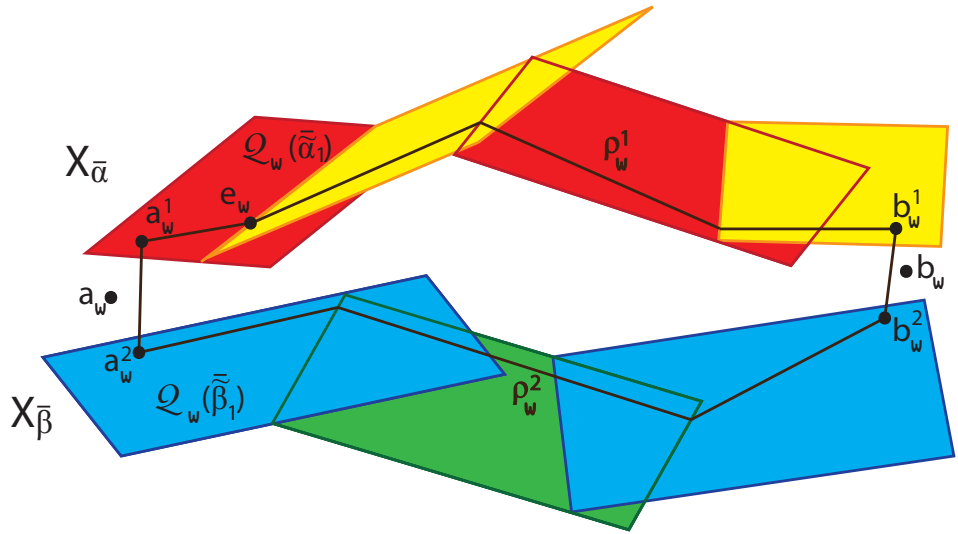}
\caption[$\mathcal{T}(S_{2,1})$ is not thick of order one.]{In neighborhoods of $a_{\omega},b_{\omega}$ there exist points $a^{1}_{\omega},b^{1}_{\omega}$ and $a^{2}_{\omega},b^{2}_{\omega},$ respectively, such that $d_{\mathbb{S}_{\omega}(S)}(a^{1}_{\omega},b^{1}_{\omega}) < \infty ,$ $d_{\mathbb{S}_{\omega}(S)}(a^{2}_{\omega},b^{2}_{\omega}) < \infty ,$ yet $a^{1}_{\omega},b^{1}_{\omega} \in \xbar{\alpha}$ while $a^{2}_{\omega},b^{2}_{\omega} \in \xbar{\beta} $ where $\overline{\alpha}\ne \overline{\beta}.$  This situation cannot occur in $\mathcal{P}_{\omega}(S_{2,1}).$  }\label{fig:chains}
\end{figure}

Let $\qbar{\tilde{\alpha}_{1}},$...,$\qbar{\tilde{\alpha}_{m}}$ be a finite chain of convex nontrivial product regions in $\xbar{\alpha}$ connecting $a^{1}_{\omega}$ and $b^{1}_{\omega}.$  Moreover, as in Theorem \ref{thm:pieces} there is a quasi-geodesic path $\rho^{1}_{\omega},$ the ultralimit of hierarchy paths, through the product regions connecting $a^{1}_{\omega}$ and $b^{1}_{\omega}.$  Similarly, let $\qbar{\tilde{\beta}_{1}},$...,$\qbar{\tilde{\beta}_{n}}$ be a finite chain of convex nontrivial product regions in $\xbar{\beta}$ connecting $a^{2}_{\omega}$ and $b^{2}_{\omega},$ and let $\rho^{2}_{\omega}$ be a quasi-geodesic path through the product regions connecting $a^{2}_{\omega}$ and $b^{2}_{\omega}.$  By omitting product regions as necessary and using properties of hierarchies in Theorem \ref{thm:hierarchy} we can assume that initial product region  $\qbar{\tilde{\alpha}_{1}}$ of the path $\rho^{1}_{\omega}$ has the property that $\rho^{1}_{\omega}$ exits the product region $\qbar{\tilde{\alpha}_{1}}$ once at a point $e_{\omega} \ne a^{1}_{\omega}.$  By Lemma \ref{lem:sepfiber}, there is some $\overline{W} \in \seultra{S}$ which is \uas a connected component of $\overline{S \setminus \tilde{\alpha}_{1}},$ such that $\pi_{\mathcal{P}_{\omega}(\overline{W})}(a^{1}_{\omega}) \ne \pi_{\mathcal{P}_{\omega}(\overline{W})}(e_{\omega}) \in F_{\overline{W},a^{1}_{\omega}}.$  

By our assumptions, $a^{1}_{\omega}$ and $a^{2}_{\omega}$ are connected by a path that remains entirely inside an $r$-neighborhood of $a_{\omega}.$  Let $[a^{1}_{\omega},a^{2}_{\omega}]$ denote such a path.  Similarly, let $[b^{1}_{\omega},b^{2}_{\omega}]$ denote a path between the points $b^{1}_{\omega}$ and $b^{2}_{\omega}.$  We can assume that $(a^{1}_{\omega},a^{2}_{\omega}]$ and $(b^{1}_{\omega},b^{2}_{\omega}]$ are contained in $\mathcal{P}_{\omega}(S_{2,1}) \setminus \qbar{\tilde{\alpha}_{1}}.$  If not, we can replace $a^{1}_{\omega}$ and/or $b^{1}_{\omega}$ with points closer to $a^{2}_{\omega}$ and/or $b^{2}_{\omega}$ respectively such that this is the case.  

Consider the closed pentagon $P$ with vertices $\{a^{1}_{\omega},e_{\omega}, b^{1}_{\omega},b^{2}_{\omega},a^{2}_{\omega}\}$ and edges $$\rho^{1}_{\omega}|_{[a^{1}_{\omega},e_{\omega}]}, \rho^{1}_{\omega}|_{[e_{\omega},b^{1}_{\omega}]}, [b^{1}_{\omega},b^{2}_{\omega}], \rho^{2}_{\omega},[a^{1}_{\omega},a^{2}_{\omega}]$$
It should be noted that some sides of the pentagon may be trivial, although this does not affect the argument.  Applying the continuous projection $\Phi_{\overline{W},x_{\omega}}$ of Theorem \ref{thm:locconstant} to the pentagon $P,$ we have $\Phi_{\overline{W},x_{\omega}}(e_{\omega})=\Phi_{\overline{W},x_{\omega}}(b^{1}_{\omega})=\Phi_{\overline{W},x_{\omega}}(b^{2}_{\omega}).$  Similarly, $\Phi_{\overline{W},x_{\omega}}(a^{1}_{\omega})=\Phi_{\overline{W},x_{\omega}}(a^{2}_{\omega})$ as by construction the edges $\rho^{1}_{\omega}|_{[e_{\omega},b^{1}_{\omega}]}, [b^{1}_{\omega},b^{2}_{\omega}]$ and $[a^{1}_{\omega},a^{2}_{\omega}]$ are contained in $\mathcal{P}_{\omega}(S_{2,1}) \setminus P_{\overline{W},x_{\omega}}.$  Furthermore, by Corollary \ref{cor:intpoint2} and continuity of the projection, $\Phi_{\overline{W},x_{\omega}}(\rho^{2}_{\omega})$ is a single point and is in fact equal to $\Phi_{\overline{W},x_{\omega}}(a^{2}_{\omega})=\Phi_{\overline{W},x_{\omega}}(b^{2}_{\omega}).$  Putting things together we have 
$$\Phi_{\overline{W},x_{\omega}}(e_{\omega})=\Phi_{\overline{W},x_{\omega}}(b^{1}_{\omega})=\Phi_{\overline{W},x_{\omega}}(b^{2}_{\omega})=\Phi_{\overline{W},x_{\omega}}(a^{2}_{\omega})=\Phi_{\overline{W},x_{\omega}}(a^{1}_{\omega})$$
However, this is a contradiction to our assumption that  $\Phi_{\overline{W},x_{\omega}}(a^{1}_{\omega}) \ne \Phi_{\overline{W},x_{\omega}}(e_{\omega}),$ thus completing the proof.
\end{proof}

Using the proof of Lemma \ref{lem:zeropieces}, presently we prove Theorem \ref{thm:zeropieces}.
\begin{proof} [Proof of Theorem \ref{thm:zeropieces}]
By Lemma \ref{lem:zeropieces} we know that given any two distinct points $a_{\omega},b_{\omega} \in Z_{\omega},$ the points $a_{\omega},b_{\omega}$ are contained in a common subset $\xbar{\alpha}$ where $\overline{\alpha} \in \mathcal{C}^{\omega}_{sep}(S_{2,0})$ is such that in any neighborhoods of $a_{\omega} \ne b_{\omega} \in Z_{\omega}$ there are points $a'_{\omega},b'_{\omega}$ with $d_{\mathbb{S}_{\omega}(S_{2,1})}(a'_{\omega},b'_{\omega})$ bounded, and such that each of the natural quasi-convex product regions $\qbar{C}$ 
in a finite $\mathbb{S}_{\omega}(S_{2,1})$ chain from $a'_{\omega}$ to $b'_{\omega}$ are entirely contained in $\xbar{\alpha}.$ 

Let $c_{\omega} \in Z_{\omega}$ be any third point in $Z_{\omega},$ (possibly the same as $a_{\omega}$ or $b_{\omega}$).  Similarly, it follows that the points $a_{\omega},c_{\omega}$ ($b_{\omega},c_{\omega}$) are contained in a common subset $\xbar{\beta}$ ($\xbar{\gamma}$) where $\overline{\beta}$ ($\overline{\gamma}$) is an element of $\mathcal{C}^{\omega}_{sep}(S_{2,0})$ such that in any neighborhoods of $a_{\omega}$ and $c_{\omega}$ ($b_{\omega}$ and  $c_{\omega}$) there are points $a'_{\omega},c'_{\omega}$ ($b'_{\omega},c'_{\omega}$) with $d_{\mathbb{S}_{\omega}(S_{2,1})}(a'_{\omega},c'_{\omega})$ bounded ($d_{\mathbb{S}_{\omega}(S_{2,1})}(b'_{\omega},c'_{\omega})$ bounded), and such that each of the natural quasi-convex product regions $\qbar{C} \in \mathcal{P}_{\omega}(S)$ in a finite $\mathbb{S}_{\omega}(S_{2,1})$ chain from $a'_{\omega}$ to $c'_{\omega}$ ($b'_{\omega}$ to $c'_{\omega}$) are entirely contained in $\overline{\beta}$ ($\overline{\gamma}).$  But then, considering the triangle between the points $a'_{\omega},b'_{\omega},c'_{\omega}$ and using the same projection arguments in Lemma \ref{lem:zeropieces} to generalize the contradiction argument with the pentagon, it follows that $\overline{\alpha}=\overline{\beta}=\overline{\gamma}.$  Notice if $c_{\omega}$ is the same as $a_{\omega}$ or $b_{\omega},$ the proof is identical to the proof in Lemma \ref{lem:zeropieces}.  

Since $c_{\omega}$ is arbitrary, it follows that $Z_{\omega} \subset \xbar{\alpha}$ where $\overline{\alpha}$ is uniquely determined by the property described in the statement of the theorem.
%
%
\end{proof} 


As a corollary of the proof of Lemma \ref{lem:zeropieces}, we have the following corollary:
\begin{cor} \label{cor:intpieces}
Let $(Z_{i}),(Z_{i}') \subset \mathcal{P}(S_{2,1})$ be any sequences subsets, and let $\mathcal{P}_{\omega}(S_{2,1})$ be an asymptotic cone such that $Z_{\omega}, Z'_{\omega}  \subset \mathcal{P}_{\omega}(S_{2,1})$ each one contains at least two points, and each one has no cut-points.  As in Theorem \ref{thm:zeropieces} assume that $Z_{\omega} \subset \xbar{\alpha}$ and $Z'_{\omega} \subset \xbar{\beta}$  for some $\overline{\alpha}, \overline{\beta} \in \mathcal{C}^{\omega}_{sep}(S_{2,0}),$ such that \uas $\alpha_{i} \ne \beta_{i},$ then: 
$$|Z_{\omega} \cap Z'_{\omega}| \leq 1.$$
In particular, if the asymptotic cone $\mathcal{P}_{\omega}(S_{2,1})$ has a constant base point, and the sequences of subsets $(Z_{i})=\overline{Z}$ and $(Z'_{i})=\overline{Z'}$ are constant and quasi-convex, then the subsets $Z$ and $Z'$ have bounded coarse intersection.  
\end{cor}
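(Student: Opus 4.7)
The plan is to adapt the pentagon--projection argument from the proof of Lemma \ref{lem:zeropieces} to the present symmetric setting, and then to deduce the ``in particular'' clause by a rescaling argument. Suppose for contradiction that $a_{\omega} \ne b_{\omega} \in Z_{\omega} \cap Z'_{\omega}.$ Since $Z_{\omega}$ is cut-point--free and contains $a_{\omega}$ and $b_{\omega},$ Theorem \ref{thm:zeropieces} supplies, in arbitrarily small neighborhoods of $a_{\omega}$ and $b_{\omega},$ points $a^{1}_{\omega}, b^{1}_{\omega}$ joined by a finite chain of natural convex product regions $\qbar{\tilde{\alpha}_{1}}, \ldots, \qbar{\tilde{\alpha}_{m}}$ entirely inside $\xbar{\alpha}.$ Applying Theorem \ref{thm:zeropieces} to $Z'_{\omega}$ analogously yields $a^{2}_{\omega}, b^{2}_{\omega}$ in the same small neighborhoods, joined by a chain $\qbar{\tilde{\beta}_{1}}, \ldots, \qbar{\tilde{\beta}_{n}}$ inside $\xbar{\beta},$ with corresponding ultralimit--of--hierarchies quasi-geodesics $\rho^{1}_{\omega}$ and $\rho^{2}_{\omega}$ through the two chains. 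After shrinking the neighborhoods to a common radius $r < \tfrac{1}{2} d_{\mathcal{P}_{\omega}}(a_{\omega}, b_{\omega})$ on which balls are path connected (Theorem \ref{thm:conebasics}), I connect $a^{1}_{\omega}$ to $a^{2}_{\omega}$ and $b^{1}_{\omega}$ to $b^{2}_{\omega}$ by paths $[a^{1}_{\omega}, a^{2}_{\omega}]$ and $[b^{1}_{\omega}, b^{2}_{\omega}]$ staying inside those balls.

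Next I trim $\rho^{1}_{\omega}$ exactly as in Lemma \ref{lem:zeropieces} so that it exits its initial product region $\qbar{\tilde{\alpha}_{1}}$ once, at a point $e_{\omega} \ne a^{1}_{\omega},$ and arrange (by replacing $a^{1}_{\omega}, b^{1}_{\omega}$ with closer-in points if necessary) that $(a^{1}_{\omega}, a^{2}_{\omega}] \cup (b^{1}_{\omega}, b^{2}_{\omega}]$ lies in $\mathcal{P}_{\omega}(S_{2,1}) \setminus \qbar{\tilde{\alpha}_{1}}.$ Using Lemma \ref{lem:sepfiber} choose $\overline{W} \in \seultra{S}$ with $\overline{\partial W} = \overline{\tilde{\alpha}_{1}}$ such that $\pi_{\mathcal{P}_{\omega}(\overline{W})}(a^{1}_{\omega}) \ne \pi_{\mathcal{P}_{\omega}(\overline{W})}(e_{\omega}) \in F_{\overline{W}, a^{1}_{\omega}}.$ Now apply the continuous projection $\Phi := \Phi_{\overline{W}, a^{1}_{\omega}}$ of Theorem \ref{thm:locconstant} to the closed pentagon with vertices $\{a^{1}_{\omega}, e_{\omega}, b^{1}_{\omega}, b^{2}_{\omega}, a^{2}_{\omega}\}.$ The edges $\rho^{1}_{\omega}|_{[e_{\omega}, b^{1}_{\omega}]},$ $[b^{1}_{\omega}, b^{2}_{\omega}],$ and $[a^{1}_{\omega}, a^{2}_{\omega}]$ lie outside $P_{\overline{W}, a^{1}_{\omega}}$ and therefore collapse to single points under $\Phi.$ The decisive new ingredient is the collapse of $\rho^{2}_{\omega}$: the hypothesis $\alpha_{i} \ne \beta_{i}$ $\omega$-a.s.\ in $\mathcal{C}_{sep}(S_{2,0}),$ together with Lemma \ref{lem:coincide}, places the lifts $\tilde{\alpha}_{1,i}$ and each $\tilde{\beta}_{k,i}$ in different connected components of $\mathcal{C}_{sep}(S_{2,1})$ $\omega$-a.s., so Corollary \ref{cor:intpoint2} forces $\Phi(\qbar{\tilde{\beta}_{k}})$ to be a single point for every $k,$ and by continuity $\Phi(\rho^{2}_{\omega})$ is itself a single point. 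Chaining these equalities around the pentagon gives $\Phi(a^{1}_{\omega}) = \Phi(e_{\omega}),$ contradicting the choice of $\overline{W}.$ Hence $|Z_{\omega} \cap Z'_{\omega}| \le 1.$

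For the ``in particular'' statement I argue by contrapositive. If constant quasi-convex $Z, Z' \subset \mathcal{P}(S_{2,1})$ (whose ultralimits are cut-point--free by the standing hypothesis) had unbounded coarse intersection, there would be a constant $R$ and sequences $a_{i}, b_{i} \in Z$ together with $a'_{i}, b'_{i} \in Z'$ satisfying $d(a_{i}, a'_{i}), d(b_{i}, b'_{i}) \le R$ and $d(a_{i}, b_{i}) \to \infty.$ Forming the asymptotic cone with basepoint sequence $(a_{i})$ and scalars $s_{i} = d(a_{i}, b_{i})$ identifies $a_{\omega}$ with $a'_{\omega}$ and $b_{\omega}$ with $b'_{\omega},$ producing two distinct points of $Z_{\omega} \cap Z'_{\omega}$ and contradicting the first part (the $\overline{\alpha}, \overline{\beta}$ associated to $Z_{\omega}, Z'_{\omega}$ must disagree, else $Z, Z'$ would be coarsely subject to a common $\xbar{\alpha}$ regime contrary to the standing hypothesis). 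The main technical obstacle I expect is the pentagon bookkeeping in the second paragraph---ensuring simultaneously that the trimmed quasi-geodesic $\rho^{1}_{\omega}$ exits $\qbar{\tilde{\alpha}_{1}}$ at a well-defined $e_{\omega}$ and that both connecting arcs avoid $P_{\overline{W}, a^{1}_{\omega}}$ everywhere except at $a^{1}_{\omega}$---but this is exactly the bookkeeping carried out in Lemma \ref{lem:zeropieces}, and the only genuinely new ingredient, the simultaneous collapse of $\rho^{2}_{\omega},$ is supplied cleanly by Corollary \ref{cor:intpoint2} together with Lemma \ref{lem:coincide}.
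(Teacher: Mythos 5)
Your first paragraph is correct and is essentially the paper's argument, just made explicit: the paper's own proof of the inequality $|Z_{\omega}\cap Z'_{\omega}|\leq 1$ simply notes that, by Theorem \ref{thm:pieces}, the assumption $a_\omega\ne b_\omega\in Z_\omega\cap Z'_\omega$ produces exactly the situation (points $a^1_\omega,b^1_\omega\in\xbar{\alpha}$, $a^2_\omega,b^2_\omega\in\xbar{\beta}$, $\overline{\alpha}\ne\overline{\beta}$, all with finite $\mathbb{S}_{\omega}(S_{2,1})$ distance) that the pentagon argument of Lemma \ref{lem:zeropieces} rules out, and leaves it there, while you re-run that pentagon argument verbatim. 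That is a valid, if longer, presentation of the same reasoning; your detour through Lemma \ref{lem:coincide} is unnecessary (Corollary \ref{cor:intpoint2} needs only $\overline{\alpha}\ne\overline{\beta}$, not that the lifts sit in different $\mathcal{C}_{sep}(S_{2,1})$ components) but harmless.

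Your second paragraph has a genuine gap. The paper's proof of the ``in particular'' clause stays inside the one asymptotic cone $\mathcal{P}_\omega(S_{2,1})$ that the hypotheses are about: it observes that $Z\hat{\cap}Z'$ is a constant, quasi-convex (hence connected) set, that the constant basepoint forces $(Z\hat{\cap}Z')_\omega$ to be nonempty, that $(Z\hat{\cap}Z')_\omega\subset Z_\omega\cap Z'_\omega$ has at most one point by the first part, and that a constant connected set whose ultralimit is a single point must be bounded. You instead pass to a fresh asymptotic cone with basepoint sequence $(a_i)$ and scalars $s_i=d(a_i,b_i)$ and then invoke the first part \emph{in that new cone}. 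But the standing hypotheses---that $Z_\omega$ and $Z'_\omega$ contain at least two points, have no cut-points, and lie in $\xbar{\alpha}$, $\xbar{\beta}$ with $\alpha_i\ne\beta_i$ $\omega$-a.s.---are granted only for the originally fixed cone. Nothing in the corollary's hypotheses makes $Z$ and $Z'$ wide (cut-point-free in \emph{every} cone), and the parenthetical about $\overline{\alpha},\overline{\beta}$ having to disagree ``else $Z,Z'$ would be coarsely subject to a common $\xbar{\alpha}$ regime'' does not address, let alone resolve, the missing cut-point hypothesis in the new cone. To repair this you would either need to strengthen the hypotheses (e.g.\ assume $Z,Z'$ are thick of order zero, as they are in the intended application), or simply adopt the paper's self-contained argument, which never leaves the given cone.
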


\begin{proof}
We will show $|Z_{\omega} \cap Z'_{\omega}| \leq 1$ by contradiction.  That is, assume $a_{\omega} \ne b_{\omega} \in \left(Z_{\omega} \cap Z'_{\omega} \right).$  By Theorem \ref{thm:pieces}, in any neighborhoods of $a_{\omega}, b_{\omega}$ there exist points $a^{1}_{\omega},b^{1}_{\omega}$ and $a^{2}_{\omega},b^{2}_{\omega},$ such that $d_{\mathbb{S}_{\omega}(S_{2,1})}(a^{1}_{\omega},b^{1}_{\omega}) < \infty$ and $d_{\mathbb{S}_{\omega}(S_{2,1})}(a^{2}_{\omega},b^{2}_{\omega}) < \infty ,$ yet $a^{1}_{\omega},b^{1}_{\omega} \in \xbar{\alpha}$ while $a^{2}_{\omega},b^{2}_{\omega} \in \xbar{\beta} $ where $\overline{\alpha}\ne \overline{\beta}.$  Precisely this situation was shown to be impossible in the proof of Lemma \ref{lem:zeropieces}.  

Next, consider the special case of the first part of the Corollary in which the asymptotic cone $\mathcal{P}_{\omega}(S_{2,1})$ has a constant base point, and the sequences of subsets $(Z_{i})=\overline{Z}$ and $(Z'_{i})=\overline{Z'}$ are constant and quasi-convex.  Then the coarse intersection $\overline{Z} \hat{\cap} \overline{Z'}$ is the constant quasi-convex, and hence connected, sequence of subsets $\overline{Z \hat{\cap} Z'}.$  Since our asymptotic cone has a constant base point, assuming $Z \hat{\cap} Z'$ is nontrivial (if not then we are done), its ultralimit $\overline{Z \hat{\cap} Z'}=(Z \hat{\cap} Z')_{\omega}$ in the asymptotic cone is similarly nontrivial.  That is, in the asymptotic cone  $(Z\hat{\cap} Z')_{\omega}$ contains at least - and hence by the first part exactly- one point, namely the point in the cone with constant representative sequence.  It follows that the diameter of the connected coarse intersection $Z \hat{\cap} Z'$ is sublinear in $s_{i}.$  On the other hand, since the diameter of the coarse intersection $Z \hat{\cap} Z',$ is not only sublinear but also constant, it follows that $Z$ and $Z'$ have bounded coarse intersection. 
\end{proof}

We are now prepared to prove the following theorem.    

\begin{thm} \label{thm:thick2}
$\mathcal{T}(S_{2,1})$ is thick of order two.
\end{thm}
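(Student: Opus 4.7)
The plan is to combine Theorem \ref{thm:brockmasur}, which gives thickness of order at most two, with Theorem \ref{thm:notwide}, which rules out thickness of order zero by exhibiting cut-points along every pseudo-Anosov axis in every asymptotic cone. It therefore only remains to rule out thickness of order one.

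I would argue by contradiction: suppose $\mathcal{P}(S_{2,1})$ is thick of order one, witnessed by a collection $\{P_{\mu}\}$ of quasi-convex, thick-of-order-zero subsets whose $R_{1}$-neighborhoods cover the space and whose members are pairwise joined by finite chains $P_{\mu_{1}}, \ldots, P_{\mu_{n}}$ with $\mathrm{diam}(N_{R_{2}}(P_{\mu_{i}}) \cap N_{R_{2}}(P_{\mu_{i+1}})) = \infty$. The first step is to \emph{locate} each subset: Theorem \ref{thm:zeropieces} says that in any asymptotic cone $\mathcal{P}_{\omega}(S_{2,1})$ in which $P_{\mu,\omega}$ contains at least two points, there is a unique $\overline{\alpha_{\mu}} \in \mathcal{C}_{sep}^{\omega}(S_{2,0})$ with $P_{\mu,\omega} \subset \xbar{\alpha_{\mu}}$. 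Combined with Corollary \ref{cor:intpieces}, two such subsets with distinct $\overline{\alpha}$-labels in a common constant-base-point cone must have bounded coarse intersection, which contradicts thick intersection. Hence consecutive members of any thick chain must carry the same $\overline{\alpha}$-label in such a cone.

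I would then choose pants decompositions $x, y \in \mathcal{P}(S_{2,1})$ whose projections $\pi(x), \pi(y) \in \mathcal{P}(S_{2,0})$ under the natural map $\pi \co \mathcal{P}(S_{2,1}) \ra \mathcal{P}(S_{2,0})$ lie very far apart along a pseudo-Anosov axis in $\mathcal{P}(S_{2,0})$. By relative hyperbolicity of $\mathcal{P}(S_{2,0})$ (Corollary \ref{cor:relhyperbolic}) together with the bounded-penetration property of pseudo-Anosov axes into peripheral pieces, once $d_{\mathcal{P}(S_{2,0})}(\pi(x),\pi(y))$ is large enough no single piece $\q{\gamma}$ contains both $\pi(x)$ and $\pi(y)$ within any uniform neighborhood. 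Iterating the preceding $\overline{\alpha}$-labeling analysis along the chain from a subset $P_{\mu_{1}} \ni x$ to $P_{\mu_{n}} \ni y$ forces all the $\overline{\alpha_{\mu_{i}}}$ to coincide with a common $\overline{\alpha}$, so that $x, y \in \xbar{\alpha}$ and hence $\pi(x), \pi(y) \in \qbar{\alpha}$, contradicting the choice of $x, y$.

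The principal technical obstacle is that the chain length $n$ may grow with $d(x,y)$, so that all the consecutive thick intersections along the chain must be simultaneously witnessed as non-trivial intersections in a \emph{single} asymptotic cone in order to apply Corollary \ref{cor:intpieces} chainwise. I would address this via a diagonal construction of base points and scaling constants extracted from the sequences realizing the infinite-diameter intersections $N_{R_{2}}(P_{\mu_{i}}) \cap N_{R_{2}}(P_{\mu_{i+1}})$; the quasi-convexity of each $P_{\mu}$ together with the bi-infinite quasi-geodesic nontriviality condition of Definition \ref{defn:thick} ensures that all intermediate ultralimits in the chosen cone remain non-degenerate, so that Theorem \ref{thm:zeropieces} and Corollary \ref{cor:intpieces} apply at every step.
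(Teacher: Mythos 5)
Your overall strategy is the paper's: label each candidate thick-of-order-zero piece via Theorem \ref{thm:zeropieces}, use Corollary \ref{cor:intpieces} to force label constancy along any thick chain, and conclude that the union must sit inside a single $\xbar{\alpha}$, which is too small to exhaust the space. The gap is in how you propose to deal with it.

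The ``diagonal construction'' you invoke to handle chains of unbounded length is where the argument gets shaky, and it is also unnecessary. The paper avoids the issue entirely by fixing one asymptotic cone $\mathcal{P}_{\omega}(S_{2,1})$ with a \emph{constant base point sequence}. The crucial observation (which your sketch never quite lands on) is that in such a cone \emph{every} thick-of-order-zero subset $Z$, wherever it sits in the space, automatically has a non-degenerate ultralimit: the constant sequence at a point of $Z$ and the scaled push-offs along the bi-infinite quasi-geodesic guaranteed by Definition \ref{defn:thick} give two distinct points of $Z_{\omega}$. (Any fixed point of $\mathcal{P}(S_{2,1})$ has the same ultralimit as the base point, so ``being far from the origin'' costs nothing.) Because all pieces are simultaneously non-degenerate in this one cone, Theorem \ref{thm:zeropieces} and Corollary \ref{cor:intpieces} apply to every consecutive pair in any chain with no bookkeeping, so the chain length is irrelevant and no diagonalization over base points or scaling sequences is needed.

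Once you have this, the pseudo-Anosov axis / relative-hyperbolicity detour in $\mathcal{P}(S_{2,0})$ is also superfluous. After all chain labels collapse to a single $\overline{\alpha}$, the union of the $R_{1}$-neighborhoods of the pieces (which would be coarsely all of $\mathcal{P}(S_{2,1})$ under the thick-of-order-one hypothesis) has ultralimit contained in $\xbar{\alpha}$. But $\xbar{\alpha}$ projects under $\pi_{\mathcal{P}_{\omega}(S_{2,0})}$ into the single product region $\qbar{\alpha} \subsetneq \mathcal{P}_{\omega}(S_{2,0})$, so $\xbar{\alpha}$ is a proper subset of $\mathcal{P}_{\omega}(S_{2,1})$; this contradicts the elementary fact that a subset of finite Hausdorff distance from the whole space has full ultralimit. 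You should also be more careful about arguing with two fixed points $x,y$: in any constant base point cone they have the same ultralimit, so either pass to sequences of pairs or, better, phrase everything in terms of the ultralimits of the subsets themselves as the paper does.
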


\begin{proof}
Since thickness is a quasi-isometry invariant property, \cite{bdm}, it suffices to prove that $\mathcal{P}(S_{2,1})$ is thick of order two.  In Section \ref{sec:atmosttwo} we showed that $\mathcal{P}(S_{2,1})$ is thick of order at most two and at least one.  Hence, it suffices to show that $\mathcal{P}(S_{2,1})$ is not thick of order one.  In fact, we will show that any thick of order one subset is entirely contained inside a \emph{nontrivially proper subset} of the entire pants complex (that is, a subset which has infinite Hausdorff from the entire pants complex).

Fix an asymptotic cone $\mathcal{P}_{\omega}(S_{2,1})$ with a constant base point and scaling sequence $s_{i}.$  Note that since $\mathcal{P}(S_{2,1})$ is connected, for any $q\in \mathcal{P}(S_{2,1}),$ the constant sequence $\overline{q}$ all represent the same base point of the asymptotic cone $\mathcal{P}_{\omega}(S_{2,1}).$

Let $Z$ be any thick of order zero subset in $\mathcal{P}(S_{2,1}).$  By hypothesis, $Z$ coarsely contains a bi-infinite quasi-geodesic through any point.  Fix some point $z\in Z,$ and some quasi-geodesic ray $\gamma$ beginning near $z$ and remaining in $Z.$  Then for every $s_{i},$ set $y_{i}=\gamma(s_{i}) \in Z.$  By construction, in the asymptotic cone the sequences $\overline{z}$ and $(y_{i})$ represent distinct points contained in $Z_{\omega} \subset \mathcal{P}_{\omega}(S_{2,1}).$  In particular, we have just shown that every thick of order zero subset $Z \subset \mathcal{P}(S_{2,1})$ has ultralimit $Z_{\omega}$ containing at least two distinct points in the asymptotic cone $\mathcal{P}(S_{2,1}).$  By Theorem \ref{thm:zeropieces} it follows that every thick of order zero subset $Z$ in $\mathcal{P}(S_{2,1})$ can be assigned a unique element $\overline{\alpha} \in \mathcal{C}^{\omega}_{sep}(S_{2,0}).$  Moreover, Corollary \ref{cor:intpieces} implies that a necessary condition for any two thick of order zero subsets $Z,Z'$ to be thickly chained together, as in condition (ii) of \ref{defn:thick}, is that the two thick of order zero subsets $Z,Z'$ are assigned the same element $\overline{\alpha} \in \mathcal{C}^{\omega}_{sep}(S_{2,1}).$

It follows that any thick of order one subset $Y$ of the space $\mathcal{P}(S_{2,1})$ can consist of at most the union of thick of order zero subsets with the same labels $\overline{\alpha} \in \mathcal{C}^{\omega}_{sep}(S_{2,0}).$  Hence, the ultralimit $Y_{\omega}$ in the asymptotic cone $\mathcal{P}_{\omega}(S_{2,1})$ is entirely contained inside the subset $\xbar{\alpha}$ which we will see is a proper subset of $\mathcal{P}_{\omega}(S_{2,1}).$  The proof of the Theorem then follows from the observation that if a subset $Y \subset X$ has finite Hausdorff distance from $X,$ then in any asymptotic cone the ultralimit $Y_{\omega}=X_{\omega}.$

To see that $\xbar{\alpha}$ is a proper subset of $\mathcal{P}_{\omega}(S_{2,1}),$ notice that under under the surjective projection $\pi\co \mathcal{P}_{\omega}(S_{2,1}) \surj \mathcal{P}_{\omega}(S_{2,0}),$ the subset $\xbar{\alpha}$ is mapped into the natural quasi-convex product region $\qbar{\alpha},$ a proper subset of $\mathcal{P}_{\omega}(S_{2,0}).$    %
\end{proof}

\begin{rem}
Theorem \ref{thm:thick2} completes the thickness classification of the pants complexes of all surfaces of finite type which is presented in Table \ref{table:thick}.  Moreover, among all surfaces of finite type of equal or higher complexity, $S_{2,1}$ is the only surface such that its pants complex is not thick of order one.\end{rem}

\begin{table}
\begin{center}
\begin{tabular}{|c||c|c|c|c|c|c|c}
$\vdots$ & \cellcolor[rgb]{0,1,1} $\vdots$& \cellcolor[rgb]{0,1,1} $\vdots$ & \cellcolor[rgb]{0,1,1} $\vdots$ &  \cellcolor[rgb]{0,1,1} $\vdots$ & \cellcolor[rgb]{0,1,1} $\vdots$ & \cellcolor[rgb]{0,1,1}$\vdots$ & \cellcolor[rgb]{0,1,1} $\iddots$  \\
\hline
$7$ & \cellcolor[rgb]{0,1,1} T1& \cellcolor[rgb]{0,1,1} T1 & \cellcolor[rgb]{0,1,1} T1  & \cellcolor[rgb]{0,1,1} T1 & \cellcolor[rgb]{0,1,1} T1& \cellcolor[rgb]{0,1,1} T1 & \cellcolor[rgb]{0,1,1}$\hdots$  \\
\hline
$6$ &\cellcolor[rgb]{1,1,0} RH&    \cellcolor[rgb]{0,1,1} T1& \cellcolor[rgb]{0,1,1} T1 & \cellcolor[rgb]{0,1,1} T1  & \cellcolor[rgb]{0,1,1} T1& \cellcolor[rgb]{0,1,1} T1 & \cellcolor[rgb]{0,1,1}$\hdots$ \\ 
\hline
$5$ &  \cellcolor[rgb]{1,.5,0} H&     \cellcolor[rgb]{0,1,1} T1 & \cellcolor[rgb]{0,1,1} T1 & \cellcolor[rgb]{0,1,1} T1 & \cellcolor[rgb]{0,1,1} T1& \cellcolor[rgb]{0,1,1} T1 & \cellcolor[rgb]{0,1,1} $\hdots$   \\ 
\hline
$4$ &  \cellcolor[rgb]{1,.5,0} H& \cellcolor[rgb]{0,1,1} T1     & \cellcolor[rgb]{0,1,1} T1  & \cellcolor[rgb]{0,1,1} T1 & \cellcolor[rgb]{0,1,1} T1 & \cellcolor[rgb]{0,1,1} T1  & \cellcolor[rgb]{0,1,1} $\hdots$   \\
\hline
$3$ &    &\cellcolor[rgb]{1,1,0} RH& \cellcolor[rgb]{0,1,1} T1  & \cellcolor[rgb]{0,1,1} T1 & \cellcolor[rgb]{0,1,1} T1 & \cellcolor[rgb]{0,1,1} T1 & \cellcolor[rgb]{0,1,1} $\hdots$  \\
\hline
$2$ &    &  \cellcolor[rgb]{1,.5,0} H & \cellcolor[rgb]{0,1,1} T1    &\cellcolor[rgb]{0,1,1} T1 & \cellcolor[rgb]{0,1,1} T1 & \cellcolor[rgb]{0,1,1} T1 & \cellcolor[rgb]{0,1,1} $\hdots$    \\
\hline
$1$ &    &  \cellcolor[rgb]{1,.5,0} H & \cellcolor[rgb]{.5,1,0} \textbf{T2}   & \cellcolor[rgb]{0,1,1} T1  & \cellcolor[rgb]{0,1,1} T1 & \cellcolor[rgb]{0,1,1} T1 & \cellcolor[rgb]{0,1,1} $\hdots$  \\ 
\hline
$0$ &    &     & \cellcolor[rgb]{1,1,0}RH    & \cellcolor[rgb]{0,1,1} T1 & \cellcolor[rgb]{0,1,1} T1 & \cellcolor[rgb]{0,1,1} T1& \cellcolor[rgb]{0,1,1} $\hdots$  \\ 
\hline
\hline
$n \uparrow \; g \rightarrow  $  & $0$  & $1$ &$2$  & $3$ & $4$ & $5$  & $\hdots$ \\  
\hline   
\end{tabular}
\end{center}
\caption[Hyperbolicity/Thickness classification of Teichm\"uller spaces]{Hyperbolicity/Thickness classification of Teichm\"uller spaces for all surfaces.  H=hyperbolic, RH=relatively hyperbolic, T1=thick of order one, and T2=thick of order two.}
 \label{table:thick}
\end{table}

%
%
%
%

\subsection{$\mathcal{T}(S_{2,1})$ has superquadratic divergence}
\label{sec:divergence}
Informally the \emph{divergence} of a metric space, a notion introduced by Gromov, is a measure of inefficiency of detours paths.  More specifically, divergence quantifies the cost of going from a point $x$ to a point $y$ in a (typically one-ended geodesic) metric space X while avoiding a metric ball based at a point $z.$  Throughout the literature there are a couple of closely related definitions of divergence that emerge based on stipulations regarding the points $x,y,z.$  See \cite{drutumozessapir} for a comparison of various definitions and criterion for when the different definitions agree.  We will consider the following definition of divergence which is a lower bound on all other definitions of divergence in the literature.  In particular, it follows that the novel result in this section regarding the superquadratic divergence of $\mathcal{T}(S_{2,1})$ remains true for any definition of divergence.

\begin{defn}[Divergence] \label{defn:divergence}  Let $\gamma$ be a coarsely arc length parameterized bi-infinite quasi-geodesic in a one-ended geodesic metric space.  Then the \emph{divergence along $\gamma$}, denoted $div(\gamma,\epsilon)$ is defined to be the growth rate of the function $$d_{X \setminus B_{\epsilon r}(\gamma(0))}(\gamma(-r),\gamma(r))$$ with respect to $r$ where the scalar $\epsilon>0$ is chosen so that $\gamma(\pm r) \not \in B_{\epsilon r}(\gamma(0)).$  As divergence is independent of the choice of a small $\epsilon,$ we will often omit $\epsilon$ from the notation.  Divergence can be similarly associated to a sequence of quasi-geodesic segments $\gamma_{i}.$  The \emph{divergence of $X$} denoted $div(X)$ is defined to be $\max_{\gamma,\epsilon} div(\gamma,\epsilon).$   
\end{defn}   


The proof of following lemma is straightforward.

\begin{lem} [\cite{drutumozessapir} Lemma 3.15] \label{lem:superlinear}
Let X be a geodesic metric space, $X_{\omega}$ any asymptotic cone, and assume $a_{\omega} \ne b_{\omega} \in X_{\omega}$ have representative sequences $(a_{i}),(b_{i}),$ respectively.  Then $X_{\omega}$ has a global cut-point separating $a_{\omega}$ and $b_{\omega}$ if and only if \uas the sequence of geodesics $[a_{i},b_{i}]$ has superlinear divergence.
\end{lem}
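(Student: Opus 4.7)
The plan is to establish the equivalence via the standard correspondence between detour paths in $X$ of length linear in $s_i$ and continuous paths of bounded length in $X_\omega$ avoiding a candidate cut-point, combined with a rectification argument in the asymptotic cone.

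For the forward implication I argue the contrapositive. Fix a cut-point $c_\omega\in X_\omega$ separating $a_\omega$ from $b_\omega$; since $c_\omega$ lies on the ultralimit of the geodesics $[a_i,b_i]$, choose $c_i\in[a_i,b_i]$ with $\lim_\omega c_i = c_\omega$. Assume for contradiction that divergence along $[a_i,b_i]$ at $c_i$ fails to be superlinear \uas, so there exist constants $K,\epsilon>0$ with the property that \uas there is a path $\gamma_i$ from $a_i$ to $b_i$ of length at most $Ks_i$ avoiding $B_{\epsilon s_i}(c_i)$. Reparametrize each $\gamma_i$ proportionally by $[0,1]$ and pass to the rescaled ultralimit: this yields a continuous path $\gamma_\omega\co[0,1]\ra X_\omega$ from $a_\omega$ to $b_\omega$ of length at most $K\cdot d_{X_\omega}(a_\omega,b_\omega)$ which stays outside the open ball $B_\epsilon(c_\omega)$. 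In particular $\gamma_\omega$ avoids $c_\omega$, contradicting that $c_\omega$ is a cut-point separating $a_\omega$ and $b_\omega$.

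For the backward implication, let $c_i$ be the sequence of midpoints (or any base points along $[a_i,b_i]$ witnessing superlinear divergence), and let $c_\omega$ be its ultralimit. Suppose for contradiction that $c_\omega$ does not separate $a_\omega$ from $b_\omega$. By Theorem \ref{thm:conebasics}(2), $X_\omega$ is a geodesic metric space and is locally path connected, so $X_\omega\setminus\{c_\omega\}$ is path connected and supports a continuous path $p_\omega\co[0,1]\ra X_\omega\setminus\{c_\omega\}$ with $p_\omega(0)=a_\omega$ and $p_\omega(1)=b_\omega$. Compactness of the image gives $\delta:=d_{X_\omega}(p_\omega([0,1]),c_\omega)>0$. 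The path $p_\omega$ need not be rectifiable, so I rectify it: using uniform continuity of $p_\omega$, choose a partition $0=t_0<t_1<\cdots<t_n=1$ with $d_{X_\omega}(p_\omega(t_k),p_\omega(t_{k+1}))<\delta/4$ for every $k$, and replace each subarc $p_\omega|_{[t_k,t_{k+1}]}$ by a geodesic of $X_\omega$. By the triangle inequality the resulting piecewise-geodesic path $q_\omega$ of finite length $L$ still lies in the complement of $B_{\delta/2}(c_\omega)$.

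It remains to lift $q_\omega$ back to $X$. Fix representative sequences for each $p_\omega(t_k)$ and join consecutive representatives in $X$ by geodesic segments, splicing on short geodesics to $a_i$ and $b_i$ at the endpoints. The concatenation $q_i$ is a path in $X$ from $a_i$ to $b_i$ of length at most $(L+o(1))s_i$, and \uas $q_i$ avoids $B_{(\delta/4)s_i}(c_i)$. This exhibits linearly-long detour paths around linearly-sized balls at $c_i$, contradicting the superlinear divergence hypothesis. The main obstacle is the rectification step and the subsequent transport from $X_\omega$ back to $X$: producing a rectifiable approximation of controlled length from an a priori wild continuous path, and realizing this approximation by paths in $X$ at scale $s_i$ while preserving avoidance of a linearly large ball around $c_i$. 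Uniform continuity on the compact interval $[0,1]$, together with the fact that ultralimits of geodesics are geodesics of $X_\omega$, is what makes both passages work quantitatively.
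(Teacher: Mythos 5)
The paper does not give its own proof of this lemma; it cites \cite{drutumozessapir} Lemma 3.15 and declares the proof ``straightforward,'' so there is no internal argument to compare against. Your proof is correct and supplies precisely what that citation elides. The forward direction (via ultralimits of proportionally reparametrized detour paths of length $O(s_i)$, which are uniformly Lipschitz after rescaling and hence converge to a continuous path in $X_\omega$ avoiding $B_\epsilon(c_\omega)$) is clean. The backward direction is where the technical content lies, and you handle it properly: compactness of $p_\omega([0,1])$ gives $\delta>0$, uniform continuity gives the fine partition, the triangle inequality keeps the piecewise-geodesic rectification $q_\omega$ outside $B_{\delta/2}(c_\omega)$, and the descent to $X$ works because for each of the finitely many $k$ the geodesics $[x^k_i,x^{k+1}_i]$ \uas avoid $B_{(\delta/4)s_i}(c_i)$ (else a point in $B_{(\delta/4)s_i}(c_i)\cap[x^k_i,x^{k+1}_i]$ would pass to a point of the ultralimit geodesic inside $B_{\delta/2}(c_\omega)$), and a finite intersection of $\omega$-a.s.\ sets is $\omega$-a.s. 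One small point you gloss over: your $c_i$ in the forward direction comes from an arbitrary cut-point $c_\omega$ on the ultralimit geodesic, which need not be the midpoint, so strictly you are proving superlinearity of the divergence based at that $c_i$ rather than at the midpoint. This is harmless here because $c_\omega$ separates $a_\omega$ from $b_\omega$, hence $d(c_i,a_i)$ and $d(c_i,b_i)$ are both $\Theta(s_i)$, so the base point is coarsely interchangeable with the midpoint for the purposes of linear-versus-superlinear growth; it would have been worth saying so explicitly.
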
  

The plan for the rest of the section is to show that $\mathcal{T}(S_{2,1})$ has at least superquadratic and at most cubic divergence.  First we prove the lower bound, and then see that the upper bound follows from Theorem \ref{thm:thick2} in conjunction with results in \cite{behrstockdrutu}.

\subsection{$\mathcal{T}(S_{2,1})$ has at least superquadratic divergence}

Recall Theorem \ref{thm:hypquasi} which characterizes contracting quasi-geodesics in CAT(0) spaces.  Presently, we will provide a standard argument for the following small ingredient of the theorem as it serves as motivation for ideas in this section.

\begin{lem} \label{lem:scatleastquad}
A (b,c)--contracting quasi-geodesic $\gamma$ in a geodesic metric space $X$ has at least quadratic divergence.
\end{lem}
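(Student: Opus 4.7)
The plan is to take an arbitrary rectifiable detour path $\sigma \co [0,L] \to X$ from $\gamma(-r)$ to $\gamma(r)$ that avoids $B_{\epsilon r}(\gamma(0))$, parametrize it by arc length, and show that its length $L$ is at least $C_0 r^2$ for a constant $C_0$ depending only on $b, c, \epsilon$ and the quasi-isometry constants of $\gamma$. Set $R := \epsilon r / 2$ and choose a spacing $\delta$ proportional to $R$ (small enough that $\delta < bR$ and small enough relative to the quasi-isometry constants of $\gamma$ to block certain projection jumps described below). Partition $[0,L]$ into $n \geq L/\delta$ consecutive subintervals of length at most $\delta$ with endpoints $p_0, p_1, \dots, p_n$, so that $d(p_i, p_{i+1}) \leq \delta$.

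The first key observation is geometric: any sample point $p \in \sigma$ whose projection $\pi_\gamma(p)$ lies in the \emph{central sub-arc} $\gamma([-R,R])$ must itself be at distance at least $R$ from $\gamma$. Indeed, since $\sigma$ avoids $B_{\epsilon r}(\gamma(0)) = B_{2R}(\gamma(0))$, one has $d(p,\gamma(0)) \geq 2R$, and the triangle inequality gives
$$d(p, \pi_\gamma(p)) \;\geq\; d(p,\gamma(0)) - d(\gamma(0), \pi_\gamma(p)) \;\geq\; 2R - R \;=\; R.$$
In particular, for any consecutive sample pair $(p_i, p_{i+1})$ both of whose projections lie in the central sub-arc, one has $d(p_i, p_{i+1}) \leq \delta < bR \leq b\,d(p_i, \pi_\gamma(p_i))$, so the $(b,c)$-contracting hypothesis of Definition \ref{def:bccont} yields $d(\pi_\gamma(p_i), \pi_\gamma(p_{i+1})) < c$.

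The second step is a coverage argument. The sample projection sequence begins at $\gamma(-r)$, to the left of the central sub-arc, and ends at $\gamma(r)$, to the right of it, so every interior point of $\gamma([-R,R])$ must be reached, up to bounded error, by some $\pi_\gamma(p_i)$. To justify this one rules out a single-step jump across the central sub-arc: if both $p_i$ and $p_{i+1}$ lie outside the $R$-neighborhood of $\gamma$ then the contracting inequality above bounds the projection jump by $c$, while if at least one of them lies within $R$ of $\gamma$ then its projection lies within $R$ of itself in $X$, and combining this with $\sigma \cap B_{2R}(\gamma(0)) = \emptyset$ and $d(p_i, p_{i+1}) \leq \delta$ forces $d(\pi_\gamma(p_i), \pi_\gamma(p_{i+1})) \leq 2R + \delta$ — which is strictly less than the $X$-diameter of the central sub-arc once $\delta$ is chosen sufficiently small relative to $R$ and the quasi-isometry constants of $\gamma$. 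Consequently the sample projections form a $c$-net in the central sub-arc, which has diameter $\asymp r$ in $X$, producing at least $\sim r/c$ sample indices. This gives $n \gtrsim r/c$ and therefore $L \geq (n{-}1)\delta \gtrsim r \cdot R \asymp r^2$, as desired.

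The main obstacle is the coverage step: the nearest-point projection $\pi_\gamma$ is not continuous in general, and a priori the projected sample sequence could skip over the central sub-arc at a single step where $\sigma$ comes close to $\gamma$ at a point far from $\gamma(0)$. The case analysis just sketched — handling far-from-$\gamma$ sample pairs via contraction and near-$\gamma$ sample pairs via a triangle inequality using the avoidance of $B_{\epsilon r}(\gamma(0))$ — is the technical heart of the proof, and it is precisely the place where both the contracting constants $(b,c)$ and the quasi-isometry constants of $\gamma$ enter into the final quadratic bound.
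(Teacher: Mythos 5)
Your discretization of the detour path and the first geometric observation (any sample whose projection lands in $\gamma([-R,R])$ must be at distance $\geq R$ from $\gamma$) are sound, and this is indeed the same triangle-inequality estimate the paper's proof rests on. But there is a genuine gap in the coverage step: the bound $d(\pi_\gamma(p_i), \pi_\gamma(p_{i+1})) \leq 2R + \delta$ that you derive when at least one of $p_i, p_{i+1}$ lies within $R$ of $\gamma$ is \emph{not} strictly less than the $X$-diameter of the central sub-arc. When $\gamma$ is a geodesic that diameter is exactly $2R$, so $2R + \delta$ exceeds it no matter how small you take $\delta$ relative to $R$; for a genuine quasi-geodesic the diameter of $\gamma([-R,R])$ can be smaller still. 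Thus a single-step jump that skips the entire central sub-arc is not ruled out by the inequality you offer, and the sample projections are not forced to form a $c$-net.

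What is missing is a quantitatively sharper use of the avoidance of $B_{2R}(\gamma(0))$. The form you need is: if $d(p_i,\gamma) < R$ and $\pi_\gamma(p_i) = \gamma(s)$, then $|s| \geq 2R - d(p_i,\gamma)$, not merely $|s| > R$. Applying this to both endpoints of a hypothetical skipping step gives $d(\gamma(s),\gamma(t)) > 4R - d(p_i,\gamma) - d(p_{i+1},\gamma)$, while the triangle inequality gives $d(\gamma(s),\gamma(t)) \leq d(p_i,\gamma) + \delta + d(p_{i+1},\gamma)$; combining these forces $d(p_i,\gamma)$ and $d(p_{i+1},\gamma)$ to each exceed $R - \delta/2$, and then, after choosing $\delta < b(R - \delta/2)$, the $(b,c)$--contracting hypothesis applies to $p_i$ after all and bounds the jump by $c$, contradicting a jump of size more than $2R$. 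Without this refinement the coverage claim does not go through. (For comparison, the paper's proof of this lemma sidesteps the issue by simply positing the existence and ordering along $\alpha_r$ of sample points $z_r^{jc}$ with $\pi_\gamma(z_r^{jc}) = \gamma(jc)$ — so you are grappling with a real technicality, but the specific inequality you use to close it is false as stated.)
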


\begin{proof}
To streamline the exposition we will assume $\gamma$ is a strongly contracting geodesic, although the same argument carries through for $\gamma$ a (b,c)--contracting quasi-geodesic.  Recall that by Definition \ref{def:bccont} since $\gamma$ is strongly contracting geodesic there exists a constant $c$ such that $\forall x,y \in X$ if $d(x,y)<d(x,\gamma)$ then $d(\pi_{\gamma}(x),\pi_{\gamma}(y))<c,$ where the map $\pi_{\gamma}\co X \ra 2^{\gamma}$ is a nearest point projection.  To prove the lemma we will consider an arbitrary detour path $\alpha_{r}$ connecting $\gamma(-r)$ and $\gamma(r)$ while avoiding the metric ball $B_{r}(\gamma(0)),$ and show that the length of $\alpha_{r}$ is at least a quadratic function in $r.$  

Presently we will discretize the detour path in terms of nearest point projections on to the subgeodesic $[\gamma(-r/2),\gamma(r/2)].$  Specifically, for each $$j \in \{-\lfloor \frac{r}{2c} \rfloor, ...,-1, 0, 1, ..., \lfloor \frac{r}{2c} \rfloor \},$$ fix $z_{r}^{jc} \in \alpha_{r}$ such that $z^{jc}_{r} \in \pi^{-1}_{\gamma}(\gamma(jc)).$  Notice that by construction $d(z_{r}^{jc},\gamma) \geq \frac{r}{2}.$  Furthermore, since $d(\pi_{\gamma}(z_{r}^{jc}),\pi_{\gamma}(z_{r}^{(j+1)c}))=c,$ by the strongly contracting property it follows that $$d(z_{r}^{jc},z_{r}^{(j+1)c})\geq d(z_{r}^{jc},\gamma) \geq \frac{r}{2}.$$  Putting things together, the following inequality gives the desired lower bound on the length of the detour path $\alpha_{r}:$
$$|\alpha_{r}|  \geq \sum_{j=1}^{\lfloor \frac{r}{c} \rfloor} d(z_{r}^{(j-1)c},z_{r}^{jc}) \geq \sum_{j=1}^{\lfloor \frac{r}{c} \rfloor} \frac{r}{2}  \geq \frac{r^{2}}{2c} -1.$$
Since $c$ is a uniform constant, the statement of the lemma follows.  
\end{proof}

The following lemma is closely related to ideas in \cite{bestvinafujiwara} regarding the thinness of polygons with edges along a contracting contracting geodesic.
\begin{lem}\label{lem:quasigeo}
Using the notation from Lemma \ref{lem:scatleastquad}, let $\sigma=\sigma_{r}^{jc}$ be the concatenated path $$[z_{r}^{jc},\gamma(jc)] \cup [\gamma(jc),\gamma((j+1)c)] \cup [\gamma((j+1)c),z_{r}^{(j+1)c}],$$ then $\sigma$ is a (2,c)-quasi-geodesic.  
\end{lem}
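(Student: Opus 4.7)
The plan is to show that $\sigma$, parameterized by arc length on $[0,A+c+C]$, where $A=d(z_r^{jc},\gamma(jc))$ and $C=d(\gamma((j+1)c), z_r^{(j+1)c})$, satisfies the two-sided quasi-geodesic inequality with constants $(2,c)$. The upper bound $d(\sigma(s),\sigma(t))\leq |s-t|\leq 2|s-t|+c$ is immediate from the triangle inequality, since $\sigma$ is a concatenation of three geodesic segments. The content is therefore in the lower bound $d(\sigma(s),\sigma(t))\geq \tfrac{1}{2}|s-t|-c$.

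The key preliminary is the contrapositive of the strongly contracting hypothesis: if $d(\pi_\gamma(x),\pi_\gamma(y))\geq c$ then $d(x,y)\geq d(x,\gamma)$, and symmetrically $\geq d(y,\gamma)$. I would then observe that for any point $w$ on the first leg $[z_r^{jc},\gamma(jc)]$, one may take $\pi_\gamma(w)=\gamma(jc)$: because $\gamma(jc)$ realizes the distance from $z_r^{jc}$ to $\gamma$, the triangle inequality along the geodesic forces $d(w,\gamma)=d(w,\gamma(jc))$, so $\gamma(jc)$ is also a nearest point projection of $w$. The analogous statement holds for the third leg with projection $\gamma((j+1)c)$.

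With these preliminaries, the lower bound reduces to a short case analysis on which of the three segments contain $\sigma(s)$ and $\sigma(t)$. If both lie on the same segment, the bound is trivial. The essential case is $s\in[0,A]$, $t\in[A+c,A+c+C]$: the two projections are $\gamma(jc)$ and $\gamma((j+1)c)$, which are exactly distance $c$ apart on $\gamma$, so the contrapositive yields
\[
d(\sigma(s),\sigma(t))\;\geq\;\max\{A-s,\;t-(A+c)\}\;\geq\;\tfrac{1}{2}\bigl((A-s)+(t-(A+c))\bigr)\;=\;\tfrac{1}{2}(t-s)-\tfrac{c}{2},
\]
which is stronger than the $(2,c)$-inequality. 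For the remaining mixed cases, in which one endpoint lies on the middle segment (and hence on $\gamma$), I would use the trivial bound $d(\sigma(s),\sigma(t))\geq d(\sigma(s),\gamma)$ together with the fact that the middle segment has length only $c$; this yields the $(2,c)$-inequality directly after a one-line computation.

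No serious obstacle arises; the proof is a clean case analysis once the contrapositive form of the contracting hypothesis is extracted. The only subtlety worth flagging is the possible non-uniqueness of nearest point projections, which is handled by making a consistent choice of $\pi_\gamma(\sigma(s))$ along each leg as described above.
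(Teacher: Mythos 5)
Your proposal is correct and takes essentially the same route as the paper: a three-way case analysis on which legs of $\sigma$ the two points lie in, with the only substantive case being the one where they lie on the two outer legs, handled by the contrapositive of the $(1,c)$-contracting condition. Your explicit justification that $\gamma(jc)$ (resp.\ $\gamma((j+1)c)$) remains a nearest-point projection for \emph{every} point of the first (resp.\ last) leg is stated only implicitly in the paper's proof, so that remark is a small improvement in rigor but not a different argument.
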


\begin{proof}  Let $x,y$ be any points along $\sigma.$  If $x,y \in [z_{r}^{jc},\gamma(jc)],$ then it is immediate that $$d(x,y)= d_{\sigma}(x,y),$$ where $d_{\sigma}(x,y)$ represents the distance along $\sigma$ from $x$ to $y.$  In particular, for any points $x,y \in [z_{r}^{jc},\gamma(jc)],$ the $(2,c)$ quasi-isometric inequality is trivially satisfied.  Similarly, the same conclusion holds for  $x,y \in [z_{r}^{(j+1)c},\gamma((j+1)c)]$ or $x,y \in [\gamma(jc),\gamma((j+1)c)].$  Moreover, since $|[\gamma(jc),\gamma((j+1)c)]|=c,$ for the cases $x \in [z_{r}^{jc},\gamma(jc)] \cup  [\gamma((j+1)c),z_{r}^{(j+1)c}]$ and $y \in [\gamma(jc),\gamma((j+1)c)]$ (or vice versa) the $(2,c)$ quasi-isometric inequality is similarly satisfied. 

Hence, we can assume $x \in [z_{r}^{jc},\gamma(jc)]$ and $y \in [\gamma((j+1)c),z_{r}^{(j+1)c}].$  Since $x$ and $y$ have nearest point projections onto $\gamma$ which are distance $c$ apart, by (1,c)-contraction of $\gamma$ we have: $$\max\{ d(x, \gamma(jc)), d(y, \gamma((j+1)c))\}= D \leq d(x,y).$$  Specifically, since 
$d(\gamma(jc), \gamma((j+1)c))= d(\gamma((j+1)c), \gamma(jc) ) =c$ the definition of (1,c)-contraction (Definition \ref{def:bccont}) implies that: $$d(x,y)\geq d(x, \gamma(jc)) \mbox{ and similarly }d(y,x)\geq d(y, \gamma((j+1)c)).$$  But then, we have the following inequality completing the proof:
$$ d(x,y) \leq d_{\sigma}(x,y) = d(x, \gamma(jc)) +  c+  d(y, \gamma((j+1)c)) \leq  2D+c \leq 2d(x,y) + c.$$
\end{proof}

\begin{rem}  Note that in the special case of $\gamma$ a strongly contracting quasi-geodesic in Lemma \ref{lem:quasigeo} we showed that the piecewise geodesic paths $\sigma^{jc}_{r}$ are (2,c)-quasi-geodesics.  More generally, for $\gamma$ a (b,c)-contracting quasi-geodesic the same argument shows that the piecewise geodesic paths $\sigma^{jc}_{r}$ are similarly all the quasi-geodesics with uniformly bounded quasi-isometry constants.  
\end{rem}
%

We will now aim toward proving the following main theorem of this subsection.  
\begin{thm} \label{thm:atleastsuperquad}
$\mathcal{T}(S_{2,1})$ has at least superquadratic divergence.
\end{thm}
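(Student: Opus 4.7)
The plan is to exhibit a specific bi-infinite quasi-geodesic $\gamma$ in $\mathcal{P}(S_{2,1})$ along which the divergence function is superquadratic, by arranging $\gamma$ to pass through a bi-infinite sequence of structurally integral corners at uniformly bounded pants-complex spacing, and then showing that the $\asymp r$ corners on $\gamma|_{[-r,r]}$ each force the nearby portion of any detour path around $B_{\epsilon r}(\gamma(0))$ to have length superlinear in $r$. Summing then gives $|\alpha_r|$ superquadratic in $r$. To construct such a $\gamma$, I would use Corollary \ref{cor:unique}: pick separating curves $\alpha, \beta \in \mathcal{C}_{sep}(S_{2,1})$ in distinct connected components of $\mathcal{C}_{sep}(S_{2,1})$ with $i(\alpha,\beta)<\infty$, and let $h \in \MCG(S_{2,1})$ be a point-pushing element whose iterates preserve the two chosen components (Lemma \ref{lem:preserve}) and whose pants-complex axis $\gamma$ visits the corners $P_j := h^j\alpha \lrcorner h^j\beta$ at linearly spaced times. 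By Definition \ref{defn:corner}, each ultrapower $(P_j)^\omega$ then realizes a structurally integral corner in every asymptotic cone in which it survives.

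\textbf{Detour estimate.} Given any detour path $\alpha_r$ from $\gamma(-r)$ to $\gamma(r)$ avoiding $B_{\epsilon r}(\gamma(0))$, and for each corner $P_j$ with $|jL| \leq r/2$ (where $L$ is the translation length of $h$), I pick a point $z_j \in \alpha_r$ closest to $P_j$. The subsegments $\alpha_r^j$ of $\alpha_r$ from $z_j$ to $z_{j+1}$ partition $\alpha_r$ into $\asymp r/L$ pieces, so $|\alpha_r| \geq \sum_j |\alpha_r^j|$; the main step is to show $|\alpha_r^j|$ is superlinear in $r$ uniformly in $j$. Suppose toward contradiction that $|\alpha_{r_i}^{j_i}| \leq C r_i$ for some constant $C$ and sequences $r_i \to \infty$, $|j_i L| \leq r_i/2$. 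Pass to an asymptotic cone $\mathcal{P}_\omega(S_{2,1})$ scaled by $r_i$ and based at $(P_{j_i})$. In this cone the ultralimit of $\alpha_{r_i}^{j_i}$ is a path of bounded length from $(z_{j_i})_\omega$ to $(z_{j_i+1})_\omega$ disjoint from the corner $C_\omega := (P_{j_i})_\omega$, which is structurally integral by construction and, since $|j_iL|\leq r_i/2$, is not forced into the avoided ball. A subsurface-projection computation using the Behrstock inequality (Lemma \ref{lem:projectionestimates}) and the hierarchy machinery (Theorem \ref{thm:hierarchy}) should show that $(z_{j_i})_\omega$ and $(z_{j_i+1})_\omega$ lie in opposite equivalence classes of $\sim_{\overline{h^{j_i}\alpha},\overline{h^{j_i}\beta}}$ from Definition \ref{defn:equivrel}, because along $\gamma$ the nearest-projection side switches as $\gamma$ transits from the product region $\mathcal{Q}(h^{j_i}\alpha)$ to $\mathcal{Q}(h^{j_i+1}\alpha)$. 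Corollary \ref{cor:cornersep} then forces $C_\omega$ to separate $(z_{j_i})_\omega$ from $(z_{j_i+1})_\omega$, contradicting the bounded-length bypass in the cone. Applying Lemma \ref{lem:superlinear} then upgrades the contradiction to the required superlinear lower bound on $|\alpha_r^j|$.

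\textbf{Main obstacle.} The most delicate point is the opposite-class assignment of the two ultralimits $(z_{j_i})_\omega$ and $(z_{j_i+1})_\omega$: one must rule out detour configurations that loop around each corner in a way that places both adjacent sampling points in the same equivalence class of Definition \ref{defn:equivrel}. Securing the opposite-class assignment is where the ``careful example'' foretold in the introduction is indispensable: the dynamics of $h$ must be tuned so that each iterate cleanly alternates between the two $\mathcal{C}_{sep}(S_{2,1})$-components of Corollary \ref{cor:unique}, producing a coherent two-sided local structure for $\gamma$ near each corner. This configuration is uniquely available for $S_{2,1}$ among surfaces of finite type; its failure for all other surfaces is consistent with the at-most-quadratic divergence that their Teichm\"uller spaces are known to satisfy.
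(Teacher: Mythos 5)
Your overall strategy---exhibit a periodic contracting quasi-geodesic in $\mathcal{P}(S_{2,1})$, discretize a detour path by nearest-point projections as in the standard quadratic lower bound, and then upgrade each linear step of length $\approx r$ to a superlinear one by passing to an asymptotic cone---is aligned with the paper's plan. However, both your construction and your separation mechanism differ from the paper's, and the latter has a genuine gap that you correctly flag as the ``most delicate point'' but do not close, and which I do not believe can be closed by the route you indicate.

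The gap is the opposite-class assignment. You want $(z_{j_i})_\omega$ and $(z_{j_i+1})_\omega$ to fall into cases one and two of Definition \ref{defn:equivrel} so that Corollary \ref{cor:cornersep} applies. But the heuristic you offer---``along $\gamma$ the nearest-projection side switches as $\gamma$ transits from $\mathcal{Q}(h^{j_i}\alpha)$ to $\mathcal{Q}(h^{j_i+1}\alpha)$''---is incompatible with the fact that a pseudo-Anosov (or point-pushing pseudo-Anosov) axis has \emph{bounded combinatorics}: all subsurface projections of $\gamma$ to proper connected essential subsurfaces are uniformly bounded. Consequently, any point $\gamma(t)$ near the corner $P_{j}$ has $d_{\mathcal{C}(W)}(\gamma(t), \beta_{j})$ uniformly bounded for every $W$ with $\partial W$ within bounded $\mathbb{S}(S)$-distance of $\alpha_j$ (and symmetrically for $\alpha_j$), so the points on the axis itself all land in case three of the trichotomy; the corner does not separate the two halves of $\gamma_\omega$ via the equivalence relation $\sim_{\overline{\alpha},\overline{\beta}}$. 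More importantly, the points $z_{j_i}, z_{j_i+1}$ on the detour path are essentially unconstrained: they lie a distance $\gtrsim r/2$ from $\gamma$ and there is no mechanism in your setup forcing one into case one and the other into case two---a perfectly admissible detour configuration places both in case three, or on the same side, and then Corollary \ref{cor:cornersep} yields nothing. Also note a side issue: $h^{j}\alpha$ and $h^{j+1}\alpha$ lie in the \emph{same} component of $\mathcal{C}_{sep}(S_{2,1})$ (Lemma \ref{lem:preserve}), so the ``transition'' you invoke is not between components at all; the only component change here is the one built into the fixed pair $\alpha, \beta$.

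The paper sidesteps this entirely: it never appeals to Corollary \ref{cor:cornersep}. Instead it deduces superlinearity of the step quasi-geodesics $\tau^{jc}_r$ by a contradiction argument routed through Lemma \ref{lem:superlinear} (superlinear divergence $\Leftrightarrow$ cut-point in the cone), Theorem \ref{thm:zeropieces} (any ultralimit without cut-points lies entirely inside a single set $\xbar{\alpha}$, with $\overline{\alpha}\in\mathcal{C}^{\omega}_{sep}(S_{2,0})$ uniquely determined), and Corollary \ref{cor:intpieces} (differently-labeled such sets meet in at most a point). Linear divergence of consecutive $\tau^{jc}_r$ would force a chain of adjacent edges of the quadrilaterals to sit in one common $\xbar{\alpha}$; projecting to $\mathcal{P}(S_{2,0})$ then contradicts the cut-point structure of $\bar\gamma_\omega$. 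The paper's construction---a pseudo-Anosov axis on $S_{2,0}$ lifted to $S_{2,1}$ via Lemma \ref{lem:embedding}, not a point-pushing axis on $S_{2,1}$---is essential for this: the projection of $\gamma$ to $\mathcal{P}(S_{2,0})$ is then itself an unbounded contracting quasi-geodesic $\bar\gamma$ whose ultralimit meets each $\qbar{\alpha}$ in at most one point, which is the final contradiction. A point-pushing axis projects to a \emph{bounded} subset of $\mathcal{P}(S_{2,0})$ and so cannot feed this last step. Finally, the uniformity-in-$j$ issue (Example \ref{ex:degenerate}) is handled in the paper via periodicity in Lemma \ref{lem:periodic}; your passage-to-subsequence argument is pointing at the same idea, but it only closes if the per-step contradiction itself is valid, which, as above, it is not in the form proposed.
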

 
Recall in the proof of Lemma \ref{lem:scatleastquad} we showed a contracting quasi-geodesic has at least quadratic divergence by showing that in order for a detour path to have more than a uniformly bounded ``shadow'' (i.e. nearest point projection set) onto $\gamma$ the detour path must travel at least a linear distance.  In other words, the at least quadratic divergence was a consequence of the fact that the detour path had to travel a linear amount of at least linear distances.  To prove Theorem \ref{thm:atleastsuperquad} we will construct a quasi-geodesic such that a detour path must travel a linear amount of at least superlinear distances.  

More specifically, recall the sequence of quasi-geodesic segments $\{\sigma^{jc}_{r}\}_{r}$ which coincide with $\gamma$ along the segment $[\gamma(jc),\gamma((j+1)c)]$ in the proof of Lemma \ref{lem:scatleastquad}.  By definition, the portion of the detour path $\alpha_{r}$ which connects the endpoints of $\sigma^{jc}_{r}$ cannot fellow travel with $\sigma^{jc}_{r}.$  In fact, by construction $\alpha_{r}$ lies outside of the ball $N_{r/2}( [\gamma(jc),\gamma((j+1)c)]).$  In particular, in order to prove that $\gamma$ has at least superquadratic divergence, we will show that the sequence of quasi-geodesic segments $\{\sigma^{jc}_{r}\}_{r}$ for almost all $j$ have superlinear divergence.  

To be sure, showing that a detour path must travel a linear amount of superlinear distances without controlling the degree of superlinearity does not ensure superquadratic divergence.  Specifically, consider the following example.

\begin{ex} \label{ex:degenerate}
Let $\alpha_{r}$ be a sequence of paths with each $\alpha_{r}$ partitioned into $r$ subsegments $\{\tau_{r,j}\}_{j=1}^{r}$ such that for any fixed $j,$ the length of the sequence of segments $\{\tau_{r,j}\}_{r}$ is superlinear in $r.$  Then, 
\begin{eqnarray*}
|\alpha_{r}| &\geq& \sum_{j=1}^{r} |\tau_{r,j}| = \sum_{j=1}^{r} r \epsilon_{j}(r) \geq r^{2} \min^{r}_{j=1}(\epsilon_{j}(r)).
\end{eqnarray*}  
where for any fixed $j,\; \lim_{r}\epsilon_{j}(r) \ra \infty.$
Taking the limit, it does not necessarily follow that $|\alpha_{r}|$ is superquadratic in $r.$  For example, if we define the functions $\epsilon_{j}(r)=1$ if  $r\leq j$ and $\epsilon_{j}(r)=r$ otherwise.  Notice that $\min^{r}_{j=1}(\epsilon_{j}(r))=1,$ and hence it merely follows that $|\alpha_{r}|$ can be bounded below by $r^{2}.$
\end{ex}

Nonetheless, the potential problem highlighted in Example \ref{ex:degenerate} will be avoided  by using the periodicity of $\gamma$ in conjunction with a contradiction argument.  

As in the proof of Lemma \ref{lem:scatleastquad} let $\gamma$ be a contracting quasi-geodesic, let $\alpha_{r}$ be a sequence of detour paths avoiding balls $B_{r}(\gamma(0)),$ and let $z^{jc}_{r}$ denote fixed points on $\alpha_{r}$ which have nearest point projections to $\gamma(jc).$  Then, for all $ jc \in \Z c$ we obtain sequences of points $\overline{z^{jc}}=\{z^{jc}_{r}\}_{r=2c|j|}^{\infty},$ and similarly sequences of quasi-geodesic paths $\overline{\sigma^{jc}}=\{\sigma^{jc}_{r}\}_{r=2c|j|}^{\infty}.$  Let $\tau^{jc}_{r}$ denote the restriction of the quasi-geodesics $\sigma^{jc}_{r}$ to the intersection $\sigma^{jc}_{r} \cap B_{r/2}(\gamma(jc)),$ and by abuse of notation refer to the endpoints of $\tau^{jc}_{r}$ by $z^{jc}_{r}$ and $z^{(j+1)c}_{r}.$  In fact, by even further abuse of notation, let $\{z^{jc}_{r}\}_{r}$ represent any sequence of points of distance $r/2$ from $\gamma$ such that the nearest point projection of $z^{jc}_{r}$ onto $\gamma$ is $\gamma(jc),$ and similarly, let $\tau^{jc}_{r}$ denote the quasi-geodesic between consecutive points $z^{jc}_{r}$ and $z^{(j+1)c}_{r},$ given by the concatenation: $$\tau^{jc}_{r}=:[z^{jc}_{r}, \gamma(jc)] \cup [\gamma(jc), \gamma((j+1)c)] \cup [\gamma((j+1)c), z^{(j+1)c}_{r}].$$     

\begin{lem} \label{lem:periodic}
With the notation from above, assume in addition that $\gamma$ is a periodic quasi-geodesic such that for all fixed $j,$ the sequence of quasi-geodesic segments $\{\tau^{jc}_{r}\}_{r}$ has divergence which is superlinear in $r,$ the natural numbers.  Then, $\gamma$ has superquadratic divergence.  Similarly, the same conclusion holds if $\gamma$ is a periodic quasi-geodesic such that there is a constant $C$ such that for any fixed $j,$ and any consecutive sequence of sequences of quasi-geodesic segments $$\{\tau^{jc}_{r}\}_{r}, \{\tau^{(j+1)c}_{r}\}_{r},..., \{\tau^{(j+C)c}_{r}\}_{r}$$ with each one beginning from the terminal point of the previous one, at least one of the sequences of quasi-geodesic segments $\{\tau^{(j+m)c}_{r}\}_{r}$ in the list has divergence which is superlinear in $r.$
\end{lem}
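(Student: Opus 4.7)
The plan is to argue by contradiction. Suppose $\gamma$ has at most quadratic divergence, so there exist a constant $K>0$ and, for arbitrarily large $r$, detour paths $\alpha_r$ from $\gamma(-r)$ to $\gamma(r)$ avoiding $B_{\epsilon r}(\gamma(0))$ with $|\alpha_r|\leq Kr^2$. The strategy is to slice $\alpha_r$ into the sub-arcs $\beta^{jc}_r$ joining consecutive projection points $z^{jc}_r$ and $z^{(j+1)c}_r$, recognize almost all of them as legitimate detours for the $\tau^{jc}_r$, and then use periodicity to promote the superlinear per-$j$ lower bound into a uniform one and sum.

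First, since the $\beta^{jc}_r$ concatenate to give $\alpha_r$, I would note that $\sum_j|\beta^{jc}_r|\leq|\alpha_r|\leq Kr^2$. Next, I would fix a small $\delta>0$ (depending only on $\epsilon$ and $c$) and observe that whenever $|j|\leq \delta r/c$, the midpoint of $\tau^{jc}_r$—which lies within $|jc|+c/2$ of $\gamma(0)$—is contained in $B_{\epsilon r/2}(\gamma(0))$. Consequently, a ball of radius $\epsilon r/4$ around the midpoint of $\tau^{jc}_r$ is contained in $B_{\epsilon r}(\gamma(0))$, and since $\alpha_r$ avoids the latter, $\beta^{jc}_r$ avoids the former. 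Thus $\beta^{jc}_r$ qualifies as a detour for $\tau^{jc}_r$ in the sense of Definition \ref{defn:divergence} with scale parameter comparable to $r$.

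Now I would invoke the periodicity of $\gamma$: a period-translation carries $\tau^0_r$ to (an isometric copy of) $\tau^{jc}_r$ up to bounded error, so the divergence functions of $\{\tau^{jc}_r\}_r$ agree across $j$ up to uniform multiplicative and additive constants. The superlinear-divergence hypothesis thus supplies a single function $\phi(r)\to\infty$ such that the divergence of each $\{\tau^{jc}_r\}_r$ is at least $r\phi(r)$ for all sufficiently large $r$. Therefore $|\beta^{jc}_r|\geq r\phi(r)$ for every $|j|\leq\delta r/c$. Summing over the roughly $2\delta r/c$ admissible indices yields
\[
|\alpha_r|\;\geq\;\sum_{|j|\leq \delta r/c}|\beta^{jc}_r|\;\geq\;\frac{2\delta}{c}\,r^2\phi(r),
\]
which is superquadratic in $r$, contradicting $|\alpha_r|\leq Kr^2$. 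This proves the first statement.

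For the second statement the argument is identical except that the admissible indices are grouped into consecutive blocks of length $C$: by hypothesis, each block contains at least one index whose segment-sequence has superlinear divergence, and by periodicity the rate is uniform, so each block contributes at least $r\phi(r)$ to $|\alpha_r|$; with $\sim 2\delta r/(cC)$ blocks the total is again superquadratic. The main difficulty I anticipate is making the periodicity transfer precise: because $\gamma$ is only a periodic \emph{quasi}-geodesic, the claim that the divergence rates of $\{\tau^{jc}_r\}_r$ and $\{\tau^0_r\}_r$ coincide up to uniform constants requires carefully absorbing bounded errors from the period-translation into the contracting and quasi-isometry constants before concluding that a single $\phi(r)\to\infty$ works uniformly in $j$; this is precisely the mechanism which rules out the degenerate behavior illustrated by Example \ref{ex:degenerate}.
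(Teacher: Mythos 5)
Your proof by contradiction follows the same broad outline as the paper's, but the key step---asserting a single superlinear function $\phi$ that uniformly bounds from below the divergence of every sequence $\{\tau^{jc}_r\}_r$---does not follow from the reasons you give, and this is exactly the pitfall that Example~\ref{ex:degenerate} is designed to flag. Periodicity lets you reduce the indices $j$ modulo the period to finitely many, but for each fixed $j$ there is not a single sequence $\{\tau^{jc}_r\}_r$: the hypothesis quantifies over an uncountable family of sequences, one for each sequence of choices of the points $z^{jc}_r, z^{(j+1)c}_r$ at each scale $r$, and translation by the period merely carries the family for one $j$ onto the family for another. Knowing that each member of this family has superlinear divergence does not directly yield that the pointwise infimum over the family is superlinear, which is what a uniform $\phi$ amounts to. You correctly suspect that uniformity is the crux, but you locate the difficulty in bounded errors coming from the period translation, which is not where the problem is; it is in the quantifier over the family of admissible $\tau^0_r$.

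To close the gap one option is a diagonal extraction applied to the $j=0$ family: if the function $D(r)=\inf \mathrm{div}(\tau^0_r)$ (infimum over admissible choices) did not satisfy $D(r)/r\to\infty$, pick a subsequence $r_n$ with $D(r_n)\leq Mr_n$ together with near-optimal witnesses $\tau^0_{r_n}$, fill in the remaining $r$ arbitrarily, and the resulting sequence $\{\tau^0_r\}_r$ fails to have superlinear divergence, contradicting the hypothesis. The paper instead avoids establishing a uniform $\phi$ at all: after bounding $|\alpha_r|\leq Nr^2$ and concluding that $\min_{j}\epsilon_{j}(r)$ is bounded in $r$, it takes the minimizing index $j_{\min}(r)$ (which varies with $r$), translates the corresponding $\tau^{j_{\min}(r)c}_r$ back to base $0$ by the period, and thereby exhibits a concrete sequence $\{\tau^0_r\}_r$ with linear divergence---an immediate contradiction. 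Either route works, but as written your passage from ``each family member is superlinear'' to ``there is a uniform superlinear $\phi$'' is a genuine gap.
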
 
 
\begin{proof}
To simplify the exposition we will prove the first case.  The proof of the similar statement follows almost identically.  Fix a sequence of detour paths $\alpha_{r}$ and corresponding quasi-geodesics $\tau^{jc}_{r}.$  By assumption, for any fixed $j$ the divergence of the sequence $\tau^{jc}_{r}$ is superlinear, say $r \epsilon_{j}(r)$ where $\lim_{r} \epsilon_{j}(r) \ra \infty.$  We will prove the lemma by contradiction.  That is, assume there is a constant $N$ such that $\lim_{r} |\alpha_{r}|< r^{2}N.$  Since $\gamma$ is contracting, as in Lemma \ref{lem:scatleastquad} we have:
\begin{eqnarray*}
|\alpha_{r}|  \geq \sum_{j=1}^{\lfloor \frac{r}{c} \rfloor} d_{X \setminus B_{r}(\gamma(0)}(z_{r}^{(j-1)c},z_{r}^{jc}) \geq \sum_{j=1}^{\lfloor \frac{r}{c} \rfloor}  r \epsilon_{j}(r) \geq \frac{r^{2}}{c} \min^{r}_{j=1}(\epsilon_{j}(r))
\end{eqnarray*}
Putting things together, it follows that 
$$ \lim_{r} \min^{r}_{j=1}(\epsilon_{j}(r)) $$ is uniformly bounded.  (In the situation of Example \ref{ex:degenerate} the uniform bound was one). Set $\min^{r}_{j=1}(\epsilon_{j}(r)) = \epsilon_{j_{min}}(r).$  Then for all values of $r\in \N$ we can use the periodicity of $\gamma$ to translate the points $z^{j_{min}c}_{r}$ to points $z^{0}_{r},$ and correspondingly the quasi-geodesics $\tau^{j_{min}c}_{r}$ to quasi-geodesics $\tau^{0}_{r}.$  After translation, we have a sequence of quasi-geodesic segments $\{\tau^{0}_{r}\}_{r}$ with linear divergence.  This is a contradiction to the hypotheses of the theorem and hence completes the proof by contradiction.
\end{proof} 

Next, consider the following Lemma of \cite{rafischleimer}, which we will use in the construction of a quasi-geodesic with superquadratic divergence in $\mathcal{P}(S_{2,1}):$
\begin{lem} [\cite{rafischleimer} Theorem 2.1] \label{lem:embedding} For any surface $S_{g,n}$ there exists an isometric embedding $i: \mathcal{C}(S_{g,n}) \ra \mathcal{C}(S_{g,n+1})$ such that $\pi \circ i$ is the identity map, where $\pi\co\mathcal{C}(S_{g,n+1}) \ra \mathcal{C}(S_{g,n})$ is given by forgetting about the puncture.
\end{lem}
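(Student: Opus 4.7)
The plan is to construct the section $i$ explicitly and verify the isometric embedding property by establishing both directions of the inequality separately. I would begin by fixing a point $p \in S_{g,n}$ so that $S_{g,n+1} \cong S_{g,n} \setminus \{p\}$ together with a reference hyperbolic metric on $S_{g,n}$ chosen so that $p$ lies off the geodesic representative of every essential simple closed curve; such a $p$ exists because the union of all simple geodesic representatives is a countable union of one-dimensional submanifolds, hence of measure zero. For each vertex $\alpha \in \mathcal{C}(S_{g,n})$, I would define $i(\alpha)$ to be the isotopy class in $S_{g,n+1}$ of the geodesic representative of $\alpha$ with respect to this fixed metric. Uniqueness of geodesic representatives on hyperbolic surfaces makes $i$ well-defined on vertices, and by construction $\pi \circ i$ is the identity on $\mathcal{C}(S_{g,n})$.

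Next, the inequality $d_{\mathcal{C}(S_{g,n})}(\alpha,\beta) \leq d_{\mathcal{C}(S_{g,n+1})}(i(\alpha),i(\beta))$ would follow from the observation that the forgetting map $\pi$ is distance nonincreasing when applied to curves whose $\pi$-images remain essential: a path of disjoint curves in $\mathcal{C}(S_{g,n+1})$ projects to a path of disjoint (essential, by the section property) curves in $\mathcal{C}(S_{g,n})$ of at most the same length. Combined with $\pi \circ i = \mathrm{id}$, this yields the stated lower bound.

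For the reverse inequality $d_{\mathcal{C}(S_{g,n+1})}(i(\alpha),i(\beta)) \leq d_{\mathcal{C}(S_{g,n})}(\alpha,\beta)$, I would take a geodesic $\alpha = \gamma_0, \gamma_1, \ldots, \gamma_k = \beta$ in $\mathcal{C}(S_{g,n})$ and realize all $\gamma_j$ simultaneously by their hyperbolic geodesic representatives; consecutive $\gamma_j, \gamma_{j+1}$ are disjoint in isotopy, hence their geodesic representatives are genuinely disjoint on $S_{g,n}$. Because $p$ was arranged to lie off of every simple closed geodesic, each $\gamma_j$ descends to an essential simple closed curve $i(\gamma_j)$ on $S_{g,n+1}$, and adjacent lifts remain disjoint there. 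Thus $i(\gamma_0), \ldots, i(\gamma_k)$ is a path of length $k$ in $\mathcal{C}(S_{g,n+1})$ connecting $i(\alpha)$ to $i(\beta)$, which gives the desired upper bound and completes the proof of isometry.

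The main obstacle will be pinning down the genericity of $p$ and the compatibility of the geodesic lift with disjointness. Once one verifies that the complement of all simple geodesics in $S_{g,n}$ is nonempty and selects $p$ therein, the transfer of disjointness from $S_{g,n}$ to $S_{g,n+1}$ is automatic. A technical subtlety worth flagging is that the section $i$ need not be equivariant with respect to the natural mapping class group actions, because the choice of $p$ breaks the symmetry via the point-pushing subgroup from Subsection \ref{subsec:ppush}; however, equivariance is not required for isometry, only the well-definedness of $i$ on vertices and the preservation of disjointness along a single geodesic path, both of which are supplied by the hyperbolic-metric construction.
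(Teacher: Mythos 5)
The paper itself gives no proof of this lemma---it is a direct citation to Rafi--Schleimer---so the only thing to evaluate is the internal correctness of your argument. Your skeleton is the right one and matches the standard construction: pick a hyperbolic metric on $S_{g,n}$, choose $p$ in the complement of the union of all simple closed geodesics (a set of measure zero by Birman--Series), define $i(\alpha)$ to be the class of the geodesic representative viewed in $S_{g,n}\setminus\{p\}$, and verify the two inequalities. The upper bound is correct as you wrote it: geodesic representatives of disjointly-isotopic curves are genuinely disjoint, they avoid $p$, and they remain essential and pairwise non-isotopic after puncturing, so a $\mathcal{C}(S_{g,n})$-geodesic lifts to a path of the same length.

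The lower bound, however, has a genuine gap. You assert that a geodesic $\delta_0,\ldots,\delta_k$ in $\mathcal{C}(S_{g,n+1})$ from $i(\alpha)$ to $i(\beta)$ projects under $\pi$ to a path of \emph{essential} curves, and you justify essentiality ``by the section property.'' But $\pi\circ i=\mathrm{id}$ controls only the endpoints $\delta_0$ and $\delta_k$; it says nothing about the interior vertices, which need not lie in the image of $i$. In fact $\pi$ is only a partially defined map: an essential curve $\delta_j\subset S_{g,n+1}$ that bounds a twice-punctured disk containing $p$ together with another puncture $q$ of the original surface projects to a curve bounding a once-punctured disk about $q$, which is \emph{peripheral} in $S_{g,n}$ under the paper's conventions. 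When such a vertex occurs, you cannot simply delete it: the neighbors $\delta_{j-1},\delta_{j+1}$ are each disjoint from the twice-punctured disk but may intersect one another and can a priori fill $S_{g,n}$ after forgetting $p$, so that $d_{\mathcal{C}(S_{g,n})}(\pi(\delta_{j-1}),\pi(\delta_{j+1}))\geq 3$ even though $d_{\mathcal{C}(S_{g,n+1})}(\delta_{j-1},\delta_{j+1})=2$. A surgery or rerouting argument is needed here that you have not supplied. Note that this issue is vacuous exactly when $n=0$: with no second puncture available, every essential curve of $S_{g,1}$ has essential image in $S_g$, so your proof does cover the case $S_{2,0}\to S_{2,1}$ actually used in this paper, but it does not prove the lemma in the stated generality for $n\geq 1$.
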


%

Fix $\bar{\alpha}_{0} \in \mathcal{C}_{sep}(S_{2,0}),$ and let $\bar{f}$ be a pseudo-Anosov axis in $\mathcal{C}(S_{2,0})$ containing the curve $\bar{\alpha}_{0}.$  Furthermore, assume that $|\bar{f}(\bar{\alpha}_{0}) \cap \bar{\alpha}_{0}| =4.$  See Figure \ref{fig:superlinear2} for an example.

\begin{figure}[htpb]
\centering
\includegraphics[height=5 cm]{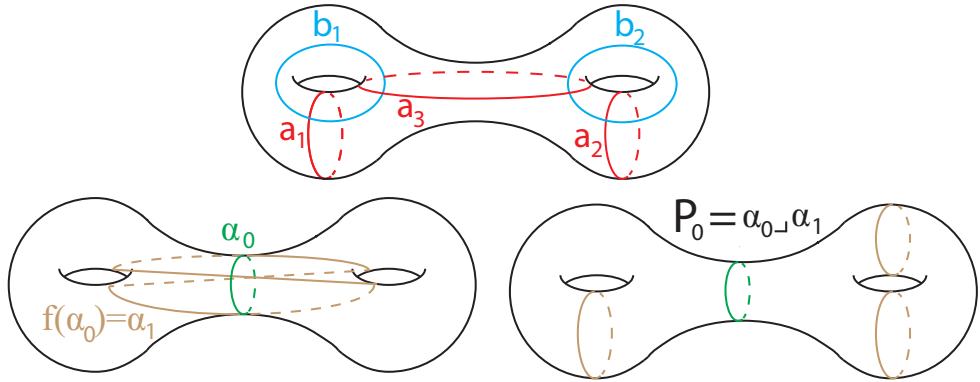}
\caption[A point pushing pseudo-Anosov map.]{$\bar{f}=T_{a_{3}} T^{-1}_{b_{2}}T^{-1}_{b_{1}} T_{a_{2}}T_{a_{1}}$ is a pseudo-Anosov mapping class.  Note that in the lower left figure $|\bar{f}(\bar{\alpha}_{0}) \cap \bar{\alpha}_{0}| =4.$  Moreover, in the lower right figure note that since $\bar{\alpha}_{0}$ and $\bar{f}(\bar{\alpha}_{0})$ are different separating curves, by topological considerations $\bar{\alpha}_{0} \lrcorner \bar{f}(\bar{\alpha}_{0})$ is a pants decomposition.  }\label{fig:superlinear2}
\end{figure}

Denote the separating curve $\bar{f}^{i}(\bar{\alpha}_{0})$ by $\bar{\alpha}_{i}$ for all $ i \in \Z.$  Since $\forall i \ne j,$ $\bar{\alpha}_{i},\bar{\alpha}_{j}$ are in different separating curves of $\mathcal{C}_{sep}(S_{2,0}),$ by topological considerations it follows that $\bar{\alpha}_{i} \lrcorner \bar{\alpha}_{j}$ can be coarsely identified with a pants decomposition of $S_{2,0}.$  In particular, for all $i \in \Z,$ let $\bar{P}_{i}$ denote a fixed pants decomposition of the form $\bar{\alpha}_{i} \lrcorner \bar{\alpha}_{i+1}.$  Let $\bar{\gamma}_{n}$ denote a piecewise geodesic path in the pants complex traveling through the pairs of pants $\bar{P}_{-n}, ...,\bar{P}_{0}, ..., \bar{P}_{n}.$  Moreover, let $\bar{\gamma}$ denote the limit of the paths $\bar{\gamma}_{n}.$  Note that we can assume $\bar{f}^{i}(\bar{P}_{j})=\bar{P}_{i+j},$ and hence $\bar{f}$ acts by translations on the path $\bar{\gamma}.$  It follows that $\bar{\gamma} \in \mathcal{P}(S_{2,0})$ is a contracting (hence Morse) quasi-geodesic as it is the axis of a pseudo-Anosov mapping class \cite{behrstock,bmm2}.  In particular, it follows that in every asymptotic cone $P_{\omega}(S_{2,0})$ any two distinct points on $\bar{\gamma}_{\omega}$ are separated by a cut-point, see for instance  \cite{drutumozessapir} Proposition 3.24.  In particular, it follows that any region of the form $\qbar{\alpha} \subset $ has a unique nearest point on $\bar{\gamma}_{\omega}$ whose removal separates the region $\qbar{\alpha}$ from the two resulting components of $\bar{\gamma}_{\omega}.$  Furthermore, by construction in every asymptotic cone $P_{\omega}(S_{2,0})$ any two distinct points on $\bar{\gamma}_{\omega}$ are not contained in the ultralimit of a natural product region of the form $\qbar{\alpha}$ for any $\bar{\alpha} \in \mathcal{C}^{\omega}_{sep}(S_{2,0}).$ 

Using the isometric embedding $i \co \mathcal{C}(S_{2,0}) \ra \mathcal{C}(S_{2,1})$ of Lemma \ref{lem:embedding}, we can lift all the aforementioned structure from $S_{2,0}$ to $S_{2,0}.$  Specifically, we can lift the separating curves $\bar{\alpha}_{i}$ to separating curves $\alpha_{i} \in \mathcal{C}_{sep}(S_{2,1}),$ the pants decompositions $\bar{P}_{i}$ to pants decompositions $P_{i} \in \mathcal{P}(S_{2,1}),$ and the periodic quasi-geodesic $\bar{\gamma}$ with bounded combinatorics to a periodic geodesic $\gamma \subset \mathcal{P}(S_{2,1})$ which also has bounded combinatorics as it too is the axis of a pseudo-Anosov map $f$ which is a lift of $\bar{f}.$  It follows that in every asymptotic cone $P_{\omega}(S_{2,1})$ any two distinct points on $\gamma_{\omega}$ are similarly separated by a cutpoint and are not contained in the ultralimit of a common subset of the form $\xbar{\alpha}$ for $\bar{\alpha}$ any $\bar{\alpha} \in \mathcal{C}^{\omega}_{sep}(S_{2,0}).$ 
 
Presently we will prove Theorem \ref{thm:atleastsuperquad} by showing that this periodic and contracting quasi-geodesic $\gamma \subset \mathcal{P}(S_{2,1})$ has superquadratic divergence.  
     
%
%
\begin{figure}[htpb]
\centering
\includegraphics[height=5.5 cm]{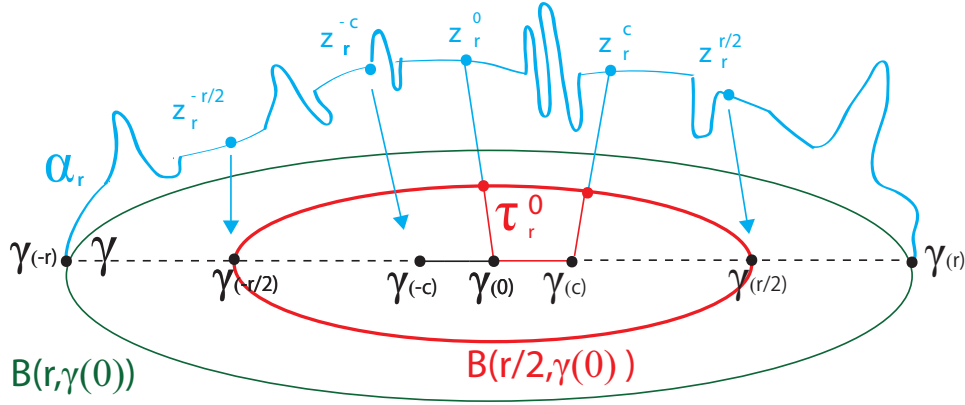}
\caption[$\mathcal{T}(S_{2,1})$ has superquadratic divergence.]{The detour path $\alpha_{r}$ connects $\gamma_{-r}$ while avoiding $B(r,\gamma(0)).$  The points $z_{r}^{jc} \in \alpha_{r}$ project to $\gamma(jc)$ under the nearest point projection onto $\gamma.$  By Lemma \ref{lem:quasigeo},  $\tau^{jc}_{r} = [z_{r}^{jc},\gamma(jc)]\cup [\gamma(jc),\gamma((j+1)c)] \cup [\gamma((j+1)c),z_{r}^{(j+1)c}]$ is a quasi-geodesic with uniform constants.  Moreover, the sequence of quasi-geodesics $\{\tau^{j}_{r}\}_{r}$ almost always has superlinear divergence.}\label{fig:superquadratic2}
\end{figure}

\begin{proof} [Proof of Theorem \ref{thm:atleastsuperquad}]
In light of Lemma \ref{lem:periodic} in order to prove the theorem it suffices to show that the above constructed periodic and contracting quasi-geodesic $\gamma \subset \mathcal{P}(S_{2,1})$ satisfies the hypothesis of Lemma \ref{lem:periodic}.  Assume $\gamma$ does not satisfy the hypothesis of Lemma \ref{lem:periodic}.  Specifically, for any positive integer $k$ there exists some consecutive sequence of sequences of quasi-geodesic segments
$$\{ \tau_{r}^{j_{k}c}\}_{r}, \{\tau_{r}^{(j_{k}+1)c}\}_{r}, .., \{\tau_{r}^{(j_{k}+k)c}\}_{r}$$ each one beginning from the terminal point of the previous one, such that for each fixed $m \in \{0,...,k\},$ the sequence of quasi-geodesic segments $\{\tau_{r}^{(j_{k}+m)c}\}_{r}$ in the list has divergence linear in $r,$ the natural numbers.   

  

Since the sequence of geodesics $[z_{r}^{(j_{k}+m)c},\gamma((j_{k}+m)c)]$ are contained as subsegments of $\tau_{r}^{(j_{k}+m)c}$ with roughly half the total length, and because $\tau_{r}^{(j_{k}+m)c}$ have linear divergence, it follows that $[z_{r}^{(j_{k}+m)c},\gamma((j_{k}+m)c)]$ also have linear divergence.  By Lemma \ref{lem:superlinear}, in the asymptotic cone $Cone_{\omega}(\mathcal{P}(S_{2,1}),\gamma(j_{k}c),(rc))$ the ultralimit of $[z_{r}^{(j_{k}+m)c},\gamma((j_{k}+m)c)]$ is nontrivial and does not have any cut-points for all $m \in \{0,...,k\}.$  By Theorem \ref{thm:zeropieces} it follows that the ultralimits of the form $[z_{r}^{(j_{k}+m)c},\gamma((j_{k}+m)c)]$ are completely contained in subsets of the form $\xbar{\alpha}$ for some unique $\overline{\alpha}$ an element of $\mathcal{C}_{sep}(S_{2,0})^{\omega}.$  

Consider the sequence of geodesic quadrilaterals with vertices given by $$\{z_{r}^{(j_{k}+m)c},z_{r}^{(j_{k}+m+1)c}, \gamma((j_{k}+m+1)c), \gamma((j_{k}+m)c)\}.$$  The sequence of edges $[\gamma((j_{k}+m)c),\gamma((j_{k}+m+1)c)]$ have bounded (constant) length.  On the other hand, the three remaining sequence of edges all have lengths growing linearly in $r$ and have linear divergence.  As in Theorem \ref{thm:zeropieces} it follows that in the same asymptotic cone $$Cone_{\omega}(\mathcal{P}(S_{2,1}),\gamma(j_{k}c),(rc)),$$ the ultralimits of the sequences of quadrilaterals and in particular the edges of them $$[z_{r}^{(j_{k}+m)c},\gamma((j_{k}+m)c)],[z_{r}^{(j_{k}+m+1)c},\gamma((j_{k}+m+1)c)]$$ are completely contained in a common subset of the form $\xbar{\alpha}.$  Repeating this argument and using the fact that adjacent pairs of ultralimits of quadrilaterals have nontrivial intersection in the asymptotic cone, by Corollary \ref{cor:intpieces} it follows that the consecutive string of sequences $$[z_{r}^{j_{k}c},\gamma(j_{k}c)],...,[z_{r}^{(j_{k}+k)c},\gamma((j_{k}+k)c)]$$ have ultralimits in the asymptotic cone $Cone_{\omega}(\mathcal{P}(S_{2,1}),\gamma(j_{k}c),(rc)),$ completely contained in a common subset of the form $\xbar{\alpha}.$ 
 
Now consider the asymptotic cone $Cone_{\omega}(\mathcal{P}(S_{2,1}),\gamma(j_{3r}),(rc)).$  In particular, consider the distinct points in the asymptotic cone with representative sequences $$\{z^{j_{3r}c}_{rc}\}_{r} \mbox{and} \{z^{(j_{3r}+3r)c}_{rc}\}_{r}.$$  By the argument above, in conjunction with appropriate translations along $\gamma,$ we have seen that these two points in the cone have representative sequences that identify them as being contained in a common subset of the form $\xbar{\alpha}.$  Furthermore, the points $$\{z^{(j_{3r}c)}_{rc}\}_{r} \mbox{, and } \{z^{((j_{3r}+3r)c)}_{rc}\}_{r}$$ are of distance at most (in fact exactly) one from the distinct points with representatives $$\{\gamma(j_{3r}c)\}_{r}\mbox{, and }\{\gamma((j_{3r}+3r)c)\}_{r} $$ on the ultralimit $\gamma_{\omega},$ respectively.  Projecting this situation from $S_{2,1}$ to $S_{2,0},$ we obtain points $\{\bar{z}^{(j_{3r}c)}_{rc}\}_{r}, \{\bar{z}^{((j_{3r}+3r)c)}_{rc}\}_{r}$ which are of distance at most one (the projection is 1-Lipschitz) from the distinct points with representatives $\{\bar{\gamma}(j_{3r}c)\}_{r},\{\bar{\gamma}((j_{3r}+3r)c)\}_{r} $ on the ultralimit $\bar{\gamma}_{\omega},$ respectively.  On the other hand, the points $\{\bar{z}^{(j_{3r}c)}_{rc}\}_{r}, \{\bar{z}^{((j_{3r}+3r)c)}_{rc}\}_{r}$ are in a common subset of the form $\qbar{\alpha}.$  It follows that there is a path $\rho_{\omega}$ connecting the points  $\{\bar{\gamma}(j_{3r}c)\}_{r},\{\bar{\gamma}((j_{3r}+3r)c)\}_{r} $ which travels for distance at most two (namely $\{[\bar{\gamma}(j_{3r}c), \bar{z}^{(j_{3r}c)}_{rc}]\}_{r}$ and $\{[\bar{\gamma}((j_{3r}+3r)c), \bar{z}^{(j_{3r}+3r)c}_{rc}]\}_{r}$ each of which has length at most one) outside of the region $\qbar{\alpha}.$  However, since in the asymptotic cone $Cone_{\omega}(\mathcal{P}(S_{2,1}),\gamma(j_{3r}c),(rc)),$ the points $\{\bar{\gamma}(j_{3r}c)\}_{r},$ and $\{\bar{\gamma}((j_{3r}+3r)c)\}_{r} $ are distance three apart and because any region of the form $\qbar{\alpha}$ has a unique nearest point on $\bar{\gamma}_{\omega}$ whose removal separates the region $\qbar{\alpha}$ from the two resulting components of $\bar{\gamma}_{\omega},$ this is a contradiction, thus completing the proof.
\end{proof}

\subsection{$\mathcal{T}(S_{2,1})$ has at most cubic divergence}
\label{subsec:atmostcubic} 
There is a strong relationship between the divergence of a metric space and its thickness.  Preliminarily, as a consequence of Lemma \ref{lem:superlinear} it follows that a geodesic metric space is thick of order zero if and only if the divergence of the space is linear.  More generally, considering the inductive nature of the definition of degree of thickness of a space, a natural conjecture is that the polynomial order of divergence of a sufficiently nice metric space - such as the pants complex - is equal to one plus the degree of thickness of the space, \cite{behrstockdrutu}.  Presently we record a theorem providing partial progress toward this conjecture.

\begin{thm} [\cite{behrstockdrutu} Corollary 4.17] \label{thm:atmost}
Let $X$ be a geodesic metric space which is thick of order $n,$ then the $div(X)$ is at most polynomial of order $n+1.$  
\end{thm}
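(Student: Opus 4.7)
The plan is to prove Theorem \ref{thm:atmost} by induction on the order of thickness $n$. The base case $n=0$ is essentially immediate from the equivalence of being wide with having linear divergence: if $X$ is thick of order zero, then no asymptotic cone of $X$ contains a cut-point, so by Lemma \ref{lem:superlinear} no sequence of geodesics in $X$ can have superlinear divergence, yielding $\operatorname{div}(X)$ at most linear (i.e.\ polynomial of order $n+1=1$), and the nontriviality clause in the definition of thick of order zero ensures that every point lies close to a bi-infinite quasi-geodesic so the divergence function is even defined.

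For the inductive step, assume the theorem holds for spaces that are thick of order at most $n-1$ and let $X$ be thick of order $n$ with witness subsets $\{P_\alpha\}$ as in Definition \ref{defn:thick}: each $P_\alpha$ is quasi-convex and thick of order at most $n-1$ in the induced metric, the $R_1$-neighborhoods cover $X$, and any two $P_\alpha, P_\beta$ can be chained through subsets whose consecutive $R_2$-neighborhoods have infinite-diameter (coarsely connected) intersection. Fix a bi-infinite quasi-geodesic $\gamma$ and a radius $r$; I want to build a detour from $\gamma(-r)$ to $\gamma(r)$ avoiding $B_{\epsilon r}(\gamma(0))$ whose length is bounded by a polynomial of degree $n+1$ in $r$.

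The construction is to discretize $\gamma|_{[-r,r]}$ into $O(r)$ sample points $\gamma(t_0),\gamma(t_1),\ldots,\gamma(t_m)$ (with $m$ linear in $r$ and the spacing a suitable constant), assign to each sample point $\gamma(t_j)$ an $R_1$-close piece $P_{\alpha_j}$, and then for each consecutive pair $(P_{\alpha_j}, P_{\alpha_{j+1}})$ use property (iii) to find a point $q_j$ lying in $N_{R_2}(P_{\alpha_j})\cap N_{R_2}(P_{\alpha_{j+1}})$ at distance at least $\epsilon r + C$ from $\gamma(0)$; because these coarse intersections have infinite diameter and are coarsely connected, such an escape point $q_j$ exists and can be reached from $\gamma(t_j)$ within $P_{\alpha_j}$ by a quasi-geodesic (using the quasi-convexity in (i)). Inside each $P_{\alpha_j}$, concatenate the path from $q_{j-1}$ to $\gamma(t_j)$ to $q_j$, entirely within $P_{\alpha_j}$ and avoiding the ball. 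By the inductive hypothesis applied to $P_{\alpha_j}$ (which is thick of order at most $n-1$), each such in-piece detour has length at most polynomial of order $n$ in $r$, and summing over the $O(r)$ pieces gives total length $O(r)\cdot O(r^n) = O(r^{n+1})$, as desired.

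The main obstacle — and the step needing the most care — will be the simultaneous requirement that the escape points $q_j$ live in $N_{R_2}(P_{\alpha_j})\cap N_{R_2}(P_{\alpha_{j+1}})$, lie outside $B_{\epsilon r}(\gamma(0))$, and are reachable from $\gamma(t_j)$ by a path of length controlled by the in-piece divergence of $P_{\alpha_j}$ at the correct radius (roughly $\epsilon r$). Here one must verify that the induced restriction metric on $P_\alpha$ is coarsely comparable to the ambient metric on the relevant scales (this uses the quasi-convexity, which is exactly why Definition \ref{defn:thick} builds in strong thickness rather than the weaker variant of \cite{bdm}), and one must use the coarse connectedness of the consecutive intersections to actually slide along those intersections from any point to one at large enough distance from $\gamma(0)$. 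Once those technical estimates are in hand, the polynomial bookkeeping above is routine.
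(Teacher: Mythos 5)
The paper only cites this result from Behrstock--Dru\c tu (\cite{behrstockdrutu}, Corollary 4.17) and supplies no proof of its own, so there is no in-paper argument to compare against. Your inductive strategy --- base case via Lemma \ref{lem:superlinear}, inductive step that routes a detour through the pieces and sums their lower-order divergences --- is the natural one and surely captures the overall shape of Behrstock--Dru\c tu's argument; the base case is fine. The inductive step, however, as you have written it, has a genuine gap in how you invoke property (iii) of Definition \ref{defn:thick}.

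You assign to each sample point $\gamma(t_j)$ a nearby piece $P_{\alpha_j}$ and then assert that ``for each consecutive pair $(P_{\alpha_j}, P_{\alpha_{j+1}})$ use property (iii) to find a point $q_j$ lying in $N_{R_2}(P_{\alpha_j}) \cap N_{R_2}(P_{\alpha_{j+1}})$ at distance at least $\epsilon r + C$ from $\gamma(0)$.'' But property (iii) says nothing about two arbitrary given pieces: it only says that any two pieces can be joined by a \emph{chain} $P_{\gamma_1}, \dots, P_{\gamma_m}$ whose \emph{successive entries} have infinite-diameter coarse intersection, and the length $m$ of that chain is not controlled by the definition. Two pieces that happen to sit within $R_1$ of consecutive sample points $\gamma(t_j), \gamma(t_{j+1})$ may have bounded, or even empty, coarse intersection; closeness of the sample points only forces their $R_1$-neighborhoods to contain a pair of points at distance $O(1)$, which is far weaker. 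Since the final bookkeeping multiplies the number of pieces traversed against the per-piece divergence bound, an uncontrolled chain length destroys the $O(r)\cdot O(r^n) = O(r^{n+1})$ estimate. A correct argument either needs an independent lemma bounding the relevant chain length (roughly, that along a quasi-geodesic the covering pieces can be chosen so that geometrically consecutive ones do thickly intersect --- which does not follow formally from the definition and must be proved), or must organize the detour so that it does not reduce to direct pairwise thick intersection of adjacent pieces. A more minor issue: as written your in-piece detour ``from $q_{j-1}$ to $\gamma(t_j)$ to $q_j$'' routes through $\gamma(t_j)$, which for $t_j$ near $0$ lies inside the ball to be avoided; presumably you intend a detour within $P_{\alpha_j}$ \emph{around} that region, but the phrasing as given is not correct.
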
  
 In particular, combining Theorems \ref{thm:thick2} and \ref{thm:atmost}, we have:
 \begin{cor} \label{cor:atmost} $\mathcal{T}(S_{2,1})$ has at most cubic divergence.  
\end{cor}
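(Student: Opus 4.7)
The plan is to deduce Corollary \ref{cor:atmost} as an immediate consequence of the two previously established results, namely that $\mathcal{T}(S_{2,1})$ is thick of order two (Theorem \ref{thm:thick2}) and that any geodesic metric space which is thick of order $n$ has divergence bounded above by a polynomial of degree $n+1$ (Theorem \ref{thm:atmost}, cited from \cite{behrstockdrutu}). Since both thickness and polynomial divergence bounds are quasi-isometry invariants, and since $\mathcal{T}(S_{2,1})$ is a geodesic metric space (being CAT(0)), both hypotheses of Theorem \ref{thm:atmost} are satisfied.

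The argument proceeds in essentially one step: apply Theorem \ref{thm:atmost} to $\mathcal{T}(S_{2,1})$ with $n=2$, which yields divergence at most polynomial of order $n+1 = 3$, that is, at most cubic. One could equivalently run the argument on the quasi-isometric model $\mathcal{P}(S_{2,1})$ and transfer the conclusion back to $\mathcal{T}(S_{2,1})$ via Theorem \ref{thm:brock}, using the fact that divergence is preserved up to the polynomial degree under quasi-isometry.

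There is no substantive obstacle here: the heavy lifting has already been done in the proof of Theorem \ref{thm:thick2} (establishing the upper bound on the order of thickness via the Brock--Masur chaining construction recalled in Section \ref{sec:atmosttwo}) and in the independent theorem of Behrstock--Dru\c{t}u relating thickness to divergence. Combined with the superquadratic lower bound of Theorem \ref{thm:atleastsuperquad}, this sandwiches $\mathcal{T}(S_{2,1})$ between superquadratic and cubic divergence, which is precisely the content of Theorem \ref{thm:atleastsuperquad}. The only expository care needed is to point out explicitly that the hypotheses of Theorem \ref{thm:atmost} apply (i.e. that $\mathcal{T}(S_{2,1})$ is geodesic and thick of order $2$), since no further calculation or construction is required beyond citing the two prior results.
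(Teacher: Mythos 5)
Your proposal is correct and takes exactly the same route as the paper: the corollary is obtained by combining Theorem \ref{thm:thick2} (that $\mathcal{T}(S_{2,1})$ is thick of order two) with Theorem \ref{thm:atmost} (thick of order $n$ implies divergence at most polynomial of degree $n+1$), with no further argument required.
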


\begin{rem} In light of Theorem \ref{thm:atleastsuperquad}, Theorem \ref{thm:atmost} provides an alternative proof of Theorem \ref{thm:thick2}, namely that $\mathcal{T}(S_{2,1})$ is thick of order two. 
\end{rem}

\subsection{Divergence of Teichm\"uller spaces}
Just as the proof of Theorem \ref{thm:thick2} uniquely characterizes $\mathcal{T}(S_{2,1})$ among all Teichm\"uller spaces and completes the thickness classification of Teichm\"uller spaces, so too Theorem \ref{thm:atleastsuperquad} and Corollary \ref{cor:atmost} also uniquely characterize $\mathcal{T}(S_{2,1})$ among all Teichm\"uller spaces and (almost) complete the divergence classification of all Teichm\"uller spaces.  See Table \ref{table:div}.  

Notice that the Teichm\"uller spaces of low complexity surfaces that are either hyperbolic or relatively hyperbolic, perforce have at least exponential divergence.  It is immediate by observation that for complexity one surfaces the pants complex, or equivalently the Farey graph, has infinitely many ends.  On the other hand, it follows from recent work of \cite{gabai,rafischleimerrigid} that for complexity at least two surfaces, the pants complex is one ended and hence the divergence is in fact exponential.  Specifically, building off of work of Gabai in \cite{gabai}. Rafi-Schleimer in Proposition 4.1 of \cite{rafischleimerrigid} show that the curve complex is one ended for complexity at least two surfaces.  In particular, it follows that the same result holds for the corresponding pants complexes.


\begin{table}[!htb] 
\begin{center}
\begin{tabular}{|c||c|c| m{3cm}|c|c}
 $\vdots$ & \cellcolor[rgb]{0,1,1}$\vdots$& \cellcolor[rgb]{0,1,1}$\vdots$ & \cellcolor[rgb]{0,1,1} \center $\vdots$ &  \cellcolor[rgb]{0,1,1}$\vdots$ &\cellcolor[rgb]{0,1,1} $\iddots$  \\
\hline
$7$ &  \cellcolor[rgb]{0,1,1} quadratic& \cellcolor[rgb]{0,1,1} quadratic& \cellcolor[rgb]{0,1,1} \center quadratic & \cellcolor[rgb]{0,1,1} quadratic& \cellcolor[rgb]{0,1,1}  $\hdots$  \\
\hline
$6$ & \cellcolor[rgb]{1,1,0}  exponential &   \cellcolor[rgb]{0,1,1}   quadratic & \cellcolor[rgb]{0,1,1}  \center quadratic& \cellcolor[rgb]{0,1,1} quadratic &  \cellcolor[rgb]{0,1,1} $\hdots$ \\ 
\hline 
$5$ & \cellcolor[rgb]{1,1,0}  exponential &     \cellcolor[rgb]{0,1,1} quadratic& \cellcolor[rgb]{0,1,1}  \center quadratic& \cellcolor[rgb]{0,1,1} quadratic & \cellcolor[rgb]{0,1,1} $\hdots$   \\ 
\hline
$4$ &  \cellcolor[rgb]{1,.5,0} infinite & \cellcolor[rgb]{0,1,1} quadratic    & \cellcolor[rgb]{0,1,1}  \center quadratic & \cellcolor[rgb]{0,1,1} quadratic  & \cellcolor[rgb]{0,1,1} $\hdots$   \\
\hline
$3$ &    & \cellcolor[rgb]{1,1,0}  exponential & \cellcolor[rgb]{0,1,1} \center  quadratic & \cellcolor[rgb]{0,1,1} quadratic&  \cellcolor[rgb]{0,1,1} $\hdots$  \\
\hline
$2$ &    & \cellcolor[rgb]{1,1,0}  exponential & \cellcolor[rgb]{0,1,1}  \center quadratic   &\cellcolor[rgb]{0,1,1} quadratic&  \cellcolor[rgb]{0,1,1} $\hdots$    \\
\hline
$1$ &    &  \cellcolor[rgb]{1,.5,0} infinite &  \cellcolor[rgb]{.5,1,0} \center superquadratic yet  at most cubic & \cellcolor[rgb]{0,1,1} quadratic& \cellcolor[rgb]{0,1,1} $\hdots$  \\ 
\hline
$0$ &    &     & \cellcolor[rgb]{1,1,0} \center exponential  & \cellcolor[rgb]{0,1,1} quadratic& \cellcolor[rgb]{0,1,1} $\hdots$  \\ 
\hline
\hline
$n \uparrow \; g \rightarrow  $  & $0$  & $1$ &\center $2$  & $3$ & $\hdots$ \\  
\hline   
\end{tabular}
\end{center}
\caption[Divergence of Teichm\"uller spaces]{Divergence of Teichm\"uller spaces for all surfaces of finite type. }
\label{table:div}
\end{table}

\newpage
\bibliographystyle{amsplain}
\bibliography{../bib}

\providecommand{\bysame}{\leavevmode\hbox to3em{\hrulefill}\thinspace}
\providecommand{\MR}{\relax\ifhmode\unskip\space\fi MR }
\providecommand{\MRhref}[2]{%
  \href{http://www.ams.org/mathscinet-getitem?mr=#1}{#2}
}
\providecommand{\href}[2]{#2}
\begin{thebibliography}{10}

\bibitem{algomkfir}
Yael Algom-Kfir, \emph{Strongly contracting geodesics in outer space}, arXiv
  (2010), no.~0812.155.

\bibitem{behrstock}
Jason Behrstock, \emph{Asymptotic geometry of the mapping class group and
  teichm{\"u}ller space}, Geometry and Topology \textbf{10} (2006), 1523--1578.

\bibitem{behrstockcharney}
Jason Behrstock and Ruth Charney, \emph{Divergence and quasimorphisms of
  right-angled artin groups}, Mathematische Annalen (2011), 1--18.

\bibitem{behrstockdrutu}
Jason Behrstock and Cornelia Drutu, \emph{Divergence, thick groups, and short
  conjugators}, arXiv (2011).

\bibitem{bdm}
Jason Behrstock, Cornelia Drutu, and Lee Mosher, \emph{Thick metric spaces,
  relative hyperbolicity, and quasi-isometric rigidity}, Mathematische Annalen
  \textbf{344} (2009), no.~3, 543--595.

\bibitem{bds}
Jason Behrstock, Cornelia Drutu, and Mark Sapir, \emph{Median structures on
  asymptotic cones and homomorphisms into mapping class groups}, Proc. Lond.
  Math. Soc. (3) (2011), 503--554.

\bibitem{bkmm}
Jason Behrstock, Bruce Kleiner, Yair Minsky, and Lee Mosher, \emph{Geometry and
  rigidity of mapping class groups}, arXiv (2008).

\bibitem{behrstockminsky}
Jason Behrstock and Yair Minsky, \emph{Dimension and rank of mapping class
  groups}, Annals of Mathematics \textbf{167} (2008), 1055--1077.

\bibitem{bers1}
Lipman Bers, \emph{Spaces of degenerating riemann surfaces, discontinuous
  groups and riemann surface}, Ann. of Math. Stud., no.~79, Proc. Conf. Univ.
  Maryland, College Park, Princeton University Press, 1974, pp.~43--55.

\bibitem{bers2}
\bysame, \emph{An inequality for riemann surfaces, differential geometry and
  complex analysis}, pp.~87--93, Springer, Berlin, 1985.

\bibitem{bestvinafujiwara}
Mladen Bestvina and Koji Fujiwara, \emph{A characterization of higher rank
  symmetric spaces via bounded cohomology}, Geometric and Functional Analysis
  \textbf{19} (2009), 11--40.

\bibitem{birmanppush}
Joan Birman, \emph{Mapping class groups and their relationship to braid
  groups}, Communications on Pure and Applied Mathematics \textbf{22} (1969),
  213--238.

\bibitem{brock}
Jeffrey Brock, \emph{The weil-petersson metric and volumes of 3-dimensional
  hyperbolic convex cores}, Journal of the American Mathematical Society
  \textbf{16} (2003), no.~3, 495--535.

\bibitem{brockfarb}
Jeffrey Brock and Benson Farb, \emph{Curvature and rank of teichm{\"u}ller
  space}, American Journal of Mathematics \textbf{128} (2006), no.~1, 1--22.

\bibitem{brockmasur}
Jeffrey Brock and Howard Masur, \emph{Coarse and synthetic weil-petersson
  geometry: quasiflats, geodesics, and relative hyperbolicity}, Geometry and
  Topology \textbf{12} (2008), 2453--2495.

\bibitem{bmm1}
Jeffrey Brock, Howard Masur, and Yair Minsky, \emph{Asymptotics of
  weil-petersson geodesics i: Ending laminations, recurrence, and flows},
  Geometric and Functional Analysis \textbf{19} (2010), no.~5, 1229--1257.

\bibitem{bmm2}
\bysame, \emph{Asymptotics of weil-petersson geodesics ii: Bounded geometry and
  unbounded entropy}, Geometric and Functional Analysis \textbf{21} (2011),
  no.~4, 820--850.

\bibitem{charney}
Ruth Charney, \emph{Contracting boundaries of cat(0) spaces}, lecture at
  "3-manifolds, Artin Groups, and Cubical Geometry CBMS-NSF conference, New
  York, NY CUNY, August 2011.

\bibitem{drutu}
Cornelia Drutu, \emph{Quasi-isometry invariants and asymptotic cones}, Int. J.
  Algebra Comput. \textbf{12} (2002), 99--135.

\bibitem{drutumozessapir}
Cornelia Drutu, Shahar Mozes, and Mark Sapir, \emph{Divergence in lattices in
  semisimple lie groups and graphs of groups}, Trans. Amer. Math. Soc.
  \textbf{362} (2010), 2451--2505.

\bibitem{drutusapir}
Cornelia Drutu and Mark Sapir, \emph{Tree-graded spaces and asymptotic cones of
  groups. (with an appendix by denis osin and mark sapir)}, Topology
  \textbf{44} (2005), no.~5, 959--1058.

\bibitem{duchinrafi}
Moon Duchin and Kasra Rafi, \emph{Divergence of geodesics in teichm{\"u}ller
  space and the mapping class group}, Geometric and Functional Analysis
  \textbf{19} (2009), no.~3, 722--742.

\bibitem{primer}
Benson Farb and Dan Margalit, \emph{A primer on mapping class groups},
  Princeton University Press, 2011.

\bibitem{gabai}
David Gabai, \emph{Almost filling laminations and the connectivity of ending
  lamination space}, Geometry and Topology \textbf{13} (2009), 1017--1041.

\bibitem{gersten2}
Steve Gersten, \emph{Divergence in 3-manifold groups}, Geometric and Functional
  Analysis \textbf{4} (1994), no.~6, 633--647.

\bibitem{gersten}
\bysame, \emph{Quadratic divergence of geodesics in cat(0)-spaces}, Geometric
  and Functional Analysis \textbf{4} (1994), no.~1, 37--51.

\bibitem{gromov}
Mikhail Gromov, \emph{Hyperbolic groups, in "essays in group theory"}, MSRI
  Publications, vol.~8, pp.~75--263, Springer-Verlag, 1987.

\bibitem{gromov2}
\bysame, \emph{Asymptotic invariants of infinite groups, geometric group
  theory}, LMS Lecture Notes, vol. 182, Cambridge University Press, 1993.

\bibitem{hatcherthurston}
Allen Hatcher and William Thurston, \emph{A presentation for the mapping class
  group of a closed orientable surface}, Topology \textbf{19} (1980), no.~3,
  221--237.

\bibitem{kapovich}
Michael Kapovich, \emph{Lectures on the geometric group theory}, http://
  www.math.ucdavis.edu/~kapovich/eprints.html, 2003.

\bibitem{kapovitchleeb}
Michael Kapovich and Bernhard Leeb, \emph{3-manifold groups and nonpositive
  curvature}, Geometric and Functional Analysis \textbf{8} (1998), no.~5,
  841--852.

\bibitem{kra}
Irwin Kra, \emph{On the nielsen-thurston-bers type of some self-maps of riemann
  surfaces}, Acta Mathematica \textbf{146} (1981), no.~1, 231--270.

\bibitem{lenzhenrafitao}
Anna Lenzhen, Kasra Rafi, and Jing Tao, \emph{Bounded combinatorics and the
  lipschitz metric on teichm{\"u}ller space}, Geometriae Dedicata (2011).

\bibitem{macura}
Natasa Macura, \emph{Cat(0) spaces with polynomial divergence of geodesics.},
  arXiv (2011).

\bibitem{mangahas}
Johanna Mangahas, \emph{Uniform uniform exponential growth of subgroups of the
  mapping class group}, Geometric and Functional Analysis \textbf{19} (2010),
  no.~5, 1468--1480.

\bibitem{mm1}
Howard Masur and Yair Minsky, \emph{Geometry of the complex of curves i:
  Hyperbolicity}, Inventiones Mathematicae \textbf{138} (1999), no.~1,
  103--149.

\bibitem{mm2}
\bysame, \emph{Geometry of the complex of curves ii: Hierarchical structure},
  Geometric and Functional Analysis \textbf{10} (2000), no.~4, 902--974.

\bibitem{osin}
Denis Osin, \emph{Relatively hyperbolic groups: intrinsic geometry, algebraic
  properties, and algorithmic problems.}, Mem. amer. Math. Soc. \textbf{179}
  (2006), no.~843.

\bibitem{rafischleimer}
Kasra Rafi and Saul Schleimer, \emph{Covers and the curve complex}, Geometry
  and Topology \textbf{13} (2009), no.~4, 2141--2162.

\bibitem{rafischleimerrigid}
\bysame, \emph{Curve complexes are rigid}, Duke Math. J. \textbf{158} (2011),
  no.~2, 225--246.

\bibitem{schleimer}
Saul Schleimer, \emph{Notes on the complex of curves},
  http://www.warwick.ac.uk/~masgar/Maths/notes.pdf, 2005.

\bibitem{sultanmorse}
Harold Sultan, \emph{Hyperbolic quasi-geodesics in cat(0) spaces}, arXiv
  (2011).

\bibitem{sultanthesis}
\bysame, \emph{The asymptotic cone of teichmuller space: Thickness and
  divergence}, Ph.D. thesis, Columbia University, New York, NY, 2012.

\bibitem{wolpertcat}
Scott Wolpert, \emph{Geodesic length functions and the nielsen problem},
  Journal of Differential Geometry \textbf{25} (1987), 275--295.

\bibitem{yamada}
Sumio Yamada, \emph{On the geometry of weil-petersson completion of
  teichm¨uller spaces}, Mathematical Research Letters \textbf{11} (2004),
  327--344.

\end{thebibliography}

\end{document}